\newtheorem{defn}{Definition}[section]
\newtheorem{corollary}[defn]{Corollary}
\newtheorem{lemma}[defn]{Lemma}
\newtheorem{thm}[defn]{Theorem}
\newtheorem{theorem}[defn]{Theorem}
\newtheorem{cor}[defn]{Corollary}
\newtheorem{prop}[defn]{Proposition}
\theoremstyle{definition}
\newtheorem*{ack}{Acknowledgements}
\newtheorem{remark}[defn]{Remark}
\newtheorem{example}[defn]{Example}
\newcommand{\CC}{\mathbb C}
\newcommand{\Q}{\mathbb Q}
\newcommand{\Z}{\mathbb Z}
\newcommand{\F}{\mathbb F}
\newcommand{\PP}{\mathbb P}
\newcommand{\OK}{\mathcal{O}_K}
\newcommand{\Of}{\mathcal{O}_{K,f}}
\newcommand{\cC}{\mathcal{C}}
\newcommand{\OO}{\mathcal{O}}
\newcommand{\calc}{\mathcal{C}}
\newcommand{\Rats}{\mathbb{Q}}
\newcommand{\Gal}{\operatorname{Gal}}
\newcommand{\Aut}{\operatorname{Aut}}
\newcommand{\GQ}{\Gal(\overline{\Rats}/\Rats)}
\newcommand{\GF}{\Gal(\overline{F}/F)}
\newcommand{\GL}{\operatorname{GL}}
\newcommand{\PGL}{\operatorname{PGL}}
\newcommand{\End}{\operatorname{End}}
\newcommand{\Cl}{\mathrm{Cl}}
\begin{document}

% Title, authors and addresses

% use the thanksref command within \title, \author or \address for footnotes;
% use the corauthref command within \author for corresponding author footnotes;
% use the ead command for the email address,
% and the form \ead[url] for the home page:
% \title{Title\thanksref{label1}}
% \thanks[label1]{}
% \author{Name\corauthref{cor1}\thanksref{label2}}
% \ead{email address}
% \ead[url]{home page}
% \thanks[label2]{}
% \corauth[cor1]{}
% \address{Address\thanksref{label3}}
% \thanks[label3]{}

%\bibliographystyle{plain}
\title[Galois representations of elliptic curves with CM]{Galois representations attached to elliptic curves with complex multiplication}

\author{\'Alvaro Lozano-Robledo}
\address{University of Connecticut, Department of Mathematics, Storrs, CT 06269, USA}
\email{alvaro.lozano-robledo@uconn.edu} 
\urladdr{http://alozano.clas.uconn.edu}

%\author{J. M. Tornero}

%\keywords{modular curves}

\subjclass{Primary: 11F80, Secondary: 11G05, 11G15, 14H52.}
%\thanks{The first author was partially  supported by the grant MTM2012--35849.}
% Authors and running title to go on top of each page
%\pagestyle{myheadings} \markboth{\'Alvaro Lozano-Robledo}{Rank
%over large fields.}

\begin{abstract} The goal of this article is to give an explicit classification of the possible $p$-adic Galois representations that are attached to elliptic curves $E$ with CM defined over $\mathbb{Q}(j(E))$. More precisely, let $K$ be an imaginary quadratic field, and let $\mathcal{O}_{K,f}$ be an order in $K$ of conductor $f\geq 1$. Let $E$ be an elliptic curve with CM by $\mathcal{O}_{K,f}$, such that $E$ is defined by a model over $\mathbb{Q}(j(E))$. Let $p\geq 2$ be a prime, let $G_{\mathbb{Q}(j(E))}$ be the absolute Galois group of $\mathbb{Q}(j(E))$, and let $\rho_{E,p^\infty}\colon G_{\mathbb{Q}(j(E))}\to \operatorname{GL}(2,\mathbb{Z}_p)$ be the Galois representation associated to the Galois action on the Tate module $T_p(E)$. The goal is then to describe, explicitly, the groups of $\operatorname{GL}(2,\mathbb{Z}_p)$ that can occur as images of $\rho_{E,p^\infty}$, up to conjugation, for an arbitrary order $\mathcal{O}_{K,f}$. 
\end{abstract}

\maketitle

%\part{Use this type of header for very long papers only}

%%%%%%%%%%%%%%%%%%%%%%%%%%%%%%%%%%%%%%%%%%%%%%%%%%%%%%%%%%%%%%%%%%%%%%%%%%%%%%%%
%%%%%%%%%%%%%%%%%%%%%%%%%%%%%%%%%%%%%%%%%%%%%%%%%%%%%%%%%%%%%%%%%%%%%%%%%%%%%%%%
%%%%%%%%%%%%%%%%%%%%%%%%%%%%%%%%%%%%%%%%%%%%%%%%%%%%%%%%%%%%%%%%%%%%%%%%%%%%%%%%
\section{Introduction}

Let $F$ be a number field, let $E/F$ be an elliptic curve, let $p$ be a prime, let $T_p(E)=\varprojlim E[p^n]$ be the $p$-adic Tate module attached to $E$, and fix a $\Z_p$-basis for $T_p(E)$. The natural action of the absolute Galois group of $F$, denoted by $G_F = \Gal(\overline{F}/F)$, produces a $p$-adic Galois representation $\rho_{E,p^\infty}\colon \GF \to \Aut(T_p(E))\cong \GL(2,\Z_p)$. Serre's open image theorem (see \cite{serre1}) implies that, if we fix an elliptic curve $E/F$ without complex multiplication, then $\rho_{E,p^\infty}$ is surjective for all but finitely many primes $p$. Much work has been done to classify the possible images when  $F=\Q$ and $\rho_{E,p^\infty}$ is {\it not} surjective. For instance, Rouse and Zureick-Brown have classified the possible $2$-adic images for non-CM curves (see \cite{rouse}), and Sutherland and Zywina have produced a conjectural list of all the possible mod $p$ images (\cite{sutherland}, \cite{zywina}) which is complete if we assume a positive answer to a uniformity question of Serre. In \cite{sutherland-zywina} the authors provide a classification of all the possible $p$-adic representations over $\Q$ that is complete except for a finite set $\mathcal{J}$ of exceptional $j$-invariants (note that $\mathcal{J}$ includes all $j$-invariants in $\Q$ with CM). 

The goal of this article is to describe all the possible images of $p$-adic Galois representations attached to elliptic curves with complex multiplication, as subgroups of $\GL(2,\Z_p)$ defined up to conjugation. More concretely, let $K$ be an imaginary quadratic field, let $\OK$ be the ring of integers of $K$ with discriminant $\Delta_K$, let $f\geq 1$ be an integer, and let $\OO_{K,f}$ be the order of $K$ of conductor $f$. Let $j\colon X(1)\to \PP^1(\CC)$ be the modular $j$-invariant function, and let $j(\CC/\OO_{K,f})$ be the $j$-invariant associated to the order $\OO_{K,f}$ when regarded as a complex lattice. The theory of complex multiplication shows that $j(\CC/\OO_{K,f})$ is an algebraic integer (see Theorem \ref{thm-cmbasics} below). Let $j_{K,f}$ be an arbitrary Galois conjugate of $j(\CC/\OO_{K,f})$. Then, an elliptic curve $E/\Q(j_{K,f})$ with $j(E)=j_{K,f}$ has complex multiplication by $\OO_{K,f}$, i.e., $\End(E)\cong \OO_{K,f}$, and every elliptic curve with CM by $\OO_{K,f}$, and defined over $\Q(j_{K,f})$, arises in this way. It is then the goal of this article to completely describe the possible images of $\rho_{E,p^\infty}\colon \Gal(\overline{\Q(j_{K,f})}/\Q(j_{K,f})) \to \GL(2,\Z_p)$ for any prime $p\geq 2$, and any elliptic curve $E/\Q(j_{K,f})$ with complex multiplication and $j$-invariant $j_{K,f}$. We shall describe the image of $\rho_{E,p^\infty}$ as a subgroup of $\GL(2,\Z_p)$, up to conjugation. When $j_{K,f}\in \Q$, the image of $\rho_{E,p^\infty} \bmod p$ has been determined by Zywina (see \cite{zywina}, \S 1.9). In articles that were produced simultaneously to this one, Bourdon, Clark, and Pollack (\cite{bourdon}), and Bourdon and Clark (\cite{bourdon2}),  have described in great detail the properties of the extension $K(j_{K,f},E[N])/K(j_{K,f})$, where $N\geq 2$ is an integer, and they use this information to prove several applications to the degrees of definition of torsion points and isogenies on CM elliptic curves. In \cite{lombardo}, Lombardo has also recently proved analogous results to those of Bourdon and Clark in the context of abelian varieties of CM type. Our results differ from those of the authors mention above in that we drill down to the level of conjugacy classes of images as subgroups of $\GL(2,\Z_p)$. 

A classification of $p$-adic (or even mod $p$) images attached to elliptic curves can yield many other results as an application, for example to the study of sporadic points on modular curves, abelian division fields of elliptic curves with CM (extending the work of \cite{gonzalez-jimenez-lozano-robledo}), or the classification of isogenies over a fixed field (see for instance the introduction to \cite{rouse}, or other examples such as \cite{lozanoannalen} \cite{gonzalez-jimenez-lozano-robledo}, \cite{chou}, or \cite{daniels}). We will present a number of corollaries of the classification given here in a follow-up paper (see \cite{lozano-applied}). For instance, we will show that there are precisely $28$ possible $2$-adic images for elliptic curves with CM defined over $\Q$, up to conjugation, which completes the classification of Rouse and Zureick-Brown (\cite{rouse}) for $2$-adic images attached to elliptic curves over $\Q$.

Our results are as follows. In our first theorem, we use the basic theory of complex multiplication and class field theory (see \cite{schertz} or \cite{stevenhagen2}) to give an interpretation of the Galois representation associated to the $N$-division field of an elliptic curve with CM. 

\begin{theorem}\label{thm-cmrep-intro}
	Let $E/\Q(j_{K,f})$ be an elliptic curve with CM by $\OO_{K,f}$, let $N\geq 3$, and let $\rho_{E,N}$ be the Galois representation $\Gal(\overline{\Q(j_{K,f})}/\Q(j_{K,f})) \to \Aut(E[N])\cong \GL(2,\Z/N\Z)$.  We define groups of $\GL(2,\Z/N\Z)$ as follows:
	\begin{itemize}
		\item If $\Delta_Kf^2\equiv 0\bmod 4$, or $N$ is odd, let $\delta=\Delta_K f^2/4$, and $\phi=0$.
		\item If $\Delta_Kf^2\equiv 1 \bmod 4$, and $N$ is even, let $\delta=\frac{(\Delta_K-1)}{4}f^2$, let $\phi=f$.
	\end{itemize}
If we define the Cartan subgroup $\cC_{\delta,\phi}(N)$ of $\GL(2,\Z/N\Z)$ by
$$\cC_{\delta,\phi}(N)=\left\{\left(\begin{array}{cc}
a+b\phi & b\\
\delta b & a\\
\end{array}\right): a,b\in\Z/N\Z,\  a^2+ab\phi-\delta b^2 \in (\Z/N\Z)^\times \right\}.$$
with  $\mathcal{N}_{\delta,\phi}(N) = \left\langle \cC_{\delta,\phi}(N),\left(\begin{array}{cc} -1 & 0\\ \phi & 1\\\end{array}\right)\right\rangle$, then there is a $\Z/N\Z$-basis of $E[N]$ such that the image of $\rho_{E,N}$
	is contained in $\mathcal{N}_{\delta,\phi}(N)$.	Moreover, 
	\begin{enumerate}
		\item $\cC_{\delta,\phi}(N)\cong (\Of/N\Of)^\times$  is a subgroup of index $2$ in  $\mathcal{N}_{\delta,\phi}(N)$, and
		\item The index of the image of $\rho_{E,N}$ in $\mathcal{N}_{\delta,\phi}(N)$ coincides with the order of the Galois group $\Gal(K(j_{K,f},E[N])/K(j_{K,f},h(E[N]))$, for a Weber function $h$, and it is a divisor of the order of $\Of^\times/\mathcal{O}_{K,f,N}^\times$, where $\mathcal{O}_{K,f,N}^\times=\{u\in\Of^\times: u\equiv 1 \bmod N\Of\}$.
	\end{enumerate} 
\end{theorem}

Theorem \ref{thm-cmrep-intro} can be used to prove a uniform analogue of Serre's ``open image theorem'' in the CM case, in the sense that one can show that the image of $\Gal(\overline{\Q(j_{K,f})}/\Q(j_{K,f})) \to \varprojlim \Aut(E[N])\cong \GL(2,\widehat{\Z})$ is of finite index as a subgroup of $\mathcal{N}_{\delta,\phi} = \varprojlim \mathcal{N}_{\delta,\phi}(N)$. Such a uniform result can also be found in \cite[Corollary 1.3]{bourdon2} and \cite[Theorem 1.5]{lombardo}. In addition, we use work of Serre and Tate (\cite{serretate}) to prove several surjectivity results for the $p$-adic representation.

\begin{thm}\label{thm-largeimage-intro}
	Let $E/\Q(j_{K,f})$ be an elliptic curve with CM by $\OO_{K,f}$. 	\begin{itemize}
		\item If $\Delta_Kf^2\equiv 0\bmod 4$, let $\delta=\Delta_K f^2/4$, and $\phi=0$.
		\item If $\Delta_Kf^2\equiv 1 \bmod 4$, let $\delta=\frac{(\Delta_K-1)}{4}f^2$, let $\phi=f$.
	\end{itemize}
Then, the following hold:
	\begin{enumerate}
		\item Let $\rho_{E}$ be the Galois representation $\Gal(\overline{\Q(j_{K,f})}/\Q(j_{K,f})) \to \varprojlim \Aut(E[N])\cong \GL(2,\widehat{\Z})$, and let $\mathcal{N}_{\delta,\phi} = \varprojlim \mathcal{N}_{\delta,\phi}(N)$. Then, there is a compatible system of bases of $E[N]$ such that the image of $\rho_E$ is contained in $\mathcal{N}_{\delta,\phi}$, and the index of the image of $\rho_{E}$ in $\mathcal{N}_{\delta,\phi}$ is a divisor of the order $\Of^\times$. In particular, the index is a divisor of $4$ or $6$. Moreover, for every $K$ and $f\geq 1$, and a fixed $N\geq 3$, there is an elliptic curve $E/\Q(j_{K,f})$ such that  the image of $\rho_{E,N}$ is precisely $\mathcal{N}_{\delta,\phi}(N)$.

		\item Let $p>2$ and $j_{K,f}\neq 0$, or $p>3$. Let $G_{E,p^\infty}\subseteq \mathcal{N}_{\delta,\phi}(p^\infty) $ be the image of $\rho_{E,p^\infty}$ with respect to a suitable basis of $T_p(E)$, and let $G_{E,p}\subseteq \mathcal{N}_{\delta,\phi}(p)$ be the image of $\rho_{E,p} \equiv \rho_{E,p^\infty}\bmod p$. Then, $G_{E,p^\infty}$ is the full inverse image of $G_{E,p}$ via the reduction mod $p$ map $\mathcal{N}_{\delta,\phi}(p^\infty)\to \mathcal{N}_{\delta,\phi}(p)$. 
		
		\item Let $p>2$ and let $\chi_{E,p^\infty}=\det(\rho_{E,p^\infty})\colon \Gal(\overline{\Q(j_{K,f})}/\Q(j_{K,f}))\to \Z_p^\times$. Then, $\chi_{E,p^\infty}$ is the $p$-adic cyclotomic character, and the index of the image in $\Z_p^\times$ is $1$ or $2$. Further, the index is $2$ if and only if $p\equiv 1 \bmod 4$ and $\Q(\sqrt{p})\subseteq \Q(j_{K,f})$. In particular, $\chi_{E,p^\infty}$ is surjective onto $\Z_p^\times$ for all but finitely many primes $p$.
		
				\item If $p$ is not a divisor of $2\Delta_K f$, and $j(E)\neq 0$ (or $j(E)=0$ but $p\equiv \pm 1 \bmod 9$), and if $c\in \Gal(\overline{\Q(j_{K,f})}/\Q(j_{K,f}))$ is a fixed complex conjugation, and we fix $\varepsilon \in \{ \pm 1 \}$, then there is a $\Z_p$-basis of $T_p(E)$ such that the image of $\rho_{E,p^\infty}\colon \Gal(\overline{\Q(j_{K,f})}/\Q(j_{K,f})) \to  \GL(2,\Z_p)$ is equal to  $\mathcal{N}_{\delta,\phi}(p^\infty)= \varprojlim \mathcal{N}_{\delta,\phi}(p^n)$ and  $\rho_{E,p^\infty}(c)=c_\varepsilon=\left(\begin{array}{cc} \varepsilon & 0\\ 0 & -\varepsilon\\\end{array}\right)$.
	\end{enumerate}
\end{thm}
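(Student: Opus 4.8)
The plan is to obtain all four parts from Theorem~\ref{thm-cmrep-intro} together with three further ingredients: elementary pro-$p$ group theory, the Weil pairing, and the theory of Serre and Tate (potential good reduction of CM abelian varieties and the resulting local ramification). I begin with the more formal parts (1) and (3). For (1), one passes to the projective limit over $N$ in Theorem~\ref{thm-cmrep-intro}: the image of $\rho_E$ lies in $\mathcal{N}_{\delta,\phi}=\varprojlim\mathcal{N}_{\delta,\phi}(N)$, and its index there is the supremum over $N$ of the level-$N$ indices, each of which divides $|\Of^\times/\mathcal{O}_{K,f,N}^\times|$ and hence $|\Of^\times|\in\{2,4,6\}$; for the last assertion of (1) one uses that twisting $E$ (quadratically, or quartically/sextically if $j_{K,f}\in\{1728,0\}$) alters $\rho_{E,N}$ by a character while leaving $K(j_{K,f},h(E[N]))$ fixed, so a suitable twist makes $K(j_{K,f},E[N])=K(j_{K,f},h(E[N]))$ and, by Theorem~\ref{thm-cmrep-intro}(2), realizes index $1$ at a fixed level $N$. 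For (3), $\chi_{E,p^\infty}=\det\rho_{E,p^\infty}$ is the $p$-adic cyclotomic character by the Weil pairing, so its image is $\Gal(\Q(j_{K,f})(\mu_{p^\infty})/\Q(j_{K,f}))$, of index $[\Q(j_{K,f})\cap\Q(\mu_{p^\infty}):\Q]$ in $\Z_p^\times$. Since $\Q(j_{K,f})$ sits inside the ring class field $H$ of $\Of$ and $\Gal(H/\Q)$ is generalized dihedral, its abelianization is an elementary abelian $2$-group, so every subextension of $H/\Q$ that is cyclic over $\Q$ has degree at most $2$. As $p$ is odd, $\Gal(\Q(\mu_{p^\infty})/\Q)\cong\Z_p^\times$ is procyclic, hence $\Q(j_{K,f})\cap\Q(\mu_{p^\infty})$ is cyclic over $\Q$ and therefore of degree $1$ or $2$; if it is quadratic it must be the unique quadratic subfield $\Q(\sqrt{p^\ast})$ of $\Q(\mu_{p^\infty})$ with $p^\ast=(-1)^{(p-1)/2}p$, and since $j(\CC/\Of)$ is real, $\Q(j_{K,f})$ has a real embedding and all of its quadratic subfields are real, forcing $p^\ast=p$, i.e. $p\equiv 1\bmod 4$. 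That only finitely many $p$ have $\Q(\sqrt p)\subseteq\Q(j_{K,f})$ is clear.

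For (2), under the stated hypotheses $p\nmid|\Of^\times|$ (the case $p=3$ being excluded unless $j_{K,f}\neq 0$, i.e. unless $|\Of^\times|\in\{2,4\}$), so by (1) the index of the image of $\rho_{E,p^\infty}$ in $\mathcal{N}_{\delta,\phi}(p^\infty)$ is prime to $p$. The kernel $U$ of the reduction $\mathcal{N}_{\delta,\phi}(p^\infty)\to\mathcal{N}_{\delta,\phi}(p)$ is contained in $\cC_{\delta,\phi}(p^\infty)\cong(\Of\otimes\Z_p)^\times$ and equals $1+p(\Of\otimes\Z_p)$, a pro-$p$ group. The intersection of $U$ with the image of $\rho_{E,p^\infty}$ is a closed subgroup of finite index in $U$, so that index is a power of $p$; since it also divides the prime-to-$p$ index $[\mathcal{N}_{\delta,\phi}(p^\infty):\operatorname{Im}\rho_{E,p^\infty}]$, it equals $1$. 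Hence $U$ lies in the image, which is therefore the full preimage of $G_{E,p}$.

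For (4), by (2) it suffices to prove $G_{E,p}=\mathcal{N}_{\delta,\phi}(p)$. By Theorem~\ref{thm-cmrep-intro}, $\rho_{E,p}$ maps $\Gal(\overline{K(j_{K,f})}/K(j_{K,f}))$ into $\cC_{\delta,\phi}(p)$; since $\det\rho_{E,p}(c)=-1$ (from (3)), $\rho_{E,p}(c)\notin\cC_{\delta,\phi}(p)$, so $G_{E,p}$ surjects onto $\mathcal{N}_{\delta,\phi}(p)/\cC_{\delta,\phi}(p)\cong\Z/2\Z$, and it remains to show $\rho_{E,p}$ is surjective from $\Gal(\overline{K(j_{K,f})}/K(j_{K,f}))$ onto $\cC_{\delta,\phi}(p)\cong(\Of/p\Of)^\times$, i.e. that $K(j_{K,f},E[p])/K(j_{K,f})$ has the largest possible degree. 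Here I would use Serre--Tate: $E$ acquires good reduction over an extension which, since $p\nmid 2\Delta_K f$ and $p\geq 5$ when $j_{K,f}=0$ (respectively $p\geq 3$ otherwise), is tamely ramified at the primes $\wP$ of $K(j_{K,f})$ above $p$; the inertia at $\wP$ then acts on $E[p]$ through the reduction, which is supersingular when $p$ is inert in $\Of$ (so inertia surjects onto $(\Of/p\Of)^\times\cong\F_{p^2}^\times$ via the level-$2$ fundamental character) and ordinary when $p$ splits, $p\Of=\mathfrak q\bar{\mathfrak q}$, with $E[p]=E[\mathfrak q]\oplus E[\bar{\mathfrak q}]$ and the inertia groups above $\mathfrak q$ and above $\bar{\mathfrak q}$ surjecting onto the two cyclic factors $\F_p^\times$ of $\cC_{\delta,\phi}(p)$; equivalently one controls the conductor of the Hecke character attached to $E$, which is forced away from $p$. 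In each case the image of $\Gal(\overline{K(j_{K,f})}/K(j_{K,f}))$ is all of $\cC_{\delta,\phi}(p)$, and the exceptional behaviour at $j_{K,f}=0$ (where the sextic twist and the prime $3$ interfere) is exactly what the hypothesis $p\equiv\pm1\bmod 9$, i.e. $\mu_9\subseteq\Of\otimes\F_p$, removes; combined with (2) this yields image $\mathcal{N}_{\delta,\phi}(p^\infty)$.

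Finally, to normalise $\rho_{E,p^\infty}(c)$: since $p$ is odd and $p\nmid f$, completing the square (the substitution $a\mapsto a+bf/2$, realised by conjugation by $\left(\begin{smallmatrix}1&0\\-f/2&1\end{smallmatrix}\right)\in\GL(2,\Z_p)$) carries $\mathcal{N}_{\delta,\phi}(p^\infty)$ to $\mathcal{N}_{\delta',0}(p^\infty)$ with $\delta'=\Delta_Kf^2/4$, so we may assume $\phi=0$. The involutions in $\mathcal{N}_{\delta,0}(p^\infty)\setminus\cC_{\delta,0}(p^\infty)$ are precisely the matrices $\left(\begin{smallmatrix}-a&b\\-\delta b&a\end{smallmatrix}\right)$ with $a^2-\delta b^2=1$; in particular $c_{+1}$ and $c_{-1}$ are among them, and an orbit computation shows $\cC_{\delta,0}(p^\infty)$ acts transitively on this set by conjugation (for instance conjugation by $\left(\begin{smallmatrix}0&1\\\delta&0\end{smallmatrix}\right)\in\cC_{\delta,0}(p^\infty)$ interchanges $c_{+1}$ and $c_{-1}$). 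Since $c^2=1$ and $c$ acts nontrivially on $K$, $\rho_{E,p^\infty}(c)$ is such an involution, so conjugating the basis by a suitable element of $\cC_{\delta,0}(p^\infty)\subseteq\GL(2,\Z_p)$---which leaves the image $\mathcal{N}_{\delta,0}(p^\infty)$ unchanged---moves $\rho_{E,p^\infty}(c)$ to $c_\varepsilon$ for the prescribed $\varepsilon$. The main obstacle throughout is the surjectivity claim in (4): making the local analysis at $p$ rigorous---identifying the inertia image from the reduction type, handling curves with additive reduction above $p$ via a tame base change, and treating the genuinely exceptional prime $3$ when $j_{K,f}=0$ (where one recovers the index-$3$ phenomenon seen over $\Q$)---is where essentially all the work lies; the other steps are bookkeeping.
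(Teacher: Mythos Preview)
Your overall architecture is right, and parts (2) and (3) are handled cleanly---arguably more cleanly than in the paper. The pro-$p$ argument for (2) is a nice shortcut over the paper's level-by-level degree count (Theorem~\ref{thm-inclp}), and your use of the dihedral abelianization in (3) directly rules out index $4$ as well as $3$ and $6$, which the paper does a bit more circuitously via Corollary~\ref{cor-notgalois}. Two points need correction, and one deserves a comparison.

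\textbf{Part (1), twisting direction.} You write that a suitable twist makes $K(j_{K,f},E[N])=K(j_{K,f},h(E[N]))$ and that this realizes index~$1$. This is backwards. By Corollary~\ref{cor-weber}, one always has $\Gal(H_f(h(E[N]))/H_f)\cong (\Of/N\Of)^\times/(\Of^\times/\mathcal{O}_{K,f,N}^\times)$, so if the two fields coincide then the image of $G_{H_f}$ in $\cC_{\delta,\phi}(N)$ has index exactly $|\Of^\times/\mathcal{O}_{K,f,N}^\times|$, which is the \emph{largest} possible index, not $1$. To get index~$1$ you must twist so that $[H_f(E[N]):H_f(h(E[N]))]$ is \emph{maximal}, equal to $|\Of^\times/\mathcal{O}_{K,f,N}^\times|$; this is what the paper does in Theorem~\ref{thm-twistforfullimage} by choosing a character $\chi$ whose fixed field is disjoint from $H_f(E[N])$. (The phrasing of Theorem~\ref{thm-cmrep-intro}(2) is admittedly easy to misread here.)

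\textbf{Part (4), why $\rho_{E,p}(c)\notin\cC_{\delta,\phi}(p)$.} The implication ``$\det\rho_{E,p}(c)=-1\Rightarrow\rho_{E,p}(c)\notin\cC_{\delta,\phi}(p)$'' is false: the Cartan contains plenty of elements of determinant $-1$ (the norm map $(\Of/p\Of)^\times\to(\Z/p\Z)^\times$ is surjective). The correct argument is that for $p\geq 3$ one has $K\subseteq\Q(j_{K,f},E[p])$ (Lemma~\ref{lem-clark}), so $\ker\rho_{E,p}\subseteq G_{H_f}$ and hence $\rho_{E,p}(c)\notin\rho_{E,p}(G_{H_f})=G_{E,K,p}$. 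Once you have established $G_{E,K,p}=\cC_{\delta,\phi}(p)$, this immediately gives $\rho_{E,p}(c)\notin\cC_{\delta,\phi}(p)$. (The paper isolates this as Lemma~\ref{lem-insidecartan}, proved via Zarhin's theorem, to cover the case where $G_{E,K,p}$ might be a proper subgroup of the Cartan.)

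\textbf{Part (4), the surjectivity onto $\cC_{\delta,\phi}(p)$.} Here your route genuinely diverges from the paper's. You propose a local inertia analysis at primes above $p$, distinguishing the supersingular (inert) and ordinary (split) cases and passing to a tame base change to acquire good reduction. The paper instead works globally: it invokes Serre--Tate (Theorem~\ref{thm-serretate}) to produce the Hecke character $\varepsilon$, uses surjectivity of the local norm at unramified primes to get full image for a twist $E'$ with good reduction above $p$ (Corollary~\ref{cor-surjective}, Theorem~\ref{thm-allgood}), and then carries out a careful case analysis of how the image can shrink under twisting (Theorem~\ref{thm-twists}). Your approach is more hands-on and would work, but the step ``inertia surjects onto $\F_{p^2}^\times$'' for a curve that may have additive reduction needs the tame-base-change bookkeeping you flag; the paper's global approach sidesteps this by never computing inertia images directly. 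Both approaches isolate the same $j=0$ obstruction, and your observation that $p\equiv\pm1\bmod 9$ is exactly the condition $\mu_9\subseteq(\Of\otimes\F_p)^\times$ (so that a cubic twist cannot drop the image) is the right diagnosis.
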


Part (1) of Theorem \ref{thm-largeimage-intro}, shows that the image of $\rho_{E}$ in $\mathcal{N}_{\delta,\phi}$ is a divisor of $4$ or $6$. However, the index may never be $1$ for some choices of $j_{K,f}$. The following theorem shows one such example (and it will be shown in our follow-up paper \cite{lozano-applied}).

\begin{thm}\label{thm-j1728-index2-intro}
	Let $E/\Q$ be an elliptic curve with $j(E)=1728$, and choose compatible bases of $E[N]$, for each $N\geq 2$, such that the image of $\rho_{E}$ is contained in $\mathcal{N}_{\delta,\phi}$. Then, the index of the image of $\rho_{E}$ in $\mathcal{N}_{\delta,\phi}$ is $2$ or $4$.
\end{thm}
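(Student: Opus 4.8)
The plan is as follows. A curve $E/\mathbb{Q}$ with $j(E)=1728$ has complex multiplication by $\mathbb{Z}[i]$, so in the notation of Theorems~\ref{thm-cmrep-intro} and~\ref{thm-largeimage-intro} we are in the case $K=\mathbb{Q}(i)$, $\Delta_K=-4$, $f=1$, $\delta=-1$, $\phi=0$, and $\mathcal{C}_{\delta,\phi}(N)\cong(\mathbb{Z}[i]/N\mathbb{Z}[i])^\times$. By Theorem~\ref{thm-largeimage-intro}(1) the index of $\rho_E(G_\mathbb{Q})$ in $\mathcal{N}_{\delta,\phi}$ divides $|\mathbb{Z}[i]^\times|=4$, so it equals $1$, $2$, or $4$. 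Since the reduction map $\mathcal{N}_{\delta,\phi}\to\mathcal{N}_{\delta,\phi}(2^\infty)$ carries $\rho_E(G_\mathbb{Q})$ onto $\rho_{E,2^\infty}(G_\mathbb{Q})$, this index is at least the index of $\rho_{E,2^\infty}(G_\mathbb{Q})$ in $\mathcal{N}_{\delta,\phi}(2^\infty)$; indeed the uniform ``full image'' statements of Theorem~\ref{thm-largeimage-intro}(2) and (4) apply at every odd prime here (as $2\Delta_Kf=-8$ has no odd prime divisors and $j(E)=1728\neq 0$), so all of the index is concentrated at $2$. It therefore suffices to show that $\rho_{E,2^\infty}$ is never surjective onto $\mathcal{N}_{\delta,\phi}(2^\infty)$, i.e.\ that the $2$-adic index is at least $2$.

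To exhibit the obstruction I would reinterpret the $2$-adic index field-theoretically via Theorem~\ref{thm-cmrep-intro}(2): a Weber function for $j=1728$ is $h(P)=c\cdot x(P)^2$ for a suitable constant $c$, so the $2$-adic index equals
\[
\big[K(E[2^\infty]):K(x(E[2^\infty])^2)\big]=\big[K(E[2^\infty]):K(x(E[2^\infty]))\big]\cdot\big[K(x(E[2^\infty])):K(x(E[2^\infty])^2)\big],
\]
with each factor dividing $2$ (one adjoins $y$-coordinates, and separately passes between an $x$-coordinate and its square). The crux is to show that the second factor equals $2$, i.e.\ that some $x$-coordinate of a $2$-power torsion point does not lie in the field generated by the squares of all such coordinates. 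Writing $\rho_{E,2^\infty}|_{G_K}$ as a character $G_K\to\mathbb{Z}_2[i]^\times$ via the $\mathbb{Z}_2[i]$-linear action on $T_2(E)$, a short Galois-theoretic computation shows that this second factor is $2$ if and only if the image of $i\in\mathbb{Z}[i]^\times$ lies in $\rho_{E,2^\infty}(G_K)$ — equivalently, that the automorphism $[i]$ of $E$ (which is defined over $K=\mathbb{Q}(i)$, not over $\mathbb{Q}$) acts on $T_2(E)$ in the same way as some $\sigma\in G_K$.

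For this I would pass to the Hecke character $\psi_E$ of $K$ attached to $E/\mathbb{Q}$, whose $2$-adic realization is exactly $\rho_{E,2^\infty}|_{G_K}$ (cf.\ \cite{schertz}, \cite{stevenhagen2}). Its conductor is a power of the ramified prime $(1+i)$ above $2$, and $E$ is its own quadratic twist by the character of $K/\mathbb{Q}=\mathbb{Q}(\zeta_4)/\mathbb{Q}$ — indeed the model $y^2=x^3+ax$ coincides with its own $(-1)$-twist. Combining this self-twist relation with local class field theory at $(1+i)$ constrains $\psi_E$ on the inertia and decomposition groups at $(1+i)$ tightly enough to force $i$ into its image, which gives the lower bound of $2$. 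Together with the divisibility by $4$ from Theorem~\ref{thm-largeimage-intro}(1), this shows the index is $2$ or $4$; deciding which one occurs for a given $E$ — in terms of the $2$-adic valuation and the quartic-twist class of the coefficient $a$ — is the additional, finite analysis of the division fields $\mathbb{Q}(E[4])$ and $\mathbb{Q}(E[8])$ carried out in \cite{lozano-applied}.

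The main obstacle is precisely this $2$-adic step. The prime $2$ ramifies in $K=\mathbb{Q}(i)$ and every elliptic curve over $\mathbb{Q}$ with $j=1728$ has additive reduction at $2$, so the clean surjectivity results of Theorem~\ref{thm-largeimage-intro}(2) and (4) simply do not apply at $p=2$; this is exactly where the CM field $\mathbb{Q}(\zeta_4)=\mathbb{Q}(i)$ becomes entangled with the $4$-division field, and pinning down the precise subgroup of $\mathbb{Z}_2[i]^\times$ that can occur — rather than merely a bound on its index — is what makes the computation delicate.
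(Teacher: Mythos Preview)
The paper itself does not prove this theorem; it explicitly defers the proof to the follow-up \cite{lozano-applied}. So there is no in-paper argument to compare against, but your proposal has a genuine gap that must be named.

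Your strategy is to show that the $2$-adic index is always at least $2$, arguing that since the $p$-adic image is full for every odd $p$ (true, by Theorem~\ref{thm-largeimage-intro}(4)), ``all of the index is concentrated at $2$.'' But the $2$-adic index can be $1$: the paper's own classification (Theorem~\ref{thm-j1728-intro}) lists $G_{E,K,2^\infty}=G_1=\mathcal{C}_{-1,0}(2^\infty)$ as a possibility, and Example~\ref{ex-j1728-2adic} exhibits $E\colon y^2=x^3+3x$ as a curve with full $2$-adic image. For this curve every $p$-adic image is full, yet the adelic index is still $\geq 2$. The obstruction is an \emph{entanglement between different primes}, not a defect at $p=2$: for $y^2=x^3+3x$ one checks $K(E[2])=K(\sqrt{-3})=K(\zeta_3)\subset K(E[3])$, so already the mod-$6$ image is proper in $\mathcal{N}_{\delta,\phi}(6)$ even though the mod-$2$ and mod-$3$ images are full.

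There is also a reversal in your use of Theorem~\ref{thm-cmrep-intro}(2). The precise relation (Theorem~\ref{thm-incl}) is that the index of $\rho_{E,N}(G_{K})$ in $(\mathcal{O}_K/N)^\times$ equals the index of $\operatorname{Gal}(K(E[N])/K(h(E[N])))$ \emph{inside} $\mathcal{O}_K^\times/\mathcal{O}_{K,N}^\times$, so $(\text{index})\cdot[K(E[N]):K(h(E[N]))]=|\mathcal{O}_K^\times|=4$. Hence showing $i\in\rho_{E,2^\infty}(G_K)$ makes the degree $[K(E[2^\infty]):K(h(E[2^\infty]))]$ larger and the $2$-adic index \emph{smaller}. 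Your Hecke-character sketch, even if it established $i$ in the $2$-adic image, would yield $2$-adic index $\leq 2$, not $\geq 2$.

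The actual mechanism is global: no $\sigma\in G_K$ can act as multiplication by $i$ on \emph{all} torsion simultaneously, because such $\sigma$ would fix $K(h(E_{\mathrm{tors}}))=K^{\mathrm{ab}}=K(E_{\mathrm{tors}})$ and hence act trivially. Equivalently, the adelic image $\rho_E(G_K)\subset\widehat{\mathcal{O}_K}^\times$ meets the diagonally embedded $\mathcal{O}_K^\times$ only in $\{1\}$. This is what forces the adelic index to exceed $1$ even when every $p$-adic image is full, and it is the argument you should be aiming for.
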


It is worth noting that part (2) of Theorem \ref{thm-largeimage-intro} says that $\rho_{E,p^\infty}$ is essentially defined at level $p$ for all odd primes, if $j_{K,f}\neq 0$. In contrast, if $E/\Q$ is a curve without CM, then \cite[Corollary 1.3]{rouse} says that $\rho_{E,2^\infty}$ is defined at level $32$, in general. It is not known what is the generic level of definition of $\rho_{E,p^\infty}$ for $p>2$. The cases of $p=3$ (when $j_{K,f}=0$) and $p=2$ are more delicate, and they will be studied in detail in Theorems \ref{thm-badprimes-intro} and \ref{thm-2adictwist-intro}, respectively. Theorem \ref{thm-inclp} is a more detailed version of Theorem \ref{thm-largeimage-intro}, part (2).

In Theorem \ref{thm-largeimage-intro}, part (4), the case of $j=0$ needs to be excluded because a couple of other images are possible that are strictly contained in $\mathcal{N}_{\delta,\phi}(p^\infty)$.

\begin{thm}\label{thm-jzero-goodredn} Let $E/\Q$ be an elliptic curve with $j(E)=0$. Let $K=\Q(\sqrt{-3})$, $f=1$, and $\OO_{K,f} = \OO_K = \Z[(1+\sqrt{-3})/2]$, and $\delta=\Delta_K f^2/4=-3/4$. Let $p>3$ be a prime, and let $c \in \GQ$ be a complex conjugation. Then, for each $\varepsilon \in \{\pm 1 \}$, there is a $\Z_p$-basis of $T_p(E)$ such that the image $G_{E,p^\infty}$ of $\rho_{E,p^\infty}\colon \GQ \to \Aut(T_p(E))\cong \GL(2,\Z_p)$ is contained in $\mathcal{N}_{\delta,0}(p^\infty)$ and $\rho_{E,p^\infty}(c)=c_\varepsilon = \left(\begin{array}{cc} \varepsilon & 0\\ 0 & -\varepsilon\\ \end{array}\right)$. Further, $G_{E,p^\infty}$ is the full inverse image via the natural reduction mod $p$ map  $\mathcal{N}_{\delta,0}(p^\infty)\to \mathcal{N}_{\delta,0}(p)$ of the image $G_{E,p}$ of $\rho_{E,p}\equiv \rho_{E,p^\infty}\bmod p$, and one of the following holds:
		\begin{enumerate}
	\item If $p\equiv \pm 1\bmod 9$, then $G_{E,p}=\mathcal{N}_{\delta,0}(p)$ and $G_{E,p^\infty}= \mathcal{N}_{\delta,0}(p^\infty)$. Moreover, if $p\equiv 1\bmod 9$, then the image is the normalizer of a split Cartan subgroup, and if $p\equiv -1\bmod 9$, then the image is a normalizer of a non-split Cartan subgroup.
	\item If $p\equiv 2$ or $5\bmod 9$, then $G_{E,p}$ is contained in the normalizer of a non-split Cartan subgroup and either $G_{E,p}=\mathcal{N}_{\delta,0}(p)$, or $G_{E,p}=\langle \cC_{\delta,0}(p)^3, c_\varepsilon \rangle $.
	\item If $p\equiv 4$ or $7\bmod 9$, then $G_{E,p}$ is contained in the normalizer of a split Cartan, and either $G_{E,p^\infty}=\mathcal{N}_{\delta,0}(p^\infty)$, or $G_{E,p^\infty}$ is isomorphic to the subgroup $$\left\langle \left\{\left(\begin{array}{cc} a & 0\\ 0 & b\\ \end{array}\right) : a/b \in (\Z_p^\times)^3\right\}, \gamma = \left(\begin{array}{cc} 0 & \varepsilon\\ \varepsilon & 0\\ \end{array}\right)\right\rangle.$$
\end{enumerate}
\end{thm}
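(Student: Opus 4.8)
The plan is to reduce everything to a careful analysis of the image of $\rho_{E,p^\infty}$ for the single curve $y^2 = x^3 - 1$ (or $y^2 = x^3 + 16$, any model over $\Q$ with $j = 0$), using the fact that all such curves over $\Q$ are sextic twists of one another, and sextic twisting over $\Q$ only changes the image by a character of order dividing $6$ into $\OO_{K}^\times$. First I would invoke Theorem \ref{thm-cmrep-intro} and Theorem \ref{thm-largeimage-intro}(2) to place the image inside $\mathcal{N}_{\delta,0}(p^\infty)$ with $\delta = -3/4$, and to know that for $p > 3$ the image $G_{E,p^\infty}$ is the full inverse image of $G_{E,p}$ under reduction mod $p$; this immediately reduces the whole problem to identifying $G_{E,p}\subseteq \mathcal{N}_{\delta,0}(p)$. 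Next, since $E$ has good reduction at $p$ (as $p > 3$ and $j = 0$), I would use the Serre--Tate theory of CM elliptic curves together with the explicit description of $\rho_{E,p}$ as arising from a Hecke character of $K = \Q(\sqrt{-3})$: the image is governed by the class of $p$ in $(\OO_K/\ell)^\times$ for the relevant ramified-vs-split behavior, and the obstruction to surjectivity is exactly the index $[\OO_K^\times : \OO_{K,f,p}^\times]$, which here is the order of the image of $\OO_K^\times = \mu_6$ in $(\OO_K/p)^\times$.

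The key computation is then the congruence condition on $p$ modulo $9$. The point is that $\OO_K^\times = \langle \zeta_6\rangle$ injects into $(\OO_K/\wp)^\times$ for $\wp \mid p$ unless the reduction map kills a nontrivial root of unity — and because $(\OO_K/\wp)^\times$ is cyclic of order $p-1$ (split case) or $p^2-1$ (inert case), the image of $\zeta_3$ is trivial exactly when $3 \mid$ the relevant group order but the finer structure forces a cube. I would spell out: $p$ splits in $K$ iff $p \equiv 1 \bmod 3$, i.e. $p \equiv 1,4,7\bmod 9$; $p$ is inert iff $p \equiv 2\bmod 3$, i.e. $p \equiv 2,5,8\bmod 9$. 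Then the residue of $p$ modulo $9$ detects whether $p$ (resp.\ a prime of $K$ above it) is a cube in the appropriate local unit group, and this is precisely the dichotomy $p \equiv \pm 1 \bmod 9$ (full normalizer of split or non-split Cartan) versus $p \equiv 2,5\bmod 9$ or $p \equiv 4,7\bmod 9$ (where the cubing appears). In the split case the extra constraint cuts the diagonal torus down to pairs $(a,b)$ with $a/b$ a cube, giving the stated subgroup $\langle\{\operatorname{diag}(a,b): a/b \in (\Z_p^\times)^3\},\gamma\rangle$; in the non-split case it cuts $\cC_{\delta,0}(p)$ down to its index-$3$ subgroup $\cC_{\delta,0}(p)^3$, and the Atkin--Lehner/normalizing involution contributes $c_\varepsilon$.

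Finally, the statement about which $\varepsilon$ is achievable — $\rho_{E,p^\infty}(c) = c_\varepsilon$ for either choice of sign — comes from the sextic-twist freedom: twisting $E$ by $d\in \Q^\times/(\Q^\times)^6$ multiplies the representation by the order-dividing-$6$ character attached to $K(\sqrt[6]{d})$, and choosing $d$ appropriately (in particular choosing $d$ to be a unit times a square or not) switches the sign of the action of complex conjugation on a suitable basis, exactly as in the proof of Theorem \ref{thm-largeimage-intro}(4); one checks that for each $\varepsilon$ some twist realizes $c_\varepsilon$, while the shape of $G_{E,p}$ (which of cases (1)--(3) we are in) depends only on $p \bmod 9$ and not on the twist, since the twisting character has image in the scalars times $\OO_K^\times$. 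The main obstacle I anticipate is the bookkeeping in case (3): one must verify that when the congruence forces the cubing condition, the resulting subgroup is genuinely the one displayed (not, say, a further proper subgroup), which requires showing the Hecke character's $\wp$-component hits \emph{every} element of $(\Z_p^\times)^3$ in the $a/b$ coordinate — this is where one needs the full strength of the Chebotarev/class field theory input bounding the index in Theorem \ref{thm-cmrep-intro}(2) to be \emph{exactly} $[\OO_K^\times:\OO_{K,f,p^n}^\times]$ rather than merely a divisor, together with the explicit evaluation of that index as $3$ (not $1$ or $6$) precisely for $p\equiv 2,4,5,7\bmod 9$.
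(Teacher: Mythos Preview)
Your overall architecture matches the paper's: place the image in $\mathcal{N}_{\delta,0}(p^\infty)$ via Theorem~\ref{thm-cmrep-intro}, invoke Theorem~\ref{thm-largeimage-intro}(2) to reduce to mod~$p$, find one curve with full image using Serre--Tate at a good prime, and then compare all other $j=0$ curves to it via sextic twisting. But there are three genuine gaps.

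First, your argument for the $\varepsilon$-freedom is wrong. The statement says that for a \emph{fixed} curve $E$ and for \emph{each} $\varepsilon\in\{\pm 1\}$ there is a $\Z_p$-basis with $\rho_{E,p^\infty}(c)=c_\varepsilon$. Twisting by $d$ replaces $E$ by a different curve, so it cannot prove this. The correct mechanism is a change of basis: since $p\nmid 2f\Delta_K$ we have $\delta\not\equiv 0\bmod p$, so $\cC_{\delta,0}(p^\infty)$ contains an anti-diagonal matrix, and such a matrix conjugates $c_1$ to $c_{-1}$ while preserving the Cartan. This is exactly Lemma~\ref{lem-ccfinal}(2), and it is what the paper uses.

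Second, your final paragraph misreads the conclusion. You claim the index must be \emph{exactly} $3$ for $p\equiv 2,4,5,7\bmod 9$; in fact the theorem says the index is $1$ \emph{or} $3$ in those cases, and both genuinely occur (depending on which sextic twist $E$ is). The congruence class of $p\bmod 9$ only determines which indices are \emph{possible}, not which one is realized. Relatedly, Theorem~\ref{thm-cmrep-intro}(2) only gives that the index \emph{divides} $|\OO_K^\times/\OO_{K,f,N}^\times|=6$; it is not an equality, and you cannot appeal to it for a lower bound.

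Third, two substantive exclusion steps are missing. You do not explain why index $2$ or $6$ never occurs; the paper rules out even index by a separate argument on the $2$-torsion of $(\OO_K/p\OO_K)^\times$ in both the split and inert cases (Theorem~\ref{thm-twists}, case (1)). And in the split case $p\equiv 4,7\bmod 9$ there are four index-$3$ subgroups of $(\Z/p\Z)^\times\times(\Z/p\Z)^\times$; you assert the answer is the one with $a/b$ a cube, but the paper's argument actively eliminates the other three: $H_1,H_2$ fail stability under complex conjugation (which swaps the two factors), and $H_3=\{(a,b):ab\in((\Z/p\Z)^\times)^3\}$ fails because $\det\rho_{E,p}$ must be the full cyclotomic character. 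Without these two filters your ``bookkeeping'' paragraph does not close.
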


Theorems \ref{thm-largeimage-intro} and \ref{thm-jzero-goodredn} take care of the image of $\rho_{E,p^\infty}$ when $p$ does not divide $2f\Delta_K$. Our next result describes the image when $p>2$ divides $f\Delta_K$. 

\begin{thm}\label{thm-badprimes-intro}
	Let $E/\Q(j_{K,f})$ be an elliptic curve with CM by $\Of$ of conductor $f\geq 1$. Let $p$ be an odd prime dividing $f\Delta_K$ (thus, $j\neq 1728$ where $f\Delta_K=-4$), let $G_{E,p^\infty}$ be the image of $\rho_{E,p^\infty}\colon\Gal(\overline{\Q(j_{K,f})}/\Q(j_{K,f}))\to \Aut(T_p(E))$, and let $c\in\Gal(\overline{\Q(j_{K,f})}/\Q(j_{K,f}))$ be a fixed complex conjugation. Then, there is a $\Z_p$-basis of $T_p(E)$ such that $G_{E,p^\infty}\subseteq \mathcal{N}_{\delta,0}(p^\infty)$, with $\delta=\Delta_K f^2/4$ and $\phi=0$, and $\rho_{E,p^\infty}(c)=c_\varepsilon=\left(\begin{array}{cc} \varepsilon & 0\\ 0 & -\varepsilon\\ \end{array}\right)$ for some $\varepsilon \in \{\pm 1 \}$. Moreover, $G_{E,p^\infty}$ is precisely one of the following groups of $\GL(2,\Z_p)$:
	\begin{enumerate}
		\item[(a)] If $j_{K,f}\neq 0,1728$, then either $G_{E,p^\infty}=\mathcal{N}_{\delta,0}(p^\infty)$, or $G_{E,p^\infty}$ is generated by $c_{\varepsilon}$ and the group
		$$\left\{ \left(\begin{array}{cc} a & b\\ \delta b & a\\ \end{array}\right): a\in {\Z_p^\times}^2, b\in  \Z_p\right\}.$$
		\item[(b)] If $j_{K,f}=0$, then $p=3$, and there are twelve possibilities for the image of $\rho_{E,3^\infty}$ (up to conjugation). More concretely, either  $G_{E,3^\infty}=\mathcal{N}_{\delta,0}(3^\infty)$, or $[\mathcal{N}_{\delta,0}(3^\infty):G_{E,3^\infty}]=2,3,$ or $6$, and one of the following holds:
		\begin{enumerate} \item[(i)] If $[\mathcal{N}_{\delta,0}(3^\infty):G_{E,3^\infty}]=2$, then $G_{E,3^\infty}$ is generated by $c_{\varepsilon}$ and 
		$$\left\{ \left(\begin{array}{cc}  a & b\\ -3b/4 & a\\ \end{array}\right): a,b\in  \Z_3,\ a\equiv 1 \bmod 3\right\}\subseteq \GL(2,\Z_3).$$
		
		\item[(ii)]  If $[\mathcal{N}_{\delta,0}(3^\infty):G_{E,3^\infty}]=3$, then $G_{E,3^\infty}$ is generated by $c_{\varepsilon}$ and 
		$$\left\{ \left(\begin{array}{cc} a & b\\ -3b/4 & a\\ \end{array}\right): a\in \Z_3^\times,\ b\equiv 0 \bmod 3 \right\},$$
		$$\text{or }\  \left\langle \left(\begin{array}{cc}  2 & 0\\ 0 & 2\\ \end{array}\right) ,\left(\begin{array}{cc}  1 & 1\\ -3/4 & 1\\ \end{array}\right)\right\rangle,\ \text{ or } \ \left\langle \left(\begin{array}{cc}  2 & 0\\ 0 & 2\\ \end{array}\right) ,\left(\begin{array}{cc}  -5/4 & 1/2\\ -3/8 & -5/4\\ \end{array}\right)\right\rangle\subseteq \GL(2,\Z_3).$$ 
		\item[(iii)]  If $[\mathcal{N}_{\delta,0}(3^\infty):G_{E,3^\infty}]=6$, then $G_{E,3^\infty}$ is generated by $c_{\varepsilon}$ and one of
		$$ \text{or }\ \left\{ \left(\begin{array}{cc}  a & b\\ -3b/4 & a\\ \end{array}\right) : a\equiv 1,\ b\equiv 0 \bmod 3\Z_3 \right\},$$
		$$ \text{or }\ \left\langle \left(\begin{array}{cc}  4 & 0\\ 0 & 4\\ \end{array}\right) ,\left(\begin{array}{cc}  1 & 1\\ -3/4 & 1\\ \end{array}\right)\right\rangle,\ \text{ or } \ \left\langle \left(\begin{array}{cc}  4 & 0\\ 0 & 4\\ \end{array}\right) ,\left(\begin{array}{cc}  -5/4 & 1/2\\ -3/8 & -5/4\\ \end{array}\right)\right\rangle\subseteq \GL(2,\Z_3).$$ 
		\end{enumerate}
	\end{enumerate}
\end{thm}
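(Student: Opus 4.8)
The plan is to treat parts~(a) and~(b) in parallel, bootstrapping from Theorems~\ref{thm-cmrep-intro} and~\ref{thm-largeimage-intro}. Since $p$ is odd we are in the case $\phi=0$, $\delta=\Delta_Kf^2/4\in\Z_p$ (and $p\mid\delta$, because $p\mid f\Delta_K$), so Theorem~\ref{thm-cmrep-intro} supplies a $\Z_p$-basis of $T_p(E)$ with $G_{E,p^\infty}\subseteq\mathcal{N}_{\delta,0}(p^\infty)$ and $\cC_{\delta,0}(p^\infty)\cong(\Of\otimes\Z_p)^\times$; applied to $N=p^n$ and passed to the limit, part~(2) of that theorem shows $[\mathcal{N}_{\delta,0}(p^\infty):G_{E,p^\infty}]$ divides $|\Of^\times/\mathcal{O}_{K,f,p^n}^\times|=|\Of^\times|$ (for $p$ odd no nontrivial unit is $\equiv1$ modulo a power of $p$). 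By Theorem~\ref{thm-largeimage-intro}(1) the basis may also be chosen so that $\rho_{E,p^\infty}(c)=c_\varepsilon$ for some $\varepsilon\in\{\pm1\}$; one checks this is compatible with the previous normalization and that conjugation by $c_\varepsilon$ acts on $\cC_{\delta,0}(p^\infty)\cong(\Of\otimes\Z_p)^\times$ as the nontrivial ring automorphism, i.e.\ by $\left(\begin{smallmatrix}a&b\\\delta b&a\end{smallmatrix}\right)\mapsto\left(\begin{smallmatrix}a&-b\\-\delta b&a\end{smallmatrix}\right)$. As $c_\varepsilon=\rho_{E,p^\infty}(c)\in G_{E,p^\infty}$ lies in $\mathcal{N}_{\delta,0}\setminus\cC_{\delta,0}$ and $\cC_{\delta,0}$ is normal of index $2$ in $\mathcal{N}_{\delta,0}$, the group $G_{E,p^\infty}$ surjects onto $\mathcal{N}_{\delta,0}/\cC_{\delta,0}\cong\Z/2$, so $[\mathcal{N}_{\delta,0}(p^\infty):G_{E,p^\infty}]=[\cC_{\delta,0}(p^\infty):H]$ and $G_{E,p^\infty}=\langle H,c_\varepsilon\rangle$, where $H:=G_{E,p^\infty}\cap\cC_{\delta,0}(p^\infty)$ is a $c_\varepsilon$-stable subgroup of $\cC_{\delta,0}(p^\infty)$ of index dividing $|\Of^\times|$.

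For part~(a), $|\Of^\times|=2$, so $H$ has index $1$ or $2$. The crucial observation is that, because $p\mid f\Delta_K$, the $\Z_p$-algebra $\Of\otimes\Z_p$ is a local ring with residue field $\F_p$ --- it is the ring of integers of a ramified quadratic extension of $\Q_p$ when $p\mid\Delta_K$ and $p\nmid f$, and otherwise the nonmaximal order $\Z_p+p^{v_p(f)}(\OK\otimes\Z_p)$, which is again local with residue field $\F_p$ --- so $(\Of\otimes\Z_p)^\times\cong\F_p^\times\times(1+\mathfrak m)$ with $1+\mathfrak m$ pro-$p$; hence $\cC_{\delta,0}(p^\infty)$ has a \emph{unique} subgroup of index $2$, the preimage of $(\F_p^\times)^2$. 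Using $p\mid\delta$ one checks that this subgroup is closed under multiplication and equals the set of $\left(\begin{smallmatrix}a&b\\\delta b&a\end{smallmatrix}\right)$ with $a\in(\Z_p^\times)^2$ and $b\in\Z_p$ (here $(\Z_p^\times)^2=\{u:\overline u\in(\F_p^\times)^2\}$ since $p$ is odd), and it is visibly $c_\varepsilon$-stable; so $G_{E,p^\infty}$ is either $\mathcal{N}_{\delta,0}(p^\infty)$ or $\langle c_\varepsilon,H\rangle$ with $H$ this group, which is exactly the dichotomy in~(a). That both possibilities genuinely occur follows by producing suitable curves: Theorem~\ref{thm-largeimage-intro}(1) (with $N=p$) together with Theorem~\ref{thm-largeimage-intro}(2) (which applies here since $p>2$ and $j\neq0$) yields a curve in the twist class with $G_{E,p^\infty}=\mathcal{N}_{\delta,0}(p^\infty)$, while twisting by a quadratic character of $\Q(j_{K,f})$ ramified at $p$ multiplies $\rho_{E,p^\infty}$ by a ramified quadratic character and, compatibly with Theorem~\ref{thm-largeimage-intro}(3) on the determinant, drops the Cartan part into the index-$2$ subgroup; one records explicit examples.

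For part~(b), $j_{K,f}=0$ forces $K=\Q(\sqrt{-3})$, $f=1$, $\Of=\Z[\zeta_3]$, $\delta=-3/4$, hence $p=3$, and $|\Of^\times|=6$; moreover Theorem~\ref{thm-largeimage-intro}(2) does \emph{not} apply, so one works over $\Z_3$ throughout. Here $\cC_{\delta,0}(3^\infty)\cong\OO_L^\times$ with $L=\Q_3(\sqrt{-3})=\Q_3(\zeta_3)$ the (tamely) ramified quadratic extension of $\Q_3$, so $\OO_L^\times\cong\F_3^\times\times(1+\mathfrak m_L)\cong\Z/6\times\Z_3^2$, the involution $c_\varepsilon$ acts as the Galois conjugation of $L/\Q_3$, and $\det$ restricted to $\cC_{\delta,0}$ is $N_{L/\Q_3}$, whose image on $\OO_L^\times$ is the index-$2$ subgroup $1+3\Z_3$ of $\Z_3^\times$. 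Since $\det\circ\rho_{E,3^\infty}$ is the cyclotomic character (Theorem~\ref{thm-largeimage-intro}(3)), which is surjective onto $\Z_3^\times$ because $3\not\equiv1\bmod4$, and $\det(c_\varepsilon)=-1$, the subgroup $H=G_{E,3^\infty}\cap\cC_{\delta,0}(3^\infty)$ must satisfy $N_{L/\Q_3}(H)=1+3\Z_3$. Thus $H$ ranges over the $c_\varepsilon$-stable subgroups of $\OO_L^\times$ of index dividing $6$ with full norm group; a direct enumeration of subgroups of $\Z/6\times\Z_3^2$ subject to these two constraints yields $\mathcal{N}_{\delta,0}(3^\infty)$ itself (when $H=\cC_{\delta,0}(3^\infty)$), the unique index-$2$ subgroup $1+\mathfrak m_L$ (giving~(i)), and precisely the index-$3$ and index-$6$ subgroups written in~(ii) and~(iii), the several options arising from the inequivalent ways to cut down the $\Z_3$-free and the $\Z/3$ parts of $\OO_L^\times$; translating these into the matrix coordinates $\left(\begin{smallmatrix}a&b\\-3b/4&a\end{smallmatrix}\right)$ reproduces the displayed groups. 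Finally one verifies each of these is realized: the curves $E/\Q$ with $j(E)=0$ are the sextic twists $y^2=x^3+a$, $a\in\Q^\times$, and $\rho_{E,3^\infty}$ differs from that of a fixed base curve by twisting with the order-dividing-$6$ character attached to $a$; running through the classes of $a$ in $\Q^\times/(\Q^\times)^6$ --- governed by $v_3(a)$ modulo $6$ and $a$ modulo a small power of $3$ --- one computes the $3$-adic image in each case and checks that $\mathcal{N}_{\delta,0}(3^\infty)$ and each of the groups in~(i)--(iii) occurs.

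The main obstacle is part~(b). Completeness requires an intricate but finite classification of the subgroups of $\OO_L^\times\cong\Z/6\times\Z_3^2$ of index dividing $6$ that are simultaneously stable under the Galois involution (needed for $\langle H,c_\varepsilon\rangle$ to be a group) and have norm group $1+3\Z_3$ (forced by the surjectivity of the cyclotomic character); the distinct index-$3$ and index-$6$ groups correspond to the non-canonical choices of quotient of the $\Z_3$-free part of $\OO_L^\times$, and one must keep track of the fact that some of them first become visible at level $9$ rather than level $3$ (for instance, the groups generated by $\left(\begin{smallmatrix}2&0\\0&2\end{smallmatrix}\right)$ or $\left(\begin{smallmatrix}4&0\\0&4\end{smallmatrix}\right)$ together with $\left(\begin{smallmatrix}1&1\\-3/4&1\end{smallmatrix}\right)$ reduce modulo $3$ onto the full Cartan). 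Sharpness is the genuinely arithmetic ingredient: one must compute the $3$-adic image of enough sextic twists $y^2=x^3+a$ to hit every group on the list, which comes down to understanding the ramification at the prime above $3$ of the Hecke Gr\"ossencharacter of $E/\Q(\sqrt{-3})$ as $a$ varies, and in particular its conductor exponent there. By comparison part~(a), once Theorems~\ref{thm-cmrep-intro} and~\ref{thm-largeimage-intro}(2) are available, reduces to the single remark that the relevant Cartan subgroup has exactly one subgroup of index $2$.
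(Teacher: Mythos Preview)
Your outline for part~(a) is correct and essentially matches the paper: since $p\mid f\Delta_K$ and $p$ is odd, $(\Of\otimes\Z_p)^\times$ is procyclic times pro-$p$, hence has a unique closed subgroup of index~$2$, and the paper (Theorem~\ref{thm-badprimes}) makes the same observation mod~$p$ before invoking Theorem~\ref{thm-inclp} to lift.

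For part~(b) there is a genuine gap. Your constraints on $H=G_{E,K,3^\infty}$ are: (i)~index dividing~$6$, (ii)~$c_\varepsilon$-stable, (iii)~$N_{L/\Q_3}(H)=1+3\Z_3$. But these are not sufficient to cut down to the list in the theorem. Concretely, write $\OO_L^\times=\overline{\langle\zeta_6,\,4,\,1+\tau\rangle}\cong\Z/6\times\Z_3\times\Z_3$ with $\tau=\sqrt{-3}/2$. The subgroup
\[
H_0=\overline{\langle\zeta_6,\,4,\,(1+\tau)^3\rangle}
\]
has index~$3$, is stable under complex conjugation (since $\overline{(1+\tau)^3}=(7/4)^3(1+\tau)^{-3}$ and $7/4\in\overline{\langle 4\rangle}$), and has $N(H_0)\supseteq\overline{\langle 16\rangle}=1+3\Z_3$. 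Yet $\zeta_3=\zeta_6^4\in H_0$, so $H_0$ is \emph{not} among the three index-$3$ groups listed in~(ii), and by Corollary~\ref{cor-missesrootofunity} it cannot arise as a Galois image. Thus your enumeration under (i)--(iii) overcounts.

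The missing constraint is precisely Corollary~\ref{cor-missesrootofunity}: if $[\cC_{\delta,0}(3^\infty):H]=d$, then the class of $\zeta_d$ must lie \emph{outside}~$H$ (this is how the quotient by $\Of^\times$ in the Weber-function field manifests). The paper enforces this by first computing the mod-$3$ image (Theorem~\ref{thm-badprimes}), then lifting case-by-case via Lemma~\ref{lem-cartan-jzero}, which tracks exactly whether $-1$ and $\zeta_3$ persist in $G_{E,K,3^n}$ as $n$ grows. Your direct $3$-adic approach is viable, but you must replace the norm condition by ``$\zeta_d\notin H$''; once you do, the enumeration of $c_\varepsilon$-stable index-$3$ subgroups missing $\zeta_3$ gives exactly three groups (and similarly for index~$6$), matching the theorem.
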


If $\cC$ is any of the groups in parts (a) or  (b) of Theorem \ref{thm-badprimes-intro}, then the group $\mathcal{N}_1 = \langle \cC,c_1\rangle$ cannot be conjugated to $\mathcal{N}_2 = \langle \cC,c_{-1}\rangle$ as subgroups of $\GL(2,\Z_p)$, in such way that $c_1$ is sent to $c_{-1}$, as we shall show as part of Lemma  \ref{lem-ccfinal}. It is also worth pointing out that the last two groups that appear in parts (b.ii) for $j=0$ surject onto $\mathcal{N}_{\delta,0}(3)$, but they are of index $3$ in $\mathcal{N}_{\delta,0}(3^\infty)$. Similarly, the last two groups that appear in (b.iii) are of index $2$ in $\mathcal{N}_{\delta,0}(3)$, but they are of index $6$ in $\mathcal{N}_{\delta,0}(3^\infty)$. These examples are the CM analogue of those non-CM images described by Elkies (see \cite{elkies}) that are surjective mod 3 but not mod 9. In Example \ref{ex-jzero} we provide examples of elliptic curves with each possible $3$-adic image (including each possible image of complex conjugation $c_\varepsilon$).

It remains to describe the $2$-adic Galois representations attached to elliptic curves with CM. We first describe those attached to $j_{K,f}\neq 0,1728$. From now on, we will use the following notation $H_f=K(j_{K,f})$.

\begin{thm}\label{thm-2adictwist-intro}
	Let $E/\Q(j_{K,f})$ be an elliptic curve with CM by an order $\Of$ in an imaginary quadratic field $K$, with $j_{K,f}\neq 0$ or $1728$. Then, for every $m\geq 1$, we have  $\Gal(H_f(E[2^m])/H_f)\subseteq (\Of/2^m\Of)^\times$. Suppose that $\Gal(H_f(E[2^n])/H_f)\subsetneq (\Of/2^n\Of)^\times$ for some $n\geq 1$, and assume $n$ is the smallest such positive integer. Then, $n=1$, $2$, or $3$, and for all $m\geq 3$, we have $\Gal(H_f(E[2^m])/H_f)\cong (\Of/2^m\Of)^\times/\{\pm 1\}$. Further, there are two possibilities:
	\begin{enumerate} 
		\item If $n\leq 2$, then $\Gal(H_f(E[4])/H_f)\cong (\Of/4\Of)^\times/\{\pm 1\}$ and:
		
		\begin{enumerate} 
			\item $\operatorname{disc}(\Of)=\Delta_Kf^2\equiv 0 \bmod 16$. In particular, we have either
			\begin{itemize}
				\item $\Delta_K\equiv 1 \bmod 4$ and $f\equiv 0\bmod 4$, or
				\item $\Delta_K\equiv 0\bmod 4$ and $f\equiv 0 \bmod 2$.
			\end{itemize} 
			\item The $4$th roots of unity $\mu_4$ are contained in $H_f$, i.e., $\Q(i)\subseteq H_f$.
			\item For each $m\geq 2$, there is a $\Z/2^m\Z$-basis of $E[2^m]$ such that the image of the Galois representation $\rho_{E,2^m}\colon \Gal(\overline{H_f}/H_f)\to \GL(2,\Z/2^m\Z)$ is one of the groups
			$$J_1=\left\langle \left(\begin{array}{cc} 5 & 0\\ 0 & 5\\\end{array}\right),\left(\begin{array}{cc} 1 & 1\\ \delta & 1\\\end{array}\right)\right\rangle \text{ or } J_2=\left\langle \left(\begin{array}{cc} 5 & 0\\ 0 & 5\\\end{array}\right),\left(\begin{array}{cc} -1 & -1\\ -\delta & -1\\\end{array}\right)\right\rangle\subseteq \cC_{\delta,0}(2^m).$$
		\end{enumerate}
		\item If $n=3$, then $\Gal(H_f(E[4])/H_f)\cong (\Of/4\Of)^\times$ and:
		\begin{enumerate}
			\item $\Delta_K\equiv 0 \bmod 8$.
			\item For each $m\geq 3$, there is a $\Z/2^m\Z$-basis of $E[2^m]$ such that the image of the Galois representation $\rho_{E,2^m}\colon \Gal(\overline{H_f}/H_f)\to \GL(2,\Z/2^m\Z)$ is  the group
			$$J_1'=\left\langle \left(\begin{array}{cc} 3 & 0\\ 0 & 3\\\end{array}\right),\left(\begin{array}{cc} 1 & 1\\ \delta & 1\\\end{array}\right)\right\rangle \text{ or } J_2'=\left\langle \left(\begin{array}{cc} 3 & 0\\ 0 & 3\\\end{array}\right),\left(\begin{array}{cc} -1 & -1\\ -\delta & -1\\\end{array}\right)\right\rangle\subseteq \cC_{\delta,0}(2^m).$$
		\end{enumerate}
	\end{enumerate}
	Finally, there is some $\varepsilon \in \{\pm 1 \}$ and $\alpha\in {3,5}$ such that the image of $\rho_{E,2^\infty}$ is a conjugate of 
	$$\left\langle \left(\begin{array}{cc} \varepsilon & 0\\ 0 & -\varepsilon\\\end{array}\right),\left(\begin{array}{cc} \alpha & 0\\ 0  & \alpha\\\end{array}\right),\left(\begin{array}{cc} 1 & 1\\ \delta  & 1\\\end{array}\right)\right\rangle \text{ or } \left\langle \left(\begin{array}{cc} \varepsilon & 0\\ 0 & -\varepsilon\\\end{array}\right),\left(\begin{array}{cc} \alpha & 0\\ 0  & \alpha\\\end{array}\right),\left(\begin{array}{cc} -1 & -1\\ -\delta  & -1\\\end{array}\right)\right\rangle \subseteq \GL(2,\Z_2).$$
\end{thm}

It is also worth pointing out that the groups that appear in (2b) surject onto $\mathcal{N}_{\delta,0}(4)$, but they are of index $2$ in $\mathcal{N}_{\delta,0}(2^\infty)$. These examples are the CM analogue of those non-CM images described by Dokchitser and Dokchitser (see \cite{dokchitser}) that are surjective mod 4 but not mod 8. In Example \ref{ex-2adicimages} we provide examples of elliptic curves with each possible $2$-adic image (including each possible image of complex conjugation $c_\varepsilon$), except for the cases of $j_{K,f}=0,1728$. Our last results describe the $2$-adic images attached to elliptic curves with $j$-invariant $0$ or $1728$.

\begin{thm}\label{thm-j1728-intro}
	Let $E/\Q$ be an elliptic curve with $j(E)=1728$, and let $c\in \GQ$ be a complex conjugation, and $\gamma=\rho_{E,2^\infty}(c)$. Let $G_{E,2^\infty}$ be the image of $\rho_{E,2^\infty}$ and let $G_{E,K,2^\infty}=\rho_{E,2^\infty}(G_{\Q(i)})$. Then, there is a $\Z_2$-basis of $T_2(E)$ such that $G_{E,K,2^\infty}$ is one of the following groups:
	\begin{itemize} 
		\item If $[\cC_{-1,0}(2^\infty):G_{E,K,2^\infty}]=1$, then $G_{E,K,2^\infty}$ is all of $\cC_{-1,0}(2^\infty)$, i.e., 
		$$G_1= \left\{\left(\begin{array}{cc} a & b\\ -b & a\\\end{array}\right)\in \GL(2,\Z_2) : a^2+b^2\not\equiv 0 \bmod 2 \right\}.$$
		
		\item If $[\cC_{-1,0}(2^\infty):G_{E,K,2^\infty}]=2$, then $G_{E,K,2^\infty}$ is one of the following groups:
		$$G_{2,a}=\left\langle -\operatorname{Id}, 3\cdot \operatorname{Id},\left(\begin{array}{cc} 1 & 2\\ -2 & 1\\\end{array}\right) \right\rangle, \text{ or } G_{2,b}=\left\langle -\operatorname{Id}, 3\cdot \operatorname{Id},\left(\begin{array}{cc} 2 & 1\\ -1 & 2\\\end{array}\right) \right\rangle.$$ 
		
		\item If $[\cC_{-1,0}(2^\infty):G_{E,K,2^\infty}]=4$, then $G_{E,K,2^\infty}$ is one of the following groups:
		$$G_{4,a}=\left\langle 5\cdot \operatorname{Id},\left(\begin{array}{cc} 1 & 2\\ -2 & 1\\\end{array}\right) \right\rangle, \text{ or } G_{4,b}=\left\langle 5\cdot \operatorname{Id},\left(\begin{array}{cc} -1 & -2\\ 2 & -1\\\end{array}\right) \right\rangle, \text{ or }$$ 
		$$ G_{4,c}= \left\langle -3\cdot \operatorname{Id},\left(\begin{array}{cc} 2 & -1\\ 1 & 2\\\end{array}\right) \right\rangle, \text{ or } G_{4,d}=\left\langle -3\cdot \operatorname{Id},\left(\begin{array}{cc} -2 & 1\\ -1 & -2\\\end{array}\right) \right\rangle.$$
	\end{itemize}
	Moreover, $G_{E,2^\infty}=\langle \gamma,G_{E,K,2^\infty}\rangle = \langle \gamma', G_{E,K,2^\infty}\rangle$ is generated by one of the groups above, and an element 
	$$\gamma' \in \left\{ c_1=\left(\begin{array}{cc} 1 & 0\\ 0 & -1\\\end{array}\right),c_{-1}=\left(\begin{array}{cc} -1 & 0\\ 0 & 1\\\end{array}\right),c_1'=\left(\begin{array}{cc} 0 & 1\\ 1 & 0\\\end{array}\right),c_{-1}'=\left(\begin{array}{cc} 0 & -1\\ -1 & 0\\\end{array}\right) \right\},$$ such that  
	$\gamma \equiv \gamma' \bmod 4.$
\end{thm}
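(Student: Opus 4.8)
The plan is to reduce everything to the description of $G_{E,K,2^\infty}$ inside the Cartan group over $K=\Q(i)$, and then to recover the image of complex conjugation. Since $j(E)=1728$ the curve $E$ has CM by $\OK=\Z[i]$, and the Hilbert class field of $K$ equals $K$, so $G_{\Q(i)}=G_{H_1}$. Applying Theorem \ref{thm-cmrep-intro} and Theorem \ref{thm-largeimage-intro}(1) with $\delta=\Delta_Kf^2/4=-1$ and $\phi=0$, I would fix a compatible system of $\Z/2^n\Z$-bases of $E[2^n]$ so that $G_{E,2^\infty}\subseteq \mathcal{N}_{-1,0}(2^\infty)$ and $G_{E,K,2^\infty}\subseteq \cC_{-1,0}(2^\infty)\cong (\Z_2[i])^\times$, with $G_{E,2^\infty}\cap\cC_{-1,0}(2^\infty)=G_{E,K,2^\infty}$ (the other coset of $\cC_{-1,0}(2^\infty)$ in $\mathcal{N}_{-1,0}(2^\infty)$ carries the image of complex conjugation). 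By Theorems \ref{thm-cmrep-intro} and \ref{thm-largeimage-intro}, the index $[\cC_{-1,0}(2^n):\rho_{E,2^n}(G_{\Q(i)})]$ divides $|\Z[i]^\times/\OO_{K,1,2^n}^\times|$, which equals $4$ for $n\geq 2$ since no unit other than $1$ is congruent to $1$ modulo $4\Z[i]$; this yields the trichotomy of indices $1,2,4$.

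To pin down the possible groups $G_{E,K,2^\infty}$ inside $(\Z_2[i])^\times$ I would use three constraints. First, by the Weil pairing $\det\rho_{E,2^\infty}$ is the $2$-adic cyclotomic character, whose restriction to $G_{\Q(i)}$ has image exactly $1+4\Z_2$ because $K=\Q(\zeta_4)$; on the Cartan the determinant is the norm $N\colon(\Z_2[i])^\times\to\Z_2^\times$, $a+bi\mapsto a^2+b^2$, which also has image $1+4\Z_2$. Hence $N$ maps $G_{E,K,2^\infty}$ onto $1+4\Z_2$, so $[(\Z_2[i])^\times:G_{E,K,2^\infty}]=[\ker N:\ker N\cap G_{E,K,2^\infty}]$ with $\ker N\cong\Z/4\Z\times\Z_2$. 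Second, the Weber function description behind Theorem \ref{thm-cmrep-intro}(2) shows that $(\Z_2[i])^\times/G_{E,K,2^\infty}$ is a cyclic quotient of $\Z[i]^\times\cong\mu_4$; in particular the composite $\mu_4\hookrightarrow(\Z_2[i])^\times\twoheadrightarrow(\Z_2[i])^\times/G_{E,K,2^\infty}$ is onto. Third, every $E/\Q$ with $j(E)=1728$ is a quartic twist of $E_1\colon y^2=x^3-x$ by some $d\in\Q^\times/(\Q^\times)^4$, whence $\rho_{E,2^\infty}|_{G_K}=\rho_{E_1,2^\infty}|_{G_K}\otimes\chi_d$, where $\chi_d\colon G_K\to\mu_4\subseteq(\Z_2[i])^\times$ is the Kummer character of $K(\sqrt[4]{d})/K$. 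I would compute $G_{E_1,K,2^\infty}$ explicitly (via the $2$-power torsion fields of $y^2=x^3-x$, equivalently via its Hecke character of conductor $(1+i)^3$ and class field theory for $\Q(i)$), and then let $d$ run over all admissible classes; matching the resulting subgroups against the norm and cyclicity constraints produces precisely the seven groups $G_1$, $G_{2,a}$, $G_{2,b}$, $G_{4,a},\dots,G_{4,d}$, up to the fixed conjugation. Each candidate is verified by a finite computation modulo $(1+i)^k$ for $k\leq 8$: the norm condition, cyclicity of the quotient (which excludes spurious index-$4$ subgroups with quotient $(\Z/2\Z)^2$), and the inclusion $\ker N\cap(1+4\Z_2[i])\subseteq G_{E,K,2^\infty}$ needed below.

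Finally I would recover $\gamma=\rho_{E,2^\infty}(c)$. As $c$ acts nontrivially on $K$, $\gamma\in\mathcal{N}_{-1,0}(2^\infty)\smallsetminus\cC_{-1,0}(2^\infty)$, so $\gamma=u\cdot c_{-1}$ for a unique $u\in(\Z_2[i])^\times$, where $c_{-1}=\left(\begin{smallmatrix}-1&0\\0&1\end{smallmatrix}\right)$ represents the nontrivial coset and acts on the Cartan by $z\mapsto\bar z$. From $c^2=1$ one gets $\gamma^2=N(u)\cdot\mathrm{Id}=\mathrm{Id}$, so $u\in\ker N$, and a direct computation shows that $\ker N$ reduces onto $\mu_4=\{\pm\mathrm{Id},\pm\left(\begin{smallmatrix}0&1\\-1&0\end{smallmatrix}\right)\}$ modulo $4$; hence $\gamma\bmod 4$ lies in $\mu_4\cdot c_{-1}=\{c_1,c_{-1},c_1',c_{-1}'\}$. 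Taking $\gamma'$ to be the element of this set with $\gamma\equiv\gamma'\bmod 4$, one has $\gamma(\gamma')^{-1}=u\cdot(u\bmod 4)^{-1}\in\ker N\cap(1+4\Z_2[i])\subseteq G_{E,K,2^\infty}$, so $G_{E,2^\infty}=\langle\gamma,G_{E,K,2^\infty}\rangle=\langle\gamma',G_{E,K,2^\infty}\rangle$ because $G_\Q=\langle c\rangle\cdot G_K$. Exhibiting explicit elliptic curves realizing each admissible pair $(G_{E,K,2^\infty},\gamma')$, by running through small quartic twists of $y^2=x^3-x$, finishes the argument. The step I expect to be the real obstacle is the completeness half of the second paragraph: showing that \emph{exactly} the seven listed groups occur. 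This requires both the precise determination of the $2$-adic image of $y^2=x^3-x$ over $\Q(i)$ and a careful analysis of how quartic twisting by $d\in\Q^\times$ — as opposed to an arbitrary class in $K^\times/(K^\times)^4$ — displaces this image within $(\Z_2[i])^\times$; the fact that the twisting class descends to $\Q$ is precisely what excludes the further subgroups allowed by the abstract constraints. The analogous bookkeeping for the image of complex conjugation, and the matching of each group with its possible values of $\gamma'$, is routine once the classification of $G_{E,K,2^\infty}$ is established.
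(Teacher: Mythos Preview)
Your strategy is sound, and your treatment of complex conjugation is essentially the same as the paper's (which also factors $\gamma$ through a fourth root of unity times $c_\phi$, via Lemma~\ref{lem-ccfinal2}, and then observes that $a^2+b^2=1$ forces $a$ or $b\equiv 0\bmod 4$). The classification of $G_{E,K,2^\infty}$, however, proceeds by a genuinely different route in the paper.

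The paper does \emph{not} compute the image for a base curve and then twist. Instead it works entirely inside $(\Z_2[i])^\times$ with two group-theoretic constraints: (i) by Corollary~\ref{cor-missesrootofunity}, a subgroup of index $2$ must miss $i$ and a subgroup of index $4$ must miss both $i$ and $-1$; (ii) since $G_{E,K,2^\infty}$ is normal in $G_{E,2^\infty}$, it is stable under conjugation by $\gamma$, i.e., under $z\mapsto\bar z$. The paper then proves a structural lemma (Lemma~\ref{lem-2adiccartanj1728}) listing all index-$2$ and index-$4$ subgroups of $(\Z[i]/2^n\Z[i])^\times$ meeting these constraints, separating the analysis into the case where $G_{E,K,2^n}$ is already the full preimage of its mod-$4$ reduction (yielding $G_1,G_{2,a},G_{2,b},G_{4,a},G_{4,b}$) and the case where the index jumps between levels $4$ and $8$ (yielding $G_{4,c},G_{4,d}$). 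No Hecke characters, explicit torsion fields, or twisting analysis enter.

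What this buys: the step you flag as the obstacle --- proving that \emph{exactly} these seven groups occur, and that the $\Q$-rationality of the twist is what cuts down the list --- simply disappears. Your constraint ``$d$ descends to $\Q$'' is exactly the complex-conjugation stability of $G_{E,K,2^\infty}$, and the paper uses the latter directly rather than routing it through Kummer theory. Conversely, your approach is more constructive and immediately exhibits each group as the image of a specific twist, whereas the paper defers that to a separate example (Example~\ref{ex-j1728-2adic}). Your verification that $\ker N\cap(1+4\Z_2[i])\subseteq G_{E,K,2^\infty}$ is also not needed in the paper's argument; Lemma~\ref{lem-ccfinal2} gives $\langle\gamma,G_{E,K,2^\infty}\rangle=\langle\zeta c_\phi,G_{E,K,2^\infty}\rangle$ for some $\zeta\in\mu_4$ without any such inclusion.
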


We provide examples of each kind of $2$-adic image with $j=1728$ in Example \ref{ex-j1728-2adic}.

\begin{thm}\label{thm-jzero-intro}
	Let $E/\Q$ be an elliptic curve with $j(E)=0$, and let $c\in \GQ$ be a complex conjugation. Let $G_{E,2^\infty}$ be the image of $\rho_{E,2^\infty}$ and let $G_{E,K,2^\infty}=\rho_{E,2^\infty}(G_{\Q(\sqrt{-3})})$. Then, there is a $\Z_2$-basis of $T_2(E)$ such that the image $G_{E,2\infty}$ of $\rho_{E,2^\infty}$ is one of the following groups of $\GL(2,\Z_2)$, with $\gamma=\rho_{E,2^\infty}(c)$.
	\begin{itemize}
		\item Either, $[\cC_{-1,1}(2^\infty):G_{E,K,2^\infty}]=3$, and 
		\begin{align*} G_{E,2^\infty} &=\left\langle \gamma', -\operatorname{Id}, \left(\begin{array}{cc} 7 & 4\\ -4 & 3\\\end{array}\right), \left(\begin{array}{cc} 3 & 6\\ -6 & -3\\\end{array}\right)\right\rangle\\
		&=\left\langle \gamma', \left\{\left(\begin{array}{cc} a+b & b\\ -b & a\\\end{array}\right)\in \GL(2,\Z_2) : a\not\equiv 0 \bmod 2,\ b\equiv 0 \bmod 2 \right\}\right\rangle,\end{align*}  
		and $\left\{\left(\begin{array}{cc} a+b & b\\ -b & a\\\end{array}\right): b\equiv 0 \bmod 2 \right\}$ is precisely the set of matrices that correspond to the subgroup of cubes of Cartan elements $\cC_{-1,1}(2^\infty)^3$, which is the unique group of index $3$ in $\cC_{-1,1}(2^\infty)$, 
		\item Or, $[\cC_{-1,1}(2^\infty):G_{E,K,2^\infty}]=1$, and 
		\begin{align*} G_{E,2^\infty} &=\mathcal{N}_{-1,1}(2^\infty)=\left\langle \gamma', -\operatorname{Id}, \left(\begin{array}{cc} 7 & 4\\ -4 & 3\\\end{array}\right), \left(\begin{array}{cc} 2 & 1\\ -1 & 1\\\end{array}\right)\right\rangle\\
		&=\left\langle \gamma', \left\{\left(\begin{array}{cc} a+b & b\\ -b & a\\\end{array}\right)\in \GL(2,\Z_2) : a\not\equiv 0 \text{ or } b\not\equiv 0 \bmod 2 \right\}\right\rangle\end{align*}
	\end{itemize} 
	where $\gamma'\in \left\{ \left(\begin{array}{cc} 0 & 1\\ 1 & 0\\\end{array}\right),\left(\begin{array}{cc} 0 & -1\\ -1 & 0\\\end{array}\right)\right\},$ and $\gamma\equiv \gamma'\bmod 4$.
\end{thm}

We provide examples of each kind of $2$-adic image with $j=0$ in Example \ref{ex-j0-2adic}.

\begin{remark}
	Although in this article we concentrate on elliptic curves $E$ with $j(E)=j_{K,f}$, and defined over the minimal field of definition $\Q(j_{K,f})$, our findings extend to any elliptic curve with CM defined over an arbitrary number field. Indeed, if $G_{E,p^\infty}$ is the image of $\rho_{E,p^\infty} \colon G_{\Q(j_{K,f})}\to \GL(2,\Z_p)$, and $H$ is a subgroup of $G_{E,p^\infty}$ of finite index, then $H$ is open in $G_{E,p^\infty}$, and the inverse image of $H$ via $\rho_{E,p^\infty}$ is also open, hence of the form $\Gal(\overline{\Q(j_{K,f})}/L)$. Thus, if we base-extend $E$ to $L$, then the image of $\rho_{E/L,p^\infty}\colon \Gal(\overline{L}/L)\to \GL(2,\Z_p)$ has image $H$. Conversely, if $E'/L$ is an elliptic curve with CM by $K$ and $j(E)=j_{K,f}$, then $\Q(j_{K,f})\subseteq L$, and if $E/\Q(j_{K,f})$ is an elliptic curve with CM by $\OO_{K,f}$ and $j(E)=j_{K,f}$, then $E'/L$ and $E/L$ are twists of each other, and $\rho_{E'/L,p^\infty}$ can be retrieved as a twist of $\rho_{E/L,p^\infty}$. 
\end{remark}

\subsection{Structure of the paper} The article is organized as follows. In Section \ref{sec-prelims} we recall some basic notions of elliptic curves with CM. In Section \ref{sec-classfieldtheory} we give a class field theory interpretation of the extension $K(j_{K,f},h(E[N]))/K(j_{K,f})$, where $h$ is a fixed Weber function, and in Section \ref{sec-fromhf} we describe the Galois group $\Gal(K(j_{K,f},E[N])/K(j_{K,f}))$. In Section \ref{sec-normalizers} we discuss abstract Cartan subgroups of $\GL(2,R)$ for a ring $R$. In Section \ref{sec-fromq} we describe the Galois group  $\Gal(K(j_{K,f},E[N])/\Q(j_{K,f}))$ and, in particular, we discuss the role of complex conjugation in detail in Section \ref{sec-cc}. In Sections \ref{sec-goodredn} and \ref{sec-badprimes} we discuss, respectively, the image of $\rho_{E,p^\infty}$ for odd primes of good and bad reduction. Finally, in Section \ref{sec-2adic}, we discuss the possible $2$-adic images that arise from elliptic curves with CM.

Theorem \ref{thm-cmrep-intro} is shown in Section \ref{sec-fromq}.  Theorem \ref{thm-largeimage-intro} is shown in Section \ref{sec-proofsparts1and3} (parts (1) and (2)), Proposition \ref{prop-largeimage-intro-part-3} (part (3)), and Theorem  \ref{thm-largeimage-intro-part4} (part (4)). Theorem \ref{thm-j1728-index2-intro} will be shown in a follow-up paper on applications of the classification (\cite{lozano-applied}). Theorem \ref{thm-jzero-goodredn} is shown in Section \ref{sec-proof-of-jzero-goodredn}, while Theorem \ref{thm-badprimes-intro} is put together in Section \ref{sec-proof-badprimes}. Finally, the Theorems \ref{thm-2adictwist-intro}, \ref{thm-j1728-intro}, and \ref{thm-jzero-intro} about $2$-adic images are shown, respectively, in Sections \ref{sec-2adic}, \ref{sec-2adic1728}, and \ref{sec-2adic0}.

\begin{ack}
The author would like to thank Abbey Bourdon, Pete Clark, and Harris Daniels for very helpful conversations and suggestions, over the span of several years. Also, the author would like to thank Drew Sutherland for his help in verifying the images that appear in Examples \ref{ex-jzero} and \ref{ex-2adicimages}. Finally, I  thank Richard Leyland for pointing out a few arguments that needed clarification. I would also like to express my gratitude to the referees for their comments and for pointing out a mistake in the original proof of Cor. \ref{cor-classfieldinterpretofdivfield}.  
\end{ack}

\section{Preliminaries}\label{sec-prelims}

In this section we recall standard facts about elliptic curves with CM, and show a class field theory characterization of the $N$-division field of an elliptic curve with CM. We continue using the notation of the introduction: let $K$ be an imaginary quadratic field, let $\OK$ be the ring of integers of $K$ with discriminant $\Delta_K$, let $f\geq 1$ be an integer, and let $\OO_{K,f}$ be the order of $K$ of conductor $f$.

\begin{thm}[\cite{silverman2}, Ch. 2, Theorems 4.3 and 6.1; \cite{cox}, Theorem 11.1]\label{thm-cmbasics} Let $K$ be an imaginary quadratic field with ring of integers $\OK$, and let $E/\CC$ be an elliptic curve with CM by an order $\Of$ of $\OK$ of conductor $f\geq 1$. Then:
\begin{enumerate}
\item The $j$-invariant of $E$, $j(E)$, is an algebraic integer. 
\item The field $L = K(j(E))$  is the ring class field of the order $\Of$.
\item $[\Q(j(E)):\Q]=[K(j(E)):K]=h(\Of),$ where $h(\Of)$ is the class number of $\Of$.
\item Let $\{E_1,\dots, E_h\}$ be a complete set of representatives of isomorphism classes of elliptic curves over $\CC$ with CM by $\Of$. Then, $\{ j(E_1),\dots, j(E_h)\}$ is a complete set of $\Gal(\overline{\Q}/\Q)$ conjugates of $ j(E)$. 
\end{enumerate}
\end{thm}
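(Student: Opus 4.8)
\medskip
\noindent\emph{Proof proposal.} The plan is to handle part (1) by itself and then deduce parts (2)--(4) together from the first main theorem of complex multiplication.

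First I would prove (1). I can choose $\alpha\in\Of$ so that $[\alpha]\colon E\to E$ is a cyclic isogeny over $\overline{\Q}$ whose degree $m=\Norm_{K/\Q}(\alpha)$ is not a perfect square; then the pair $(j(E),j(E))$ is a zero of the modular polynomial $\Phi_m(X,Y)\in\Z[X,Y]$. Because $m$ is not a perfect square, $\Phi_m(X,X)$ is, up to sign, monic in $\Z[X]$, and hence $j(E)$ is an algebraic integer. (Alternatively, one may invoke the standard fact that a CM elliptic curve has potentially good reduction at every finite place, which forces $j(E)$ to be integral at that place.)

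For parts (2)--(4) I would set up the class-group torsor. Every elliptic curve with CM by $\Of$ is isomorphic to $\CC/\mathfrak a$ for some proper fractional $\Of$-ideal $\mathfrak a$, with $\CC/\mathfrak a\cong\CC/\mathfrak b$ exactly when $[\mathfrak a]=[\mathfrak b]$ in $\Cl(\Of)$, and $\Cl(\Of)$ acts simply transitively on the set $\mathrm{Ell}(\Of)$ of isomorphism classes via $\mathfrak c\cdot[\CC/\mathfrak a]=[\CC/\mathfrak c^{-1}\mathfrak a]$; in particular $\#\mathrm{Ell}(\Of)=h(\Of)$. Next, since a field automorphism carries an elliptic curve with CM by $\Of$ to another such curve, $G_K=\Gal(\overline K/K)$ permutes $\{j(E_1),\dots,j(E_h)\}$, and the key step is to show this permutation action is compatible with the $\Cl(\Of)$-action: concretely, that for a prime $\mathfrak p$ of $K$ of good reduction and prime to $f$, the Frobenius $\mathrm{Frob}_{\mathfrak p}$ acts on $\{j(E_i)\}$ as multiplication by $[\mathfrak p]$. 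I would obtain this by reducing the isogeny $\CC/\mathfrak a\to\CC/\mathfrak p^{-1}\mathfrak a$ modulo a prime above $\mathfrak p$ and identifying its reduction with the $\Norm(\mathfrak p)$-power Frobenius isogeny, i.e.\ Deuring's reduction theory for CM curves. Granting this, Chebotarev gives that $G_K$ acts transitively on $\{j(E_i)\}$, which is (4); hence $[K(j(E)):K]=h(\Of)$, the extension $K(j(E))/K$ is abelian with group $\Cl(\Of)$, and Artin reciprocity identifies $K(j(E))$ as the abelian extension of $K$ whose Artin map has kernel the group of principal $\Of$-ideals with a rational-integer generator modulo $f\OK$, that is, as the ring class field of $\Of$; this gives (2) and the degree count in (3).

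It remains to get $[\Q(j(E)):\Q]=[K(j(E)):K]$, for which I would show $K\not\subseteq\Q(j(E))$. The ring class field $L=K(j(E))$ is Galois over $\Q$, and $\Gal(L/\Q)$ is generalized dihedral: an extension of $\Cl(\Of)$ by a complex conjugation $\tau$ acting by inversion, reflecting $\overline{\mathfrak a}\sim\mathfrak a^{-1}$. Taking $E_0=\CC/\Of$ and using $\overline{\Of}=\Of$, the value $j(E_0)$ is real, hence $\tau$-fixed; combined with the free transitivity of $\Cl(\Of)$ this forces the stabilizer of $j(E_0)$ in $\Gal(L/\Q)$ to be exactly $\{1,\tau\}$, so $[\Q(j(E_0)):\Q]=h(\Of)$, and since all the $j(E_i)$ are $G_\Q$-conjugate each has degree $h(\Of)$ over $\Q$. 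I expect the Frobenius-compatibility step to be the only real obstacle: it is the substantive content of the main theorem of CM and rests on good-reduction theory rather than formal manipulations, while the remaining ingredients are routine bookkeeping with ideals, class field theory, and Galois theory.
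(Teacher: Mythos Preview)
The paper does not give its own proof of this theorem: it is stated with attribution to Silverman's \emph{Advanced Topics} (Ch.~II, Theorems~4.3 and~6.1) and Cox (Theorem~11.1), and is used as background. So there is no ``paper's proof'' to compare against; your task was really to reproduce the standard argument from those references.

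Your proposal does this correctly and follows essentially the same route as Silverman and Cox. The modular-polynomial argument for (1) is the one in Silverman~II.6 (with the minor caveat that you should spell out why one can choose $\alpha=a+b f\tau$ with $\gcd(a,b)=1$ so that $\Of/\alpha\Of$ is actually cyclic; this is easy but should be said). The torsor/Frobenius-compatibility argument for (2)--(4) is exactly the first main theorem of CM as developed in those books, and you have correctly identified the Deuring reduction step as the non-formal heart of the proof. Your argument that $[\Q(j(E)):\Q]=h(\Of)$ via the dihedral structure and the reality of $j(\CC/\Of)$ is also standard and correct; note that the paper later packages the same fact slightly differently in Lemma~\ref{lem-kjqj}, deducing $[K(j_{K,f}):\Q(j_{K,f})]=2$ directly from the tower relation, which is an equivalent formulation.
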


In this paper, we denote by $j_{K,f}$ one of the $j$-invariants $j(E_i)$ that appear in part (4) of Theorem \ref{thm-cmbasics}, and $E/\Q(j_{K,f})$ will be an arbitrary elliptic curve with $j$-invariant $j(E)=j_{K,f}$. Our main object of study is the $N$-division field $\Q(j_{K,f},E[N])$ generated by $j_{K,f}$ and the coordinates of $N$-torsion points on $E$. 

\begin{lemma}[\cite{bourdon3}, Lemma 3.15]\label{lem-clark} Let $K$ be an imaginary quadratic field, let $F$ a number field, let $E/F$ be an elliptic curve with CM by $\OO_{K,f}$, and let $N\geq 3$ be an integer. Then, $K\subseteq F(E[N])$. In particular, if $E$ is defined over $\Q(j_{K,f})$, then $K\subseteq K(j_{K,f})\subseteq \Q(j_{K,f},E[N])$ for any $N\geq 3$. 
	\end{lemma}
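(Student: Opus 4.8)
\emph{Proof strategy.} The plan is to argue by contradiction, using the Galois action on the endomorphism ring. Recall from the theory of complex multiplication (\cite{silverman2}, Ch.~2) that every element of $\End(E_{\overline F})\cong\Of$ is defined over the compositum $FK$, that $G_F$ acts on $\End(E_{\overline F})$ through $\Gal(FK/F)$, and that this action is trivial exactly when $K\subseteq F$; when $K\not\subseteq F$ the nontrivial element acts as complex conjugation $\alpha\mapsto\bar\alpha$ of $K$. Concretely, for $\sigma\in G_F$, $\alpha\in\End(E_{\overline F})$ and $P\in E(\overline F)$ one has $\sigma(\alpha(P))=\alpha^\sigma(\sigma(P))$, and $\alpha^\sigma=\bar\alpha$ whenever $\sigma|_K\neq\mathrm{id}$. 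This is the only place the CM hypothesis enters, and I regard it as the point to state carefully.

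First I would fix a $\Z$-module basis $\{1,\tau\}$ of $\Of$ with $\tau\notin\Z$, so that $\tau-\bar\tau=2\tau-\Tr(\tau)$ has $\tau$-coordinate equal to $2$ in that basis; hence $\tau-\bar\tau\notin N\,\Of$ for every $N\geq 3$, since $N\nmid 2$. Now assume for contradiction that $K\not\subseteq F(E[N])$. As $K/\Q$ is quadratic this forces $K\cap F(E[N])=\Q$, so there is some $\sigma\in G_{F(E[N])}$ whose restriction to $K$ is complex conjugation. Since $\sigma$ fixes $E[N]$ pointwise, and $\tau(P)\in E[N]$ for every $P\in E[N]$, I get for all $P\in E[N]$
$$\tau(P)=\sigma(\tau(P))=\tau^\sigma(\sigma(P))=\bar\tau(P),$$
so $E[N]\subseteq\ker(\tau-\bar\tau)$.

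Next, $\tau-\bar\tau$ is a nonzero endomorphism (because $\tau\notin\Z$), hence an isogeny, and any isogeny $E\to E$ whose kernel contains $\ker[N]=E[N]$ factors through multiplication by $N$; therefore $\tau-\bar\tau\in N\End(E)=N\,\Of$, contradicting the choice of $\tau$. This proves $K\subseteq F(E[N])$ for every $N\geq 3$. The final assertion is then immediate: applying this with $F=\Q(j_{K,f})$ gives $K\subseteq\Q(j_{K,f},E[N])$, and since trivially $j_{K,f}\in\Q(j_{K,f},E[N])$ we conclude $K(j_{K,f})=K\cdot\Q(j_{K,f})\subseteq\Q(j_{K,f},E[N])$.

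Beyond the CM equivariance statement mentioned above, the only other ingredients are a one-line computation in the order $\Of$ and the standard factorization of isogenies through $[N]$, so I expect no serious obstacle. It is worth noting that the argument genuinely requires $N\geq 3$: for $N=2$ one has $N\mid 2$, and $\tau-\bar\tau$ can lie in $2\,\Of$ (e.g.\ $\tau=i$, $\tau-\bar\tau=2i$), which is why the hypothesis $N\geq 3$ cannot be dropped.
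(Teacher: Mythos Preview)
The paper does not supply its own proof of this lemma; it simply cites \cite{bourdon3}, Lemma~3.15. Your argument is correct and is essentially the standard one: the Galois-equivariance formula $\sigma(\alpha(P))=\alpha^{\sigma}(\sigma(P))$ together with the fact that $G_F$ acts on $\End(E_{\overline F})\cong\Of$ through $\Gal(FK/F)$ by complex conjugation (this is exactly what is used in the paper's Theorem~\ref{thm-endo}) forces $(\tau-\bar\tau)$ to kill $E[N]$, hence to lie in $N\Of$, which is impossible for $N\geq 3$ since the $\tau$-coordinate of $\tau-\bar\tau$ is $2$. The factorization step ``$E[N]\subseteq\ker\phi\Rightarrow\phi\in N\End(E)$'' is the universal property of the quotient $E\to E/E[N]\cong E$, and your remark that the argument genuinely breaks for $N=2$ is apt.
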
 
	
	While we are mainly interested in the Galois group $\Gal(\Q(j_{K,f},E[N])/\Q(j_{K,f}))$, the subgroup given by $\Gal(\Q(j_{K,f},E[N])/K(j_{k,f}))$, for $N\geq 3$, plays an essential role in the theory (see next section) at least in part due to the fact that all the endomorphisms of $E$ are defined precisely over $K(j_{K,f})$, as the following theorem demonstrates.

\begin{thm}\label{thm-endo} Let $E$ be an elliptic curve with complex multiplication by an order $\OO_{K,f}$ in an imaginary quadratic field $K$, and suppose that $E$ is defined over $\Q(j_{K,f})$. Then, the field of definition of $\operatorname{End}(E)$ is precisely $K(j_{K,f})$. 
\end{thm}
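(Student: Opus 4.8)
The plan is to build the canonical ``normalized'' identification of $\End(E_{\overline{\Q}})$ with $\Of$ by means of an invariant differential defined over the base field, show that this identification is Galois equivariant, and then read off the field of definition of $\End(E)$ as a fixed field. Fix a nonzero invariant differential $\omega$ on $E$ defined over $\Q(j_{K,f})$ (possible since $H^0(E,\Omega^1_{E/\Q(j_{K,f})})$ is one-dimensional over $\Q(j_{K,f})$), and for $\phi\in\End(E_{\overline{\Q}})$ write $\phi^*\omega=\iota(\phi)\,\omega$ with $\iota(\phi)\in\overline{\Q}$. Then $\iota\colon\End(E_{\overline{\Q}})\to\overline{\Q}$ is a ring homomorphism fixing $\Z$, and it is injective because its kernel is a prime ideal of $\End(E_{\overline{\Q}})\cong\Of$ meeting $\Z$ only in $0$, and the one-dimensional ring $\Of$ has no such nonzero prime. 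Since $\Of$ has $\Z$-rank $2$ and fraction field $K$, the subring $\iota(\End(E_{\overline{\Q}}))$ generates a quadratic imaginary subfield of $\overline{\Q}$ isomorphic to $K$, hence equal to $K$ inside $\overline{\Q}$; I thereby identify $\End(E_{\overline{\Q}})$ with $\Of\subset K\subset\overline{\Q}$ via $\iota$, recovering the usual normalized identification of complex-multiplication theory (cf.\ \cite[Ch.~II]{silverman2}).

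The key point is that $\iota$ is equivariant for the natural action of $\Gal(\overline{\Q}/\Q(j_{K,f}))$. Indeed, for such a $\sigma$ we have $E^\sigma=E$ and $\omega^\sigma=\omega$, and the formation of pullbacks commutes with conjugation by $\sigma$, so for every $\phi\in\End(E_{\overline{\Q}})$,
\[
\iota(\phi^\sigma)\,\omega=(\phi^\sigma)^*\omega=(\phi^\sigma)^*(\omega^\sigma)=(\phi^*\omega)^\sigma=\sigma\bigl(\iota(\phi)\bigr)\,\omega,
\]
whence $\iota(\phi^\sigma)=\sigma(\iota(\phi))$. Combined with Galois descent for morphisms of $\Q(j_{K,f})$-varieties (an endomorphism is defined over $F\supseteq\Q(j_{K,f})$ if and only if it is fixed by $\Gal(\overline{\Q}/F)$), this shows that $\phi$ is defined over a field $F$ with $\Q(j_{K,f})\subseteq F\subseteq\overline{\Q}$ exactly when $\iota(\phi)\in F$.

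Therefore the field of definition of $\End(E)$ is $\overline{\Q}^{\,H}$, where $H=\{\sigma\in\Gal(\overline{\Q}/\Q(j_{K,f})):\iota(\phi)^\sigma=\iota(\phi)\text{ for all }\phi\in\End(E_{\overline{\Q}})\}$; since $\iota(\End(E_{\overline{\Q}}))$ generates $K$, this is the set of $\sigma$ restricting to the identity on $K$, i.e.\ $H=\Gal(\overline{\Q}/K\cdot\Q(j_{K,f}))=\Gal(\overline{\Q}/K(j_{K,f}))$, so the field of definition is $K(j_{K,f})$, as claimed (and it is a genuine quadratic extension of $\Q(j_{K,f})$ by Theorem~\ref{thm-cmbasics}). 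The step I expect to demand the most care is the verification that pullback of the fixed differential $\omega$ interacts correctly with conjugation, so that $\iota$ is Galois equivariant; the rest is formal. A variant using less differential machinery: $\Gal(\overline{\Q}/\Q(j_{K,f}))$ acts on $\End(E_{\overline{\Q}})\cong\Of$ through ring automorphisms, hence through $\Aut(\Of)=\{1,c\}$, so the field of definition $M$ satisfies $[M:\Q(j_{K,f})]\le2$; applying the differential identity only to one endomorphism with $\iota(\phi)=f\sqrt{\Delta_K}$ (so $\phi^2=[\Delta_Kf^2]$ with $\Delta_Kf^2<0$) forces $\sqrt{\Delta_K}\in M$, hence $K(j_{K,f})\subseteq M$, and the degree bound gives equality.
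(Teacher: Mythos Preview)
Your proof is correct and takes essentially the same approach as the paper: both arguments rest on the Galois equivariance $\iota(\phi^\sigma)=\sigma(\iota(\phi))$ of the normalized identification $\End(E_{\overline{\Q}})\cong\Of\subset K$, from which the field of definition of $\End(E)$ is read off. The only difference is that the paper cites this equivariance from \cite[Ch.~II, Thm.~2.2]{silverman2} and then argues by contradiction that $K\not\subseteq\Q(j_{K,f})$, whereas you reconstruct $\iota$ explicitly via an invariant differential and compute the fixed field directly.
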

\begin{proof}
	In part (b) of \cite[Ch. II, Theorem 2.2]{silverman2} it is shown that every endomorphism of $E$ is defined over $K(j_{K,f})$, so it suffices to show that not every endomorphism is defined over $\Q(j_{K,f})$. Suppose for a contradiction that every endomorphism of $E$ is defined over $\Q(j(E))$, and fix an isomorphism $[\cdot]_E \colon \OO_{K,f}\to \operatorname{End}(E)$ that is normalized as in \cite[Ch. II, Prop. 1.1]{silverman2}. It follows that if we fix an $\alpha\in\OO_{K,f}$, then $[\sigma(\alpha)]_E = \sigma([\alpha]_E)=[\alpha]_E$ for all  $\sigma\in\Gal(\overline{\Q(j_{K,f})}/\Q(j_{K,f}))$. Hence, $\sigma(\alpha)=\alpha$, and $\alpha\in\Q(j_{K,f})$. This would imply that $K\subseteq \Q(j_{K,f})$, but part (3) of Theorem \ref{thm-cmbasics} shows that $[K(j_{K,f}):\Q(j_{K,f})]=2$. Thus, we have reached a contradiction, and the field of definition of the endomorphisms of $E$ must be precisely $K(j_{K,f})$.
\end{proof}
	
Next, we follow Silverman \cite{silverman2}, Ch. II, p. 134, to define a Weber function.
	
\begin{defn}\label{defn-weber}
Let $E/F$ be an elliptic curve defined over a number field $F$. A finite map $h\colon E\to E/\Aut(E)\cong \PP^1$ also defined over $F$ is called a Weber function for $E/F$.
\end{defn}

It is worth pointing out that $E/\Aut(E)$ is $F$-isomorphic to $\PP^1$. Thus, if we fix an isomorphism $\phi\colon E/\Aut(E)\to \PP^1$ defined over $F$, then the Weber functions for $E/F$ arise from the compositions $h=\psi\circ\phi$ where $\psi$ is an automorphism in $\Aut(\PP^1/F)\cong \PGL_2(F)$ and, conversely, every Weber function $h$ arises in this way.

We can provide an analytic Weber function as follows. Let $\Lambda$ be a lattice in the complex plane such that $E(\CC)\cong \CC/\Lambda$ and fix an isomorphism $f:\CC/\Lambda \to E(\mathbb{C})$ by $z\mapsto (\wp(z,\Lambda),\wp'(z,\Lambda))$. Then, for $P\in E$ and $z\in\CC$ with $f(z)=P$, the function $h$ defined by $h(P):=h(z,\Lambda)$ where $$h(z;\Lambda)=\begin{cases}
\dfrac{g_2(\Lambda)g_3(\Lambda)}{\Delta(\Lambda)} \wp(z,\Lambda) & \text{ if } j(E)\neq 0,1728,\\
\dfrac{g_2(\Lambda)^2}{\Delta(\Lambda)} \wp(z,\Lambda)^2 & \text{ if } j(E)=1728,\\
\dfrac{g_3(\Lambda)}{\Delta(\Lambda)} \wp(z,\Lambda)^3 & \text{ if } j(E) = 0,\\
\end{cases}$$
is a Weber function for $E$ (see \cite{silverman2}, Example 5.5.2, or \cite{schertz}, Section 2.4.4), which takes complex values for $z\in \CC \setminus\Lambda$ and has a pole at each $z\in\Lambda$. We remark that
$$h(\lambda z;\lambda \Lambda) = h(z;\Lambda)$$
for any non-zero $\lambda\in \CC$, since $\wp(\lambda z; \lambda \Lambda)=\lambda^{-2}\wp(z;\Lambda)$, and $g_2$, $g_3$, and $\Delta$ are modular of weight $4$, $6$, and $12$, respectively. Further, $h$ is model independent (i.e., does not depend on the choice of $\Lambda$ or Weierstrass equation for $E$) and, therefore, if $E$ is defined over a number field $F$, we can find a Weber function over $F$. In particular, if $E: y^2=x^3+Ax+B$, with $A,B\in F$, we can choose a Weber function
$$h(P)=\begin{cases}
x(P) & \text{ if } j(E)\neq 0,1728 \ (AB\neq 0),\\
(x(P))^2 & \text{ if } j(E)=1728\ ( B= 0),\\
(x(P))^3 & \text{ if } j(E)=0 \ (A= 0).\\
\end{cases}$$

We finish this section with a result about the size of Cartan subgroups of $\GL(2,\Z/N\Z)$.

\begin{lemma}\label{lem-cartansbgp}
Let $f\geq 1$, let $K$ be an imaginary quadratic field of discriminant $d=\Delta_K$, and let $\Of$ be the order of $K$ of conductor $f$. Let $N\geq 1$, and let $\cC=\cC_{d,f,N}=\Of/N\Of$. Then, the unit group $\cC^\times$ is isomorphic to the group of matrices
$$\left\{\left(\begin{array}{cc}
a+bdf & b \\
\frac{d(1-d)}{4}bf^2 & a\\
\end{array}\right): a,b\in\Z/N\Z,\  a^2+abdf-\frac{d(1-d)}{4}f^2b^2\in (\Z/N\Z)^\times \right\}\subseteq \GL(2,\Z/N\Z).$$
In particular, let $N=p^n$, for some prime $p$ and $n\geq 1$.
\begin{enumerate}
\item If $p\mid df$, then $|\cC_{d,f,p^n}^\times|=p^{2n-1}(p-1)$.
\item If $\gcd(p,f)=1$ and $p$ is split in $K$, then $|\cC_{d,f,p^n}^\times|=p^{2(n-1)}(p-1)^2$.
\item If $\gcd(p,f)=1$ and $p$ is inert in $K$, then $|\cC_{d,f,p^n}^\times|=p^{2(n-1)}(p^2-1)$.
\end{enumerate}
Finally, in all cases $|\cC_{d,f,p^{n+1}}^\times|/|\cC_{d,f,p^n}^\times|=p^2$.
\end{lemma}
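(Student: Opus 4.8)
The plan is to compute $|\mathcal{C}_{d,f,p^n}^\times|$ directly from the matrix description, since the ring $\Of/N\Of$ is not in general a product of local rings and a purely structural argument is awkward; instead I would count the pairs $(a,b) \in (\Z/p^n\Z)^2$ for which the quadratic form $Q(a,b) = a^2 + abdf - \frac{d(1-d)}{4}f^2 b^2$ is a unit mod $p^n$. Writing $N(x) = Q(a,b)$ for $x = a + b\omega$ with $\omega$ a generator of $\Of$ over $\Z$, the condition ``$x \in (\Of/p^n\Of)^\times$'' is equivalent to ``$N(x) \in (\Z/p^n\Z)^\times$'', i.e.\ $p \nmid N(x)$. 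So the count is $p^{2n} - \#\{x \in \Of/p^n\Of : p \mid N(x)\}$, and by Hensel/lifting the number of $x$ in $\Of/p^n\Of$ with a given value of $x \bmod p$ is exactly $p^{2(n-1)}$; hence $|\mathcal{C}_{d,f,p^n}^\times| = p^{2(n-1)}\cdot \#\{x \in \Of/p\Of : p \nmid N(x)\} = p^{2(n-1)}\cdot |(\Of/p\Of)^\times|$.

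Thus everything reduces to computing $|(\Of/p\Of)^\times|$, i.e.\ to the $n=1$ case, which splits into the three stated cases according to the factorization behavior of $p$ in $\Of$. First I would handle $p \mid df$: then $\Of/p\Of$ is a local ring with residue field $\F_p$ and nilpotent maximal ideal (one checks the reduced form of $Q$ mod $p$ is a perfect square, or equivalently $\Of \otimes \F_p \cong \F_p[\varepsilon]/(\varepsilon^2)$), so $|(\Of/p\Of)^\times| = p^2 - p = p(p-1)$, giving $|\mathcal{C}^\times| = p^{2(n-1)}\cdot p(p-1) = p^{2n-1}(p-1)$. Next, if $\gcd(p,f)=1$ then $\Of/p\Of \cong \OK/p\OK$, and the latter is $\F_p \times \F_p$ if $p$ splits (so $|(\OK/p\OK)^\times| = (p-1)^2$) and $\F_{p^2}$ if $p$ is inert (so $|(\OK/p\OK)^\times| = p^2 - 1$); substituting into $p^{2(n-1)}\cdot|(\Of/p\Of)^\times|$ yields the two remaining formulas. (The case $p \mid f$ with $p$ split or inert in $\OK$ is covered by the first bullet, since then $p \mid df$ regardless.) The final claim $|\mathcal{C}_{d,f,p^{n+1}}^\times|/|\mathcal{C}_{d,f,p^n}^\times| = p^2$ is then immediate from the uniform factor $p^{2(n-1)}$ in all three formulas.

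The only genuine obstacle is the lifting step, namely verifying that for each residue class $\bar{x} \in \Of/p\Of$ there are exactly $p^{2(n-1)}$ elements of $\Of/p^n\Of$ reducing to it and that the unit condition depends only on $\bar{x}$. The second part is clear ($x$ is a unit in $\Of/p^n\Of$ iff its norm is prime to $p$ iff $\bar{x}$ is a unit in $\Of/p\Of$), and the first is just the statement that $\Of/p^n\Of$ is free of rank $2$ over $\Z/p^n\Z$ (true since $\Of$ is free of rank $2$ over $\Z$), so the reduction map $\Of/p^n\Of \to \Of/p\Of$ is $p^{2(n-1)}$-to-$1$. With that in hand the argument is a short bookkeeping exercise, and I would simply note that the explicit $2\times 2$ matrix form of $\mathcal{C}^\times$ claimed in the first part of the lemma is exactly the regular representation of $\Of/N\Of$ on itself with respect to the $\Z/N\Z$-basis $\{1, \omega\}$, $\omega = bdf$-shifted generator as in the statement, so the isomorphism $\mathcal{C}^\times \cong (\Of/N\Of)^\times$ and the counting are two facets of the same computation.
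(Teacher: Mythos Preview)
Your proof is correct and close in spirit to the paper's: both identify the unit condition with the norm/determinant being prime to $p$, and both split according to the ramification behavior of $p$ in $\Of$. The organizational difference is that you first reduce uniformly to $n=1$ via the fiber count $|(\Of/p^n\Of)^\times| = p^{2(n-1)}\,|(\Of/p\Of)^\times|$ and then handle the three cases at level $p$, whereas the paper computes directly at level $p^n$ in each case (with a separate ad hoc subcase for $p=2$ when $p\mid df$). Your reduction is a little cleaner and makes the ratio $|\cC_{d,f,p^{n+1}}^\times|/|\cC_{d,f,p^n}^\times|=p^2$ immediate, but the content is the same.
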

\begin{proof}
Let $p$ be a prime and let $n\geq 1$. Let $G$ be the subgroup of $\GL(2,\Z/p^n\Z)$ described in the statement of the lemma. Then, an isomorphism $\cC^\times \cong G$ is given by $\alpha\mapsto M_\alpha$, where $M_{\alpha}$ is the multiplication-by-$\alpha$ matrix with respect to the $\Z$-basis $\{f\tau,1\}$ of $\Of/p^n\Of$, where we have chosen $\tau=\frac{d+\sqrt{d}}{2}$ to cover both cases $d\equiv 0$ or $\equiv 1 \bmod 4$ at once.

Note that if $p$ is odd and $p|f\Delta_K$, or $p=2$ and $d(d-1)f^2/4$ is even, and $\alpha=a+bf\tau$, then $\det(M_\alpha)\equiv a^2\bmod p$, and therefore $\alpha$ is a unit if and only if $a\not\equiv 0 \bmod p$. Thus, $|\cC^\times| = p^n\cdot \varphi(p^n)=p^{2n-1}(p-1)$. If $p=2$, and $d(d-1)f^2/4$ is odd, then $\det(M_\alpha)$ is a unit if and only if $a$ or $b\equiv 1\bmod 2$ and $ab\equiv 0\bmod 2$. Thus, $|\cC^\times| = 2\cdot \varphi(2^n)\cdot 2^{n-1}=2\cdot 2^{2(n-1)} = 2^{2n-1}(2-1)$, so the same formula works for $p=2$.

Otherwise, if $\gcd(p,f)=1$, then $\Of/p^n\Of\cong \OK/p^n\OK$. Thus, $|\cC^\times|=|\OK/p^n\OK|=N_\Q^K(p^n\OK)$, which equals $p^{2n-1}(p-1)$, $p^{2(n-1)}(p^2-1)$, or $p^{2(n-1)}(p-1)^2$, according to whether $p$ is ramified, inert, or split in $K$, respectively.
\end{proof}

\begin{remark}\label{rem-complexconj}
Of course, one can find simpler matrix representations of $\cC_{d,f,N}^\times$, according to whether $\Delta_K\equiv 0$ or $1\bmod 4$. If $\Delta_Kf^2\equiv 0 \bmod 4$ or $N$ is odd, then with respect to the $\Z/N\Z$-basis $\{f\sqrt{\Delta_K}/2,1\}$, the group of units $\cC^\times\cong (\OO_{K,f}/N\OO_{K,f})^\times$ is isomorphic to
$$\cC_{\delta,0}(N)=\left\{\left(\begin{array}{cc}
a & b\\
\delta b & a\\
\end{array}\right): a,b\in\Z/N\Z,\  a^2-\delta b^2\in (\Z/N\Z)^\times \right\}\subseteq \GL(2,\Z/N\Z),$$
where $\delta=\Delta_K f^2/4$, while if $\Delta_K f^2\equiv 1 \bmod 4$ and $N$ is even, then with respect to the basis $\{f(1+\sqrt{\Delta_K})/2,1\}$ we have a representation
$$\cC_{\delta,\phi}(N)=\left\{\left(\begin{array}{cc}
a+bf & b\\
\frac{(\Delta_K-1)}{4}bf^2 & a\\
\end{array}\right): a,b\in\Z/N\Z,\  a^2+abf-\frac{\Delta_K-1}{4}f^2 b^2\in (\Z/N\Z)^\times \right\},$$
where $\delta=(\Delta_K-1)f^2/4$, and $\phi=f$. We will discuss Cartan subgroups in more detail in Section \ref{sec-normalizers}.
\end{remark}

\section{Class field theory interpretation of the $N$-division field}\label{sec-classfieldtheory}

Let $N\geq 3$, and let $E/\Q(j_{K,f})$ be an elliptic curve with CM as above. Lemma \ref{lem-clark} shows that $K\subseteq \Q(j_{K,f},E[N])$, so that $K(j_{K,f},E[N])=\Q(j_{K,f},E[N])$. The goal of this section is to give a class field theory interpretation of the extension $K(j_{K,f},h(E[N]))/K(j_{K,f})$, where $h$ is a fixed Weber function, as in Definition \ref{defn-weber}, which will allow us to describe the structure of the Galois group $\Gal(K(j_{K,f},h(E[N]))/K(j_{K,f}))$. In \cite{bourdon2}, Bourdon and Clark deduce an explicit description of the field $K(j_{K,f},h(E[N]))$ as the compositum of a ray class field and a ring class field (see \cite[Theorem 1.1]{bourdon2}). In \cite{stevenhagen2}, Stevenhagen  gives a description of the extension and the Galois group using an adelic approach and Shimura reciprocity. We shall use a classical class field theory approach (and the work of Schertz, \cite{schertz}) to describe the Galois group in terms of quotients of groups of proper $\OO_{K,f}$-ideals, and to show the isomorphisms we need (Corollaries \ref{cor-classfieldinterpretofdivfield} and \ref{cor-weber}). In the following definitions, we follow the notation of \cite{cox}.

\begin{defn} Let $K$ be an imaginary quadratic field, let $f\geq 1$ be an integer, and let $\Of$ be an order of $K$ with conductor $f$.
	
	\begin{enumerate}
		\item A fractional $\Of$-ideal is a subset of $K$ which is a non-zero finitely generated $\Of$-module. A fractional $\Of$-ideal $\mathfrak{F}$ is said to be proper if $\Of = \{\beta\in K  : \beta\mathfrak{F}\subseteq \mathfrak{F}\}$. {\rm (Note: proper fractional $\Of$-ideals are invertible; see for instance Prop. 7.4 of \cite{cox}.)}
		\item The set of all  proper fractional $\Of$-ideals is denoted by $I(\Of)$. The subgroup of $I(\Of)$ of principal fractional $\Of$-ideals is denoted by $P(\Of)$. The ideal class group of the order $\Of$ is the quotient $\Cl(\Of)=I(\Of)/P(\Of)$.
		\item Let $\mathfrak{A}$ and $\mathfrak{B}$ be integral $\Of$-ideals. We say that $\mathfrak{A}$ is prime to $\mathfrak{B}$ if $\mathfrak{A}+\mathfrak{B}=\Of$. 
		\item Let $\mathfrak{B}$ be a fixed integral ideal of $\Of$. The subgroup of $I(\Of)$ generated by proper ideals of $\Of$ that are prime to  $\mathfrak{B}$ is denoted by $I(\Of,\mathfrak{B})$, and $P(\Of,\mathfrak{B})=P(\Of)\cap I(\Of,\mathfrak{B})$.
		\item Finally, $P_1(\Of,\mathfrak{B})$ is the subgroup of principal fractional $\Of$-ideals of the form $\alpha\Of$, for some $\alpha\in K$ such that $\alpha\equiv 1\ \mathrm{mod}^\times\,\mathfrak{B}$, i.e., there are principal $\Of$-ideals $\alpha_1\Of$, $\alpha_2\Of$ prime to $\mathfrak{B}$, such that $\alpha =\alpha_1/\alpha_2$ and $\alpha_1\equiv \alpha_2\bmod \mathfrak{B}$.  
	\end{enumerate}  
\end{defn}

Next, we cite two results that appear in \cite{schertz}, which are the key ingredients in Corollary \ref{cor-classfieldinterpretofdivfield}.

\begin{thm}[\cite{schertz}, Theorems 6.1.1, and  6.2.3]\label{thm-schertz} Let $E$ be an elliptic curve with CM by an order $\Of$ of an imaginary quadratic field $K$, with $j(E)=j_{K,f}$. Then:
\begin{enumerate}
\item 
 The field $H_f=K(j_{K,f})$ is the ring class field modulo $f$ of $K$. In particular, there is an isomorphism $$\psi\colon \Cl(\Of)\cong I(\Of)/P(\Of) \to \Gal(K(j(E))/K),$$
 such that $\psi$ sends the class of a prime ideal $\wp$ to the Frobenius automorphism associated to $\wp$, with the following additional property. If $\mathfrak{A}\in I(\Of)$, and $[\mathfrak{A}]$ is the class of $\mathfrak{A}$ in $\Cl(\Of)$, then
 $$j(E)^{\psi([\mathfrak{A}])} = j([\mathfrak{A}]\ast E),$$
 where, if $E=E_\Lambda$ has complex lattice $\Lambda$, then $[\mathfrak{A}]\ast E$ is an elliptic curve with lattice $\mathfrak{A}^{-1}\cdot \Lambda$. 
 \item  Let $\mathfrak{A}$ be a proper fractional ideal of $\Of$ such that  $\mathfrak{N}=\mathfrak{A}\cap \Of \neq \Of$. Then, for any integral ideal $\mathfrak{C}\in I(\Of,\mathfrak{N})$, we have that $K(j(E),h(1;\mathfrak{A}\mathfrak{C}^{-1}))$ is the class field $H_{f,\mathfrak{N}}$ of $K$ such that
 $$\frac{I(\Of,\mathfrak{N})}{P_{1}(\Of,\mathfrak{N})}\cong \Gal\left(H_{f,\mathfrak{N}}/K\right),$$
 where the isomorphism sends the class of a prime ideal $\wp$ to the Frobenius automorphism associated to $\wp$.
 \end{enumerate}
\end{thm}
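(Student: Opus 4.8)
Both statements are avatars of the main theorem of complex multiplication for the (possibly non-maximal) order $\Of$, and since they are quoted verbatim from \cite[Thms.~6.1.1, 6.2.3]{schertz}, the task is really to \emph{organize} a proof rather than invent one; here is the standard route. The overall plan is to bootstrap from the classical main theorem for the maximal order $\OK$ together with Cox's dictionary identifying $\Cl(\Of)\cong I(\Of)/P(\Of)$ with the ring class group $I_K(f)/P_{K,\Z}(f)$ of $K$ of conductor $f$ (\cite[Ch.~7]{cox}), working throughout with ideals prime to $f$ so that localization at $f$ identifies $\Of$ with $\OK$ and keeps every ideal in sight invertible (\cite[Prop.~7.4]{cox}).

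For part (1), I would first invoke Theorem~\ref{thm-cmbasics}(4): the conjugates of $j_{K,f}$ are precisely the $h(\Of)$ values $j([\mathfrak A]\ast E)$ with $[\mathfrak A]$ ranging over $\Cl(\Of)$, so $\Cl(\Of)$ acts simply transitively on them by $[\mathfrak A]\colon j([\mathfrak B]\ast E)\mapsto j([\mathfrak A\mathfrak B]\ast E)$ and $[K(j_{K,f}):K]=h(\Of)$. It then remains to recognize this as the Artin action. The classical argument is Deuring reduction: choose a degree-one prime $\mathfrak p$ of $K$ prime to $f$ and to the conductor of a model of $E$, pick a prime $\mathfrak P$ of $K(j_{K,f})$ above it, and reduce; the reduction still has CM by $\Of$, its $\mathfrak p$-power Frobenius corresponds under the normalized isomorphism $\Of\xrightarrow{\ \sim\ }\End(E)$ of \cite[Ch.~II]{silverman2} to a generator attached to $\mathfrak p$, and compatibility of the $\ast$-action with reduction gives $j_{K,f}^{\operatorname{Fr}_{\mathfrak P}}=j([\mathfrak p\cap\Of]\ast E)$. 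Artin reciprocity then identifies the kernel of the reciprocity map with $P(\Of)$, showing $K(j_{K,f})$ is the ring class field of conductor $f$ and producing the isomorphism $\psi$ with the stated property.

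For part (2) I would pass from $E$ to its $\mathfrak N$-torsion. With $\mathfrak N=\mathfrak A\cap\Of\neq\Of$ and $\mathfrak C$ prime to $\mathfrak N$, the element $1$ represents a well-defined $\mathfrak N$-torsion point on the CM curve with lattice $\mathfrak A\mathfrak C^{-1}$, and the value $h(1;\mathfrak A\mathfrak C^{-1})$ is independent of the admissible choices by the model-independence of the Weber function recorded in Section~\ref{sec-prelims}. The main theorem of CM on torsion points (Shimura reciprocity, or \cite[Ch.~6]{schertz}) then computes the Galois action over $K(j_{K,f})$: the Artin symbol of an ideal $\mathfrak D$ prime to $\mathfrak N$ sends $h(1;\mathfrak A\mathfrak C^{-1})$ to $h(1;\mathfrak A(\mathfrak C\mathfrak D)^{-1})$, so $K(j_{K,f},h(1;\mathfrak A\mathfrak C^{-1}))/K$ is abelian; crucially, applying the Weber function kills the $\Of^\times$-ambiguity in the torsion point, so the congruence subgroup cutting out this field is $P_1(\Of,\mathfrak N)$ rather than $P(\Of,\mathfrak N)$. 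Hence the field equals the ray-type class field $H_{f,\mathfrak N}$ with $\Gal(H_{f,\mathfrak N}/K)\cong I(\Of,\mathfrak N)/P_1(\Of,\mathfrak N)$.

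The genuinely fiddly part — and the reason it is cleanest simply to cite Schertz — is the non-maximal-order bookkeeping: restricting throughout to ideals prime to $f$, translating between the $I(\Of)/P(\Of)$ and ring-class-group pictures, checking invertibility (\cite[Prop.~7.4]{cox}) and the isomorphism $\Of/\mathfrak N\cong\OK/\mathfrak N$ for $\mathfrak N$ prime to $f$, and matching the Weber-function normalizations so that $h(1;\mathfrak A\mathfrak C^{-1})$ is canonical. Granting the classical maximal-order main theorem of CM and \cite{cox}, each of these is routine, and the two displayed isomorphisms drop out.
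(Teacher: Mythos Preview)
The paper does not prove this theorem; it is stated with a direct citation to \cite[Thms.~6.1.1, 6.2.3]{schertz} and used as a black box in the proof of Corollary~\ref{cor-classfieldinterpretofdivfield}. Your outline is a reasonable sketch of the classical main-theorem-of-CM argument (Deuring reduction to identify the Artin action on $j$-invariants, then Shimura reciprocity on Weber-function values of torsion points, with Cox's dictionary handling the non-maximal-order bookkeeping), and you correctly flag that the delicate part is the $\Of$-versus-$\OK$ translation, but there is no proof in the paper to compare it against.
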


\begin{remark}
	In \cite{schertz}, the terminology ``proper ideal of $\Of$'' means proper {\it fractional} ideal of $\Of$ (see Definition 3.1.1 in \cite{schertz}). 
\end{remark}

Note that if $\mathfrak{A}$ is a proper $\Of$-ideal, and $E=E_\Lambda$ has CM by $\Of$, then an elliptic curve with lattice $\mathfrak{A}^{-1}\cdot \Lambda$ also has CM by $\Of$. Alternatively, since $j([\mathfrak{A}]\ast E)=j(E)^{\psi([\mathfrak{A}])}$ is a conjugate of $j(E)$, it follows from Theorem \ref{thm-cmbasics} that $[\mathfrak{A}]\ast E$ also has CM by $\Of$. As a consequence of Theorem \ref{thm-schertz}, we obtain a description of the field generated by the $N$-torsion points of an elliptic curve.

\begin{cor}\label{cor-classfieldinterpretofdivfield}
Let $E$ be an elliptic curve with CM by $\Of$ and $j(E)=j_{K,f}$, and let $N\geq 2$ be an integer. Then, $H_{f,N}=K(j_{K,f},h(E[N]))$ is the class field $H_{f,N}$ of $K$ such that
 $$\frac{I(\Of,N)}{P_{1}(\Of,N)}\cong \Gal\left(H_{f,N}/K\right),$$ where $I(\Of,N)=I(\Of,N\Of)$ and $P_{1}(\Of,N)=P_{1}(\Of,N\Of)$, and the isomorphism sends the class of a prime ideal $\wp$ to the Frobenius associated to $\wp$. Moreover, the field $H_{f,N}$ does not depend on the choice of curve $E$ with CM by $\Of$.
\end{cor}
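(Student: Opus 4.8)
The plan is to identify $K(j_{K,f},h(E[N]))$ with the class field that Theorem~\ref{thm-schertz}(2) attaches to the single Weber value $h(1;N\Of)$. First I would reduce to a convenient model. Since the analytic Weber function satisfies $h(\lambda z;\lambda\Lambda)=h(z;\Lambda)$ and is model independent, the finite set $h(E[N])\subseteq\overline{\Q}$ --- and hence the field $K(j_{K,f},h(E[N]))$ --- depends only on $j_{K,f}$ and $N$; this already gives the last assertion of the corollary. Writing $E$, up to $\overline{\Q}$-isomorphism, as $E_{\mathfrak a}=\CC/\mathfrak a$ for a proper fractional $\Of$-ideal $\mathfrak a$ with $j(\CC/\mathfrak a)=j_{K,f}$ (possible by Theorem~\ref{thm-cmbasics}(4)), it then suffices to compute the field for the base lattice $\Lambda=\Of$ and transport the answer along the $\Gal(\overline K/K)$-action: the fields $K(j(\CC/\mathfrak b),h(E_{\mathfrak b}[N]))$, as $[\mathfrak b]$ runs over $\Cl(\Of)$, form a single Galois orbit over $K$ by Theorem~\ref{thm-cmbasics}(4), so once the member with $\mathfrak b=\Of$ is known to be Galois over $K$, every member equals it.

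For $\Lambda=\Of$ we have $E_{\Of}[N]=\tfrac1N\Of/\Of$, and applying $h(\lambda z;\lambda\Lambda)=h(z;\Lambda)$ with $z=\tfrac1N$ and $\lambda=N$ gives $h(\tfrac1N;\Of)=h(1;N\Of)$. Now $N\Of$ is principal, hence a proper $\Of$-ideal, and $N\Of\cap\Of=N\Of\neq\Of$ for $N\geq2$; therefore Theorem~\ref{thm-schertz}(2), with $\mathfrak A=N\Of$ and $\mathfrak C=\Of$, shows that $K(j(\CC/\Of),h(1;N\Of))$ equals the class field $H_{f,N\Of}$ of $K$ with $\Gal(H_{f,N\Of}/K)\cong I(\Of,N\Of)/P_1(\Of,N\Of)$. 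In particular $H_{f,N\Of}\subseteq K(j(\CC/\Of),h(E_{\Of}[N]))$, and $H_{f,N\Of}$ is Galois over $K$, which is exactly what the transport step needs.

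For the reverse inclusion I would show that every value $h(\alpha/N;\Of)$ with $\alpha\in\Of\setminus N\Of$ already lies in $H_{f,N\Of}$ (the identity point contributes only $h(O)\in K(j_{K,f})\subseteq H_{f,N\Of}$). Put $\mathfrak d=\alpha\Of+N\Of$; since $\alpha\Of$ and $N\Of$ are principal, hence proper, fractional $\Of$-ideals, Lemma~\ref{lem-propergcd} gives that $\mathfrak d$ is proper and that $\alpha\Of=\mathfrak d\mathfrak b$ and $N\Of=\mathfrak d\mathfrak c$ for integral $\Of$-ideals $\mathfrak b,\mathfrak c$ with $\mathfrak b+\mathfrak c=\Of$; here $\mathfrak c\neq\Of$ (otherwise $\alpha\in N\Of$) and $N\Of=\mathfrak d\mathfrak c\subseteq\mathfrak c$. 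Homogeneity with $\lambda=N/\alpha$ then yields $h(\alpha/N;\Of)=h(1;(N/\alpha)\Of)=h(1;\mathfrak c\mathfrak b^{-1})$, and Theorem~\ref{thm-schertz}(2) with $\mathfrak A=\mathfrak c$ and $\mathfrak C=\mathfrak b$ (which is prime to $\mathfrak c$ since $\mathfrak b+\mathfrak c=\Of$) shows $h(1;\mathfrak c\mathfrak b^{-1})\in H_{f,\mathfrak c}$, the class field with norm group $P_1(\Of,\mathfrak c)$. Since $N\Of\subseteq\mathfrak c$ one has $P_1(\Of,N\Of)\subseteq P_1(\Of,\mathfrak c)$, whence $H_{f,\mathfrak c}\subseteq H_{f,N\Of}$ by class field theory. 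Thus $K(j(\CC/\Of),h(E_{\Of}[N]))\subseteq H_{f,N\Of}$; combined with the previous paragraph this gives equality, and combined with the reduction it gives the corollary, with $H_{f,N}=H_{f,N\Of}$ and the stated Galois group, independently of the choice of $E$.

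The step I expect to demand the most care is keeping the class field theory of the non-maximal order $\Of$ honest: ensuring that every ideal fed into Theorem~\ref{thm-schertz}(2) is genuinely proper --- this is precisely why one needs both that principal $\Of$-ideals are invertible and the gcd-splitting of Lemma~\ref{lem-propergcd} --- and justifying the inclusion $H_{f,\mathfrak c}\subseteq H_{f,N\Of}$, i.e.\ the functoriality $P_1(\Of,N\Of)\subseteq P_1(\Of,\mathfrak c)$ when $N\Of\subseteq\mathfrak c$, directly at the level of the order $\Of$ rather than the maximal order. The reduction to a single model, via model independence of $h$ together with the transitive Galois action on the conjugate curves, is routine but should be written out.
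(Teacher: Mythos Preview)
Your argument is correct and follows essentially the same route as the paper's proof: reduce to the model $E\cong\CC/\Of$, obtain $H_{f,N}\subseteq K(j,h(E[N]))$ from Theorem~\ref{thm-schertz}(2) applied to $\mathfrak A=N\Of$, and obtain the reverse inclusion by using Lemma~\ref{lem-propergcd} to factor $(\alpha\Of+N\Of)$ and then invoke Theorem~\ref{thm-schertz}(2) at the divisor level together with the inclusion $H_{f,\mathfrak c}\subseteq H_{f,N\Of}$. The only organizational difference is that you place the independence-of-model argument first, as a reduction, whereas the paper proves the equality for $\CC/\Of$ first and then deduces independence via the Galois action; both versions ultimately rely on Theorem~\ref{thm-cmbasics}(4) and the compatibility $\sigma(h(P))=h(\sigma(P))$, which you should state explicitly since Theorem~\ref{thm-cmbasics}(4) alone only gives the Galois orbit of the $j$-invariants.
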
 

\begin{proof}
Let $E$ be an elliptic curve with complex multiplication by $\Of$, given by a model over $\Q(j_{K,f})$, and such that $E\cong \CC/\Of$, i.e., pick $j(E)=j(\CC/\Of)$ where $\Of$ is regarded as a lattice in $\CC$ (we will show at the end of the proof that the choice of $E$ with CM by $\Of$ does not matter). Let $\alpha\in \Of$ be an arbitrary element such that $\alpha \not\equiv 0 \bmod N\Of$, and let $\mathfrak{A}$ be the fractional ideal $(N/\alpha)=\frac{N}{\alpha}\Of$ of $\Of$. Let $\mathfrak{N}=\mathfrak{A}\cap \Of \neq \Of$. Since $\mathfrak{A}$ is principal (therefore invertible), it is a proper fractional ideal of $\Of$. Then, $\alpha/N \in \frac{1}{N}\Of/\Of \cong E[N]$, and by Theorem \ref{thm-schertz} we have
\begin{align*} H_{f,\mathfrak{N}} & =K(j(E),h(1;\mathfrak{A}))=K\left(j(E),h\left(1;\frac{N}{\alpha}\Of\right)\right)\\
&=K\left(j(E),h\left(\frac{\alpha}{N};\Of\right)\right)\subseteq K(j(E),h(E[N]))
\end{align*}
by the homogeneity of the Weber function $h$, where $H_{f,\mathfrak{N}}$ is the class field defined in Theorem \ref{thm-schertz}. Note that $N\in \mathfrak{N}$, and so $N\Of\subseteq \mathfrak{N}$. Thus, there is an inclusion $H_{f,\mathfrak{N}}\subseteq H_{f,N}$ given by the natural surjection
$$\frac{I(\Of,N)}{P_1(\Of,N)}\to \frac{I(\Of,\mathfrak{N})}{P_1(\Of,\mathfrak{N})},$$
and, therefore, it follows that
$$H_{f,\mathfrak{N}}  =K(j(E),h(1;\mathfrak{A}))\subseteq H_{f,N}.$$ 
On the other hand, when $\alpha=1$ (or more generally, for any $\alpha\in\Of$ relatively prime to $N\Of$), we have $\mathfrak{N}=\mathfrak{A}\cap \Of = N\Of$, and so $K(j(E),h(1;\mathfrak{A})) =  H_{f,N}.$
 Since $K(j(E),h(E[N]))$ is the compositum of all such fields $K\left(j(E),h\left(\frac{\alpha}{N};\Of\right)\right)$ for non-zero classes $\alpha\in\Of/N\Of$, it follows that
$$ H_{f,N}\subseteq K(j(E),h(E[N])) = \prod_{\substack{\alpha\in\Of/N\Of\\ \alpha\not\equiv 0 \bmod N}} K\left(j(E),h\left(\frac{\alpha}{N};\Of\right)\right)\subseteq H_{f,N}.$$
Therefore we have an equality of fields $K(j(E),h(E[N]))= H_{f,N}$, as claimed.

Finally, let $E'/\Q(j(E'))$ be another elliptic curve with CM by $\Of$. By Theorem \ref{thm-cmbasics}, the curves $E$ and $E'$ are Galois conjugates of each other, that is, there is $\sigma\in\Gal(\overline{K}/K)$ such that $j(E')=\sigma(j(E))$ and a model of $E'$ is given by letting $\sigma$ act on the coefficients of a model for $E$. Thus, if $P\in E[N]$, then $\sigma(P)\in E'[N]$, and $h(\sigma(P))=\sigma(h(P))$. Finally, since  $K(j(E),h(E[N]))= H_{f,N}$ is Galois, it follows that $j(E')=\sigma(j(E))$ and $h(\sigma(P))$ are in $H_{f,N}$ for $\sigma\in \Gal(\overline{K}/K)$. Hence $K(j(E),h(E[N])) = K(j(E'),h(E'[N]))$ does not depend on the choice of elliptic curve with CM by $\Of$, as desired.
\end{proof} 

The following proposition describes the Galois group of $H_{f,N}/H_f$ in terms of those of $H_{f,N}/K$ and $H_f/K$. 
	
\begin{prop}\label{prop-propcft} Let $E/\Q(j_{K,f})$ be an elliptic curve with CM by an order $\Of$ of conductor $f\geq 1$ in an imaginary quadratic field $K$, and let $N\geq 2$.  Let $H_f=K(j_{K,f})$. Then, there is a commutative diagram of short exact sequences:
	$$\xymatrix{
		1 \ar@{->}[r] & \dfrac{P(\Of,N)}{P_1(\Of,N)}  \ar@{->}[r] \ar@{->}[d]^\cong & \dfrac{I(\Of,N)}{P_1(\Of,N)}\ar@{->}[r] \ar@{->}[d]^\cong & \dfrac{I(\Of,N)}{P(\Of,N)}\ar@{->}[r] \ar@{->}[d]^\cong & 1\\
		1\ar@{->}[r] & \Gal(H_f(h(E[N]))/H_f)\ar@{->}[r] & \Gal(H_f(h(E[N]))/K)\ar@{->}[r] & \Gal(H_f/K)\ar@{->}[r] & 1
	}$$
	where $P(\Of,N)=P(\Of,N\Of)$.
\end{prop}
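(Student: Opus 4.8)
The plan is to build the diagram one row at a time, identify the three vertical maps, check that the right-hand square commutes by functoriality of the Artin map, and then deduce the left vertical isomorphism (and commutativity of the left square) by a short diagram chase. For the bottom row: since $H_f=K(j_{K,f})$ is the ring class field of conductor $f$, it is abelian, hence Galois, over $K$; and $H_f(h(E[N]))=H_{f,N}=K(j_{K,f},h(E[N]))$ is, by Corollary~\ref{cor-classfieldinterpretofdivfield}, a class field of $K$, hence also Galois over $K$. Thus the tower $K\subseteq H_f\subseteq H_{f,N}$ yields the bottom exact sequence
\[
1\to \Gal(H_{f,N}/H_f)\to \Gal(H_{f,N}/K)\xrightarrow{\ \mathrm{res}\ }\Gal(H_f/K)\to 1 .
\]
The top row is pure group theory: from the chain of subgroups $P_1(\Of,N)\subseteq P(\Of,N)\subseteq I(\Of,N)$ (all abelian, so normality is automatic) one obtains
\[
1\to \frac{P(\Of,N)}{P_1(\Of,N)}\to \frac{I(\Of,N)}{P_1(\Of,N)}\to \frac{I(\Of,N)}{P(\Of,N)}\to 1 ,
\]
with the evident inclusion and projection.

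\emph{The vertical maps.} The middle isomorphism is precisely the one furnished by Corollary~\ref{cor-classfieldinterpretofdivfield}. For the right-hand one, compose the isomorphism $\psi\colon\Cl(\Of)=I(\Of)/P(\Of)\xrightarrow{\sim}\Gal(H_f/K)$ of Theorem~\ref{thm-schertz}(1) with the map induced by the inclusion $I(\Of,N)\hookrightarrow I(\Of)$,
\[
\frac{I(\Of,N)}{P(\Of,N)}\ \longrightarrow\ \frac{I(\Of)}{P(\Of)}=\Cl(\Of) .
\]
This last map is an isomorphism: it is injective because $P(\Of,N)=P(\Of)\cap I(\Of,N)$ by definition, and surjective because every ideal class of $\Cl(\Of)$ contains a proper $\Of$-ideal prime to $N\Of$ (a standard fact; see \cite{cox}). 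This produces the right vertical isomorphism $I(\Of,N)/P(\Of,N)\xrightarrow{\sim}\Gal(H_f/K)$, and realizes $H_f$ as the class field of $K$ attached to the congruence subgroup $P(\Of,N)$ of $I(\Of,N)$.

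\emph{Commutativity and the left column.} The substance of the Proposition is that the right-hand square commutes. By Corollary~\ref{cor-classfieldinterpretofdivfield} and Theorem~\ref{thm-schertz}, $H_{f,N}$ and $H_f$ are the class fields of $K$ attached, respectively, to the congruence subgroups $P_1(\Of,N)$ and $P(\Of,N)$ of $I(\Of,N)$, with the Galois groups identified with the corresponding quotients via Artin reciprocity; since $P_1(\Of,N)\subseteq P(\Of,N)$, the functoriality of the Artin map for (order) generalized ideal class groups (see \cite{schertz}, or \cite{cox}) says exactly that, under these identifications, the restriction map $\Gal(H_{f,N}/K)\to\Gal(H_f/K)$ corresponds to the natural projection $I(\Of,N)/P_1(\Of,N)\to I(\Of,N)/P(\Of,N)$; this is the commutativity of the right square. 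A diagram chase then finishes the proof: the middle vertical map is an isomorphism, the right vertical map is injective, and both horizontal maps are surjective, so the middle isomorphism carries $\ker\bigl(I(\Of,N)/P_1(\Of,N)\to I(\Of,N)/P(\Of,N)\bigr)=P(\Of,N)/P_1(\Of,N)$ isomorphically onto $\ker\bigl(\Gal(H_{f,N}/K)\to\Gal(H_f/K)\bigr)=\Gal(H_{f,N}/H_f)$; this is the left vertical isomorphism, and the left square commutes because its horizontal maps are the inclusions and its vertical maps are the restrictions of the middle ones.

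\emph{Main obstacle.} The one genuinely delicate point is in the third step: one has to be sure that the isomorphism $\psi$ of Theorem~\ref{thm-schertz}(1), which is described via its action on $j$-invariants, is indeed the Artin reciprocity map (under the usual dictionary between proper $\Of$-ideals prime to $f$ and $\OK$-ideals prime to $f$), and that the class field identifications of $H_f$ and $H_{f,N}$ as attached to congruence subgroups of $I(\Of,N)$ are compatible with one another; only then does functoriality of the Artin map legitimately give the commutativity of the right square. Everything else in the argument is formal.
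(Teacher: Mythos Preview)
Your proposal is correct and follows essentially the same architecture as the paper's proof: set up both rows, identify the middle and right vertical maps via Corollary~\ref{cor-classfieldinterpretofdivfield}, Theorem~\ref{thm-schertz}, and the isomorphism $I(\Of,N)/P(\Of,N)\cong \Cl(\Of)$ from \cite{cox}, then read off the left vertical isomorphism as the induced map on kernels.

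The one noteworthy difference is in how the commutativity of the right-hand square is argued. You invoke the abstract functoriality of the Artin map for congruence subgroups, and correctly flag as the ``main obstacle'' that one must check that the map $\psi$ of Theorem~\ref{thm-schertz}(1) really is the Artin map. The paper avoids this detour: it uses directly the explicit description of $\psi$ in Theorem~\ref{thm-schertz}(1), namely $j(E)^{\psi([\mathfrak{A}])}=j([\mathfrak{A}]\ast E)$, to observe that principal ideals act trivially on $j(E)$, hence elements of $P(\Of,N)/P_1(\Of,N)$ land in $\Gal(H_{f,N}/H_f)$ under the middle isomorphism. This gives the commutativity without ever naming the Artin map, and so dissolves the obstacle you identified. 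Your route is perfectly valid but slightly heavier; the paper's is more concrete and self-contained given the tools already on the table.
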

\begin{proof} 
By Corollary 7.17, and Prop. 7.19, of \cite{cox} we have isomorphisms  
$$I(\Of,N)/P(\Of,N) \cong I(\Of)/P(\Of)\cong \Cl(\Of).$$ 
By Theorem \ref{thm-schertz}, the principal ideals of $I(\Of)$ act trivially on $j(E)$, and therefore the restriction map of Galois groups $\Gal(H_f(h(E[N]))/K)\to \Gal(H_f/K)$ corresponds to the natural surjection
$$\dfrac{I(\Of,N)}{P_1(\Of,N)}\longrightarrow  \dfrac{I(\Of,N)}{P(\Of,N)}\cong I(\Of)/P(\Of)\cong \Cl(\Of) \cong \Gal(H_f/K).$$
Since the kernel of this map is clearly $P(\Of,N)/P_1(\Of,N)$, it follows that this quotient is isomorphic to $\Gal(H_f(h(E[N]))/H_f)$ as needed.
\end{proof}

The next lemma describes the Galois group of $H_{f,N}/H_f$ in terms of a quotient of units mod $N$.

\begin{lemma}\label{lem-lemcft} There is an exact sequence
$$1 \longrightarrow
 \dfrac{\Of^\times}{\mathcal{O}_{K,f,N}^\times}\longrightarrow\left(\dfrac{\Of}{N\Of}\right)^\times \longrightarrow  \dfrac{P(\Of,N)}{P_1(\Of,N)}\longrightarrow 1$$
where $\mathcal{O}_{K,f,N}^\times=\{u\in\Of^\times: u\equiv 1 \bmod N\Of\}$.
\end{lemma}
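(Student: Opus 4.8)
The plan is to construct the map $\left(\Of/N\Of\right)^\times \to P(\Of,N)/P_1(\Of,N)$ directly and identify its kernel. First I would recall that $P(\Of,N) = P(\Of)\cap I(\Of,N)$ consists of those principal fractional $\Of$-ideals that admit a generator (or a ratio of generators) prime to $N\Of$, and that $P_1(\Of,N)$ is the subgroup of those $\alpha\Of$ with $\alpha\equiv 1 \bmod^\times N\Of$. The natural thing is to send a unit $\bar u \in (\Of/N\Of)^\times$ to the class of $\alpha\Of$ in $P(\Of,N)/P_1(\Of,N)$, where $\alpha\in\Of$ is any lift of $\bar u$; since $\bar u$ is a unit mod $N$, such a lift $\alpha$ is automatically prime to $N\Of$, so $\alpha\Of\in P(\Of,N)$. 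I would check this is well-defined: two lifts $\alpha,\alpha'$ of the same $\bar u$ satisfy $\alpha\equiv\alpha'\bmod N\Of$ with both prime to $N\Of$, so $\alpha/\alpha' \equiv 1\bmod^\times N\Of$ and $\alpha\Of, \alpha'\Of$ have the same class in the quotient by $P_1(\Of,N)$. It is clearly a homomorphism since the map $\Of\to P(\Of)$, $\alpha\mapsto\alpha\Of$, is multiplicative.

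Next I would prove surjectivity. Any class in $P(\Of,N)/P_1(\Of,N)$ is represented by some $\beta\Of$ with $\beta = \beta_1/\beta_2$, $\beta_1,\beta_2\in\Of$ each generating an ideal prime to $N\Of$. Then $\bar\beta_1,\bar\beta_2$ are units in $\Of/N\Of$, and $\overline{\beta_1}\cdot\overline{\beta_2}^{-1}$ maps to $\beta_1\Of\cdot(\beta_2\Of)^{-1} = \beta\Of$ modulo $P_1$, giving surjectivity. For injectivity/exactness on the left, I would show the kernel is exactly the image of $\Of^\times/\mathcal{O}_{K,f,N}^\times$. If $\bar u\mapsto$ trivial class, then for a lift $\alpha$ we have $\alpha\Of\in P_1(\Of,N)$, i.e. $\alpha\Of = \gamma\Of$ for some $\gamma\equiv 1\bmod^\times N\Of$; hence $\alpha = v\gamma$ for some unit $v\in\Of^\times$, and reducing mod $N\Of$ gives $\bar u = \bar\alpha = \bar v\cdot\bar\gamma = \bar v$, so $\bar u$ is in the image of $\Of^\times$. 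Conversely the image of $\mathcal{O}_{K,f,N}^\times$ in $(\Of/N\Of)^\times$ is trivial by definition, so the composite $\Of^\times\to(\Of/N\Of)^\times\to P(\Of,N)/P_1(\Of,N)$ kills $\mathcal{O}_{K,f,N}^\times$ and factors through $\Of^\times/\mathcal{O}_{K,f,N}^\times$; the map $\Of^\times/\mathcal{O}_{K,f,N}^\times \to (\Of/N\Of)^\times$ is injective by the very definition of $\mathcal{O}_{K,f,N}^\times$ as the kernel of reduction on units.

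The step I expect to require the most care is the surjectivity of $\Of\to(\Of/N\Of)^\times$ onto units and, relatedly, the claim that every ideal class in $P(\Of,N)/P_1(\Of,N)$ has a representative of the form (ratio of) elements of $\Of$ prime to $N$ rather than fractional ideals with denominators; this is where one must use that $\Of/N\Of$ is a finite ring so that every unit lifts, and possibly a weak approximation / coprimality argument to replace a fractional generator by an integral one prime to $N\Of$ after scaling by a principal ideal in $P_1(\Of,N)$. Once those lifting statements are in hand the exactness is formal. I would also double-check the edge behavior at primes dividing $f$, where $\Of$ is not maximal, but since $N\Of$ is still an ideal of finite index and $P_1(\Of,N)$ is defined using $\bmod^\times$ congruences that only see the quotient $\Of/N\Of$, the argument goes through uniformly.
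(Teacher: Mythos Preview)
Your proposal is correct and follows essentially the same route as the paper: define the map $(\Of/N\Of)^\times \to P(\Of,N)/P_1(\Of,N)$ by sending a residue class to the class of the principal ideal generated by any lift, check well-definedness, surjectivity, and identify the kernel as the image of $\Of^\times$. Your treatment is in fact somewhat more careful than the paper's in handling fractional representatives for surjectivity and in spelling out the injectivity of $\Of^\times/\mathcal{O}_{K,f,N}^\times \hookrightarrow (\Of/N\Of)^\times$; the worries you flag about approximation and primes dividing $f$ turn out not to require any extra work.
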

\begin{proof}
Let $\mathcal{C}$ be a class in the quotient  $P(\Of,N)/P_1(\Of,N)$. One can find an ideal $(\alpha)$ of $\Of$ in the class of $\mathcal{C}$, for some $\alpha\in \Of$. Since $(\alpha)\in P(\Of,N)$ is an ideal relatively prime to $N\Of$, it follows that $\alpha \bmod N\Of$ is a unit of $\Of/N\Of$. Moreover, if $\beta\in\Of$ is such that $\beta \equiv 1\bmod N\Of$, then $(\alpha\beta)$ is another principal ideal in the class of $\mathcal{C}\in P(\Of,N)/P_1(\Of,N)$. This shows that $\psi:\left(\Of/N\Of\right)^\times \to  P(\Of,N)/P_1(\Of,N)$ is well-defined and surjective. The kernel of $\psi$ is given by the units $\alpha \bmod N\Of$ such that $(\alpha)$ is equivalent to $\Of$ modulo $P_1(\Of,N)$, i.e., if and only if there is $(\beta)\in P_1(\Of,N)$ such that $(\alpha\beta)=\Of$. That is, $\alpha\beta = u \in \Of^\times$, or in other words $\alpha\equiv u \bmod N\Of$ for some $u\in \Of^\times$. Thus, the kernel of $\psi$ is the image of $\Of^\times$ in $(\Of/N\Of)^\times$, as claimed.
\end{proof}

Finally, we can put Proposition \ref{prop-propcft} and Corollary \ref{lem-lemcft} together to describe the Galois group of the extension $K(j_{K,f},h(E[N]))/K(j_{K,f})$ in terms of units mod $N$.

\begin{cor}\label{cor-weber}
Let $E/\Q(j_{K,f})$ be an elliptic curve with CM by an order $\Of$ of conductor $f\geq 1$ in an imaginary quadratic field $K$, and let $N\geq 2$.  Let $H_f=K(j_{K,f})$. Then,
$$\Gal(H_f(h(E[N]))/H_f)\cong \left(\dfrac{\Of}{N\Of}\right)^\times \Big/\  \dfrac{\Of^\times}{\mathcal{O}_{K,f,N}^\times}.$$
\end{cor}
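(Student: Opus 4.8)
The plan is to simply splice together the two structural results just established: Proposition \ref{prop-propcft}, which computes $\Gal(H_f(h(E[N]))/H_f)$ in terms of ideal groups, and Lemma \ref{lem-lemcft}, which relates those ideal groups to unit groups modulo $N$. First I would invoke the left vertical isomorphism in the commutative diagram of Proposition \ref{prop-propcft}, which identifies
$$\Gal(H_f(h(E[N]))/H_f) \;\cong\; \frac{P(\Of,N)}{P_1(\Of,N)}.$$
This reduces the corollary to an entirely algebraic statement about fractional $\Of$-ideals and units.

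Next I would feed this into the exact sequence of Lemma \ref{lem-lemcft},
$$1 \longrightarrow \frac{\Of^\times}{\mathcal{O}_{K,f,N}^\times} \longrightarrow \left(\frac{\Of}{N\Of}\right)^\times \longrightarrow \frac{P(\Of,N)}{P_1(\Of,N)} \longrightarrow 1,$$
noting that the left-hand term is precisely the image of the reduction-mod-$N\Of$ map $\Of^\times \to (\Of/N\Of)^\times$, since by definition $\mathcal{O}_{K,f,N}^\times = \{u\in\Of^\times : u\equiv 1 \bmod N\Of\}$ is exactly the kernel of that map. By the first isomorphism theorem applied to the surjection $\psi\colon (\Of/N\Of)^\times \twoheadrightarrow P(\Of,N)/P_1(\Of,N)$, whose kernel is that image, we obtain
$$\frac{P(\Of,N)}{P_1(\Of,N)} \;\cong\; \left(\frac{\Of}{N\Of}\right)^\times \Big/\ \frac{\Of^\times}{\mathcal{O}_{K,f,N}^\times},$$
where the quotient on the right is understood as the quotient by the canonical image of $\Of^\times/\mathcal{O}_{K,f,N}^\times$ inside $(\Of/N\Of)^\times$. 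Combining this with the identification from the first step yields the desired isomorphism.

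There is essentially no serious obstacle here: the corollary is a formal consequence of Proposition \ref{prop-propcft} and Lemma \ref{lem-lemcft}, and the only point requiring a word of care is the (standard) interpretation of the iterated quotient on the right-hand side, namely that $\Of^\times/\mathcal{O}_{K,f,N}^\times$ is being regarded as a subgroup of $(\Of/N\Of)^\times$ via reduction modulo $N\Of$, which is legitimate precisely because $\mathcal{O}_{K,f,N}^\times$ is the kernel of that reduction. One should also remark that, via Corollary \ref{cor-classfieldinterpretofdivfield}, the field $H_f(h(E[N])) = H_{f,N} = K(j_{K,f},h(E[N]))$ is independent of the chosen curve $E$ with CM by $\Of$, so the statement is well-posed.
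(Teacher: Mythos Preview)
Your proposal is correct and matches the paper's approach exactly: the paper states this corollary immediately after Proposition~\ref{prop-propcft} and Lemma~\ref{lem-lemcft} without a separate proof, simply indicating that one ``put[s] Proposition~\ref{prop-propcft} and [Lemma]~\ref{lem-lemcft} together,'' which is precisely what you do.
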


In the following lemma we show that the options for the group $\OO_{K,f,N}^\times$ are rather limited.

\begin{lemma}\label{lem-unitsmod}
	Let $\Of$ be an order of an imaginary quadratic field $K$, let $N\geq 2$, and let $\mathcal{O}_{K,f,N}^\times=\{u\in\Of^\times: u\equiv 1 \bmod N\Of\}$. Then, $\mathcal{O}_{K,f,N}^\times=\{1\}$ is trivial, unless $N=2$, in which case  $\OO_{K,f,N}^\times = \{\pm 1\}$.
\end{lemma}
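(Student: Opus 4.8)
The plan is to reduce everything to a two-line computation with norms and traces, avoiding any case-by-case enumeration of the finitely many possible unit groups $\Of^\times$. The only input I need about units is: if $u\in\Of$ is a unit, then $\Norm(u)=u\overline{u}$ is a positive rational integer with $\Norm(u)\Norm(u^{-1})=\Norm(1)=1$, hence $\Norm(u)=1$, i.e. $|u|=1$. (Here the bar denotes complex conjugation, which preserves $\Of=\Z+f\OK$, so $\overline{u}\in\Of$ and both $\Norm$ and $\Tr$ are $\Z$-valued on $\Of$, with $\Norm(\beta)\geq 1$ for every nonzero $\beta\in\Of$.)

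Next I would take $u\in\mathcal{O}_{K,f,N}^\times$ with $u\neq 1$ and write $u=1+N\beta$ with $\beta\in\Of$, $\beta\neq 0$. Expanding $1=|u|^2=(1+N\beta)(1+N\overline{\beta})=1+N\Tr(\beta)+N^2\Norm(\beta)$ and dividing by $N$ gives $N\,\Norm(\beta)=-\Tr(\beta)$. Now I combine two elementary bounds: $\Norm(\beta)\geq 1$ since $\beta$ is a nonzero algebraic integer, and $|\Tr(\beta)|=2|\operatorname{Re}(\beta)|\leq 2|\beta|=2\sqrt{\Norm(\beta)}$. Substituting yields $N\sqrt{\Norm(\beta)}\leq 2$, which forces $\Norm(\beta)=1$ and $N\leq 2$; since $N\geq 2$ we conclude $N=2$. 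In particular $\mathcal{O}_{K,f,N}^\times=\{1\}$ whenever $N>2$.

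For $N=2$ I would then pin down $\beta$ exactly: from $2=N\,\Norm(\beta)=|\Tr(\beta)|$ together with $|\Tr(\beta)|\leq 2|\beta|=2$, equality holds in $|\operatorname{Re}(\beta)|\leq|\beta|$, so $\operatorname{Im}(\beta)=0$; as $K$ is imaginary quadratic, a real element of $\Of$ lies in $\Z$, so $\beta\in\Z$ with $\beta^2=\Norm(\beta)=1$, i.e. $\beta=\pm 1$ and $u=1+2\beta\in\{-1,3\}$. Since $3\notin\Of^\times$, we get $u=-1$; conversely $-1\equiv 1\bmod 2\Of$ because $-1-1=-2\in 2\Of$. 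Hence $\mathcal{O}_{K,f,2}^\times=\{\pm 1\}$, completing the proof.

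There is no genuine obstacle here; the computation is routine. The only point requiring a little care is to phrase the $N=2$ step tightly enough to conclude $\beta=\pm 1$ rather than merely $\Norm(\beta)=1$ — the latter would a priori also allow norm-one units such as $\beta=\zeta_6$ in the case $K=\Q(\sqrt{-3})$, $f=1$ — and it is precisely the equality case of $|\operatorname{Re}(\beta)|\leq|\beta|$ that rules this out cleanly.
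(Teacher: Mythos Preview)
Your proof is correct and takes a genuinely different route from the paper's. The paper simply writes down the three possible unit groups $\Of^\times$ (namely $\{\pm 1\}$, $\{\pm 1,\pm i\}$ for $\Delta_K=-4$, $f=1$, and $\{\pm 1,\pm\rho,\pm\rho^2\}$ for $\Delta_K=-3$, $f=1$) and checks by inspection that among these at most six elements only $u=-1$ can satisfy $u\equiv 1\bmod N\Of$ for some $N\geq 2$, and only when $N=2$. Your approach instead extracts a uniform inequality from $|u|=1$: writing $u=1+N\beta$ and expanding the norm yields $N\,\Norm(\beta)=-\Tr(\beta)$, and the bound $|\Tr(\beta)|\leq 2\sqrt{\Norm(\beta)}$ forces $N\leq 2$ and $\Norm(\beta)=1$ in one stroke, with the equality case pinning down $\beta\in\Z$ and hence $u=-1$. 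The paper's argument is shorter and perhaps more natural given how small and explicit $\Of^\times$ is; your argument is more conceptual, avoids any appeal to the classification of $\Of^\times$, and would adapt to analogous statements where the unit group is not so easily enumerated.
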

\begin{proof}
	If $K=\Q(\sqrt{\Delta_K})$ is quadratic imaginary, with $\OO_K=\Z[\tau]$, then $\Of$ is of the form $\Z[f\tau]$. In particular, 
	$$\Of^\times =\begin{cases}
	\{\pm 1, \pm i  \} & \text{ if } \Delta_K=-4, f=1,\ j_{K,f}=1728, \text{ where } i^2+1=0,\\
	\{\pm 1, \pm \rho, \pm \rho^2  \} & \text{ if } \Delta_K=-3,\ f=1,\ j_{K,f}=0, \text{ where } \rho^2+\rho+1=0,\\
	\{\pm 1 \} & \text{ otherwise}.
	\end{cases}$$
	Of these, the only unit $u\in \Of^\times$ such that $u\equiv 1 \bmod N\Of$ is $u=-1$, and only when $N=2$.
\end{proof}

Now that we have a description of the Galois group of the extension $H_f(h(E[N]))/H_f$, we move to describe $\Gal(H_f(E[N])/H_f)$ in the next section, and then we will describe the Galois group of $H_f(E[N])/\Q(j_{K,f})$ in Section \ref{sec-fromq}.

\section{The Galois group $\Gal(H_f(E[N])/H_f)$}\label{sec-fromhf}

In the previous section we have described $\Gal(H_f(h(E[N]))/H_f)$, for a Weber function $h$, and in this section we will describe $\Gal(H_f(E[N])/H_f)$. We will rephrase our results in terms of the Galois representation associated to the natural action of the absolute Galois group of $\Q(j_{K,f})$ on the $N$-torsion of $E$,
$$\rho_{E,N}\colon \Gal\left(\overline{\Q(j_{K,f})}/\Q(j_{K,f})\right)\to \Aut(E[N]).$$  
Our first step is the following result about the Galois group of $H_f(E[N])/H_f$ coming from the basic theory of complex multiplication.

\begin{thm}\label{thm-incl} Let $E/\Q(j_{K,f})$ be an elliptic curve with CM by an order $\Of$ of an imaginary quadratic field $K$, let $H_f = K(j_{K,f})$, and let $N\geq 2$. Then, the restriction of the Galois representation $\rho_{E,N}$ to $\Gal(\overline{H_f}/H_f)$ induces an injection
$$\Gal(H_f(E[N])/H_f)\hookrightarrow \Aut_{\Of/N\Of}(E[N])\cong  \left(\dfrac{\Of}{N\Of}\right)^\times.$$
Moreover 
$$\Gal(H_f(E[N])/H_f(h(E[N]))\hookrightarrow \dfrac{\Of^\times}{\mathcal{O}_{K,f,N}^\times}\hookrightarrow \Of^\times\cong \Aut(E),$$
and the index of the image of $\rho_{E,N}$ in $(\Of/N\Of)^\times$ is the index of   $\Gal(H_f(E[N])/H_f(h(E[N])))$ as a subgroup of $\Of^\times/\mathcal{O}_{K,f,N}^\times$ which, in turn, divides the order of $\Aut(E)$.
\end{thm}
\begin{proof} The $N$-torsion subgroup $E[N]$ is a free $\Of/N\Of$-module of rank $1$ (see \cite[Lemma 1]{parish}). Since the endomorphisms of $E$ are defined over $H_f$ by Theorem \ref{thm-endo}, the elements of $\Gal(H_f(E[N])/H_f)$ and $\End(E)$ commute in their action on $E[N]$. It follows that we have an  embedding $$\Gal(H_f(E[N])/H_f)\hookrightarrow \Aut_{\Of/N\Of}(E[N])\cong  (\Of/N\Of)^\times$$ as in \cite{silverman2}, Ch. 2, Theorem 2.3. For $\sigma\in \Gal(H_f(E[N])/H_f)$, let $\psi_\sigma \in \Aut_{\Of/N\Of}(E[N])$ be the corresponding automorphism. Then, $\sigma \in \Gal(H_f(E[N])/H_f(h(E[N])))$ if and only if $\sigma(h(P))=h(P)$ for all $P\in E[N]$ or, equivalently, $h(\psi_\sigma(P))=h(P)$ for all $P\in E[N]$. Since $h:E\to E/\Aut(E)$ is a Weber function, this is equivalent to $\psi_\sigma(P)=\phi(P)$ for some $\phi\in \Aut(E)$ and all $P\in E[N]$. In other words, $\psi_\sigma = \phi \in \Aut(E)$ when restricted to $E[N]$. Thus,  we have shown that $\sigma \in \Gal(H_f(E[N])/H_f(h(E[N])))$ if and only if $\psi_\sigma$ is induced from an element of $\Aut(E)\cong \Of^\times$, and therefore $\sigma$ maps to an element of the image of $\Of^\times$ in $(\Of/N\Of)^\times$, which is $\Of^\times/\mathcal{O}_{K,f,N}^\times$.

Finally, since $\Gal(H_f(h(E[N]))/H_f)\cong (\Of/N\Of)^\times/(\Of^\times/\mathcal{O}_{K,f,N}^\times)$ by Corollary \ref{cor-weber}, and we have an injection of $\Gal(H_f(E[N])/H_f)$ into $(\Of/N\Of)^\times$, it follows that the index of the image of $\rho_{E,N}$ in $(\Of/N\Of)^\times$ is precisely the index of   $\Gal(H_f(E[N])/H_f(h(E[N])))$ as a subgroup of $(\Of^\times/\mathcal{O}_{K,f,N}^\times)$. This concludes the proof of the theorem.
\end{proof}

\begin{corollary}\label{cor-missesrootofunity}
	With the notation of Theorem \ref{thm-incl}, suppose that the image of $\Gal(H_f(E[N])/H_f)$ is a subgroup $H$ of index $d$ in $(\Of/N\Of)^\times$, for some $N>2$. Then, 
	\begin{enumerate}
		\item Suppose $j_{K,f}\neq 1728$. Then, $d\equiv 0 \bmod 2$ if and only if $-1\bmod N\Of\not\in H$. 
		\item Suppose $j_{K,f}=1728$. Then, $d\equiv 0 \bmod 2$ if and only if  $\zeta_4=i\bmod N\Of\not\in H$, and $d\equiv 0\bmod 4$ if and only if $-1\bmod N\Of \not\in H$.
		\item $d\equiv 0 \bmod 3$  if and only if $j_{K,f}=0$, and $\zeta_3\bmod N\Of\not\in H$.
	\end{enumerate}
	\end{corollary} 
\begin{proof}
	By Theorem \ref{thm-incl}, the index of the image of $\rho_{E,N}$ in $(\Of/N\Of)^\times$ is the index of the Galois group  $\Gal(H_f(E[N])/H_f(h(E[N]))$ as a subgroup of $\Of^\times/\mathcal{O}_{K,f,N}^\times\cong \Of^\times$ (where the last isomorphism comes from Lemma \ref{lem-unitsmod} since $N>2$). Moreover, in the course of the proof of Theorem \ref{thm-incl}, we have shown that $\sigma \in \Gal(H_f(E[N])/H_f(h(E[N])))$ if and only if $\psi_\sigma$ is induced from an element of $\Aut(E)\cong \Of^\times$, and therefore $\sigma$ maps to an element of the image of $\Of^\times$ in $(\Of/N\Of)^\times$, which is $\Of^\times/\mathcal{O}_{K,f,N}^\times$.
	
	The unique subgroup $H\subseteq \Of^\times$ of index $2$ is $\{1\}$, $\{\pm 1\}$, or $\{1,\zeta_3,\zeta_{3}^2 \}$ according to whether $j_{K,f}\neq 0,1728$, $j_{K,f}=1728$, or $j_{K,f}=0$, respectively. Claim (1) follows, as well as the first part of (2). If $j_{K,f}=1728$, then $H$ has index $4$ in $\Of^\times$ if and only if $H$ is trivial, if and only if $-1\bmod N\Of \not\in H$. This shows the second part of (2).
	
	Finally, $3|d$ can only happen for $j_{K,f}=0$, and the  subgroups of $\Of^\times$ of index divisible $3$ are $\{\pm 1\}$ and $\{1\}$ in that case. Thus, $d$ is divisible by $3$ if and only if $j_{K,f}=0$ and $\zeta_3\bmod N\Of\not\in H$, and this proves (3).
\end{proof}

The following result shows conditions that imply that  $\rho_{E,p^\infty}\colon\Gal(\overline{H_f}/H_f)\to \GL(2,\Z_p)$ is determined modulo $p$ for $p\geq 3$ if $j_{K,f}\neq 0$.

\begin{thm}\label{thm-inclp} Let $E/\Q(j_{K,f})$ be an elliptic curve with CM by an order $\Of$ of an imaginary quadratic field $K$, let $H_f = K(j_{K,f})$, let $p\geq 2$ be a prime, and let $h$ be a Weber function. Let $t=1$ if $p>2$ and $t=2$ if $p=2$. Then:
\begin{enumerate}
	\item The index $[H_f(E[p^{n+1}]):H_f(E[p^n])]$ is a divisor of $p^2$, for all $n\geq 1$.
	\item For each $n\geq t$, we have $[H_f(h(E[p^{n+1}])):H_f(h(E[p^n]))] =p^2$.
	\item If $[H_f(E[p^n]):H_f(h(E[p^{n}]))]=1$ for some $n\geq t$, then $[H_f(E[p^m]):H_f(h(E[p^{m}]))]=1$ for all $m\geq n$.
	\item If $[H_f(E[p^n]):H_f(h(E[p^{n}]))]=\# \Of^\times$, then $[H_f(E[p^m]):H_f(h(E[p^{m}]))]=\# \Of^\times$ for all $t\leq m\leq n$.
	\item Suppose one of the following holds:
	\begin{itemize}
		\item $[H_f(E[p^n]):H_f(h(E[p^{n}]))]$ is relatively prime to $p$, for some $n\geq t$.
		\item $j_{K,f}\neq 0$, and $p>2$.
		\item $j_{K,f}=0$ and $p>3$.
	\end{itemize}
	Then, the image of the group $\Gal(H_f(E[p^{n+1}])/H_f)$ in $(\Of/p^{n+1}\Of)^\times$ is the full inverse image of the image of $\Gal(H_f(E[p^{n}])/H_f)$ in $(\Of/p^{n}\Of)^\times$ under the natural reduction map modulo $p^n$. In particular, $[H_f(E[p^{n+1}]):H_f(E[p^n])] =p^2$.
\end{enumerate}	
\end{thm}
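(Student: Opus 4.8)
The plan is to recast the whole statement as abelian group theory in the Cartan groups $\cC_n := (\Of/p^n\Of)^\times$ and then run a short index bookkeeping. I would first fix notation: let $G_n$ be the image of $\rho_{E,p^n}$ restricted to $\Gal(\overline{H_f}/H_f)$, so that $G_n\cong \Gal(H_f(E[p^n])/H_f)$ is a subgroup of $\cC_n$ by Theorem~\ref{thm-incl}; put $d_n := [H_f(E[p^n]):H_f(h(E[p^n]))]$ and $e_n := [\cC_n:G_n]$; and let $\pi_n\colon \cC_{n+1}\to\cC_n$ be reduction mod $p^n$, which is surjective with a nilpotent kernel of order $|\cC_{n+1}|/|\cC_n| = p^2$ by Lemma~\ref{lem-cartansbgp}. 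Two facts from the preliminaries feed in: for $n\geq t$ the group $\OO_{K,f,p^n}^\times$ is trivial (Lemma~\ref{lem-unitsmod}), so Corollary~\ref{cor-weber} gives $|\Gal(H_f(h(E[p^n]))/H_f)| = |\cC_n|/\#\Of^\times$, while the last clause of Theorem~\ref{thm-incl} gives $e_n = \#\Of^\times/d_n$ for $n\geq t$.

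Parts (1) and (2) are then immediate. For (1): restriction is a surjection $\Gal(H_f(E[p^{n+1}])/H_f)\twoheadrightarrow \Gal(H_f(E[p^n])/H_f)$ whose kernel embeds, compatibly with the embeddings of Theorem~\ref{thm-incl}, as a subgroup of $\ker\pi_n$; hence its order (which is $[H_f(E[p^{n+1}]):H_f(E[p^n])]$) divides $p^2$. For (2): for $n\geq t$ divide the two orders computed above, using $|\cC_{n+1}|/|\cC_n| = p^2$.

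The technical core is one divisibility, valid for all $n\geq t$. Since $\pi_n(G_{n+1}) = G_n$, we have $G_{n+1}\subseteq \pi_n^{-1}(G_n)$ with $[\cC_{n+1}:\pi_n^{-1}(G_n)] = [\cC_n:G_n] = e_n$, whereas $[\pi_n^{-1}(G_n):G_{n+1}] = [\ker\pi_n : G_{n+1}\cap\ker\pi_n]$ divides $|\ker\pi_n| = p^2$. Comparing indices, $e_{n+1} = e_n\cdot[\pi_n^{-1}(G_n):G_{n+1}]$; combined with $e_n = \#\Of^\times/d_n$ this forces $d_{n+1}\mid d_n$ and
$$\frac{d_n}{d_{n+1}} = \bigl[\pi_n^{-1}(G_n):G_{n+1}\bigr] \mid \gcd\bigl(p^2,\,d_n\bigr) \mid \gcd\bigl(p^2,\,\#\Of^\times\bigr).$$
So $(d_n)_{n\geq t}$ is a divisibility-decreasing sequence of divisors of $\#\Of^\times$. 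Part (3) follows at once ($d_n = 1$ forces $d_{n+1} = 1$, then induct), and so does part (4) (if $d_n = \#\Of^\times$, then for $t\leq m\leq n$ chaining the relations $d_{i+1}\mid d_i$ gives $d_n\mid d_m$, while $d_m\mid\#\Of^\times = d_n$, so $d_m = \#\Of^\times$). For part (5), in each of the three listed cases the displayed divisibility forces $d_n/d_{n+1} = 1$: if $p>3$, or $p = 3$ and $j_{K,f}\neq 0$, then $\gcd(p^2,\#\Of^\times) = 1$ because $\#\Of^\times\in\{2,4,6\}$; and if $p = 2$, or $p = 3$ and $j_{K,f} = 0$, then the standing hypothesis that $d_n$ is prime to $p$ gives $\gcd(p^2,d_n) = 1$. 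Hence $G_{n+1} = \pi_n^{-1}(G_n)$, and then $[H_f(E[p^{n+1}]):H_f(E[p^n])] = |G_{n+1}|/|G_n| = |\ker\pi_n| = p^2$.

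I do not expect a genuine difficulty here --- the argument is essentially bookkeeping built on Lemmas~\ref{lem-cartansbgp} and \ref{lem-unitsmod} and Theorem~\ref{thm-incl} --- so the main thing to get right is precisely that bookkeeping: that $\OO_{K,f,p^n}^\times$ is trivial exactly for $n\geq t$, that the Cartan index ratio is $p^2$ with no exceptional primes, and, for part (5), that the ``defect'' $d_n/d_{n+1}$ is a priori forced to divide \emph{both} $p^2$ \emph{and} $\#\Of^\times$. The three bulleted hypotheses are exactly the configurations where the gcd of those two quantities collapses to $1$; the case $p = 3$, $j_{K,f} = 0$ with $3\mid d_n$ excluded from (5) is genuinely different, since there $d_n/d_{n+1}$ can equal $3$.
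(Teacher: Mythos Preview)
Your proof is correct and follows essentially the same approach as the paper. The paper organizes the argument via a field diagram with indices $a,b,c,d$ satisfying $ac=bd$ (where your $d_n,d_{n+1}$ are the paper's $c,b$), while you package the same relations through $e_n=\#\Of^\times/d_n$ and the divisibility $d_n/d_{n+1}\mid\gcd(p^2,d_n)$; the underlying inputs (Lemma~\ref{lem-cartansbgp}, Lemma~\ref{lem-unitsmod}, Corollary~\ref{cor-weber}, Theorem~\ref{thm-incl}) and the logic are identical.
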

\begin{proof}
Let $p\geq 2$ be a fixed prime, and let $G_n$ be the image of $\Gal(H_f(E[p^{n}])/H_f)$ in $(\Of/p^{n}\Of)^\times$.  In this proof, we will argue in terms of the following diagram:
	$$\xymatrix{
	& H_f(E[p^{n+1}]) & \\
	H_f(E[p^n]) \ar@{-}[ur]^a&  & H_f(h(E[p^{n+1}])) \ar@{-}[ul]_b \\
	& H_f(h(E[p^n])) \ar@{-}[ur]_d \ar@{-}[ul]^c &  \\
H_f \ar@{-}[ur] & & }$$
where $a,b,c,d$ are the indices of the corresponding extensions. 
\begin{enumerate}
	\item The group $G_{n+1}$ surjects onto $G_n$ (by restriction) and the kernel of the reduction map $(\Of/p^{n+1}\Of)^\times \to (\Of/p^{n}\Of)^\times$ is of size $p^2$,  by Lemma \ref{lem-cartansbgp}. Hence, $|G_{n+1}|$ is a divisor of $|G_n|\cdot p^2$, and $a=|G_{n+1}/G_n|$ is a divisor of $p^2$, as claimed.
	\item  Let $n\geq t$. By Corollary \ref{cor-weber}, we have an isomorphism $$\Gal(H_f(h(E[p^n]))/H_f)\cong (\Of/p^n\Of)^\times/(\Of^\times/\mathcal{O}_{K,f,p^n}^\times),$$ and since $\mathcal{O}_{K,f,p^n}^\times = \mathcal{O}_{K,f,p^{n+1}}^\times=\{1\}$ by Lemma \ref{lem-unitsmod}, it follows that $d=p^2$ where we have also used the fact that $|(\Of/p^{n+1}\Of)^\times|/|(\Of/p^n\Of)^\times| = p^2$, which was shown in Lemma \ref{lem-cartansbgp}. This shows (2).
	\item Suppose that $c=1$ for some $n\geq t$. By (2), we have $d=p^2$. Since $ac=bd$, we must have $a=p^2$ as well, and therefore $b=1$. Now we can use induction to show that $[H_f(E[p^m]):H_f(h(E[p^{m}]))]=1$ for all $m\geq n$.
	\item Suppose $n\geq t$, and $b=\# \Of^\times$. Then, $ac=bd=\# \Of^\times\cdot  p^2$, and $a$ and $c$ are, respectively, divisors of $p^2$ and $\# \Of^\times$. Hence, we must have $a=p^2$ and $c=\#\Of^\times$.
	\item Suppose one of the hypotheses hold. First, we note that under any of these hypotheses we have $d=p^2$. Then, $[H_f(E[p^n]):H_f(h(E[p^{n}]))]$ and $p^2$ are relatively prime, and therefore, $c$ and $d$ are relatively prime. Since $ac=bd$, we must have $a=d$ and $b=c$. In particular, $a=p^2$. Therefore we must have an equality $|G_{n+1}|=|G_n|\cdot p^2$, which implies that $G_{n+1}$ is indeed the full inverse image of $G_n$ under reduction modulo $p^n$.
\end{enumerate} 
This concludes the proof of the theorem.
\end{proof}

\begin{example} 
The conclusion of Theorem \ref{thm-inclp}, part (5), does not hold in general  when $p=2$ and $n=1$. For instance, let $E/\Q : y^2=x^3+x$, with $j(E)=1728$ and CM by $\Z[i]\subseteq K=\Q(i)=H_f$. Here, $\Q(E[2])=\Q(i)=H_f$, so $H_f(E[2])/H_f$ is trivial. Moveover, $\Q(E[4])=\Q(\zeta_8)$, and so $H_f(E[4])=H_f(\sqrt{2})$ is quadratic over $H_f=\Q(i)$. Thus, $[H_f(E[4]):H_f(E[2])]=2$. The reason for the discrepancy with the results of Theorem \ref{thm-inclp} is the fact that $\OO_{K,f,4}$ is trivial, but $\OO_{K,f,2}=\{\pm  1\}$ is not. 
\end{example}

Below, we have adapted an argument in \cite[Theorem 2.11]{bourdon2} to show that there are elliptic curves with CM by $\Of$ such that $\Gal(H_f(E[N])/H_f)$ is as large as possible, i.e., isomorphic to $(\Of/N\Of)^\times$. The proof is similar, except that here we make sure that $E$ is defined over $\Q(j_{K,f})$.

\begin{thm}\label{thm-twistforfullimage}
Let $\Of$ be an order of conductor $f\geq 1$, contained in an imaginary quadratic field $K$, and let $N\geq 3$ be an integer. Then, there exists an elliptic curve $E/\Q(j_{K,f})$ with CM by $\Of$, such that $\Gal(H_f(E[N])/H_f) \cong (\Of/N\Of)^\times$, where $H_f=K(j_{K,f})$. 
\end{thm}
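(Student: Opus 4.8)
The plan is to obtain $E$ as a suitable twist of a fixed elliptic curve $E_0/\Q(j_{K,f})$ with CM by $\Of$; such an $E_0$ exists because $j_{K,f}$ is, by definition, the $j$-invariant of a curve with CM by $\Of$ admitting a model over $\Q(j_{K,f})$. Write $w=\#\Of^\times\in\{2,4,6\}$ and $L=H_f(h(E_0[N]))$, the latter being, by Corollary \ref{cor-classfieldinterpretofdivfield}, the class field $H_{f,N}$, which in particular does not depend on the choice of curve with CM by $\Of$. Since $N\geq 3$, Lemma \ref{lem-unitsmod} gives $\mathcal{O}_{K,f,N}^\times=\{1\}$, so by Theorem \ref{thm-incl} it is enough to produce a twist $E$ of $E_0$ for which $[H_f(E[N]):H_f(h(E[N]))]=w$. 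By the argument in the proof of Theorem \ref{thm-incl}, every $\sigma\in\Gal(\overline{H_f}/L)$ acts on $E[N]$ through an automorphism of $E$; this defines a homomorphism $\varepsilon_E\colon\Gal(\overline{H_f}/L)\to\Aut(E)\cong\Of^\times\cong\mu_w$ whose image is precisely $\Gal(H_f(E[N])/L)$. Thus the theorem reduces to finding a twist $E$ of $E_0$ for which $\varepsilon_E$ is surjective.

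Next I would track the effect of twisting on $\varepsilon_E$. By Theorem \ref{thm-endo} all of $\End(E_0)$, hence all of $\Aut(E_0)\cong\Of^\times=\mu_w$, is defined over $H_f=K(j_{K,f})$, and moreover $\mu_w\subseteq H_f$. Consequently the twists of $E_0$ are classified by $H^1(\Gal(\overline{\Q(j_{K,f})}/\Q(j_{K,f})),\mu_w)$, they all admit models over $\Q(j_{K,f})$ (quadratic twists if $j_{K,f}\neq 0,1728$, quartic twists if $j_{K,f}=1728$, sextic twists if $j_{K,f}=0$), and the restriction of a twisting cocycle to $\Gal(\overline{H_f}/H_f)$ is an honest character $\chi\colon\Gal(\overline{H_f}/H_f)\to\mu_w$. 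A direct computation with the twisting isomorphism $E_0^{\chi}\to E_0$ then yields $\varepsilon_{E_0^{\chi}}=\chi|_{\Gal(\overline{H_f}/L)}\cdot\varepsilon_{E_0}$ (up to replacing $\chi$ by $\chi^{-1}$, which is irrelevant here).

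It remains to choose the twisting class so that $\chi|_{\Gal(\overline{H_f}/L)}\cdot\varepsilon_{E_0}$ is onto $\mu_w$. Let $M=H_f(E_0[N])$, a number field; the image of $\varepsilon_{E_0}$ is the Galois group of the subextension $M/L$. I would take as twisting parameter an element $d\in\Q(j_{K,f})^\times$ whose class in $M^\times/(M^\times)^w$ has order exactly $w$; such a $d$ exists because $\Q(j_{K,f})\subseteq M$ and the image of $\Q(j_{K,f})^\times$ in $M^\times/(M^\times)^w$ cannot have exponent smaller than $w$, as otherwise $M$ would contain $w$-power roots of too many elements of $\Q(j_{K,f})^\times$, contradicting $[M:\Q]<\infty$. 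For such $d$ one checks that $[L(d^{1/w}):L]=w$ and that $L(d^{1/w})$ is linearly disjoint from $M$ over $L$; hence the characters $\chi|_{\Gal(\overline{H_f}/L)}$ (which cuts out $L(d^{1/w})/L$) and $\varepsilon_{E_0}$ are independent, so $(\varepsilon_{E_0},\chi|_{\Gal(\overline{H_f}/L)})$ surjects onto $(\text{image of }\varepsilon_{E_0})\times\mu_w$, and therefore their product $\varepsilon_{E_0^{\chi}}$ surjects onto $\mu_w\cong\Of^\times$. Taking $E=E_0^{\chi}$ and applying Theorem \ref{thm-incl} as above gives $\Gal(H_f(E[N])/H_f)\cong(\Of/N\Of)^\times$, as desired.

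The step I expect to need the most care is the twisting bookkeeping in the cases $j_{K,f}\in\{0,1728\}$: there $\mu_w=\mu_4$ or $\mu_6$ is a nontrivial module for $\Gal(\overline{\Q}/\Q)$ and the twisting cocycle is genuinely not a homomorphism over $\Q$, so one must be careful that it becomes a character only after restriction to $\Gal(\overline{H_f}/H_f)$ — which is exactly the group over which $\varepsilon_E$ is defined — in order for the identity $\varepsilon_{E_0^{\chi}}=\chi|_{\Gal(\overline{H_f}/L)}\cdot\varepsilon_{E_0}$ to make sense and hold. The remaining points (existence of $d$ with class of order $w$ in $M^\times/(M^\times)^w$, and the linear disjointness of $L(d^{1/w})$ and $M$ over $L$) are routine, the latter because $L(d^{1/w})/L$ and $M(d^{1/w})/M$ have the same degree $w$.
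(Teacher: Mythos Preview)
Your proof is correct and follows essentially the same approach as the paper's: both start from an arbitrary $E_0/\Q(j_{K,f})$ with CM by $\Of$, observe via Theorem~\ref{thm-incl} and Corollary~\ref{cor-weber} that the deficiency of $\Gal(H_f(E_0[N])/H_f)$ inside $(\Of/N\Of)^\times$ is governed by the image of $\Gal(H_f(E_0[N])/H_f(h(E_0[N])))$ in $\Of^\times\cong\mu_w$, and then twist by a suitable class so that this image becomes all of $\mu_w$. The paper phrases the key computation as $\rho_{E^\chi,N}(G_{E,N})\cong\mu_n$ and secures disjointness by taking the twisting parameter to be a prime unramified in $H_f(E[N])$; you instead package the same information into the character $\varepsilon_E$ and secure disjointness by choosing $d$ of exact order $w$ in $M^\times/(M^\times)^w$. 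Both choices yield the needed linear disjointness, and your explicit remark that the twisting cocycle becomes a genuine character only after restriction to $G_{H_f}$ is a point the paper's proof handles somewhat informally in the cases $j_{K,f}\in\{0,1728\}$.
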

\begin{proof}
Let $E/\Q(j_{K,f})$ be an arbitrary elliptic curve with CM by the order $\Of$. By Theorem \ref{thm-incl}, we have an injection
$$\Gal(H_f(E[N])/H_f)\hookrightarrow \Aut_{\Of/N\Of}(E[N])\cong  \left(\dfrac{\Of}{N\Of}\right)^\times,$$
and by Corollary \ref{cor-weber}, we have 
$$\Gal(H_f(h(E[N]))/H_f)\cong \left(\dfrac{\Of}{N\Of}\right)^\times \Big/ \dfrac{\Of^\times}{\mathcal{O}_{K,f,N}^\times}.$$
Thus, it suffices to find a curve $E'/\Q(j_{K,f})$ with CM by $\Of$ such that $$\Gal(H_f(E'[N])/H_f(h(E'[N])))\cong \dfrac{\Of^\times}{\mathcal{O}_{K,f,N}^\times} \cong \begin{cases}
\Of^\times/\{\pm 1\} & \text{ if } N=2,\\
\Of^\times & \text{ otherwise},
\end{cases}$$
where the last isomorphism comes from Lemma \ref{lem-unitsmod}. In all cases, $\Of^\times/\OO_{K,f,N}^\times \cong \mu_n$, for $n=n(f,N)=1,2,3,4,$ or $6$.

Suppose that $\Gal(H_f(E[N])/H_f(h(E[N])))\cong \mu_m$ with $m< n$ and $m$ divides $n$. Let $\chi$ be a character $\Gal(\overline{H_f}/H_f)\to \mu_n$ such that the fixed field of $\overline{H_f}$ by the kernel  of $\chi$, denoted by $F_\chi = \overline{H_f}^{\ker(\chi)}$, is an extension of $H_f$ of degree $n$ that is disjoint from $H_f(E[N])$, i.e., $F_\chi \cap H_f(E[N])=H_f$. Note that there are infinitely many such characters: indeed, consider for instance the characters given by $\chi_k(\sigma) = \sigma(\sqrt[n]{k})$ for each square-free integer $k> 1$, with fixed field $F_{\chi_k}=H_f(\sqrt[n]{k})$ (note that $\mu_n \subseteq K\subseteq H_f$ so $\chi_k$ is a character and not just a $1$-cocycle in this case). Since $H_f(E[N])$ is a finite extension, there are infinitely many choices of $k$ such that $F_{\chi_k}$ and $H_f(E[N])$ are disjoint (for example, take $k=p$, where $p$ is an unramified prime in $H_f(E[N])$).

Fix one such character $\chi=\chi_k$, consider $E$ as defined over $H_f$ for a moment, and consider $E^\chi$, the twist of $E$ by $\chi$. A priori, $E^\chi$ is also defined over $H_f$, but note that $E^\chi$ has a model defined over the field of definition of $E$, that is, $\Q(j_{K,f})$, by \cite[Ch. ~X, Prop. ~5.4]{silverman} because $\chi=\chi_k$ with $k\in\Z$. Moreover, $j_{K,f}=j(E)=j(E^\chi)$ because $E$ and $E^\chi$ are isomorphic over $F_{\chi}$.  Furthermore, the Galois representation $\rho_{E^\chi,N}\colon \Gal(\overline{H_f}/H_f) \to (\Of/N\Of)^\times$ is given by a twist of $\rho_{E,N}$ by $\chi$, i.e., 
$$\rho_{E^\chi,N} = \rho_{E,N}\otimes \chi.$$
Now, let $G_{E,N} = \Gal(\overline{H_f}/H_f(E[N]))$ and consider its image via $\rho_{E^\chi,N}$:
$$\rho_{E^\chi,N}(G_{E,N}) = (\rho_{E,N}\otimes \chi)(G_{E,N}).$$
Since $G_{E,N}\subseteq \ker(\rho_{E,N})$, it follows that $\rho_{E,N}(G_{E,N})$ is trivial. Moreover, since $F_\chi \cap H_f(E[N])=H_f$, it follows that $\chi(G_{E,N})=\chi(\Gal(\overline{H_f}/H_f)\cong \mu_n$. Thus, $\rho_{E^\chi,N}(G_{E,N})\cong \mu_n$. This implies that the degree of the extension $H_f(E^\chi[N])/(H_f(E^\chi[N])\cap H_f(E[N]))$ is $n$. By Corollary \ref{cor-classfieldinterpretofdivfield}, we have $H_f(h(E^\chi[N]))=H_f(h(E[N])) \subseteq H_f(E^\chi[N])\cap H_f(E[N]).$ Hence:
$$n=[H_f(E^\chi[N]): H_f(E^\chi[N])\cap H_f(E[N]) ] \leq [H_f(E^\chi[N]): H_f(h(E^\chi[N]))] \leq n,$$
where we have used the fact that, for any elliptic curve $E'$ with CM by $\Of$ we have $$\Gal(H_f(E'[N])/H_f(h(E'[N])))\hookrightarrow \Of^\times/\OO_{K,f,N}^\times \cong \mu_n$$ by Theorem \ref{thm-incl}. We conclude that $[H_f(E^\chi[N]): H_f(h(E^\chi[N]))] = n$, which shows the isomorphism $\Gal(H_f(E^\chi[N])/H_f) \cong (\Of/N\Of)^\times$, as desired.
\end{proof} 

We conclude this section with a collection of  results about the image of $\rho_{E,NM}$ compared to the image of $\rho_{E,N}$ and $\rho_{E,M}$, when $\gcd(N,M)=1$. 

\begin{theorem}
	Let $E/\Q(j_{K,f})$ be an elliptic curve with CM by an order $\Of$ of an imaginary quadratic field $K$, let $H_f = K(j_{K,f})$, and let $M>N\geq 2$ be relatively prime integers. Then, there is an injection 
	$$\Gal(H_f(E[MN])/H_f)\hookrightarrow\Gal(H_f(E[M])/H_f)\times \Gal(H_f(E[N])/H_f)$$
	and the cokernel embeds into  $\Of^\times/\mathcal{O}_{K,f,MN}^\times$. Further, if we put $G_N=\Gal(H_f(E[N])/H_f)$, $Gh_N= \Gal(H_f(E[N])/H_f(h(E[N])))$, and $U_{f,N}=(\Of/N\Of)^\times$, then
	\begin{enumerate}
		\item  The index of $G_{MN}$ in $G_M\times G_N$ is at most $|\Of^\times/\mathcal{O}_{K,f,MN}^\times|$.
		\item If $G_{MN}\cong U_{f,MN}$, then  $G_M\cong U_{f,M}$ and $G_N\cong U_{f,N}$. In particular, $G_{MN}\cong G_{M}\times G_N$.
		\item There is an injection $Gh_{MN}\hookrightarrow Gh_M\times Gh_N$. In particular, if $Gh_M=Gh_N=\{1\}$, then $Gh_{MN}$ would be trivial as well, and this can only happen if $N=2$. In this case, $G_{MN}\cong G_{M}\times G_N$ also.
		\item Suppose $N>2$,  and $Gh_M$ or $Gh_N$ is trivial. Then, $Gh_{MN}$ is trivial  and $G_{MN}\cong G_M\times G_N$.
	\end{enumerate}
\end{theorem}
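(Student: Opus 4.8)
\emph{Proof proposal.}

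Since $\gcd(M,N)=1$, the Chinese Remainder Theorem gives ring isomorphisms $\Of/MN\Of\cong\Of/M\Of\times\Of/N\Of$, hence $U_{f,MN}\cong U_{f,M}\times U_{f,N}$, and an isomorphism of Galois modules $E[MN]\cong E[M]\oplus E[N]$; in particular $H_f(E[MN])$ is the compositum of $H_f(E[M])$ and $H_f(E[N])$ over $H_f$. The restriction homomorphism $G_{MN}=\Gal(H_f(E[MN])/H_f)\to G_M\times G_N$, $\sigma\mapsto(\sigma|_{H_f(E[M])},\sigma|_{H_f(E[N])})$, is then injective, because its image generates the compositum $H_f(E[MN])$; this is the asserted injection. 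Both coordinate projections of this image are surjective (restriction of a Galois group along a subfield is surjective), so the image is a subdirect product, and Goursat's lemma identifies the quotient $(G_M\times G_N)/G_{MN}$ in cardinality with $\Gal(L/H_f)$, where $L:=H_f(E[M])\cap H_f(E[N])$. Thus the injection statement and part (1) reduce to showing
$$\Gal(L/H_f)\ \hookrightarrow\ \Of^\times/\mathcal{O}_{K,f,MN}^\times .$$

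For this estimate, recall from Corollary \ref{cor-classfieldinterpretofdivfield} that $H_f(h(E[M]))=:H_{f,M}$ depends only on $K,f,M$, and I would first record that $H_{f,M}\cap H_{f,N}=H_f$ and $H_f(h(E[MN]))=H_{f,M}H_{f,N}$; both follow from $\gcd(M,N)=1$ via the Chinese Remainder Theorem in $\Of$ (which gives $P_1(\Of,M)\cap P_1(\Of,N)=P_1(\Of,MN)$ and $P_1(\Of,M)P_1(\Of,N)=P(\Of,MN)$) together with Theorem \ref{thm-schertz}. Writing $Gh_M=\Gal(H_f(E[M])/H_{f,M})$, which embeds into $\Of^\times$ by Theorem \ref{thm-incl}, and $K_M=\Gal(H_f(E[M])/L)\leq G_M$, one has $\Gal(L/H_f)=G_M/K_M$ and $\Gal(L\cap H_{f,M}/H_f)=G_M/(K_MGh_M)$, whence
$$\#\Gal(L/H_f)=\#\Gal(L\cap H_{f,M}/H_f)\cdot\#\bigl(Gh_M/(Gh_M\cap K_M)\bigr),$$
the second factor dividing $\#\Of^\times$, and symmetrically with $M$ replaced by $N$. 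Since $L\cap H_{f,M}\subseteq H_{f,M}$ and $L\cap H_{f,N}\subseteq H_{f,N}$ meet in $H_{f,M}\cap H_{f,N}=H_f$, it suffices to prove that \emph{at least one} of $L\cap H_{f,M}$, $L\cap H_{f,N}$ equals $H_f$; then the corresponding display gives $\#\Gal(L/H_f)\mid\#\Of^\times$, and since the group in question is then a subquotient of the cyclic group $\Of^\times$ it embeds into $\Of^\times/\mathcal{O}_{K,f,MN}^\times$ (note $MN\ge 6$). \emph{This last point is where I expect the main work to lie}: one must rule out that the ``$\Of^\times$-torsors'' $H_f(E[M])/H_{f,M}$ and $H_f(E[N])/H_{f,N}$ contribute independent pieces to $L$. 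The natural route is to use Theorem \ref{thm-twistforfullimage} to write $E$ as a twist by a single character $\chi$ (with values in $\Of^\times/\mathcal{O}_{K,f,MN}^\times$) of a curve $E_0$ that is $\Of^\times$-maximal at level $MN$, for which $L(E_0)=H_f$ by part (2) below; since $H_f(E[M])$ and $H_f(E_0[M])$ agree after adjoining the fixed field $F_\chi$ of $\chi$ (and likewise at level $N$), the common part $L=L(E)$ is forced into a field controlled by $F_\chi$, which has degree dividing $\#\Of^\times$ over $H_f$. Carrying out this comparison carefully, with attention to the primes of bad reduction of $E$ dividing $M$ or $N$, is the crux of the argument.

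Part (2) is immediate: if $G_{MN}\cong U_{f,MN}\cong U_{f,M}\times U_{f,N}$, then the surjective projections force $G_M=U_{f,M}$ and $G_N=U_{f,N}$, hence $G_{MN}=U_{f,M}\times U_{f,N}=G_M\times G_N$. For part (3), apply the compositum argument to the Weber-function fields: with $H_f(h(E[MN]))=H_{f,M}H_{f,N}$ as above, every $\sigma\in Gh_{MN}=\Gal(H_f(E[MN])/H_f(h(E[MN])))$ fixes both $H_{f,M}$ and $H_{f,N}$, hence restricts to elements of $Gh_M$ and of $Gh_N$, injectively by the compositum property, giving $Gh_{MN}\hookrightarrow Gh_M\times Gh_N$; the remark that $Gh_M=Gh_N=\{1\}$ can occur only for $N=2$ follows from the description of $Gh_\bullet$ as a subgroup of $\Of^\times/\mathcal{O}_{K,f,\bullet}^\times$ (Theorem \ref{thm-incl}) together with Lemma \ref{lem-unitsmod}, which singles out $N=2$ as the exceptional modulus; and when $Gh_{MN}=\{1\}$ the estimate above forces $L=H_f$, i.e., $G_{MN}\cong G_M\times G_N$. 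For part (4), assume $N>2$ and, say, $Gh_M=\{1\}$ (so $M\ge 3$): any $\sigma\in Gh_{MN}$ acts on $E[MN]$ as multiplication by a unit $u\in\Of^\times$ (Theorem \ref{thm-incl}), and since $\sigma$ fixes $H_{f,M}=H_f(h(E[M]))$ its restriction to $H_f(E[M])$ lies in $Gh_M=\{1\}$, so $u$ acts trivially on $E[M]$, i.e.\ $u\equiv 1\bmod M\Of$, whence $u=1$ by Lemma \ref{lem-unitsmod}; thus $\sigma=1$, so $Gh_{MN}=\{1\}$, and again $L=H_f$ and $G_{MN}\cong G_M\times G_N$.
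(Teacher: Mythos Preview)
Your treatment of the injection, of part (2), and of the injection $Gh_{MN}\hookrightarrow Gh_M\times Gh_N$ is correct and essentially the same as the paper's. Your arguments for (3) and (4) are also fine (in (4) you phrase it via the action of $u\in\Of^\times$ on $E[M]$, whereas the paper just counts, but both work).

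The place where you diverge from the paper is part (1) and the cokernel bound, and here you are making life much harder than necessary. You invoke Goursat's lemma, analyze $L=H_f(E[M])\cap H_f(E[N])$, and then set up a twisting comparison with an auxiliary curve $E_0$, while acknowledging that ``carrying out this comparison carefully \ldots\ is the crux of the argument.'' None of this is needed. The paper's proof of (1) is a two-line cardinality estimate: by Corollary~\ref{cor-weber} (equivalently Theorem~\ref{thm-incl}) one has
\[
|G_{MN}|\ \geq\ \bigl|U_{f,MN}\bigr|\big/\bigl|\Of^\times/\mathcal{O}_{K,f,MN}^\times\bigr|,
\]
since $H_f(h(E[MN]))/H_f$ already has Galois group $U_{f,MN}/(\Of^\times/\mathcal{O}_{K,f,MN}^\times)$; and trivially $|G_M\times G_N|\leq |U_{f,M}|\cdot|U_{f,N}|=|U_{f,MN}|$ because each $G_\bullet\hookrightarrow U_{f,\bullet}$. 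Dividing gives $[G_M\times G_N:G_{MN}]\leq|\Of^\times/\mathcal{O}_{K,f,MN}^\times|$. For the embedding of the cokernel itself, note that under the CRT identification $U_{f,MN}\cong U_{f,M}\times U_{f,N}$ the chain $G_{MN}\subseteq G_M\times G_N\subseteq U_{f,MN}$ makes $(G_M\times G_N)/G_{MN}$ a subgroup of $U_{f,MN}/G_{MN}$, and by Theorem~\ref{thm-incl} the latter is a quotient of the cyclic group $\Of^\times/\mathcal{O}_{K,f,MN}^\times$, hence also embeds in it.

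So your proof is not wrong in spirit, but the acknowledged gap in your argument for (1) is real, and the fix is not to complete the twisting analysis but to bypass it entirely with the size bound above. The same counting device is what the paper uses in (3) to see that $Gh_M=Gh_N=\{1\}$ forces $N=2$ (compare $|\Of^\times/\mathcal{O}_{K,f,M}^\times|\cdot|\Of^\times/\mathcal{O}_{K,f,N}^\times|$ with $|\Of^\times/\mathcal{O}_{K,f,MN}^\times|$), though your alternative argument there is also fine.
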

\begin{proof}
	Let $\gcd(M,N)=1$. The injection $G_{MN}\hookrightarrow G_{M}\times G_N$ is clear, since $H_f(E[MN])$ is the compositum of $H_f(E[M])$ and $H_f(E[N])$. Also, we have $Gh_{MN}\hookrightarrow Gh_M\times Gh_N$ because if $\sigma\in Gh_{MN}$ and we restrict it to $H_f(E[M])$, since $\sigma$ fixes $H_f(h(E[MN]))\supseteq H_f(h(E[M]))$, then $\sigma|_{H_f(E[M])}\in Gh_M$. Then,
	\begin{enumerate}
		\item The smallest $G_{MN}$ can be is $|U_{f,MN}/(\Of^\times/\mathcal{O}_{K,f,MN}^\times)|$ while the right hand side is as large as $U_{f,M}\times U_{f,N}\cong U_{f,MN}$. Hence, the index of $G_{MN}$ in $G_M\times G_N$ is at most $|\Of^\times/\mathcal{O}_{K,f,MN}^\times|$.
		\item If $G_{MN}\cong U_{f,MN}$, then the size of $G_{M}\times G_N$ must be $U_{f,MN}\cong U_{f,M}\times U_{f,N}$ and so $G_M\cong U_{f,M}$ and $G_N\cong U_{f,N}$. In particular, $G_{MN}\cong G_{M}\times G_N$.
		\item The injection $Gh_{MN}\hookrightarrow Gh_M\times Gh_N$ implies that if $Gh_M=Gh_N=\{1\}$, then $Gh_{MN}$ would be trivial as well. Further, if $Gh_M=Gh_N=\{1\}=Gh_{MN}$, then $$|U_{f,MN}|/|\Of^\times/\mathcal{O}_{K,f,MN}^\times|=|U_{f,MN}|/(|\Of^\times/\mathcal{O}_{K,f,M}^\times|\cdot |\Of^\times/\mathcal{O}_{K,f,N}^\times|)$$
		%and so
		%$$|\Of^\times/\mathcal{O}_{f,MN}^\times|=|\Of^\times/\mathcal{O}_{f,M}^\times|\cdot |\Of^\times/\mathcal{O}_{K,f,N}^\times|$$
		and so
		$$|\Of^\times|\cdot |\mathcal{O}_{K,f,MN}^\times| = |\mathcal{O}_{K,f,M}^\times|\cdot |\mathcal{O}_{K,f,N}^\times|.$$
		If $M,N\neq 2$, then $|\mathcal{O}_{K,f,MN}^\times| = |\mathcal{O}_{K,f,M}^\times|= |\mathcal{O}_{K,f,N}^\times|=1$, and this implies $|\Of^\times|=1$, which is impossible. If $N=2$, then $M\neq 2$ (because we assumed $\gcd(M,N)=1)$, and this implies $|\Of^\times|=2$, which is possible. In this case we would have $|G_{MN}|=|U_{f,MN}|/2$ and $|G_M\times G_N| = |U_{f,MN}|/(2/1\cdot 2/2)$ and so we must have $G_{MN}\cong G_M\times G_N$.
		\item Suppose $N>2$. By Lemma \ref{lem-unitsmod}, we have $\OO_{f,M}^\times =\OO_{K,f,N}^\times =\OO_{f,MN}^\times =\{1 \}$. Without loss of generality, let us assume that $Gh_N$ is trivial. Then, $|G_N|=|U_{f,N}/\Of^\times|$ and so $|G_{MN}|$ is of size at most $|U_{f,MN}/\Of^\times|$. Since that is the least $|G_{MN}|$ can be, we must have an equality, and therefore $Gh_{MN}$ must be trivial, and $G_{MN}\cong G_M\times G_N$.
	\end{enumerate}
\end{proof} 

\section{Interlude: Normalizers of Cartan Subgroups}\label{sec-normalizers}

In this section we discuss properties of abstract Cartan subgroups of matrices, and their normalizers. Let $n\geq 1$ be an integer, and let $p$ be a prime number. For a commutative ring $R$, and elements $\delta,\phi\in R$, we define 
$$\cC_{\delta,\phi}(R)=\left\{\left(\begin{array}{cc}
a+b\phi & b\\
\delta b & a\\
\end{array}\right): a,b\in R,\  a^2+ab\phi-\delta b^2\in R^\times \right\}.$$
Then, one can show that $\cC_{\delta,\phi}(R)$ is a subgroup of $\GL(2,R)$, for any choices of $\delta$ and $\phi$ in $R$. Moreover, the equation
$$ \left(\begin{array}{cc}
1 & 0\\
\phi/2 & 1\\
\end{array}\right)\cdot \left(\begin{array}{cc}
a+b\phi & b\\
\delta b & a\\
\end{array}\right)\cdot \left(\begin{array}{cc}
1 & 0\\
-\phi/2 & 1\\
\end{array}\right)=\left(\begin{array}{cc}
a+b\phi/2 & b\\
(\delta +\phi^2/4)b & a+b\phi/2\\
\end{array}\right)$$
 shows that if $2$ is invertible in $R$, or $\phi\in (2)$, then there is an element $\phi/2 \in R$ (not necessarily unique), such that $\cC_{\delta,\phi}(R)$ is a conjugate of $\cC_{(\delta+\phi^2/4),0}(R)$ in $\GL(2,R)$. We shall abreviate $\cC_{\delta,0}(R)$ by $\cC_\delta(R)$.  Our first step is to compute the normalizer of $\cC_{\delta,\phi}(R)$, when $R$ is a local domain in characteristic zero.
 
 \begin{lemma}\label{lem-variety} Let $R$ be a domain, let $\delta,\phi\in R$, and let $F$ be  the field of fractions of $R$. Let $N_R(\cC_{\delta,\phi}(R))$  be the normalizer of $\cC_{\delta,\phi}(R)$ in $\GL(2,R)$, and let  $N_F(\cC_{\delta,\phi}(F))$ be the normalizer of $\cC_{\delta,\phi}(F)$ in  $\GL(2,F)$. Then, $N_R(\cC_{\delta,\phi}(R))= N_F(\cC_{\delta,\phi}(F)) \cap \GL(2,R)$.
 	\end{lemma} 
 \begin{proof}
 	Let $g\in  N_F(\cC_{\delta,\phi}(F)) \cap \GL(2,R)$ and let $c\in \cC_{\delta,\phi}(R)$. Since $\cC_{\delta,\phi}(R)\subseteq \cC_{\delta,\phi}(F)$, it follows that $gcg^{-1}\in \cC_{\delta,\phi}(F)$, and since $g$ and $c$ are in $\GL(2,R)$, it follows that $gcg^{-1}\in \cC_{\delta,\phi}(R)$. Thus, $g\in N_R(\cC_{\delta,\phi}(R))$. This shows that $N_F(\cC_{\delta,\phi}(F)) \cap \GL(2,R)\subseteq N_R(\cC_{\delta,\phi}(R))$.
 	
 	For the other containment, let us consider $\cC_{\delta,\phi}/R$ as a variety in $\mathbb{A}^4$ defined over $R$ by the equations $X-W=\phi Y$ and $Z=\delta Y$, and a function $\det\colon \cC_{\delta,\phi}(F)\to F$ defined by $\det(X,Y,Z,W)=XW-YZ$. Under this notation, an element $g\in N_R(\cC_{\delta,\phi}(R))$ defines an algebraic function $\tau_g\colon \cC_{\delta,\phi} \to \cC_{\delta,\phi}$, defined over $R$, which commutes with $\det$, i.e., $\det(\tau_g)=\tau_g(\det)$. In particular, if $c=(x_0,y_0,z_0,w_0)\in \cC_{\delta,\phi}(F)$, then $\tau_g(c)\in \cC_{\delta,\phi}(F)$, and since $\det(c)\neq 0$, it follows that $\det(\tau_g(c))\neq 0$ as well. Hence, $\tau_g(c)\in \cC_{\delta,\phi}(F)$. This shows that $N_R(\cC_{\delta,\phi}(R))\subseteq N_F(\cC_{\delta,\phi}(F))$, and since $N_R(\cC_{\delta,\phi}(R))\subseteq \GL(2,R)$, it follows that $N_R(\cC_{\delta,\phi}(R))\subseteq N_F(\cC_{\delta,\phi}(F))\cap \GL(2,R)$, as desired.
 \end{proof}

\begin{lemma}\label{lem-normalizerchar0} If $R$ is a local domain of characteristic $0$, then the normalizer of $\cC_{\delta,\phi}(R)$ in $\GL(2,R)$ is given by $\mathcal{N}_{\delta,\phi}(R)=\langle \cC_{\delta,\phi}(R),c_\phi\rangle$ where $c_\phi=\left(\begin{array}{cc}
-1 & 0\\
\phi & 1\\
\end{array}\right)$. In other words, the normalizer is the group
$$\left\langle \cC_{\delta,\phi}(R),\left(\begin{array}{cc} -1 & 0\\ \phi & 1\\\end{array}\right)\right\rangle = \left\{\left(\begin{array}{cc}
a+b\phi & b\\
\delta b & a\\
\end{array}\right), \left(\begin{array}{cc}
-a & b\\
a\phi- \delta b& a\\
\end{array}\right) : a,b\in R,\ a^2+ab\phi - \delta b^2\in R^\times\right\}.$$
Moreover, if $G$ is a subgroup of $\cC_{\delta,\phi}(R)$ that contains a non-diagonal element, then the normalizer of $G$ in $\GL(2,R)$ is contained in $\mathcal{N}_{\delta,\phi}(R)$. 
\end{lemma}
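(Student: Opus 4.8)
The plan is to prove the three assertions in turn: that $c_\phi$ normalizes $\cC_{\delta,\phi}(R)$, that no element outside $\langle \cC_{\delta,\phi}(R),c_\phi\rangle$ normalizes $\cC_{\delta,\phi}(R)$, and finally that the normalizer of any subgroup $G$ of $\cC_{\delta,\phi}(R)$ containing a non-diagonal element is contained in $\mathcal{N}_{\delta,\phi}(R)$. First, after the conjugation by $\left(\begin{smallmatrix}1&0\\ \phi/2&1\end{smallmatrix}\right)$ remark preceding the lemma, one might be tempted to reduce to the case $\phi=0$; but since $2$ need not be invertible in $R$ and $\phi$ need not lie in $(2)$, I would instead argue directly with $\phi$ present. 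For the first assertion, I would simply compute $c_\phi M c_\phi^{-1}$ for a general $M=\left(\begin{smallmatrix}a+b\phi & b\\ \delta b & a\end{smallmatrix}\right)\in\cC_{\delta,\phi}(R)$ (noting $c_\phi^2 = \mathrm{Id}$, so $c_\phi^{-1}=c_\phi$) and check that the result is again of the displayed Cartan form, with the same determinant; this is the routine calculation that justifies the explicit description of $\mathcal{N}_{\delta,\phi}(R)$ given in the statement, and shows $[\mathcal{N}_{\delta,\phi}(R):\cC_{\delta,\phi}(R)]=2$ (the two cosets being the "$+$" and "$-$" matrices in the display).

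For the second assertion — that nothing else normalizes $\cC_{\delta,\phi}(R)$ — I would take an arbitrary $g=\left(\begin{smallmatrix}x & y\\ z & w\end{smallmatrix}\right)\in\GL(2,R)$ with $g\,\cC_{\delta,\phi}(R)\,g^{-1}=\cC_{\delta,\phi}(R)$ and extract constraints by conjugating a well-chosen element. The natural test element is $u=\left(\begin{smallmatrix}\phi & 1\\ \delta & 0\end{smallmatrix}\right)$, i.e. the matrix with $a=0,b=1$ (which lies in $\cC_{\delta,\phi}(R)$ precisely when $-\delta\in R^\times$; if $\delta\notin R^\times$ I would instead use $a=1,b=1$, so $a^2+a\phi-\delta = 1+\phi-\delta$, or more robustly work with $v - a\cdot\mathrm{Id}$ for a generic element $v$ and exploit $R$-linearity). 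Conjugating such a non-central Cartan element by $g$ and demanding the result again lie in $\cC_{\delta,\phi}(R)$ forces a system of equations on $x,y,z,w$; here is where I use that $R$ is a \emph{domain} of characteristic $0$: the relations are polynomial identities in the entries, and integrality/cancellation let me conclude that either $g$ is itself of Cartan form (then $g\in\cC_{\delta,\phi}(R)$ since $\det g\in R^\times$) or $g$ differs from such by $c_\phi$. Concretely, conjugation of a non-diagonal element permutes/mixes the two "eigendirections" of the Cartan algebra $R[u]$, and $g$ must either preserve the ordered pair of directions (diagonal-type behaviour modulo the Cartan form) or swap them (the $c_\phi$ coset); because $R$ is a domain there is no room for any intermediate behaviour. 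I would phrase this cleanly by noting $\cC_{\delta,\phi}(R)$ spans the commutative $R$-subalgebra $A=R\cdot\mathrm{Id}+R\cdot u\subseteq M_2(R)$ (since it contains $\mathrm{Id}$ and $u$), so $g$ normalizes $\cC_{\delta,\phi}(R)$ only if $gAg^{-1}=A$ — here I would invoke that $A$ is generated over $R$ by its units together with $R\cdot\mathrm{Id}$, which holds because $u = (\text{a unit}) - a\cdot\mathrm{Id}$ is achievable or, failing that, $u+\lambda\,\mathrm{Id}$ is a unit for suitable $\lambda\in R$ since $R$ is local (adjust $\lambda$ so the "norm" $a^2+a\phi-\delta b^2$ is a unit). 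Then $gAg^{-1}=A$, and $A$ being a rank-$2$ étale-or-ramified quadratic $R$-algebra, its $R$-algebra automorphisms induced by $\GL(2,R)$-conjugation are $\{1,\text{conjugation}\}$, realized by $\cC_{\delta,\phi}(R)$ and $c_\phi\cC_{\delta,\phi}(R)$ respectively.

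Finally, for the third assertion, suppose $G\subseteq\cC_{\delta,\phi}(R)$ contains a non-diagonal element $u_0$, and suppose $g$ normalizes $G$. I would consider $gu_0g^{-1}\in G\subseteq\cC_{\delta,\phi}(R)$: since $u_0$ is non-diagonal, $R[u_0]=A$ is the full quadratic subalgebra above, and $g$ centralizing-up-to-$G$ this element forces $gAg^{-1}$ to meet $A$ in something of full rank, from which (again using that $R$ is a domain, so the $R$-span is determined by finitely many elements) I get $gAg^{-1}=A$, hence $g\in\mathcal{N}_{\delta,\phi}(R)$ by the second part. The main obstacle I anticipate is handling the case where $\delta$ (or $-\delta$, or the relevant norm form) is \emph{not} a unit in $R$ — i.e. $p$ divides $\delta$, which is exactly the ramified/degenerate case of real interest in the paper — since then the "obvious" non-diagonal test matrix $\left(\begin{smallmatrix}\phi&1\\\delta&0\end{smallmatrix}\right)$ is not in $\cC_{\delta,\phi}(R)$, and the quadratic algebra $A$ is non-reduced modulo the maximal ideal; I would navigate this by always replacing a bare nilpotent-looking $u$ with $u+\lambda\,\mathrm{Id}$ for $\lambda$ chosen (using that $R$ is local) so that $N(u+\lambda\,\mathrm{Id})\in R^\times$, keeping the test element genuinely inside the Cartan group while still non-diagonal, and by tracking the argument over $R$ rather than passing to the residue field.
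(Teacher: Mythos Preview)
Your approach is correct in outline and genuinely different from the paper's. The paper argues by brute-force matrix computation: with $\phi=0$, it conjugates a fixed non-diagonal Cartan element $A=\left(\begin{smallmatrix}a&b\\\delta b&a\end{smallmatrix}\right)$ (with $b\neq 0$) by a general $M=\left(\begin{smallmatrix}c&d\\e&g\end{smallmatrix}\right)$, matches entries, and after cancelling $b$ and $2$ (using that $R$ is a domain of characteristic $0$) obtains the system $eg=\delta cd$, $\ (g^2-\delta d^2)\delta = \delta c^2-e^2$. Using that $R$ is local so that one of $e,g$ is a unit, the paper eliminates to get $(g^2-\delta d^2)(g^2-c^2)=0$, and the two factors give exactly the Cartan coset and the $c_\phi$-coset. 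For $\phi\neq 0$, rather than arguing directly as you propose, the paper passes to the \emph{fraction field} $F$ of $R$ (where $2$ is automatically invertible), conjugates $\cC_{\delta,\phi}(F)$ to $\cC_{\delta+\phi^2/4,0}(F)$ via $\Lambda=\left(\begin{smallmatrix}1&0\\-\phi/2&1\end{smallmatrix}\right)$, applies the $\phi=0$ case over $F$, and then recovers the result over $R$ by intersecting with $\GL(2,R)$.

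Your algebra-automorphism route (show $g$ normalizing the Cartan forces $gAg^{-1}=A$ for the rank-$2$ subalgebra $A=R[u]$, then classify $\Aut_R(A)$, then identify the centralizer of $A$ with $A$ itself) is cleaner conceptually and handles the ``moreover'' clause in the same breath, since a single non-diagonal $u_0\in G$ already generates $A$ over $R$. Two points worth noting: first, your claim $\Aut_R(A)=\{1,\sigma\}$ amounts to showing $T^2-\phi T-\delta$ has exactly two roots in $A$, which needs the discriminant $\phi^2+4\delta\neq 0$ (equivalently $\Delta_Kf^2\neq 0$, always true in the paper's applications); the paper's computation has the same hidden hypothesis. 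Second, the paper's fraction-field trick is worth knowing, as it lets you reduce to $\phi=0$ without any fuss about $2$ or $\phi$ being in the maximal ideal --- you might find it simpler than carrying $\phi$ through every formula.
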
 
\begin{proof}
 Let us first assume that $\phi=0$, so that  $\cC_{\delta,0}(R)=\cC_{\delta}(R)$, let $N_R(\cC_\delta(R))$  be the normalizer of $\cC_{\delta}(R)$ in $\GL(2,R)$, and let $G\subseteq \cC_{\delta}(R)$ be a subgroup, such that $G$ contains an element $A=\left(\begin{array}{cc}
a & b\\
\delta b & a\\
\end{array}\right)\in \cC_{\delta}(R)$ with $b\neq 0$. Let $M=\left(\begin{array}{cc}
c & d\\
e & g\\
\end{array}\right)$ be an element of $\GL(2,R)$ such that $M^{-1}AM$ is in $G\subseteq \cC_{\delta}(R)$. Since
$$M^{-1}AM = \frac{1}{cg-de}\left(\begin{array}{cc}
acg - ade - \delta bcd  + beg  & -\delta bd^2  + bg^2\\
\delta bc^2  - be^2 &  acg - ade + \delta bcd  - beg\\
\end{array}\right) $$
belongs to $\cC_{\delta}(R)$, it follows that 
\begin{align*}
(-\delta bd^2  + bg^2)\delta & = \delta bc^2  - be^2\\
2beg  &= 2\delta bcd.
\end{align*}
Since $b\neq 0$, and since $R$ is an integral domain of characteristic $0\neq 2$, then we get 
\begin{align*}
(-\delta d^2  + g^2)\delta & = \delta c^2  - e^2\\
 eg  &= \delta cd.
\end{align*}
Since $R$ is a local domain, and $cg-de$ is invertible, at least one of $e$ and $g$ must be a unit. Let us assume that $g$ is invertible in $R$ (if $e$ is invertible, one proceeds similarly). Then, $e=\delta cd/g$ and it follows that 
$$g^{-2}(g^2-\delta d^2)(g^2-c^2)=0.$$
If $g^2-\delta d^2=0$, then $\delta=u^2$ is a square in $R$, and it follows that $g=\varepsilon ud$ for some $\varepsilon=\pm 1$, and $e=\delta cd/g = \varepsilon u c$. But in this case, $cg-de = 0$, which is a contradiction. Thus, $g^2-c^2=0$ and so $g=\varepsilon c$, and $e=\varepsilon \delta g$. Thus, the normalizer of $G$ is contained in
$$ N=\left\langle \cC_{\delta}(R),\left(\begin{array}{cc} -1 & 0\\ 0 & 1\\\end{array}\right)\right\rangle = \left\{\left(\begin{array}{cc}
a & b\\
\delta b & a\\
\end{array}\right), \left(\begin{array}{cc}
-a & b\\
- \delta b& a\\
\end{array}\right) : a,b\in R,\ a^2 - \delta b^2\in R^\times\right\}.$$
Moreover, it is easy to show that the normalizer of $\cC_\delta(R)$ contains $N$, and so $N_R(\cC_\delta(R))=N$, as claimed. 

Now suppose that $\phi\neq 0$. Let $F$ be the fraction field of $R$, and let $N_F(\cC_{\delta,\phi}(F))$ be the normalizer of $\cC_{\delta,\phi}(F)$ in  $\GL(2,F)$. By Lemma \ref{lem-variety}, we have $N_R(\cC_{\delta,\phi}(R))= N_F(\cC_{\delta,\phi}(F)) \cap \GL(2,R)$, so we will compute $N_F(\cC_{\delta,\phi}(F))$ instead. If we let $\Lambda = \left(\begin{array}{cc}
1 & 0\\
-\phi/2 & 1\\
\end{array}\right)$, then
$$\Lambda^{-1} \cdot  \cC_{\delta,\phi}(F) \cdot \Lambda = C_{\delta + \phi^2/4}(F)$$
by our observation preceding the statement of the lemma. By our work in the first part of this proof, if we put $\delta'=\delta+\phi^2/4$, we obtain 
$$N_{F}(C_{\delta'}(F)) =  \left\{\left(\begin{array}{cc}
a & b\\
\delta' b & a\\
\end{array}\right), \left(\begin{array}{cc}
-a & b\\
- \delta' b& a\\
\end{array}\right) : a,b\in F,\ a^2 - \delta' b^2\in F^\times\right\}.$$
Hence, $N_F(\cC_{\delta,\phi}(F)) = \Lambda\cdot  N_F(C_{\delta+\phi^2/4}(F))\cdot  \Lambda^{-1}$ is given by
$$\left\{\left(\begin{array}{cc}
a+b\phi & b\\
\delta b & a\\
\end{array}\right), \left(\begin{array}{cc}
-a & b\\
a\phi- \delta b& a\\
\end{array}\right) : a,b\in F,\ a^2+ab\phi - \delta b^2\in F^\times\right\}.$$
Thus, $N_R(\cC_{\delta,\phi}(R))= N_F(\cC_{\delta,\phi}(F)) \cap \GL(2,R)$ is as given in the statement of the lemma. Finally, if $G$ is a subgroup of $\cC_{\delta,\phi}(R)$, then we consider $G$ as a subgroup of $\cC_{\delta,\phi}(F)$ and therefore $G'=\Lambda^{-1}\cdot G\cdot \Lambda$ is a subgroup of $C_{\delta'}(F)$. By the first part of the lemma, the normalizer $N'$ of $G'$ in $\GL(2,F)$ is contained in $N_F(C_{\delta'}(F))$, and therefore the normalizer of $G$ in $\GL(2,F)$ is contained in $N_F(\cC_{\delta,\phi}(F))$. Thus, the normalizer of $G$ in $\GL(2,R)$ must be contained in $N_F(\cC_{\delta,\phi}(F))\cap \GL(2,R)=N_R(\cC_{\delta,\phi}(R))$, as desired.
\end{proof}

The result of Lemma \ref{lem-normalizerchar0} is not true when the characteristic of the ring is positive, or it is not an integral domain. We will describe the normalizers in several cases of interest, namely Cartan subgroups in matrix groups over $\Z/p^n\Z$ for a prime $p$. In order to abbreviate our notation, we will write $\cC_{\delta,\phi}(N)$ for $\cC_{\delta,\phi}(\Z/N\Z)$. The following proposition describes the normalizer of $\cC_{\delta,\phi}(p)$ for all primes $p$, and all values of $\phi$ and $\delta \bmod p$. Since each individual result is easy to verify, we will not include the (lengthy) proof of the proposition here.

\begin{prop}\label{prop-normalizermodp}
Let $p$ be a prime, let $\delta,\phi\in \Z/p\Z$, and consider the groups $\cC_{\delta}(p)=\cC_{\delta}(\Z/p\Z)$ and $\cC_{\delta,\phi}(2)=\cC_{\delta,\phi}(\Z/2\Z)$, let $c=\left(\begin{array}{cc}
-1 & 0\\
0 & 1\\
\end{array}\right)$, and let $c_\phi=\left(\begin{array}{cc}
-1 & 0\\
\phi & 1\\
\end{array}\right)$. Then, 
\begin{enumerate}
\item If $p=2$, and $\delta\equiv \phi\equiv 1 \bmod 2$, then $\cC_{\delta,\phi}(2) = \left\langle \left(\begin{array}{cc}
1 & 1\\
1 & 0\\
\end{array}\right) \right\rangle \cong \Z/3\Z$ and its normalizer is all of $N=\GL(2,\Z/2\Z)=\cC_{\delta,\phi}(2)\rtimes \langle c_\phi\rangle$.
\item If $p=2$, and $\phi\equiv 1$, $\delta\equiv 0\bmod 2$, then $\cC_{\delta,\phi}(2)$ is trivial and its normalizer is all of $\GL(2,\Z/2\Z)$.
\item If $p=2$, and $\delta\equiv 1\bmod 2$, then $\cC_\delta(2) = \left\langle \left(\begin{array}{cc}
0 & 1\\
1 & 0\\
\end{array}\right) \right\rangle \cong \Z/2\Z$ and it is its own normalizer.
\item If $p=2$, and $\delta\equiv 0 \bmod 2$, then $\cC_\delta(2) = \left\langle \left(\begin{array}{cc}
1 & 1\\
0 & 1\\
\end{array}\right) \right\rangle \cong \Z/2\Z$ and it is its own normalizer.
\item If $p> 2$, then
	\begin{enumerate}
    	\item If $\delta \not\equiv 0 \bmod p$, and $\delta$ is a square mod $p$, then $\cC_\delta(p)\cong ((\Z/p\Z)^\times)^2$, and its normalizer $N$ is generated by $\cC_\delta(p)$ and $c$. More concretely, $N\cong \cC_\delta(p)\rtimes \langle c\rangle$. 
            	\item If $\delta \not\equiv 0 \bmod p$, and $\delta$ is not a square mod $p$, then $\cC_\delta(p)\cong \F_{p^2}^\times$, and its normalizer $N$ is generated by $\cC_\delta(p)$ and $c$. More concretely, $N\cong \cC_\delta(p)\rtimes \langle c\rangle$.
                            	\item If $\delta \equiv 0 \bmod p$, then $\cC_\delta(p)\cong (\Z/p^2\Z)^\times$, and its normalizer $N$ is generated by $\cC_\delta(p)$ and $c_g=\left(\begin{array}{cc}
g & 0\\
0 & 1\\
\end{array}\right)$, where $g$ is a primitive root modulo $p$. More concretely, $N\cong \cC_\delta(p)\rtimes_\phi \langle c_g\rangle \cong (\Z/p^2\Z)^\times \rtimes_\phi (\Z/p\Z)^\times$, where the associated map $\phi: \langle c_g\rangle \cong (\Z/p\Z)^\times \to \Aut((\Z/p^2\Z)^\times)$ is defined by $\phi(g)((a,b))=(a,bg)$, for any $(a,b)\in (\Z/p\Z)^\times \times \Z/p\Z\cong (\Z/p^2\Z)^\times$.
    \end{enumerate} 
\end{enumerate}
\end{prop}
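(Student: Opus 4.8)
The plan is to regard the statement as a finite list of normalizer computations and to reduce each to a short linear-algebra check, organized first by the prime $p$ and then by the isomorphism type of $\cC_{\delta,\phi}(p)$. The unifying observation is that $\cC_{\delta,\phi}(R)=(R[v])^\times$, where $v=\left(\begin{smallmatrix}\phi & 1\\ \delta & 0\end{smallmatrix}\right)$ has characteristic polynomial $x^2-\phi x-\delta$ and $R[v]=RI+Rv$: the adjugate of $aI+bv$ is $(a+b\phi)I-bv\in R[v]$, so $aI+bv$ is invertible in $R[v]$ exactly when its determinant is a unit. Consequently, whenever $\cC_{\delta,\phi}(p)$ contains a non-scalar element its $\F_p$-span is all of $\F_p[v]$, and then $M\in\GL(2,\F_p)$ normalizes $\cC_{\delta,\phi}(p)$ if and only if $M^{-1}vM\in\F_p[v]$, i.e. $M$ conjugates $v$ into $\F_pI+\F_pv$. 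The handful of cases in which $\cC_{\delta,\phi}(p)$ is trivial or central (which, for $p>2$, never occurs) are treated directly, since the normalizer of such a group is everything.

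For $p=2$ I would simply enumerate, using $\GL(2,\F_2)\cong S_3$. In each of the four residue cases one reads off $\cC_{\delta,\phi}(2)$ from the defining unit condition: for $\delta\equiv\phi\equiv 1$ it is the cyclic group $\langle v\rangle$ of order $3$, which is the unique Sylow $3$-subgroup of $S_3$, hence normal with normalizer all of $\GL(2,\F_2)$, while $c_\phi\notin\langle v\rangle$ has order $2$ and provides the complement; for $\phi\equiv 1,\ \delta\equiv 0$ the group is trivial, with normalizer everything; and for $\phi=0$ (the cases $\delta\equiv 1$ and $\delta\equiv 0$) it is a subgroup of order $2$, and in $S_3$ every subgroup of order $2$ is self-normalizing. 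This settles parts (1)--(4).

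For $p>2$ and $\delta\not\equiv 0\bmod p$ one has $\phi=0$ and $v=\left(\begin{smallmatrix}0 & 1\\ \delta & 0\end{smallmatrix}\right)$, and I would run the computation in the proof of Lemma \ref{lem-normalizerchar0} verbatim over the field $\F_p$ --- the only division used there is by $2$, which is legitimate since $p$ is odd --- to conclude that any $M$ normalizing $\cC_\delta(p)$ lies in $\cC_\delta(p)\cup c\,\cC_\delta(p)$ with $c=\left(\begin{smallmatrix}-1 & 0\\ 0 & 1\end{smallmatrix}\right)$; the degenerate branch $g^2-\delta d^2=0$ appearing there is impossible when $\delta$ is a non-square and forces $\det M=0$ when $\delta$ is a non-zero square. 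Conversely $cvc^{-1}=-v\in\cC_\delta(p)$, and $c\notin\cC_\delta(p)$ (its off-diagonal entries vanish while those of a non-scalar Cartan element cannot) with $c^2=I$, giving $N=\cC_\delta(p)\rtimes\langle c\rangle$. Whether $\cC_\delta(p)\cong((\Z/p\Z)^\times)^2$ or $\cC_\delta(p)\cong\F_{p^2}^\times$ is then decided by factoring $x^2-\delta$ over $\F_p$: it splits precisely when $\delta$ is a square, in which case $v$ is diagonalizable and $\cC_\delta(p)$ is conjugate to the diagonal torus, and it is irreducible otherwise, in which case $\F_p[v]\cong\F_{p^2}$.

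The genuinely separate case --- and the one requiring a new idea, since it is exactly where Lemma \ref{lem-normalizerchar0} fails --- is $p>2$ with $\delta\equiv 0\bmod p$. Here $v=\left(\begin{smallmatrix}0 & 1\\ 0 & 0\end{smallmatrix}\right)$ is nilpotent, $\F_p[v]$ is not a domain, and I would argue geometrically: since $\ker v=\operatorname{im}v=\langle e_1\rangle$, any conjugate $MvM^{-1}$ lies in $\F_pv$ if and only if $M$ preserves the line $\langle e_1\rangle$, i.e. $M$ is upper triangular; hence the normalizer of $\cC_0(p)$ is the full Borel subgroup, of order $p(p-1)^2$. Finally $\cC_0(p)$ is abelian of order $p(p-1)$ with structure $\Z/(p-1)\times\Z/p$, hence cyclic and isomorphic to $(\Z/p^2\Z)^\times$; the element $c_g=\left(\begin{smallmatrix}g & 0\\ 0 & 1\end{smallmatrix}\right)$ has order $p-1$, meets $\cC_0(p)$ trivially, and satisfies $c_gvc_g^{-1}=gv$, which both exhibits the Borel as $\cC_0(p)\rtimes\langle c_g\rangle$ and yields the displayed action of $\phi$ under the identification $(\Z/p^2\Z)^\times\cong(\Z/p\Z)^\times\times\Z/p\Z$. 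I do not anticipate any real conceptual obstacle beyond the nilpotent case just described: the remaining work is bookkeeping across the eight sub-cases, the one recurring subtlety being to upgrade the set-level description of each normalizer to the asserted semidirect-product structure by checking the order of $c$ or $c_g$ and its trivial intersection with the Cartan.
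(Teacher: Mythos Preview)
Your proposal is correct. The paper itself omits the proof entirely, writing only ``Since each individual result is easy to verify, we will not include the (lengthy) proof of the proposition here,'' so there is no detailed argument to compare against; your write-up supplies exactly what the paper leaves out.

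Your organization is efficient and a bit more conceptual than a brute-force verification. The unifying observation $\cC_{\delta,\phi}(R)=(R[v])^\times$ with $v=\left(\begin{smallmatrix}\phi & 1\\ \delta & 0\end{smallmatrix}\right)$ lets you reduce the normalizer question to conjugating a single element, and your treatment of the three regimes is clean: enumeration in $\GL(2,\F_2)\cong S_3$ for $p=2$; transporting the computation of Lemma~\ref{lem-normalizerchar0} to $\F_p$ for $p$ odd and $\delta\not\equiv 0$ (the only divisions are by $2$ and by a nonzero $b$, both valid over $\F_p$, and the degenerate branch $g^2=\delta d^2$ indeed forces $\det M=0$); and, for $\delta\equiv 0$, the geometric argument that normalizing the nilpotent $v$ amounts to preserving $\ker v=\operatorname{im}v=\langle e_1\rangle$, so the normalizer is the full Borel. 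The semidirect-product bookkeeping (orders of $c$, $c_\phi$, $c_g$ and their trivial intersection with the Cartan) is straightforward as you note, and your identification $\cC_0(p)\cong(\Z/p^2\Z)^\times$ via $\F_p[v]\cong\F_p[x]/(x^2)$ is correct.
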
 

Suppose $\delta\equiv 0 \bmod p$. Then, Prop. \ref{prop-normalizermodp}, part (5c) shows that the index of $\cC_{\delta}(p)$ in its normalizer is $p-1$, which is greater than $2$ if $p>3$. The following lemma shows that there is only one subgroup $N$ of the normalizer such that $N/\cC_{\delta}(p)$ is of order $2$.

\begin{cor} Let $p>2$ be a prime. Let $\delta\equiv 0 \bmod p$, and let $N$ be the normalizer of $C=\cC_\delta(p)$. Then, there is a unique subgroup $N'$ of $N$ containing $C$ such that $N'/C$ is of size $2$, namely:
$$N'=\left\{ \left(\begin{array}{cc}
		\pm a & b\\
		0 & a\\
		\end{array}\right) : a\in (\Z/p\Z)^\times,\ b\in\Z/p\Z \right\}.$$
\end{cor}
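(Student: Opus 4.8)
The plan is to reduce the uniqueness claim to the elementary fact that a finite cyclic group has a unique subgroup of each order dividing its order, and then to pin down which overgroup of $C$ this is by a short matrix computation. So the proof splits into two nearly independent parts: (i) show that $N/C$ is cyclic, and (ii) verify that the set $N'$ displayed in the statement is an index-$2$ subgroup of $N$ containing $C$.

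For (i), I would invoke Proposition \ref{prop-normalizermodp}, part (5c): when $\delta\equiv 0\bmod p$ it gives $N\cong C\rtimes_\phi\langle c_g\rangle$ with $c_g=\left(\begin{smallmatrix} g & 0\\ 0 & 1\end{smallmatrix}\right)$ and $g$ a primitive root modulo $p$, and in this description $C$ is precisely the kernel of the projection of the semidirect product onto $\langle c_g\rangle$. Hence $C$ is normal in $N$ and $N/C\cong\langle c_g\rangle\cong(\Z/p\Z)^\times$ is cyclic of order $p-1$ (even, since being in case (5c) forces $p$ odd). By the correspondence theorem, subgroups of $N$ containing $C$ correspond bijectively, and with the same index, to subgroups of $N/C$; a cyclic group of even order $p-1$ has exactly one subgroup of order $2$, so there is exactly one subgroup $N'\leq N$ with $C\subseteq N'$ and $[N':C]=2$.

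For (ii), I would set $c=\left(\begin{smallmatrix} -1 & 0\\ 0 & 1\end{smallmatrix}\right)$ and note that, since $\delta\equiv 0\bmod p$, one has $C=\left\{\left(\begin{smallmatrix} a & b\\ 0 & a\end{smallmatrix}\right): a\in(\Z/p\Z)^\times,\ b\in\Z/p\Z\right\}$; the conjugation $c\left(\begin{smallmatrix} a & b\\ 0 & a\end{smallmatrix}\right)c^{-1}=\left(\begin{smallmatrix} a & -b\\ 0 & a\end{smallmatrix}\right)$ shows $c$ normalizes $C$ (equivalently $c=c_g^{(p-1)/2}\in N$ because $-1\equiv g^{(p-1)/2}\bmod p$), while $c\notin C$ as $-1\neq 1$ in $\Z/p\Z$. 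Since $c^2=\operatorname{Id}\in C$, the set $\langle C,c\rangle=C\sqcup cC$ is a subgroup of $N$ containing $C$ with index $2$, and computing $c\left(\begin{smallmatrix} a & b\\ 0 & a\end{smallmatrix}\right)=\left(\begin{smallmatrix} -a & -b\\ 0 & a\end{smallmatrix}\right)$ and letting $b$ range over $\Z/p\Z$ identifies $cC$ with $\left\{\left(\begin{smallmatrix} -a & b\\ 0 & a\end{smallmatrix}\right): a\in(\Z/p\Z)^\times,\ b\in\Z/p\Z\right\}$; thus $\langle C,c\rangle$ is exactly the group $N'$ in the statement, and by the uniqueness from (i) it is the only such subgroup. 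The only step that is not pure routine is (i), recognizing that $N/C$ is cyclic, but this is immediate once Proposition \ref{prop-normalizermodp}(5c) is in hand, so there is in fact no serious obstacle.
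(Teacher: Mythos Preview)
Your proof is correct and follows the same approach as the paper: invoke Proposition \ref{prop-normalizermodp}(5c) to see that $N/C$ is cyclic of order $p-1$, so it has a unique subgroup of order $2$. The paper's proof is the single sentence ``The result follows from Prop.~\ref{prop-normalizermodp}, part (5c), since $N/C$ is cyclic of order $p-1$,'' whereas you additionally spell out the explicit identification of $N'$ with $\langle C,c\rangle$; this extra verification is fine but not strictly needed, since the displayed $N'$ visibly contains $C$ with index $2$ and lies in $N$.
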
 
\begin{proof}
	The result follows from Prop. \ref{prop-normalizermodp}, part (5c), since $N/C$ is cyclic of order $p-1$.
\end{proof}

The next lemma is used to construct normalizers in towers of matrix groups.

\begin{lemma}\label{lem-normalizer} Let $\varphi:S \to R$ be a surjective homomorphism of commutative rings such that it induces a surjective homomorphism $\varphi\colon\GL(2,S)\to \GL(2,R)$, let $H_S$ be a subgroup of $\GL(2,S)$ with normalizer $N(H_S)\subseteq \GL(2,S)$, let $H_R=\varphi(H_S)$, and let $N(H_R)$ be the normalizer of $H_R$ in $\GL(2,R)$. Then, 
	\begin{enumerate}
		\item $\varphi(N(H_S))\subseteq N(H_R)$.
		\item In particular, if $N$ is a (set-theoretic) lift of $N(H_R)$ to $\GL(2,S)$ via $\varphi$, and $K_\varphi=\ker(\varphi)\subseteq \GL(2,S)$, then $N(H_S)\subseteq \langle N, K_\varphi \rangle$. 
	\end{enumerate} 
\end{lemma}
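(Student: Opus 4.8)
The plan is to verify both parts essentially by unwinding the definitions of "normalizer" under a surjective ring homomorphism. For part (1), I would start with an arbitrary element $n\in N(H_S)$, so that $nH_Sn^{-1}=H_S$. Applying the homomorphism $\varphi\colon\GL(2,S)\to\GL(2,R)$ and using that $\varphi$ is a group homomorphism (and in particular respects inverses and products), we get $\varphi(n)\varphi(H_S)\varphi(n)^{-1}=\varphi(nH_Sn^{-1})=\varphi(H_S)=H_R$. Hence $\varphi(n)$ normalizes $H_R$, i.e.\ $\varphi(n)\in N(H_R)$. Since $n$ was arbitrary, $\varphi(N(H_S))\subseteq N(H_R)$, which is exactly (1). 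The only thing to be careful about is that $\varphi(H_S)=H_R$ by hypothesis, and that $\varphi(nH_Sn^{-1})=\varphi(n)\varphi(H_S)\varphi(n)^{-1}$ holds setwise, which is immediate since $\varphi$ is a homomorphism.

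For part (2), the plan is to combine (1) with the fact that $\varphi$ is surjective and that $K_\varphi=\ker(\varphi)\cap\GL(2,S)$ is the "error" by which a lift fails to be unique. Let $n\in N(H_S)$. By part (1), $\varphi(n)\in N(H_R)$, so there is an element $m\in N$ (the chosen set-theoretic lift of $N(H_R)$) with $\varphi(m)=\varphi(n)$. Then $\varphi(m^{-1}n)=\varphi(m)^{-1}\varphi(n)$ is the identity in $\GL(2,R)$, so $m^{-1}n\in K_\varphi$, giving $n=m\cdot(m^{-1}n)\in N\cdot K_\varphi\subseteq\langle N,K_\varphi\rangle$. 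Since $n\in N(H_S)$ was arbitrary, $N(H_S)\subseteq\langle N,K_\varphi\rangle$, as claimed.

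I do not expect any real obstacle here: the statement is a formal consequence of $\varphi$ being a surjective group homomorphism together with the standard observation that two elements with the same image differ by an element of the kernel. The only point worth a sentence of care is to note that "normalizer" is meant in $\GL(2,S)$ on the source side and in $\GL(2,R)$ on the target side, and that $K_\varphi$ is to be read as $\ker\bigl(\varphi\colon\GL(2,S)\to\GL(2,R)\bigr)$, which equals $\{g\in\GL(2,S):\varphi(g)=\operatorname{Id}\}$; with this reading both inclusions are immediate from the computations above. One could additionally remark that $\varphi(N(H_S))$ need not be all of $N(H_R)$ in general (a normalizing coset downstairs may fail to lift to a normalizing element upstairs), which is precisely why part (2) is phrased with the containment $N(H_S)\subseteq\langle N,K_\varphi\rangle$ rather than an equality — and this is the point that will matter in the intended application to towers of matrix groups over $\Z/p^n\Z$.
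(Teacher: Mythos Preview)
Your proposal is correct and follows essentially the same approach as the paper's proof. The paper argues (1) element by element (lifting each $a\in H_R$ to $\hat a\in H_S$ and pushing $\hat g\hat a\hat g^{-1}\in H_S$ back down), while you apply $\varphi$ to the set equality $nH_Sn^{-1}=H_S$ directly; for (2) the paper simply notes that $N(H_S)\subseteq\varphi^{-1}(N(H_R))$ and says ``(2) follows,'' which is exactly the $n=m\cdot(m^{-1}n)$ decomposition you wrote out.
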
 
\begin{proof}
	Suppose $\hat{g}\in N(H_S)$, let $g=\varphi(\hat{g})$, and let $a\in H_R$. Since $\varphi(H_S)=H_R$, there is some $\hat{a}\in H_S$ such that $\varphi(\hat{a})=a$. Moreover, since $N(H_S)$ is a normalizer and $\hat{g}\in N(H_S)$, it follows that $\hat{g}\cdot \hat{a} \cdot \hat{g}^{-1} \in H_S$. Therefore,
	$$\varphi(\hat{g}\cdot \hat{a} \cdot \hat{g}^{-1}) = g\cdot a \cdot g^{-1}$$
	belongs to $H_R$. This shows that $g\in N(H_R)$, and therefore $\varphi(N(H_S))\subseteq N(H_R)$ as desired in (1).
	
	From part (1), we obtain $N(H_S)\subseteq \varphi^{-1}(N(H_R))$ and (2) follows.
\end{proof} 

The following result is used in combination with Lemma \ref{lem-normalizer} to compute the normalizers of $\cC_{\delta,\phi}(p^n)$. The proof is trivial (using the fact that $\GL(2,\Z/p^{n+1}\Z)\to \GL(2,\Z/p^n\Z)$ is surjective) and it is omitted. 

\begin{lemma}\label{lem-kernels}
	Let $p$ be a prime, let $n\geq 1$, let $\delta,f\in \Z/p^n\Z$, and consider the groups $\cC_{\delta}(p^n)$ and $C_{\delta,\phi}(\Z/2^n\Z)$. Let $K_{n+1}$ be the kernel of the map $\GL(2,\Z/p^{n+1}\Z)\to \GL(2,\Z/p^n\Z)$ induced by reduction mod $p^n$. Then:
	\begin{enumerate}
		\item $K_{n+1}\cong\left\langle \left(\begin{array}{cc}
		1 & p^n\\
		0 & 1\\
		\end{array}\right),\left(\begin{array}{cc}
		1 & 0\\
		p^n & 1\\
		\end{array}\right),\left(\begin{array}{cc}
		1+p^n & 0\\
		0 & 1\\
		\end{array}\right),\left(\begin{array}{cc}
		1 & 0\\
		0 & 1+p^n\\
		\end{array}\right)\right\rangle\cong (\Z/p\Z)^4$.
		\item $K_{n+1}\cap C_{\delta,\phi}(2^{n+1})=\left\langle \left(\begin{array}{cc}
		1+\phi\cdot 2^n & 2^n\\
		\delta\cdot 2^n & 1\\
		\end{array}\right),\left(\begin{array}{cc}
		1+2^n & 0\\
		0 & 1+2^n\\
		\end{array}\right)\right\rangle\cong (\Z/2\Z)^2.$
		\item  $K_{n+1}\cap \cC_{\delta}(\Z/p^{n+1}\Z)=\left\langle \left(\begin{array}{cc}
		1 & p^n\\
		\delta\cdot p^n & 1\\
		\end{array}\right),\left(\begin{array}{cc}
		1+p^n & 0\\
		0 & 1+p^n\\
		\end{array}\right)\right\rangle\cong (\Z/p\Z)^2.$ 
	\end{enumerate} 
\end{lemma}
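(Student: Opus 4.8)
The plan is to identify $K_{n+1}$ with the additive group of $2\times 2$ matrices over $\Z/p\Z$ and then cut out the Cartan part linearly. First I would observe that a matrix $M\in\GL(2,\Z/p^{n+1}\Z)$ reduces to the identity modulo $p^n$ if and only if $M=I+p^nA$ for some $A\in M_2(\Z/p\Z)$, where $p^nA$ depends only on $A\bmod p$. The key elementary input, used throughout, is the inequality $2n\ge n+1$ (equivalently $n\ge 1$), which forces $p^{2n}\equiv 0\bmod p^{n+1}$. Because of it, $\det(I+p^nA)=1+p^n\operatorname{tr}(A)+p^{2n}\det(A)\equiv 1+p^n\operatorname{tr}(A)\bmod p^{n+1}$, which is $\equiv 1\bmod p$ and hence a unit, so every $M=I+p^nA$ already lies in $\GL(2,\Z/p^{n+1}\Z)$. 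The same inequality gives $(I+p^nA)(I+p^nB)=I+p^n(A+B)+p^{2n}AB=I+p^n(A+B)$, so $A\mapsto I+p^nA$ is a group isomorphism $(M_2(\Z/p\Z),+)\xrightarrow{\ \sim\ }K_{n+1}$. In particular $K_{n+1}\cong(\Z/p\Z)^4$, and since the four displayed matrices are the images of the matrix units $E_{12},E_{21},E_{11},E_{22}$ (a $\Z/p\Z$-basis of $M_2(\Z/p\Z)$), they generate $K_{n+1}$. This proves (1).

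For (2) and (3) I would simply restrict this isomorphism. An element $\left(\begin{smallmatrix} a+b\phi & b\\ \delta b & a\end{smallmatrix}\right)$ of $\cC_{\delta,\phi}(p^{n+1})$ lies in $K_{n+1}$ exactly when $b\equiv 0\bmod p^n$ and $a\equiv 1\bmod p^n$: the $(2,1)$-entry condition is then automatic, and once $b\equiv 0$ one has $b\phi\equiv 0$, so the $(1,1)$-entry condition reduces to $a\equiv 1$. Writing $a=1+p^ns$, $b=p^nt$ with $s,t\in\Z/p\Z$, such a matrix equals $I+p^n\left(\begin{smallmatrix} s+t\phi & t\\ \delta t & s\end{smallmatrix}\right)=I+p^n\bigl(sI+t\left(\begin{smallmatrix}\phi&1\\\delta&0\end{smallmatrix}\right)\bigr)$, and a one-line check shows $a^2+ab\phi-\delta b^2\equiv 1+p^n(2s+t\phi)\bmod p^{n+1}$ is automatically a unit, so no further constraint is imposed. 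Hence, under the isomorphism of (1), the intersection $K_{n+1}\cap\cC_{\delta,\phi}(p^{n+1})$ corresponds to the $\Z/p\Z$-span of $I$ and $\left(\begin{smallmatrix}\phi&1\\\delta&0\end{smallmatrix}\right)$; these are $\Z/p\Z$-linearly independent, since comparing $(1,2)$-entries in a relation $\alpha I+\beta\left(\begin{smallmatrix}\phi&1\\\delta&0\end{smallmatrix}\right)=0$ forces $\beta=0$ and then $\alpha=0$. Thus the intersection is $\cong(\Z/p\Z)^2$, and taking $(s,t)=(0,1)$ and $(s,t)=(1,0)$ recovers precisely the two displayed generators, which therefore generate it. Part (3) is the specialization $\phi=0$ (with $p$ arbitrary), and part (2) is the case $p=2$.

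There is essentially no obstacle; the only point that must be stated with care is the hypothesis $n\ge 1$, which is exactly what linearizes both the determinant and the group law of $K_{n+1}$ in the ``$p^n$-part'' and which genuinely fails for $n=0$. Everything else is routine bookkeeping with $2\times 2$ matrices, so I would keep the written-out version short.
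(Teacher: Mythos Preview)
Your proof is correct and is exactly the standard linearization argument one would expect here. The paper in fact omits the proof entirely (``The proof is trivial and it is omitted''), so there is no alternative approach to compare against; your write-up supplies precisely the routine verification the author had in mind.
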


Finally, we describe the normalizer of $\cC_{\delta,\phi}(p^n)$ for each prime $p$ and values of $\delta$ and $\phi$.

\begin{prop}
	Let $p$ be a prime, let $n\geq 1$, let $\delta,\phi\in \Z/p^n\Z$, and consider the groups $\cC_{\delta}(p^n)$ and $\cC_{\delta,\phi}(2^n)$, and define matrices in $\GL(2,\Z/p^n\Z)$ by $c_0=\left(\begin{array}{cc}
	-1 & 0\\
	0 & 1\\
	\end{array}\right)$, and $c_\phi=\left(\begin{array}{cc}
	-1 & 0\\
	\phi & 1\\
	\end{array}\right)$.
	Then, 
	\begin{enumerate} 
		\item If $p=2$, and $\delta\equiv \phi\equiv 1 \bmod 2$, then the normalizer of $\cC_{\delta,\phi}(2^n)$ is $N_n=\cC_{\delta,\phi}(2^n)\rtimes \langle c_\phi\rangle$, and $[N_n:\cC_{\delta,\phi}]=2$ for all $n\geq 1$.
		\item If $p=2$, $n\geq 2$, and $\phi\equiv 1$, and $\delta\equiv 0\bmod 2$, then the normalizer of $\cC_{\delta,\phi}(2^n)$ is
		$$N_n = \left\langle \cC_{\delta,\phi}(2^n),c_\phi,\left(\begin{array}{cc}
		1 & 0\\
		2^{n-1} & 1\\
		\end{array}\right),\left(\begin{array}{cc}
		1+2^{n-1} & 0\\
		0 & 1\\
		\end{array}\right)\right\rangle.$$
		Thus, $[N_n:\cC_{\delta,\phi}]=8$ for all $n\geq 2$.
		
		\item If $p=2$, and $\nu_2(\delta)=j$ for some $0\leq j \leq n$, then the normalizer of $C=\cC_\delta(2^n)$ is the group
		$$\left\{ \left(\begin{array}{cc}
		e & k\\
		g & h\\
		\end{array}\right) : \varepsilon=\pm 1,\ h\equiv \varepsilon  e \bmod 2^{n-j-1} 
		\text{ if } j<n,\  g\equiv \varepsilon \delta k \bmod 2^{n-1}\right\}\cap \GL(2,\Z/2^n\Z).$$
		Moreover, $[\mathcal{N}_{\delta}(2^n):\cC_{\delta}(2^n)]=1$ for $n=1$, and $2^{\min\{n,3+j\}}$ for all $n\geq 2$.
		\item If $p>2$, and $\nu_p(\delta)=j$ for some $0\leq j \leq n$, then the normalizer of $C=\cC_\delta(p^n)$ is the group
		$$\left\{ \left(\begin{array}{cc}
		e & k\\
		g & h\\
		\end{array}\right) : \varepsilon=\pm 1,\ h\equiv \varepsilon e \bmod p^{n-j} 
		\text{ if } j\leq n,\  g\equiv \varepsilon \delta k \bmod p^{n}\right\}\cap \GL(2,\Z/p^n\Z),$$
		Moreover, $[\mathcal{N}_{\delta}(p^n):\cC_{\delta}(p^n)]=2\cdot p^{\min\{n-1,j\}}$ for all $n\geq 1$.
	\end{enumerate} 
\end{prop}

\begin{proof}
	For parts (1) and (2), we shall use Lemmas \ref{lem-normalizer} and \ref{lem-kernels}. In particular, Lemma \ref{lem-normalizer} implies that if we have computed $N_n$, then $N_{n+1}$ is contained in the subgroup generated by a lift of $N_n$ to $\GL(2,\Z/2^{n+1}\Z)$ and the kernel $K_{n+1}$ of Lemma \ref{lem-kernels}. Now, part (1) follows from the fact that $K_{n+1}\cap N_{n+1} \subseteq \langle \cC_{\delta,\phi}(2^{n+1}),c_\phi\rangle$ while part (2) follows from the fact that $K_{n+1}\cap N_{n+1} = K_{n+1}$.
	
Before we prove (3) and (4), we observe that, in general, 
	$$\left(\begin{array}{cc}
	e & k\\
	g & h\\
	\end{array}\right)\cdot \left(\begin{array}{cc}
	a & b\\
	\delta b & a\\
	\end{array}\right)\cdot \left(\begin{array}{cc}
	e & k\\
	g & h\\
	\end{array}\right)^{-1}$$
	belongs to $\cC_\delta(R)$ if and only if 
	\begin{enumerate}
		\item $eg= 2\delta hk$, and
		\item $g^2-\delta^2k^2 = \delta (h^2-e^2)$.
	\end{enumerate}
	Assume that $e$ is a unit. Then, $g=\delta kh/e$ and 
	$$\frac{\delta}{e^2}\cdot (\delta k^2 - e^2)\cdot (h^2-e^2)=0.$$
	Further, if $R$ is an integral domain (and $\delta\neq 0$), then either $e^2=h^2$ or $e^2=\delta k^2$. If $e=\pm h$, then $\left(\begin{array}{cc}
	e & k\\
	g & h\\
	\end{array}\right)$ belongs to $\left\langle \cC_\delta(R),\left(\begin{array}{cc}
	-1 & 0\\
	0 & 1\\
	\end{array}\right)\right\rangle$. Otherwise, if $e^2=\delta k^2$, then $k$ must be a unit, $\delta=\gamma^2$ is a square, and $e=\pm \gamma k$. This implies that $(h^2-\delta k^2)(h^2-e^2)=0$. If $h=\pm \gamma k$, then $\left(\begin{array}{cc}
	e & k\\
	g & h\\
	\end{array}\right)=\left(\begin{array}{cc}
	\gamma k & k\\
	\pm \delta k & \pm \gamma k\\
	\end{array}\right)$ also belongs to $\left\langle \cC_\delta(R),\left(\begin{array}{cc}
	-1 & 0\\
	0 & 1\\
	\end{array}\right)\right\rangle$.
	
	We shall first prove (3). Let $N_n$ be the normalizer of $C_n=\cC_{\delta}(\Z/2^n\Z)$ in $\GL(2,\Z/2^n\Z)$. Let $e,f,g,h\in \Z/2^n\Z$, and suppose that $\delta\equiv 0 \bmod 2^j$, for some $1\leq j\leq n$. Then, by our work above,
	$$\left(\begin{array}{cc}
	e & k\\
	g & h\\
	\end{array}\right)\cdot \left(\begin{array}{cc}
	a & b\\
	\delta b & a\\
	\end{array}\right)\cdot \left(\begin{array}{cc}
	e & k\\
	g & h\\
	\end{array}\right)^{-1}$$
	belongs to $C_n$ if and only if the following conditions are met
	\begin{enumerate}
		\item $eg\equiv \delta hk \bmod 2^{n-1}$, and
		\item $g^2-\delta^2k^2 \equiv \delta (h^2-e^2) \bmod 2^n$.
	\end{enumerate}
	Either $e$ or $k$ is a unit modulo $2^n$ (and if $j>0$, then $e$ must be a unit). Then, conditions (1) and (2) are, in turn, equivalent to the existence of $\varepsilon =  1$ or $-1$ such that 
	\begin{enumerate}
		\item[(a)] $h\equiv \varepsilon  e \bmod 2^{n-j-1}$ if $j<n$.
		\item[(b)] $g\equiv \varepsilon \delta k \bmod 2^{n-1}$,
	\end{enumerate}
	so the normalizer $N_n$ is as given in the statement, and a counting argument shows the formula for the index $[N_n:C_n]$.
	
	For (4), when $p>2$, the proof is the same as for (3), except that $2$ is now a unit modulo $p^n$, and conditions (1) and (2) are both modulo $p^n$, and conditions (a) and (b) are modulo $p^{n-j}$ and $p^n$, respectively. 
\end{proof}

The following two lemmas will be used in later sections to determine the possible shapes of the image of complex conjugation. We do not provide a proof for the following lemma, since it is just an elementary calculation. 

\begin{lemma}\label{lem-cc1} Let $R$ be an integral domain, let $a,b,\delta\in R$, and let $\left(\begin{array}{cc}
		 -a & b\\
		-\delta b & a\\
		\end{array}\right)$ be a matrix with zero trace and determinant $-1$ (so that $a^2-\delta b^2=1$). Then,	
		\small 
\begin{align*} & \left(\begin{array}{cc}
			 a-1 & -b\\
			-\delta b & a-1\\
			\end{array}\right)\left(\begin{array}{cc}
			 -a & b\\
			-\delta b & a\\
			\end{array}\right)\left(\begin{array}{cc}
					 a-1 & -b\\
					-\delta b & a-1\\
					\end{array}\right)^{-1}  = \left(\begin{array}{cc} 1  &  0\\
													0 &  -1 \end{array}\right),\\
			& \left(\begin{array}{cc}
			a+1 & -b\\
			-\delta b & a+1\\
			\end{array}\right)\left(\begin{array}{cc}
			-a & b\\
			-\delta b & a\\
			\end{array}\right)\left(\begin{array}{cc}
			a+1 & -b\\
			-\delta b & a+1\\
			\end{array}\right)^{-1} = \left(\begin{array}{cc} -1  &  0\\
			0 &  1 \end{array}\right),\\
			& \left(\begin{array}{cc}
			-\delta b & a+1\\
			\delta (a+1) & - \delta b \\
			\end{array}\right)\left(\begin{array}{cc}
			-a & b\\
			-\delta b & a\\
			\end{array}\right)\left(\begin{array}{cc}
			-\delta b & a+1\\
			\delta (a+1) & - \delta b \\
			\end{array}\right)^{-1}= \left(\begin{array}{cc} 1  &  0\\
			0 &  -1 \end{array}\right)
					\end{align*}
					\normalsize 
					where $(a-1)^2-\delta b^2 = 2(1-a)$, and  $(a+1)^2-\delta b^2 = 2(1+a)$, and $(-\delta b)^2-\delta(a+1)^2= -2\delta (a+1).$
\end{lemma}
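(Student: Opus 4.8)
The plan is to verify the three displayed identities by direct matrix multiplication, since the hypothesis $a^2-\delta b^2=1$ is the only relation available and it suffices. For the first identity, write $M=\left(\begin{smallmatrix}-a&b\\-\delta b&a\end{smallmatrix}\right)$ and $P_1=\left(\begin{smallmatrix}a-1&-b\\-\delta b&a-1\end{smallmatrix}\right)$; rather than invert $P_1$, I would prove the equivalent statement $P_1M=\left(\begin{smallmatrix}1&0\\0&-1\end{smallmatrix}\right)P_1$, which is an identity of $2\times2$ matrices requiring no invertibility assumption. Multiplying out $P_1M$ and substituting the hypothesis in the form $\delta b^2=a^2-1$ collapses the diagonal entries to $a-1$ and $-(a-1)$ and the off-diagonal entries to $-b$ and $\delta b$, which is exactly $\left(\begin{smallmatrix}1&0\\0&-1\end{smallmatrix}\right)P_1$. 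The same short computation handles the second identity with $P_2=\left(\begin{smallmatrix}a+1&-b\\-\delta b&a+1\end{smallmatrix}\right)$ in the form $P_2M=\left(\begin{smallmatrix}-1&0\\0&1\end{smallmatrix}\right)P_2$, and the third with $P_3=\left(\begin{smallmatrix}-\delta b&a+1\\\delta(a+1)&-\delta b\end{smallmatrix}\right)$; the third case uses, besides $\delta b^2=a^2-1$, its once-multiplied form $\delta^2b^2=\delta(a^2-1)$ to simplify the lower-left entry.

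Next I would record the determinant formulas in the last line of the statement: expanding $(a-1)^2-\delta b^2$, $(a+1)^2-\delta b^2$ and $\delta^2b^2-\delta(a+1)^2$ and again using $\delta b^2=a^2-1$ yields $2(1-a)$, $2(1+a)$ and $-2\delta(a+1)$. These are precisely the determinants of $P_1$, $P_2$, $P_3$, so the first identity is a genuine conjugation exactly when $a\not\equiv1\bmod p$ and the second exactly when $a\not\equiv-1\bmod p$. Since $p$ is odd, $2(1-a)$ and $2(1+a)$ cannot both fail to be units, so at least one of $P_1,P_2$ is invertible; this is the only place the hypothesis that $p$ is odd enters, and it is what makes the lemma usable in later sections, namely every matrix of the given Cartan shape with trace $0$ and determinant $-1$ is conjugate to $c_0$ or to $-c_0$ by one of the three explicit matrices exhibited.

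I do not expect any real obstacle: the whole argument reduces to the three $2\times2$ multiplications above together with the single substitution $\delta b^2=a^2-1$. The only thing requiring care is sign bookkeeping, especially in the third identity, where the conjugating matrix is not itself of Cartan type; I would therefore check each of its four resulting entries separately.
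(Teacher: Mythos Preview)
Your proposal is correct and matches the paper's approach: the paper explicitly omits the proof, stating that ``it is just an elementary calculation,'' which is precisely the direct matrix verification you outline. Your device of checking $P_iM=c_\varepsilon P_i$ rather than inverting $P_i$ is a clean way to carry out that calculation, and your observations about the determinants and the role of the odd-$p$ hypothesis are exactly how the lemma is used downstream in Lemma~\ref{lem-ccfinal}.
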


\begin{lemma}\label{lem-cc2}
Let $R$ be a domain of characteristic different from $2$, and let $c_\varepsilon = \left(\begin{array}{cc} \varepsilon  &  0\\
0 &  -\varepsilon \end{array}\right)\in \GL(2,R)$ for $\varepsilon = \pm 1$. The matrices $\gamma \in \GL(2,R)$ such that $\gamma \cdot c_{-1} \cdot \gamma^{-1}=c_{1}$ are precisely those in the set 
$$\left\{  \left(\begin{array}{cc} 0  &  b\\
c &  0 \end{array}\right) : b,c \in R^\times \right\}.$$
\end{lemma}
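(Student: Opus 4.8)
The plan is to turn the conjugacy relation into a tiny linear system over $\Z_p$ and solve it by inspection. Write an arbitrary $\gamma=\left(\begin{array}{cc} x & y\\ z & w\\\end{array}\right)\in\GL(2,\Z_p)$. Since each $c_\varepsilon$ is invertible, the identity $\gamma\cdot c_{-1}\cdot\gamma^{-1}=c_1$ is equivalent to the inverse-free relation $\gamma\cdot c_{-1}=c_1\cdot\gamma$, and it is this form I would work with.

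The first step is the direct multiplication: $\gamma\cdot c_{-1}=\left(\begin{array}{cc} -x & y\\ -z & w\\\end{array}\right)$ while $c_1\cdot\gamma=\left(\begin{array}{cc} x & y\\ -z & -w\\\end{array}\right)$. Comparing entries, the off-diagonal entries match automatically, and the diagonal entries force $-x=x$ and $w=-w$, i.e. $2x=0$ and $2w=0$ in $\Z_p$.

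The second step is to observe that $\Z_p$ is an integral domain in which $2\neq 0$ for every prime $p$ (including $p=2$), so these equations give $x=w=0$. Hence any $\gamma$ satisfying the relation is antidiagonal, $\gamma=\left(\begin{array}{cc} 0 & b\\ c & 0\\\end{array}\right)$, and membership in $\GL(2,\Z_p)$ amounts to $\det\gamma=-bc\in\Z_p^\times$, which (since $\Z_p$ is local) holds exactly when both $b,c\in\Z_p^\times$. The converse inclusion I would dispatch by the same one-line computation, checking that every antidiagonal matrix with unit entries indeed conjugates $c_{-1}$ to $c_1$. Together these give the claimed equality of sets.

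I do not expect any genuine obstacle: the statement is an elementary $2\times 2$ computation. The only point worth flagging is that the implication ``$2x=0\Rightarrow x=0$'' relies on $\Z_p$ having no $2$-torsion, which is why the conclusion is uniform in $p$ and, in particular, also valid at $p=2$; if one were working over $\Z/2^n\Z$ instead, the conclusion would fail and a different analysis would be needed.
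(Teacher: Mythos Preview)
Your proof is correct and follows essentially the same approach as the paper: both reduce to a direct matrix computation over $\Z_p$ and use that $\Z_p$ has no zero-divisors (in particular no $2$-torsion). Your version is arguably slightly cleaner, since by rewriting the conjugacy condition as $\gamma\cdot c_{-1}=c_1\cdot\gamma$ you avoid computing $\gamma^{-1}$ explicitly, whereas the paper computes $\gamma^{-1}\cdot c_1\cdot\gamma$ and then compares with $c_{-1}$; but the substance is identical.
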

\begin{proof}
If we put $\gamma = \left(\begin{array}{cc} a  &  b\\
c &  d \end{array}\right)\in \GL(2,R)$, then
$$\gamma^{-1}\cdot  c_1\cdot  \gamma = \frac{1}{ad-bc}\cdot \left(\begin{array}{cc} ad+bc  &  2bd\\
-2ac &  -(ad+bc) \end{array}\right).$$
Thus, $\gamma^{-1}\cdot  c_1\cdot  \gamma = c_{-1}$ if and only if $2bd=2ac=0$, and $(ad+bc)/(ad-bc)=-1$. Since $R$ is a domain, it follows that the only possibility is $a=d=0$.
\end{proof}

\section{The Galois group $\Gal(H_f(E[N])/\Q(j_{K,f}))$}\label{sec-fromq}

In  Section \ref{sec-fromhf} we have discussed the generic structure of $\Gal(H_f(E[N])/H_f)$, where we recall that $H_f=K(j_{K,f})$.In order to show Theorem \ref{thm-cmrep-intro}, it remains to understand the structure of $\Gal(H_f(E[N])/\Q(j_{K,f}))$. First, we note that $[K(j_{K,f}):\Q(j_{K,f})]=2$.

\begin{lemma}\label{lem-kjqj}
	Let $E$ be an elliptic curve with CM by an imaginary quadratic field $K$, and $j(E)=j_{K,f}$. Then, $[K(j_{K,f}):\Q(j_{K,f})]=2$.
\end{lemma}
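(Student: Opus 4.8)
The plan is to show that $K(j_{K,f})$ is a proper extension of $\Q(j_{K,f})$, and that it has degree exactly $2$; the upper bound is immediate since $K/\Q$ is quadratic, so the content is that $K\not\subseteq \Q(j_{K,f})$. The quickest route is to invoke part (3) of Theorem \ref{thm-cmbasics}, which asserts $[\Q(j(E)):\Q]=[K(j(E)):K]=h(\Of)$. From this, a degree count in the tower $\Q\subseteq \Q(j_{K,f})\subseteq K(j_{K,f})$, compared with $\Q\subseteq K\subseteq K(j_{K,f})$, forces $[K(j_{K,f}):\Q(j_{K,f})]=2$: indeed $[K(j_{K,f}):\Q] = [K(j_{K,f}):K]\cdot[K:\Q] = 2h(\Of)$, while $[K(j_{K,f}):\Q] = [K(j_{K,f}):\Q(j_{K,f})]\cdot[\Q(j_{K,f}):\Q] = [K(j_{K,f}):\Q(j_{K,f})]\cdot h(\Of)$, and dividing gives the claim.

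First I would state that $[K(j_{K,f}):\Q(j_{K,f})]\le 2$ because $K(j_{K,f})=\Q(j_{K,f})\cdot K$ is the compositum of $\Q(j_{K,f})$ with the quadratic field $K$. Then I would carry out the degree computation above using Theorem \ref{thm-cmbasics}(3) applied to an elliptic curve $E$ with CM by $\Of$ and $j(E)=j_{K,f}$; note that such a curve exists and $\Q(j(E))=\Q(j_{K,f})$, $K(j(E))=K(j_{K,f})$ by definition of $j_{K,f}$ as a Galois conjugate of $j(\CC/\Of)$. Since $h(\Of)\ge 1$ is finite and nonzero, dividing the two expressions for $[K(j_{K,f}):\Q]$ is legitimate and yields $[K(j_{K,f}):\Q(j_{K,f})]=2$.

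Alternatively — and this is essentially the argument already used inside the proof of Theorem \ref{thm-endo} — one can argue directly that $K\not\subseteq\Q(j_{K,f})$: if $K\subseteq\Q(j_{K,f})$ then $K(j_{K,f})=\Q(j_{K,f})$, contradicting $[K(j_{K,f}):K]=[\Q(j_{K,f}):\Q]=h(\Of)$ together with $[K:\Q]=2$ (which would force $[K(j_{K,f}):\Q]$ to equal both $h(\Of)$ and $2h(\Of)$). Either phrasing works; I would go with the clean degree count. There is no real obstacle here: the statement is essentially a restatement of Theorem \ref{thm-cmbasics}(3), and the only thing to be careful about is making sure the two towers are set up correctly and that the appeal to finiteness of $h(\Of)$ (so that cancellation is valid) is recorded. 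The proof is a few lines.
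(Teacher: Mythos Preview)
Your proposal is correct and is essentially the same argument as the paper's: both compute $[K(j_{K,f}):\Q]$ in two ways via the towers through $\Q(j_{K,f})$ and through $K$, then invoke Theorem~\ref{thm-cmbasics}(3) to cancel the common factor $h(\Of)$.
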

\begin{proof}
	This follows from Theorem \ref{thm-cmbasics}, part (3). Indeed, the multiplicative property of degrees in towers 
	$$[K(j(E)):\Q(j(E))]\cdot [\Q(j(E)):\Q]=[K(j(E)):K]\cdot [K:\Q]$$
	together with $[\Q(j(E)):\Q]=[K(j(E)):K]$ implies the result.
\end{proof}

\begin{lemma}\label{lem-normalizer2}
Let $E$ be an elliptic curve with CM by an order $\Of$, and $j(E)=j_{K,f}$. Fix a $\Z/N\Z$-basis of $E[N]$ and let $\rho_{E,N}\colon \Gal(\overline{\Q(j_{K,f})}/\Q(j_{K,f})) \to \Aut(E[N])\cong \GL(2,\Z/N\Z)$ be the representation afforded by the action of Galois on $E[N]$. Let $G_{E,K,N}=\rho_{E,N}(\Gal(\overline{H_f}/H_f))$, where $H_f=K(j_{K,f})$, and let $G_{E,N}$ be the image of $\rho_{E,N}$. Then, 
\begin{enumerate}
\item $G_{E,N}$ is contained in the normalizer of $G_{E,K,N}$ in $\GL(2,\Z/N\Z)$.
\item If we pick a $\Z/N\Z$-basis of $E[N]$ so that $G_{E,K,N}$ is contained in $\cC_{\delta,\phi}(N)$, with $(\delta,\phi)=(\Delta_K,0)$ or $((\Delta_K-1)f^2/4,f)$, then $G_{E,N}$ is contained in $\mathcal{N}_{\delta,\phi}(N)=\langle \cC_{\delta,\phi}(N),c_\phi\rangle$, with $c_\phi$ defined as in Lemma \ref{lem-normalizerchar0}.
\end{enumerate} 
\end{lemma}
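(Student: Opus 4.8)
The plan is to deduce everything from the structure theory already established, together with the basic fact that conjugation by elements of $G_{E,N}$ permutes the cosets of the normal subgroup $G_{E,K,N}$. For part (1), observe that $\Gal(\overline{H_f}/H_f)$ is a normal subgroup of $\Gal(\overline{\Q(j_{K,f})}/\Q(j_{K,f}))$ of index $2$ by Lemma \ref{lem-kjqj}. Applying $\rho_{E,N}$ and using that the image of a normal subgroup under a homomorphism is normalized by the image of the whole group, we get immediately that $G_{E,K,N} = \rho_{E,N}(\Gal(\overline{H_f}/H_f))$ is normal in $G_{E,N} = \rho_{E,N}(\Gal(\overline{\Q(j_{K,f})}/\Q(j_{K,f})))$; in particular $G_{E,N}$ is contained in the normalizer $N(G_{E,K,N})$ of $G_{E,K,N}$ in $\GL(2,\Z/N\Z)$. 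This is the easy half and requires no computation.

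For part (2), first fix the basis: Theorem \ref{thm-incl} gives an embedding $G_{E,K,N} = \Gal(H_f(E[N])/H_f)\hookrightarrow \Aut_{\Of/N\Of}(E[N])\cong (\Of/N\Of)^\times$, and by Lemma \ref{lem-cartansbgp} (or Remark \ref{rem-complexconj}) this unit group is, with respect to a suitable $\Z/N\Z$-basis of $E[N]$ coming from a $\Z$-basis of $\Of/N\Of$, exactly the Cartan group $\cC_{\delta,\phi}(N)$ with the stated values of $(\delta,\phi)$ — where the split into the two cases $(\Delta_K f^2/4, 0)$ versus $((\Delta_K-1)f^2/4, f)$ follows Remark \ref{rem-complexconj} according to whether $\Delta_K f^2 \equiv 0$ or $1 \bmod 4$ (and $N$ even). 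So after this basis change $G_{E,K,N}\subseteq \cC_{\delta,\phi}(N)$. Now I want to conclude $G_{E,N}\subseteq \mathcal{N}_{\delta,\phi}(N) = \langle \cC_{\delta,\phi}(N), c_\phi\rangle$. By part (1), $G_{E,N}$ lies in the normalizer $N(G_{E,K,N})$ of $G_{E,K,N}$ in $\GL(2,\Z/N\Z)$; the goal is therefore to show that this normalizer is contained in $\mathcal{N}_{\delta,\phi}(N)$.

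The crux is that $N(G_{E,K,N}) \subseteq N(\cC_{\delta,\phi}(N)) = \mathcal{N}_{\delta,\phi}(N)$, and here is where the main obstacle lies: this inclusion of normalizers is \emph{not} automatic — it needs $G_{E,K,N}$ to be "large enough" inside $\cC_{\delta,\phi}(N)$, and the normalizer computation over $\Z/N\Z$ is genuinely more delicate than the characteristic-zero case of Lemma \ref{lem-normalizerchar0}. The route I would take: reduce to prime power level $N = p^n$ by the Chinese Remainder Theorem (the Cartan groups and their normalizers split as products over $p \mid N$), and then invoke the normalizer computations collected in Section \ref{sec-normalizers} — specifically the proposition describing $N(\cC_{\delta,\phi}(p^n))$ and Lemma \ref{lem-normalizerchar0} applied over $\Z_p$ followed by reduction via Lemma \ref{lem-normalizer}. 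The key subtlety is handling the case where $G_{E,K,N}$ is a proper subgroup of $\cC_{\delta,\phi}(N)$: one must check that $G_{E,K,N}$ still contains a non-diagonal (more precisely, non-scalar-in-the-relevant-sense) element, so that the "Moreover" clause of Lemma \ref{lem-normalizerchar0} applies and forces $N(G_{E,K,N}) \subseteq \mathcal{N}_{\delta,\phi}$. To see that $G_{E,K,N}$ does contain such an element, I would use Theorem \ref{thm-incl} and Corollary \ref{cor-weber}: the index of $G_{E,K,N}$ in $\cC_{\delta,\phi}(N)\cong (\Of/N\Of)^\times$ divides $\#\Of^\times \le 6$, so $G_{E,K,N}$ has index at most $6$ and hence contains elements of the form $\left(\begin{smallmatrix} a+b\phi & b \\ \delta b & a\end{smallmatrix}\right)$ with $b$ a unit (in fact with $b \not\equiv 0$) — a quick order/index count at each prime $p\mid N$ rules out $G_{E,K,N}$ being contained in the diagonal or scalar part. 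Once the non-diagonal element is in hand, Lemma \ref{lem-normalizerchar0} (for odd $p$, via lifting to $\Z_p$ and reducing) together with the explicit mod-$p$ and mod-$p^n$ normalizer propositions of Section \ref{sec-normalizers} pin down $N(G_{E,K,N})$ inside $\mathcal{N}_{\delta,\phi}(N)$, completing the proof. The truly delicate point to watch is $p = 2$ at small level (cf. the example following Theorem \ref{thm-inclp}), where $\Of^\times$ can fail to inject mod $2$; but since the lemma's conclusion only asserts containment in $\mathcal{N}_{\delta,\phi}(N)$ — not equality or index — the mod-$2$ normalizer computations from the proposition in Section \ref{sec-normalizers} suffice.
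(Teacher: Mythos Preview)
Your argument for part (1) is correct and identical to the paper's.

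For part (2), you have the right core idea --- reduce to prime-power level via the Chinese Remainder Theorem, lift to $\Z_p$, and invoke Lemma~\ref{lem-normalizerchar0} --- and this is exactly what the paper does. But your write-up takes detours that are unnecessary and contains one genuinely wrong intermediate claim. You write that ``the crux is that $N(G_{E,K,N}) \subseteq N(\cC_{\delta,\phi}(N)) = \mathcal{N}_{\delta,\phi}(N)$''. Over $\Z/N\Z$ \emph{both} steps of this chain can fail: the inclusion $N(H)\subseteq N(G)$ for $H\subseteq G$ is false in general, and the equality $N(\cC_{\delta,\phi}(N))=\mathcal{N}_{\delta,\phi}(N)$ is false precisely when $\delta$ is not a unit (see Proposition~\ref{prop-normalizermodp}(5c) and the propositions following it, where the normalizer has index $p-1$, not $2$). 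You seem aware of this, since you then pivot to the $\Z_p$ route, but the mod-$p^n$ normalizer propositions you invoke alongside are not needed at all and only muddy the water.

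The paper's proof simply bypasses all of this: after CRT it passes to the $p$-adic representation $\rho_{E,p^\infty}\colon G_{\Q(j_{K,f})}\to\GL(2,\Z_p)$, notes that $G_{E,K,p^\infty}\subseteq\cC_{\delta,\phi}(\Z_p)$ with respect to a suitable $\Z_p$-basis, and applies Lemma~\ref{lem-normalizerchar0} directly over the local integral domain $\Z_p$ to conclude $G_{E,p^\infty}\subseteq\mathcal{N}_{\delta,\phi}(\Z_p)$; reducing mod $p^n$ finishes. Your check that $G_{E,K,p^\infty}$ contains a non-diagonal element (needed for the ``Moreover'' clause of Lemma~\ref{lem-normalizerchar0}) via the finite-index bound is correct --- indeed the paper leaves this implicit --- but note that it is cleanest to run this argument at the $\Z_p$ level, where the diagonal/scalar part of $\cC_{\delta,\phi}(\Z_p)$ has infinite index, so any finite-index subgroup automatically escapes it. There is no need to do separate order/index counts at each finite level, and no need to worry about the $p=2$ subtleties you mention.
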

\begin{proof}
Let $G_{\Q(j_{K,f})}$ and $G_{H_f}$ be the absolute Galois group of $\Q(j_{K,f})$ and $H_f$, respectively. Then, $G_{\Q(j_{K,f})}/G_{H_f}\cong \Gal(H_f/\Q(j_{K,f}))$ is of order $2$, by Lemma \ref{lem-kjqj}. In particular, $G_{H_f}$ is a normal subgroup of $G_{\Q(j_{K,f})}$. It follows that $G_{E,K,N}=\rho_{E,N}(G_{H_f})$ is normal in $G_{E,N} = \rho_{E,N}(G_{\Q(j_{K,f})})$ and, therefore, $G_{E,N}$ is contained in the normalizer of $G_{E,K,N}$ in $\GL(2,\Z/N\Z)$. This proves (1).

By the Chinese remainder theorem, it suffices to show (2) when $N=p^n$ is a power of a prime $p$. Fix a prime $p$, and let $\rho_{E,p^\infty}\colon G_{\Q(j_{K,f})} \to \Aut(T_p(E))\cong \GL(2,\Z_p)$ be the representation associated to the action of Galois on the Tate module $T_p(E)$. Then, $G_{E,K,p^\infty}=\rho_{E,p^\infty}(G_{H_f})\subseteq  \cC_{\delta,\phi}(p^\infty)=\cC_{\delta,\phi}(\Z_p)$ with respect to an appropriate $\Z_p$-basis of $T_p(E)$ (as in Remark \ref{rem-complexconj}). Since $\Z_p$ is a local domain of characteristic $0$, Lemma \ref{lem-normalizerchar0} shows that $G_{E,p^\infty}=\rho_{E,p^\infty}(G_{\Q(j(E))})\subseteq \mathcal{N}_{\delta,\phi}(p^\infty)=\mathcal{N}_{\delta,\phi}(\Z_p)$. Hence, $G_{E,p^n} \equiv G_{E,p^\infty} \bmod p^n$ is contained in $\mathcal{N}_{\delta,\phi}(p^n)$, as desired. 
\end{proof}

We are ready to show Theorem  \ref{thm-cmrep-intro}.

\begin{theorem}\label{thm-cmrep}
	Let $E/\Q(j_{K,f})$ be an elliptic curve with CM by $\OO_{K,f}$, let $N\geq 3$, and let $\rho_{E,N}$ be the Galois representation $\Gal(\overline{\Q(j_{K,f})}/\Q(j_{K,f})) \to \Aut(E[N])\cong \GL(2,\Z/N\Z)$.  We define groups of $\GL(2,\Z/N\Z)$ as follows:
	\begin{itemize}
		\item If $\Delta_Kf^2\equiv 0\bmod 4$, or $N$ is odd, let $\delta=\Delta_K f^2/4$, and $\phi=0$.
		\item If $\Delta_Kf^2\equiv 1 \bmod 4$, and $N$ is even, let $\delta=\frac{(\Delta_K-1)}{4}f^2$, let $\phi=f$.
	\end{itemize}
	If we define $\cC_{\delta,\phi}(N)$ by
	$$\cC_{\delta,\phi}(N)=\left\{\left(\begin{array}{cc}
	a+b\phi & b\\
	\delta b & a\\
	\end{array}\right): a,b\in\Z/N\Z,\  a^2+ab\phi-\delta b^2 \in (\Z/N\Z)^\times \right\}.$$
	with  $\mathcal{N}_{\delta,\phi}(N) = \left\langle \cC_{\delta,\phi}(N),c_\phi=\left(\begin{array}{cc} -1 & 0\\ \phi & 1\\\end{array}\right)\right\rangle$, then there is a $\Z/N\Z$-basis of $E[N]$ such that the image of $\rho_{E,N}$
	is contained in $\mathcal{N}_{\delta,\phi}(N)$.	Moreover, 
	\begin{enumerate}
		\item $\cC_{\delta,\phi}(N)\cong (\Of/N\Of)^\times$  is a subgroup of index $2$ in  $\mathcal{N}_{\delta,f}(N)$, and
		\item The index of the image of $\rho_{E,N}$ in $\mathcal{N}_{\delta,f}(N)$ coincides with the order of the Galois group $\Gal(K(j_{K,f},E[N])/K(j_{K,f},h(E[N])))$, for a Weber function $h$, and it is a divisor of the order of $\Of^\times/\mathcal{O}_{K,f,N}^\times$, where $\mathcal{O}_{K,f,N}^\times=\{u\in\Of^\times: u\equiv 1 \bmod N\Of\}$.
		\item The index of $\Gal(H_f(E[N])/H_f)$ in $\cC_{\delta,\phi}(N)$ is the same as the index of the Galois group $\Gal(H_f(E[N])/\Q(j_{K,f}))$ in $\mathcal{N}_{\delta,\phi}(N)$.
	\end{enumerate} 
\end{theorem}
\begin{proof} Let $(\delta,\phi)=(\Delta_Kf^2/4,0)$ or $((\Delta_K-1)f^2/4,f)$ be chosen as in the statement of the theorem. By \cite{silverman2}, Theorem 2.2.3, the Galois representation $\rho_{E,N}|_{K(j_{K,f})}\colon\Gal(\overline{K(j_{K,f})}/K(j_{K,f})) \to \Aut(E[N])$ factors through $\Aut_{\Of/N\Of}(E[N])\cong (\Of/N\Of)^\times$. When we represent $\rho_{E,N}|_{K(j_{K,f})}$ with respect to the basis $\{P_N,Q_N\}\cong \{f\tau,1\}$ chosen in Remark \ref{rem-complexconj}, an automorphism $\alpha=a+bf\tau\in (\Of/N\Of)^\times$ corresponds to the matrix 
$$\left(\begin{array}{cc}
a+b\phi & b\\
\delta b & a\\
\end{array}\right)\in  \GL(2,\Z/N\Z)\cong \GL(E[N]).$$
It follows that the image of $\rho_{E,N}|_{K(j_{K,f})}:\Gal(\overline{K(j_{K,f})}/K(j_{K,f})) \to \Aut(E[N])\cong \GL(2,\Z/N\Z)$ is a subgroup of $\cC_{\delta,\phi}(N)\subseteq \GL(2,\Z/N\Z)$.
Moreover, by Lemma \ref{lem-normalizer}, we have that   the image of the Galois representation 
$\rho_{E,N}$ is a subgroup of $\mathcal{N}_{\delta,\phi}(N)$. This shows that the image of $\rho_{E,N}$ is contained in $\mathcal{N}_{\delta,\phi}(N)$. Also, part (1) of the theorem follows directly from the explicit description of $\mathcal{N}_{\delta,\phi}(N)$.

For parts (2) and (3), first note that by Lemma \ref{lem-clark}, we have that $K\subseteq \Q(j_{K,f},E[N])$, since we are assuming that $N\geq 3$. Also, note that the image of $\rho_{E,N}$ is isomorphic to $\Gal(\Q(j_{K,f},E[N])/\Q(j_{K,f}))$ which we just proved embeds into $\mathcal{N}_{\delta,\phi}(N)$. Let us denote  $K(j_{K,f})$ by $H_f$ as before. By part (1) here, and the first injection given by Theorem \ref{thm-incl}, we conclude that $\Gal(H_f(E[N])/H_f) \subseteq \cC_{\delta,\phi}(N)$. Moreover the index of $\cC_{\delta,\phi}(N)$ in $\mathcal{N}_{\delta,\phi}(N)$ is $2$ and the index of $\Gal(H_f(E[N])/H_f)$ in $\Gal(H_f(E[N])/\Q(j_{K,f}))$ is also $2$, therefore the index of $\Gal(H_f(E[N])/H_f)$ in $\cC_{\delta,\phi}(N)$ is the same as the index of the Galois group $\Gal(H_f(E[N])/\Q(j_{K,f}))$ in $\mathcal{N}_{\delta,\phi}(N)$. Finally, Theorem \ref{thm-incl} says that the index of the Galois group $\Gal(H_f(E[N])/H_f)$ in $\cC_{\delta,\phi}(N)$ is as claimed in the statement, and this concludes the proof.
\end{proof} 

\subsection{Proofs of parts (1) and (2) of Theorem \ref{thm-largeimage-intro}}\label{sec-proofsparts1and3}

Our results so far allow us to show parts (1) and (2) of Theorem \ref{thm-largeimage-intro}. We will show parts (3) and (4) later on, when we discuss the properties of the $p$-adic image for primes of good reduction in Section \ref{sec-goodredn} (see Prop. \ref{prop-largeimage-intro-part-3} and Theorem \ref{thm-largeimage-intro-part4}).

\begin{thm}\label{thm-largeimage-intro-parts-1-and-2}
	Let $E/\Q(j_{K,f})$ be an elliptic curve with CM by $\OO_{K,f}$. 	\begin{itemize}
		\item If $\Delta_Kf^2\equiv 0\bmod 4$, let $\delta=\Delta_K f^2/4$, and $\phi=0$.
		\item If $\Delta_K\equiv 1 \bmod 4$, let $\delta=\frac{(\Delta_K-1)}{4}f^2$, let $\phi=f$.
	\end{itemize}
	Then, the following hold:
	\begin{enumerate}
		\item Let $\rho_{E}$ be the Galois representation $\Gal(\overline{\Q(j_{K,f})}/\Q(j_{K,f})) \to \varprojlim \Aut(E[N])\cong \GL(2,\widehat{\Z})$, and let $\mathcal{N}_{\delta,\phi} = \varprojlim \mathcal{N}_{\delta,\phi}(N)$. Then, the index of the image of $\rho_{E}$ in $\mathcal{N}_{\delta,\phi}$ is a divisor of the order $\Of^\times$. In particular, the index is a divisor of $4$ or $6$. Moreover, for every $K$ and $f\geq 1$, and a  fixed $N\geq 3$, there is an elliptic curve $E/\Q(j_{K,f})$ such that the index of the image of $\rho_{E,N}$ in $\mathcal{N}_{\delta,\phi}(N)$ is $1$.
		
		\item Let $p>2$ and $j_{K,f}\neq 0$, or $p>3$. Let $G_{E,p^\infty}\subseteq \mathcal{N}_{\delta,\phi}(p^\infty) $ be the image of $\rho_{E,p^\infty}$ with respect to a suitable basis of $T_p(E)$, and let $G_{E,p}\subseteq \mathcal{N}_{\delta,\phi}(p)$ be the image of $\rho_{E,p} \equiv \rho_{E,p^\infty}\bmod p$. Then, $G_{E,p^\infty}$ is the full inverse image of $G_{E,p}$ via the reduction mod $p$ map $\mathcal{N}_{\delta,\phi}(p^\infty)\to \mathcal{N}_{\delta,\phi}(p)$.  
	\end{enumerate}
\end{thm}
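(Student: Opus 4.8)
The plan is to reduce both parts to finite-level statements that are already available and then pass to the inverse limit, using throughout that $[H_f:\mathbb{Q}(j_{K,f})]=2$ with $H_f=K(j_{K,f})$ (Lemma \ref{lem-kjqj}), that in the basis of Theorem \ref{thm-cmrep} the image $G_{E,K,N}$ of $\rho_{E,N}|_{\Gal(\overline{H_f}/H_f)}$ lies in $\cC_{\delta,\phi}(N)$ while $G_{E,N}=\operatorname{im}\rho_{E,N}$ lies in $\mathcal{N}_{\delta,\phi}(N)$ (Theorem \ref{thm-cmrep}, Lemma \ref{lem-normalizer2}), and that $K\subseteq\mathbb{Q}(j_{K,f},E[N])$ for $N\geq 3$ (Lemma \ref{lem-clark}), so that $G_{E,K,N}$ always has index exactly $2$ in $G_{E,N}$.

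\textbf{Part (1).} Theorem \ref{thm-cmrep}(2) gives that $d_N:=[\mathcal{N}_{\delta,\phi}(N):G_{E,N}]$ divides $|\Of^\times/\mathcal{O}_{K,f,N}^\times|$, which by Lemma \ref{lem-unitsmod} equals $|\Of^\times|\in\{2,4,6\}$ for every $N\geq 3$. I would then observe that for $N\mid N'$ the surjection $\mathcal{N}_{\delta,\phi}(N')\to\mathcal{N}_{\delta,\phi}(N)$ carries $G_{E,N'}$ onto $G_{E,N}$, so $d_N\mid d_{N'}$; hence $\{d_N:N\geq 3\}$ is a directed family of divisors of the fixed number $|\Of^\times|\leq 6$ and therefore has a maximum $d\mid|\Of^\times|$. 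Since $\operatorname{im}\rho_E$ is closed (continuous image of a compact group), $[\mathcal{N}_{\delta,\phi}:\operatorname{im}\rho_E]=\sup_N d_N=d$, which is the claimed divisibility, and $d$ divides $4$ or $6$ because $|\Of^\times|$ does. For the final sentence, fix $N\geq 3$: Theorem \ref{thm-twistforfullimage} produces $E/\mathbb{Q}(j_{K,f})$ with CM by $\Of$ and $\Gal(H_f(E[N])/H_f)\cong(\Of/N\Of)^\times$, i.e. $G_{E,K,N}=\cC_{\delta,\phi}(N)$; since $[G_{E,N}:G_{E,K,N}]=2$ and $[\mathcal{N}_{\delta,\phi}(N):\cC_{\delta,\phi}(N)]=2$ with $G_{E,N}\subseteq\mathcal{N}_{\delta,\phi}(N)$, it follows that $G_{E,N}=\mathcal{N}_{\delta,\phi}(N)$.

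\textbf{Part (2).} Under exactly the stated hypotheses on $p$ and $j_{K,f}$, Theorem \ref{thm-inclp}(5), iterated over all $n\geq 1$, shows that $G_{E,K,p^\infty}\subseteq\cC_{\delta,\phi}(p^\infty)$ is the full preimage of its reduction $G_{E,K,p}$ under $\cC_{\delta,\phi}(p^\infty)\to\cC_{\delta,\phi}(p)$. To lift this to $\mathbb{Q}(j_{K,f})$, let $\pi\colon\mathcal{N}_{\delta,\phi}(p^\infty)\to\mathcal{N}_{\delta,\phi}(p)$ be reduction mod $p$. Because $p>2$, the matrix $c_\phi\bmod p$ does not lie in $\cC_{\delta,\phi}(p)$ (its bottom-right entry is $1$, its top-left is $-1$), so the nontrivial coset $\cC_{\delta,\phi}(p^\infty)c_\phi$ maps entirely outside $\cC_{\delta,\phi}(p)$; hence $\ker\pi\subseteq\cC_{\delta,\phi}(p^\infty)$, and consequently $\pi^{-1}(G_{E,K,p})=G_{E,K,p^\infty}$ by the $H_f$-level result. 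Since $\pi$ is surjective, $\pi^{-1}(G_{E,p})\supseteq\pi^{-1}(G_{E,K,p})=G_{E,K,p^\infty}$ with index $[G_{E,p}:G_{E,K,p}]=2$, while also $G_{E,K,p^\infty}\subseteq G_{E,p^\infty}\subseteq\pi^{-1}(G_{E,p})$ with $[G_{E,p^\infty}:G_{E,K,p^\infty}]=2$; comparing indices forces $G_{E,p^\infty}=\pi^{-1}(G_{E,p})$.

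\textbf{Main difficulty.} The substantive inputs — that $\Gal(H_f(E[p^n])/H_f)$ is determined mod $p$, and the twisting construction yielding a maximal image — are already in hand, so the only real work is the bookkeeping of inverse limits and the index-$2$ descent from $H_f$ to $\mathbb{Q}(j_{K,f})$, together with the coset computation showing that for odd $p$ the kernel of reduction is contained in the Cartan. I expect that last point, and the compatibility of ``index'' and ``full preimage'' with $\varprojlim$, to be the most error-prone step rather than a genuine obstacle.
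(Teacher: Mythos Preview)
Your proposal is correct and follows essentially the same route as the paper. For part~(1) you invoke Theorem~\ref{thm-cmrep}(2) together with Lemma~\ref{lem-unitsmod} and Theorem~\ref{thm-twistforfullimage} exactly as the paper does (with a slightly more careful inverse-limit justification than the paper gives). For part~(2) both arguments rest on Theorem~\ref{thm-inclp}(5) for the $H_f$-level and then lift via the index-$2$ extension; the paper phrases the last step using an explicit lift $\gamma_n=\rho_{E,p^n}(c)$ of complex conjugation and the observation that any two such lifts differ by a Cartan element, whereas you package the same fact as ``$\ker\pi\subseteq\cC_{\delta,\phi}(p^\infty)$'' followed by an index count --- these are the same argument in different clothing.
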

\begin{proof}
	The first part of (1) follows from Theorem \ref{thm-cmrep}, because we have shown that for any $N\geq 3$, the image of $\rho_{E,N}$ in $\mathcal{N}_{\delta,\phi}(N)$ is a divisor of the order of $\Of^\times/\OO_{K,f,N}^\times$, which is in turn a divisor of the order of $\Of^\times$, and this order can be $2$, $4$, or $6$. 
	
	If we fix $K$, $f$, and $N\geq 3$,  then Theorem \ref{thm-twistforfullimage} shows that there exists an elliptic curve $E/\Q(j_{K,f})$ with CM by $\Of$, such that $\Gal(H_f(E[N])/H_f) \cong (\Of/N\Of)^\times$, where $H_f=K(j_{K,f})$. Hence, by Theorem \ref{thm-cmrep}, there is a basis of $E[N]$ such that the image of $\rho_{E,N} \colon \Gal(\overline{H_f}/H_f) \to \GL(2,\Z/N\Z)$ is all of $\cC_{\delta,\phi}(N)$, and therefore the image of $\rho_{E,N}$ is all of  $\mathcal{N}_{\delta,\phi}(N)$. This shows the rest of (1).
	
	For (2), let us fix compatible $\Z/p^n\Z$-basis of $E[p^n]$, for every $n\geq 1$, and a $\Z_p$-basis of $T_p(E)$, such that the image of $\rho_{E,p^n}$ and $\rho_{E,p^\infty}$ are contained in $\mathcal{N}_{\delta,\phi}(p^n)$ and  $\mathcal{N}_{\delta,\phi}(p^\infty)$, respectively. Let $G_{E,p^n}$ be the image of $\rho_{E,p^n}$, and let $G_{E,K,p^n}$ be the image of $\Gal(\overline{H_f}/H_f)$ through $\rho_{E,p^n}$. Then, Theorem \ref{thm-inclp} shows that, if $p>2$ and $j_{K,f}\neq 0$, or $p>3$, for all $n\geq 1$  the group $G_{E,K,p^{n+1}}$ is the full inverse image of $G_{E,K,p^n}$ under the natural reduction map $\cC_{\delta,\phi}(p^{n+1})\to \cC_{\delta,\phi}(p^n)$ induced by the map $(\Of/p^{n+1}\Of)^\times \to (\Of/p^{n}\Of)^\times$.
	
	Now, let $c$ be a fixed complex conjugation. Since $c$ does not fix $K$, it follows it does not fix $H_f$, and therefore it is an element in $\Gal(\overline{\Q(j_{K,f})}/\Q(j_{K,f})$ that is not in  $\Gal(\overline{H_f}/H_f)$, and the image $\gamma_n=\rho_{E,p^n}(c)$ together with $G_{E,K,p^n}$ generate $G_{E,p^n}$. Let $J_n$ be the full inverse image of $\langle \gamma_k, G_{E,K,p} \rangle$ via $\pi\colon \mathcal{N}_{\delta,\phi}(p^n)\to \mathcal{N}_{\delta,\phi}(p)$, where $\gamma_k=\pi(\gamma_n)$. We claim that $J_n = G_{E,p^n}$. Since $\pi(\gamma_n)=\gamma_1$, and since we have shown that $G_{E,K,p^n}$ is the full inverse image of $G_{E,K,p}$, it follows that $G_{E,p^n}\subseteq J$. To prove equality, it suffices to show that if $\gamma_n'$ is any other element in $J_n$ such that $\pi(\gamma_n')=\gamma_k$, then $\gamma_n'\in G_{E,p^n}$. Indeed, since $\# \mathcal{N}_{\delta,\phi}(p^n)/\cC_{\delta,\phi}(p^n) = \# \mathcal{N}_{\delta,\phi}(p)/\cC_{\delta,\phi}(p) =2$, and $\gamma_n$ and $\gamma_n'$ lie above $\gamma_k$ which is not in the Cartan at level $p^k$, it follows that $\gamma_n,\gamma_n'$ are not in the Cartan at level $p^n$, and therefore $\gamma_n^{-1}\cdot \gamma_n'=\alpha \in \cC_{\delta,\phi}(p^n)$. Hence, $\gamma_n' =\gamma_n\cdot \alpha\in G_{E,p^n}$, as desired. This shows that $J_n=G_{E,p^n}$ is the full inverse image of $G_{E,p}$, and concludes the proof of (2).
\end{proof}

In the following section we show that, under some hypotheses, one can choose a change of variables such that the image of $\rho_{E,N}$ is in $\mathcal{N}_{\delta,\phi}(p^n)$ and the image of complex conjugation has a particular shape that is easy to work with.

\subsection{The shape of complex conjugation}\label{sec-cc}

Let $E/\Q(j_{K,f})$ be an elliptic curve with CM by $\OO_{K,f}$, and let $N\geq 3$ (thus, by Lemma \ref{lem-clark}, $K\subseteq \Q(j_{K,f},E[N])$). Let us consider the group $\Aut(E[N])$ of automorphisms of $E[N]$, and the representation $$\rho_{E,N}\colon \Gal\left(\overline{\Q(j_{K,f})}/\Q(j_{K,f})\right)\to \Aut(E[N]).$$
The $N$-torsion of $E$ is a free $\Of/N\Of$-module of rank one (Prop. 1.4 of Ch II, \cite{silverman2}, and \cite[Lemma 1]{parish} in this generality) and, as in Theorem \ref{thm-incl}, if we restrict $\rho_{E,N}$ to the absolute Galois group of $H_f = K(j_{K,f})$, then
$$\rho_{E,N}|_{H_f}\colon\Gal\left(\overline{H_f}/H_f\right)\to \Aut_{\Of/N\Of}(E[N])\cong \left(\Of/N\Of\right)^\times.$$
On the other hand, if we fix an embedding of $\Q(j_{K,f})$ into $\CC$, then complex conjugation in $\CC$ induces a complex conjugation $c\in \Gal\left(\overline{\Q(j_{K,f})}/\Q(j_{K,f})\right)$, and the Galois group of the quadratic extension $H_f/\Q(j_{K,f})$ is generated by the restriction of complex conjugation, which we will denote by $c|_{H_f}$. If we fix an isomorphism $[\cdot] \colon \OO_{K,f} \to \End(E)$, normalized as in \cite[Ch 2., Prop. 1.1]{silverman2}, then $\sigma([\alpha])=[\sigma(\alpha)]$ for any $\sigma \in \Aut(\CC)$ by \cite[Ch. 2, Thm. 2.2]{silverman2}. In particular, this means that $\langle c|_{H_f} \rangle = \Gal(H_f/\Q(j_{K,f}))$ acts on $\Gal(H_f(E[N])/H_f)\subseteq \left(\Of/N\Of\right)^\times$ as complex conjugation. It follows that $\Gal(\Q(j_{K,f},E[N])/\Q(j_{K,f}))$ is a subgroup of the (external)  semi-direct product $\left(\Of/N\Of\right)^\times \rtimes_\varphi \Z/2\Z$ where $\varphi\colon \Z/2\Z\cong \langle c|_{H_f}\rangle \to \Aut(\left(\Of/N\Of\right)^\times)$ is defined by $\varphi(c|_{H_f})$, which is the automorphism that sends $\alpha \bmod N\Of\mapsto c(\alpha) \bmod N\Of$, the congruence class of the complex conjugate of $\alpha\in\Of$.

We note here that, as it is usual in semi-direct products, as an element of the Galois group $\Gal(\Q(j_{K,f},E[N])/\Q(j_{K,f}))$, the complex conjugation $c$ acts internally on $\Gal(\Q(j_{K,f},E[N])/H_f)$ via (group-theoretic) conjugation. Indeed, if we write an element $\sigma\in \Gal(\Q(j_{K,f},E[N])/\Q(j_{K,f}))$ as a pair $(\alpha, \beta)\in \left(\Of/N\Of\right)^\times \rtimes_\varphi \Z/2\Z$, and we put $c=(1,c|_{H_f})$, then if $\sigma \in \Gal(\Q(j_{K,f},E[N])/H_f)$, we can write $\sigma=(\alpha,1)$ and
\begin{align*}
c\cdot \sigma \cdot c^{-1} & = (1,c|_{H_f})(\alpha,1)(1,c|_{H_f})^{-1}\\
&= (1,c|_{H_f})(\alpha,1)(1,c|_{H_f}) = (1,c|_{H_f})(\alpha,c|_{H_f}) = (c(\alpha),1). 
\end{align*}
Thus, internally, $\Gal(\Q(j_{K,f},E[N])/\Q(j_{K,f}))\cong \Gal(\Q(j_{k,f},E[N])/H_f)\rtimes_\varphi \langle c\rangle$ where the semi-direct product is given by $\varphi\colon \langle c\rangle \to \Aut(\Gal(\Q(j_{k,f},E[N])/H_f))$ defined by $\varphi(c)(\sigma)=c\cdot \sigma\cdot  c^{-1}$.

Let us translate this into coordinates. Fix an appropriate $\Z/N\Z$-basis of $E[N]$, so that the image  $\rho_{E,N}(\Gal(\overline{H_f}/H_f))$ is a subgroup of $\cC_{\delta,\phi}(N)\subseteq \GL(2,\Z/N\Z)$, where $\delta$ and $\phi$ are chosen as in Theorem \ref{thm-cmrep-intro}. By Lemma \ref{lem-normalizer}, the image $\rho_{E,N}\left(\Gal\left(\overline{\Q(j_{K,f})}/\Q(j_{K,f})\right)\right)$ is contained in $\mathcal{N}_{\delta,\phi}(N)$. Let us investigate where a complex conjugation lands via $\rho_{E,N}$.

\begin{lemma}\label{lem-zerotrace}
	Let $c \in \Gal\left(\overline{\Q(j_{K,f})}/\Q(j_{K,f})\right)$ be a complex conjugation. Then, $\rho_{E,N}(c)$ is a matrix in $\GL(2,\Z/N\Z)$ with determinant $-1\bmod N$ and zero trace.
\end{lemma}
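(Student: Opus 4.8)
The plan is to prove the two assertions separately, using essentially only that a complex conjugation is an involution. Write $M=\rho_{E,N}(c)\in\GL(2,\Z/N\Z)$.

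\emph{Determinant.} I will invoke the Weil pairing $e_N\colon E[N]\times E[N]\to\mu_N$, which is bilinear, alternating, non-degenerate, and Galois-equivariant; these properties are equivalent to the identity $\det\circ\,\rho_{E,N}=\chi_N$, where $\chi_N\colon\Gal(\overline{\Q(j_{K,f})}/\Q(j_{K,f}))\to(\Z/N\Z)^\times$ is the mod-$N$ cyclotomic character (cf.\ Silverman, \emph{The Arithmetic of Elliptic Curves}, III.8.1.1). Fixing the embedding $\overline{\Q(j_{K,f})}\hookrightarrow\CC$ that realizes $c$, the restriction of $c$ to $\mu_N$ is $\zeta\mapsto\overline{\zeta}=\zeta^{-1}$, i.e.\ multiplication by $-1$; hence $\det M=\chi_N(c)\equiv -1\bmod N$.

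\emph{Trace.} Since $c$ has order $2$ we get $M^2=\operatorname{Id}$. Combining this with the Cayley--Hamilton relation for the $2\times2$ matrix $M$, namely $M^2-(\operatorname{tr}M)\,M+(\det M)\operatorname{Id}=0$, and substituting $\det M=-1$, I obtain $(\operatorname{tr}M)\,M=0$. As $M\in\GL(2,\Z/N\Z)$ is invertible, multiplying by $M^{-1}$ gives $\operatorname{tr}M\equiv 0\bmod N$, which completes the argument.

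I do not expect a serious obstacle here; the only point requiring a word of care is the existence of a complex conjugation in $\Gal(\overline{\Q(j_{K,f})}/\Q(j_{K,f}))$ that is genuinely an involution, which follows from the classical fact that $j(\CC/\Of)\in\R$, so that $\Q(j_{K,f})$ admits a real embedding. As an alternative argument more in line with the surrounding discussion, one can instead use the normalizer picture: by Lemma~\ref{lem-normalizer2} choose a $\Z/N\Z$-basis of $E[N]$ with $\rho_{E,N}(\Gal(\overline{H_f}/H_f))\subseteq\cC_{\delta,\phi}(N)$ and $\rho_{E,N}(\Gal(\overline{\Q(j_{K,f})}/\Q(j_{K,f})))\subseteq\mathcal{N}_{\delta,\phi}(N)$; since $N\geq 3$, Lemma~\ref{lem-clark} gives $K\subseteq\Q(j_{K,f},E[N])$, and $c$ restricts to the nontrivial automorphism of the imaginary quadratic field $K$, so $M$ cannot act $\Of/N\Of$-linearly on $E[N]$ (that would force $\overline{\alpha}\equiv\alpha\bmod N\Of$ for all $\alpha\in\Of$, which fails for $N\geq 3$). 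Hence $M\in\mathcal{N}_{\delta,\phi}(N)\setminus\cC_{\delta,\phi}(N)$, and by the explicit description in Lemma~\ref{lem-normalizerchar0} every matrix in that coset has the form $\left(\begin{array}{cc}-a&b\\ a\phi-\delta b&a\end{array}\right)$, which has zero trace; the determinant is then pinned to $-1$ using $M^2=\operatorname{Id}$ together with Cayley--Hamilton.
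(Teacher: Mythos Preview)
Your main argument is correct and matches the paper's proof. For the determinant you both invoke the Weil pairing to identify $\det\circ\rho_{E,N}$ with the cyclotomic character and evaluate at $c$. For the trace, the paper writes $M=M^{-1}$ and compares entries using the adjugate formula $M^{-1}=(\det M)^{-1}\left(\begin{smallmatrix}d&-b\\-c&a\end{smallmatrix}\right)$ with $\det M=-1$, whereas you package the same computation via Cayley--Hamilton; these are equivalent.
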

\begin{proof}
	It follows from the existence of the Weil pairing that  $\chi_{E,N}=\det(\rho_{E,N})$ is the $N$-th cyclotomic character. In particular, $\det(\rho_{E,N}(c))=\chi_{E,N}(c)\equiv -1 \bmod N$. Moreover, $c^2$ is the identity, and therefore $(\rho_{E,N}(c))^2 \equiv \operatorname{Id} \bmod N$. In particular, $\rho_{E,N}(c)$ is congruent to its own inverse modulo $N$, and therefore
	$$\left(\begin{array}{cc} a & b\\ c & d\\\end{array}\right) \equiv \frac{1}{ad-bc}\left(\begin{array}{cc} d & -b\\ -c & a\\\end{array}\right) \equiv \left(\begin{array}{cc} -d & b\\ c & -a\\\end{array}\right).$$
	Hence, $a\equiv -d\bmod N$, and the trace of $\rho_{E,N}(c)$ is zero.
\end{proof}

There exist elements of a Cartan subgroup $\cC_{\delta,\phi}(N)$ of determinant $-1$ and zero trace. For instance, if $p$ is odd, and $\Delta_K$ is a square modulo $p$, let $\delta=\Delta_K f^2/4 = d^2$ for some $d\in\Z_p$, and consider the matrix $\left(\begin{array}{cc} 0 & d^{-1}\\ \delta d^{-1} & 0\\\end{array}\right).$ We shall use a result of Zarhin to show that the image of a complex conjugation via $\rho_{E,N}$ cannot be contained in a Cartan subgroup, for $N\geq 3$.

\begin{thm}[Zarhin, \cite{zarhin}]\label{thm-zarhin} Let $L$ be a number field, let $L^{\text{ab}}$ be the maximal abelian extension of $L$, and let $A/L$ be an abelian variety defined over $L$. Then, $A(L)_{\text{tors}}$ is finite if and only if $A$ has no abelian subvariety with CM-type defined over $L$. 
\end{thm}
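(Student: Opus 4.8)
This is the (known) theorem of Zarhin, so the plan is to reconstruct its proof, treating both implications by contraposition around the principle that \emph{the $\ell$-adic Galois representation attached to an abelian variety has essentially abelian image exactly when the variety has complex multiplication}. Write $G_L=\Gal(\overline{L}/L)$; for a prime $\ell$ put $V_\ell A=T_\ell A\otimes_{\Q}\mathbb{Q}_\ell$, let $\rho_\ell\colon G_L\to\GL(V_\ell A)$ be the associated representation with image $G_\ell$ and Zariski closure $\mathbf{G}_\ell$, and let $H_\ell=\rho_\ell(\Gal(\overline{L}/L^{\mathrm{ab}}))$, a closed normal subgroup of $G_\ell$ with abelian quotient, of Zariski closure $\mathbf{H}_\ell$; consequently $\mathbf{H}_\ell$ contains the derived group of the identity component $\mathbf{G}_\ell^{\,\circ}$. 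I would use Faltings' theorems freely: $V_\ell A$ is a semisimple $G_L$-module, $\End_{G_L}(V_\ell A)=\End_L(A)\otimes\mathbb{Q}_\ell$, and $\mathbf{G}_\ell^{\,\circ}$ is connected reductive.

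For the direction ``$A$ has a CM-type abelian subvariety over $L$ $\Rightarrow$ infinite torsion'', suppose $B\subseteq A$ is defined over $L$ and $\End_L(B)\otimes\Q$ contains a CM field $F$ with $[F:\Q]=2\dim B$. Then $V_\ell B$ is free of rank one over $F\otimes_{\Q}\mathbb{Q}_\ell$, and since the CM is $L$-rational the Galois action is $(F\otimes_{\Q}\mathbb{Q}_\ell)$-linear, so $\rho_{B,\ell}(G_L)\subseteq(F\otimes_{\Q}\mathbb{Q}_\ell)^\times$ is commutative; hence $G_L$ acts on $B[\ell^\infty]$ through $\Gal(L^{\mathrm{ab}}/L)$, giving $B[\ell^\infty]\subseteq B(L^{\mathrm{ab}})\subseteq A(L^{\mathrm{ab}})$, which therefore has infinite torsion.

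For the converse, assume $A(L^{\mathrm{ab}})_{\mathrm{tors}}$ is infinite. By Poincar\'e reducibility over $L$, $A$ is $L$-isogenous to $\prod_i B_i^{n_i}$ with the $B_i$ simple over $L$, and since $A(L^{\mathrm{ab}})_{\mathrm{tors}}$ maps with finite kernel to $\prod_i B_i(L^{\mathrm{ab}})_{\mathrm{tors}}$, some $B=B_i$ has $B(L^{\mathrm{ab}})_{\mathrm{tors}}$ infinite; a standard reduction — bounding torsion in abelian extensions by means of good reduction and Northcott's theorem — then produces a prime $\ell$ with $B[\ell^\infty](L^{\mathrm{ab}})$ infinite. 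Thus $H_\ell$ fixes pointwise an infinite subgroup of $B[\ell^\infty]$, hence fixes a nonzero subspace $W\subseteq V_\ell B$, and since $\mathbf{H}_\ell$ contains the derived group of $\mathbf{G}_\ell^{\,\circ}$, that derived group fixes $W$. Decomposing $V_\ell B$ into $G_L$-isotypic blocks — which are exactly the blocks cut out by the central idempotents of $\End_L(B)\otimes\mathbb{Q}_\ell$ — one obtains a nonzero block on which the derived group of the $\ell$-adic monodromy of $B$ acts trivially.

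It remains to upgrade ``one isotypic block has abelian monodromy'' to ``$B$ has full CM over $L$''. Here one invokes the circle of ideas going back to Serre and to Shimura and Taniyama relating abelian $\ell$-adic monodromy to complex multiplication: the triviality of the derived monodromy on a block, combined with Faltings' description of $\End_L(B)\otimes\mathbb{Q}_\ell$ and the fact that the commutativity already holds over $L$, forces $B$ to have CM by a CM field of degree $2\dim B$, all defined over $L$. Finally, being an isogeny factor of $A$ over $L$, $B$ is $L$-isogenous to an abelian subvariety of $A$ by Poincar\'e reducibility, which is the required CM-type subvariety. The \emph{main obstacle} is exactly this last upgrade — extracting honest CM, of maximal degree and realized over $L$, from the abelianness of a single block of the monodromy — which is where Faltings' semisimplicity and the Tate conjecture over number fields, together with the classification of CM abelian varieties, are indispensable; by contrast the two intermediate reductions (to a simple factor, and to a single prime $\ell$) are routine.
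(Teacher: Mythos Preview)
The paper does not prove this theorem at all: it is quoted as a result of Zarhin (with a citation) and then applied as a black box in the proof of Lemma~\ref{lem-insidecartan}. There is therefore no ``paper's own proof'' to compare your attempt against. Note also that the statement as printed contains a typo: it should read $A(L^{\text{ab}})_{\text{tors}}$, not $A(L)_{\text{tors}}$; you have correctly interpreted it this way.

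As for your reconstruction of Zarhin's argument, the overall architecture is right and the easy direction is fine. In the hard direction, two steps deserve more care than you give them. First, the passage from ``$B(L^{\mathrm{ab}})_{\mathrm{tors}}$ infinite'' to ``$B[\ell^\infty](L^{\mathrm{ab}})$ infinite for some $\ell$'' is not really a Northcott-style bound; one needs to rule out the possibility of finite but nonzero $\ell$-torsion for infinitely many $\ell$, which is typically done via results on the mod-$\ell$ image being large for almost all $\ell$ (Serre, Bogomolov, or arguing via the center of $\mathbf{G}_\ell^{\,\circ}$). Second, you correctly identify the crux: going from ``the derived group of $\mathbf{G}_\ell^{\,\circ}$ acts trivially on a nonzero subspace of $V_\ell B$'' to ``$B$ has CM of maximal degree over $L$''. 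For $B$ simple over $L$, Faltings gives $\End_L(B)\otimes\mathbb{Q}_\ell=\End_{G_L}(V_\ell B)$, so abelian monodromy on a block yields a commutative factor of $\End_L(B)\otimes\mathbb{Q}$ of the right dimension; but pinning this down to a CM field acting with $[F:\mathbb{Q}]=2\dim B$ requires the full classification of endomorphism algebras and the comparison with Hodge structures, which you gesture at but do not carry out. Your sketch is an honest outline of the proof, with the genuine difficulty flagged rather than resolved.
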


\begin{thm}\label{lem-insidecartan}
	Let $E/\Q(j_{K,f})$ be an elliptic curve with CM, let $p$ be a prime, let $n=2$ if $p=2$ and $n=1$ if $p>2$, and choose a basis of $T_p(E)$ as in Theorem \ref{thm-cmrep-intro}. Then, the image of $\rho_{E,p^n}\colon \Gal\left(\overline{\Q(j_{K,f})}/\Q(j_{K,f})\right)\to \GL(2,\Z/p^n\Z)$ cannot be contained in $\calc_{\delta,\phi}(p^n)$. 
\end{thm}
\begin{proof}
	Let us fix a $\Z_p$-basis of $T_p(E)$ so that the image of $\rho_{E,p^\infty}$ is as described in Theorem \ref{thm-cmrep-intro}. In particular, the image $H$ of $\Gal(\overline{K(j_{K,f})}/K(j_{K,f}))$ is contained in the $p$-adic Cartan subgroup $\cC_{\delta,\phi}(p^\infty)$ and the image $G$ of $\Gal(\overline{\Q(j_{K,f})}/\Q(j_{K,f}))$ is contained in $\mathcal{N}_{\delta,\phi}(p^\infty)$.
	
	Suppose that the image of complex conjugation via $\rho_{E,p^\infty}$ is $\gamma\in \GL(2,\Z_p)$ of order $2$, zero trace, and determinant $-1$, and suppose for a contradiction that the image $G_{p^n}$ of $\rho_{E,p^n}$ is contained in the Cartan subgroup (where, $p^n=4$ if $p=2$, and $p^n=p$ if $p>2$).
	\begin{itemize}
		\item Suppose first that $\Delta_K\equiv 0 \bmod 4$ or $f$ is even or $p>2$. Then, $\bar{\gamma}\in G_{p^n}\subseteq \calc_{\delta,\phi}(p^n)$, where $\bar{\gamma}\equiv \gamma \bmod p^n$. If $\gamma$ is {\it not} in the Cartan, then there are $p$-adic integers $e,g\in\Z_p$, such that $\gamma= \left(\begin{array}{cc} -e & g\\ -\delta g & e\\\end{array}\right)$ where $\delta = \Delta_K f^2/4$, and $a,b\in \Z/p^n\Z$ such that 
		$$\bar{\gamma}\equiv \left(\begin{array}{cc} a & b\\ \delta b & a\\\end{array}\right)\equiv \left(\begin{array}{cc} -e & g\\ -\delta g & e\\\end{array}\right) \bmod p^n.$$
		This implies that $a\equiv -a\bmod p^n$ and $\delta b \equiv -\delta b\bmod p^n$, which in turn imply that $2a\equiv 2\delta b\equiv 0 \bmod p^n$. Thus, $a\equiv \delta b\equiv 0 \bmod p$. This is a contradiction because $\det(\bar{\gamma})\equiv a^2-\delta b^2 \equiv 0 \bmod p$ needs to be unit. It follows that $\gamma$ is in the Cartan subgroup of $\GL(2,\Z_p)$, and therefore $G=\langle H,\gamma\rangle$ is also contained in the Cartan.
		
		\item Now suppose that $\Delta_K\equiv 1 \bmod 4$ and $f$ is odd and $p=2$.  Then, $\bar{\gamma}\in G_4\subseteq \calc_{\delta,\phi}(4)$, where $\bar{\gamma}\equiv \gamma \bmod 4$. If $\gamma$ is {\it not} in the Cartan, then there are $2$-adic integers $e,g\in\Z_2$, such that $\gamma= \left(\begin{array}{cc} -e & g\\ ef-\delta g & e\\\end{array}\right)$ where $\delta = (\Delta_K-1) f^2/4$, and $a,b\in \Z/4\Z$ such that 
		$$\bar{\gamma}\equiv \left(\begin{array}{cc} a +bf & b\\ \delta b & a\\\end{array}\right)\equiv \left(\begin{array}{cc} -e & g\\ ef-\delta g & e\\\end{array}\right) \bmod 4.$$
		In particular, $a+bf\equiv -a\bmod 4$ and so $2a+bf\equiv 0 \bmod 4$. Since $f\equiv 1 \bmod 2$, it follows that $b\equiv 0\bmod 2$. The matrix congruence above also yields $\delta b \equiv ef-\delta g$ and $b\equiv g \bmod 4$, and so $2\delta b \equiv af\bmod 4$, which implies that $a\equiv 0 \bmod 2$. But then, $\det(\bar{\gamma})\equiv a^2+abf-\delta b^2\equiv 0 \bmod 2$, which is impossible. We conclude that $\gamma$ is in the Cartan subgroup of $\GL(2,\Z_2)$, and therefore $G=\langle H,\gamma\rangle$ is also contained in the Cartan.
	\end{itemize} 
	In all cases, we conclude that $G$ would be contained in the Cartan, and therefore the extension $\Q(j_{K,f},E[p^\infty])/\Q(j_{K,f})$ is abelian. It follows that if we put $L=\Q(j_{K,f})$, then $E/L$ is an abelian variety with CM, such that $E(L^\text{ab})_\text{tors}$ is infinite, since all of $E[p^\infty]$ would be defined over $L^\text{ab}$. By Zarhin's Theorem \ref{thm-zarhin}, the CM-type of $E$ would be defined over $L$, i.e., all the endomorphisms of $E$ would be defined over $L$. However, the endomorphisms are defined over $K(j_{K,f})$ by Theorem \ref{thm-endo}, and $[K(j_{K,f}):\Q(j_{K,f})]=2$ by Lemma \ref{lem-kjqj}. Thus, not all endomorphisms can be defined over $\Q(j_{K,f})$, and we have reached a contradiction. Hence, the image $G_{p^n}$ cannot be contained in the Cartan subgroup, as claimed. (Note: alternatively, after arriving to the conclusion that the extension $\Q(j_{K,f},E[p^\infty])/\Q(j_{K,f})$ is abelian, the proof could have been concluded using Serre's work \cite{serre66}, Theorem 5.)
\end{proof}

\begin{remark} In some cases $\Q(E[n])$ can be abelian for $n\geq 3$, even for elliptic curves with CM (see \cite{gonzalez-jimenez-lozano-robledo}), without contradicting Theorem \ref{lem-insidecartan}. For instance, let $E: y^2=x^3+16$ defined over $\Q$. Then, $\Q(E[3])=K=\Q(\sqrt{-3})$ is abelian. However, in this case, the image of $\Gal(K(E[3])/K)$ is trivial in  $\cC_{\delta,0}(3)=\left\{ \left(\begin{array}{cc} a & b\\ 0 & a\\\end{array}\right) : a \in (\Z/3\Z)^\times, b\in\Z/3\Z \right\}$, and the image of a complex conjugation is given by the matrix  $\left(\begin{array}{cc} 1 & 0\\ 0 & -1\\\end{array}\right)$, which indeed does not belong to   $\cC_{\delta,0}(3)$, in agreement with Theorem \ref{lem-insidecartan}.

Similarly, if $E:y^2=x^3+x$, then $\Q(E[4])$ is abelian. As we shall see in the proof of Theorem \ref{thm-j1728-intro} below, the image of $\Gal(\Q(i,E[4])/\Q(i))$ is given by the subgroup $\left\langle \left(\begin{array}{cc} 1 & 2\\2 & 1\\\end{array}\right)  \right\rangle\subseteq \cC_{\delta,0}(4)$ and complex conjugation maps to the matrix $\left(\begin{array}{cc} 0 & 3\\ 3 & 0\\\end{array}\right)$ which is not contained in the Cartan subgroup $\cC_{\delta,0}(4)=\left\{ \left(\begin{array}{cc} a & b\\ -b & a\\\end{array}\right) : a,b \in \Z/4\Z, a^2+b^2\not\equiv 0 \bmod 2 \right\}$.

In upcoming work, we will use the results of this paper to describe all the CM elliptic curves over $\Q(j_{K,f})$ and $N\geq 2$ such that the image of $\rho_{E,N}$ is abelian, extending the work of Section 4 of \cite{gonzalez-jimenez-lozano-robledo} which covers the case of $j_{K,f}\in \Q$.
\end{remark}

Theorem \ref{lem-insidecartan} shows that the image of a complex conjugation via $\rho_{E,N}$ lands in $\mathcal{N}_{\delta,\phi}(N) \setminus \cC_{\delta,\phi}(N)$ for any $N\geq 3$. The group $\mathcal{N}_{\delta,\phi}(N)$ can also be described as a semi-direct product, as the following lemma shows.

\begin{lemma}
	Let $\delta$ and $\phi$ be chosen as in Theorem \ref{thm-cmrep-intro}, and let $\gamma$ be any element in $\mathcal{N}_{\delta,\phi}(N) \setminus \cC_{\delta,\phi}(N).$ Then, 
	$$\mathcal{N}_{\delta,\phi}(N) \cong \cC_{\delta,\phi}(N) \rtimes_\varphi \langle \gamma \rangle$$
	where $\varphi \colon \langle \gamma \rangle \to \Aut(\cC_{\delta,\phi}(N))$ is defined by $\gamma \mapsto (\alpha \to \gamma \cdot \alpha \cdot \gamma^{-1})$. More precisely, if $\alpha = \left(\begin{array}{cc} a +b\phi & b\\ \delta b & a\\\end{array}\right)$ and $\gamma \in \mathcal{N}_{\delta,\phi}(N) \setminus \cC_{\delta,\phi}(N)$, then
	$$\gamma \cdot \left(\begin{array}{cc} a +b\phi & b\\ \delta b & a\\\end{array}\right) \cdot \gamma^{-1} = \left(\begin{array}{cc} a  & -b\\ -\delta b & a+b\phi\\\end{array}\right).$$
\end{lemma}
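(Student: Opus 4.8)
The plan is to exhibit the matrix $c_\phi=\left(\begin{smallmatrix}-1 & 0\\ \phi & 1\end{smallmatrix}\right)$ as a canonical complement of order $2$ to the Cartan subgroup inside $\mathcal{N}_{\delta,\phi}(N)$, to compute the conjugation action of $c_\phi$ on $\cC_{\delta,\phi}(N)$ by a direct $2\times 2$ multiplication, and finally to deduce that this action does not depend on the particular choice of $\gamma$ outside $\cC_{\delta,\phi}(N)$ because $\cC_{\delta,\phi}(N)$ is abelian.

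First I would recall, from Theorem \ref{thm-cmrep}, part (1), that $\cC_{\delta,\phi}(N)\cong(\Of/N\Of)^\times$ is an \emph{abelian} normal subgroup of index $2$ in $\mathcal{N}_{\delta,\phi}(N)=\langle\cC_{\delta,\phi}(N),c_\phi\rangle$; in particular $c_\phi\notin\cC_{\delta,\phi}(N)$ and the nontrivial coset of $\cC_{\delta,\phi}(N)$ in $\mathcal{N}_{\delta,\phi}(N)$ is $c_\phi\,\cC_{\delta,\phi}(N)=\cC_{\delta,\phi}(N)\,c_\phi$. A one-line computation shows $c_\phi^2=\operatorname{Id}$, so $\langle c_\phi\rangle\cong\Z/2\Z$ meets $\cC_{\delta,\phi}(N)$ trivially, which yields the internal semidirect product decomposition $\mathcal{N}_{\delta,\phi}(N)=\cC_{\delta,\phi}(N)\rtimes_\varphi\langle c_\phi\rangle$ with $\varphi$ the conjugation map.

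Next I would verify the explicit formula by multiplying out $c_\phi\left(\begin{smallmatrix}a+b\phi & b\\ \delta b & a\end{smallmatrix}\right)c_\phi^{-1}$, using $c_\phi^{-1}=c_\phi$; the product is $\left(\begin{smallmatrix}a & -b\\ -\delta b & a+b\phi\end{smallmatrix}\right)$, i.e.\ the Cartan element attached to the parameters $(a+b\phi,-b)$, whose unit condition $a'^2+a'b'\phi-\delta b'^2\in(\Z/N\Z)^\times$ is automatic since conjugation preserves the determinant $a^2+ab\phi-\delta b^2$. This simultaneously reconfirms that $c_\phi$ normalizes $\cC_{\delta,\phi}(N)$ and establishes the stated conjugation formula for $\gamma=c_\phi$.

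Finally, for an arbitrary $\gamma\in\mathcal{N}_{\delta,\phi}(N)\setminus\cC_{\delta,\phi}(N)$, I would write $\gamma=\beta c_\phi$ with $\beta\in\cC_{\delta,\phi}(N)$ (possible by the coset description), so that $\gamma\alpha\gamma^{-1}=\beta\,(c_\phi\alpha c_\phi^{-1})\,\beta^{-1}$. Since $c_\phi\alpha c_\phi^{-1}\in\cC_{\delta,\phi}(N)$ and $\cC_{\delta,\phi}(N)$ is abelian, $\beta$ commutes with it, whence $\gamma\alpha\gamma^{-1}=c_\phi\alpha c_\phi^{-1}=\left(\begin{smallmatrix}a & -b\\ -\delta b & a+b\phi\end{smallmatrix}\right)$, independently of $\gamma$. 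Thus $\varphi$ factors through $\mathcal{N}_{\delta,\phi}(N)/\cC_{\delta,\phi}(N)$, and the semidirect product statement holds with any representative $\gamma$ of order $2$ (for instance $c_\phi$, or any $\gamma$ with $\det\gamma=-1$, such as a complex conjugation by Lemma \ref{lem-zerotrace}). The only step that is not a pure computation is this last one, where abelianness of the Cartan subgroup is precisely what makes the conjugation action well-defined on cosets; the rest is routine $2\times 2$ matrix arithmetic that I would not spell out.
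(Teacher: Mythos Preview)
Your proof is correct and follows essentially the same approach as the paper. The paper's own proof is very brief: it cites a standard recognition theorem for semidirect products (normality of the Cartan plus index $2$) and dismisses the conjugation formula as ``an elementary calculation.'' You fill in those details, and your use of the abelianness of $\cC_{\delta,\phi}(N)$ to pass from the specific representative $c_\phi$ to an arbitrary $\gamma$ is a clean way to handle the ``any $\gamma$'' clause without redoing the matrix computation for a general element of the nontrivial coset.
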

\begin{proof}
	The isomorphism $\mathcal{N}_{\delta,\phi}(N) \cong \cC_{\delta,\phi}(N) \rtimes_\varphi \langle \gamma \rangle$ follows from standard facts about semi-direct products (see \cite{dummitfoote}, Ch. 5, Theorem 12) by noting that $\cC_{\delta,\phi}(N) \vartriangleleft\mathcal{N}_{\delta,\phi}(N)$ because $\mathcal{N}_{\delta,\phi}(N)$ is contained in the normalizer of $\cC_{\delta,\phi}(N)$ in $\GL(2,\Z/N\Z)$, and  $\# \mathcal{N}_{\delta,\phi}(N)/\cC_{\delta,\phi}(N)=2$, so  $\mathcal{N}_{\delta,\phi}(N) = \cC_{\delta,\phi}(N) \cdot \langle \gamma \rangle$ for any $\gamma \in \mathcal{N}_{\delta,\phi}(N) \setminus \cC_{\delta,\phi}(N).$ The second part of the statement amounts to an elementary calculation.
\end{proof}

Note that the matrix $\left(\begin{array}{cc} a +b\phi & b\\ \delta b & a\\\end{array}\right)$ is the multiplication-by-$(a+bf\tau)$ matrix, while the matrix $\left(\begin{array}{cc} a  & -b\\ -\delta b & a+b\phi\\\end{array}\right)$ corresponds to  multiplication-by-$(a+bf-bf\tau)$ matrix, and the complex conjugate of $a+bf\tau$ is precisely $a+bf-bf\tau$. The following lemma will show that if $N$ is odd, and the image is the full group $\mathcal{N}_{\delta,\phi}(N)$, then we can assume that $c$ is given by a matrix of the form $\left(\begin{array}{cc} \varepsilon & 0\\ 0 & -\varepsilon\\\end{array}\right)$, for some $\varepsilon \in \{\pm 1 \}$, after an appropriate change of basis.

\begin{lemma}\label{lem-ccfinal}
	Let $E/\Q(j_{K,f})$ be an elliptic curve with CM by $\Of$, let $p$ be an odd prime, let $G_{E,p^\infty}$ be the image of $\rho_{E,p^\infty}$ and let $G_{E,K,p^\infty}=\rho_{E,p^\infty}(\Gal(\overline{H_f}/H_f))$. Let $c\in\Gal(\overline{\Q(j_{K,f})}/\Q(j_{K,f}))$ be a complex conjugation, and let $\gamma=\rho_{E,p^\infty}(c)$. Then:  
	\begin{enumerate}
		\item There is a $\Z_p$-basis of $T_p(E)$ such that $G_{E,K,p^\infty}\subseteq \cC_{\delta,\phi}(p^\infty)$, and  $\gamma=c_\varepsilon=\left(\begin{array}{cc} \varepsilon & 0\\ 0 & -\varepsilon\\\end{array}\right)$ for some $\varepsilon \in \{\pm 1\}$.
		\item If $\delta=\Delta_K f^2/4 \not \equiv 0 \bmod p$, then for each $\varepsilon\in \{\pm 1 \}$ there is a basis such that $\rho_{E,p^\infty}(c)=c_\varepsilon$.
		\item  If $\delta\equiv 0 \bmod p$, then the group $\langle \cC_{\delta,\phi}(p^\infty),c_1\rangle$ cannot be conjugated to $\langle \cC_{\delta,\phi}(p^\infty),c_{-1}\rangle$ in such a way that $c_1$ is sent to $c_{-1}$.
	\end{enumerate}
\end{lemma}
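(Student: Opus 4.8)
The plan is to attack the three parts in order, using Lemma~\ref{lem-zerotrace} (complex conjugation has zero trace and determinant $-1$), Lemma~\ref{lem-insidecartan} (the image of $c$ is not in the Cartan, hence lies in $\mathcal{N}_{\delta,\phi}(p^\infty)\setminus\cC_{\delta,\phi}(p^\infty)$), Lemma~\ref{lem-cc1} (explicit conjugators diagonalizing a trace-zero, determinant-$-1$ element of the normalizer), and Lemma~\ref{lem-cc2} (the matrices intertwining $c_1$ and $c_{-1}$). First I would fix a $\Z_p$-basis as in Theorem~\ref{thm-cmrep} so that $G_{E,K,p^\infty}\subseteq\cC_{\delta,\phi}(p^\infty)$ and $\gamma=\rho_{E,p^\infty}(c)\in\mathcal{N}_{\delta,\phi}(p^\infty)$; after conjugating by $\left(\begin{smallmatrix}1&0\\-\phi/2&1\end{smallmatrix}\right)$ (allowed since $p$ is odd, by the displayed identity in Section~\ref{sec-normalizers}) I may assume $\phi=0$, i.e.\ $\gamma=\left(\begin{smallmatrix}-a&b\\-\delta b&a\end{smallmatrix}\right)$ with $a^2-\delta b^2=1$. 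For part~(1), I apply the first (or third) identity of Lemma~\ref{lem-cc1}: conjugating by $\left(\begin{smallmatrix}a-1&-b\\-\delta b&a-1\end{smallmatrix}\right)$ (or, if that matrix is singular mod $p$, by $\left(\begin{smallmatrix}-\delta b&a+1\\\delta(a+1)&-\delta b\end{smallmatrix}\right)$, or by the second conjugator to land on $c_{-1}$) sends $\gamma$ to $c_\varepsilon$ for some $\varepsilon\in\{\pm1\}$; I must check that at least one of these conjugators is invertible over $\Z_p$ — since $\det$ of the first is $2(1-a)$, of the second $2(1+a)$, and $1-a$, $1+a$ cannot both be non-units mod $p$ for $p$ odd (their sum is $2$), one of them works, and this conjugation preserves $\cC_{\delta,\phi}(p^\infty)$ because Lemma~\ref{lem-normalizerchar0} makes those conjugators elements of $\mathcal{N}_{\delta,\phi}(p^\infty)$ up to the diagonalizing change of basis; the $\phi\neq0$ case is handled by conjugating back.

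For part~(2), assume $\delta\not\equiv0\bmod p$. The point is that now $\cC_{\delta,\phi}(p^\infty)$ acts transitively enough on its nontrivial outer coset that both $c_1$ and $c_{-1}$ are realized by some conjugation preserving the Cartan. Concretely, I would exhibit an element $g\in\cC_{\delta}(p^\infty)$ with $g c_1 g^{-1}=c_{-1}$: a direct computation shows $\left(\begin{smallmatrix}0&b\\ \delta b&0\end{smallmatrix}\right)$, which lies in $\cC_\delta(p^\infty)$ whenever $-\delta b^2$ is a unit (possible since $\delta\not\equiv0\bmod p$, e.g.\ $b=1$), conjugates $c_1$ to $c_{-1}$. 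Since conjugating the whole image $G_{E,p^\infty}$ by $g$ fixes $G_{E,K,p^\infty}\subseteq\cC_\delta(p^\infty)$ (as $g$ normalizes the Cartan — indeed $g\in\cC_\delta$) and sends the $c_1$-obtained $\gamma$ to the $c_{-1}$-version, both choices of $\varepsilon$ are attainable; the $\phi\neq 0$ case follows by transporting via the fixed $\left(\begin{smallmatrix}1&0\\\phi/2&1\end{smallmatrix}\right)$.

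For part~(3), suppose $\delta\equiv0\bmod p$ and suppose for contradiction that $M\in\GL(2,\Z_p)$ conjugates $\langle\cC_{\delta,\phi}(p^\infty),c_1\rangle$ to $\langle\cC_{\delta,\phi}(p^\infty),c_{-1}\rangle$ with $Mc_1M^{-1}=c_{-1}$. By Lemma~\ref{lem-cc2}, $M=\left(\begin{smallmatrix}0&\beta\\\gamma&0\end{smallmatrix}\right)$ with $\beta,\gamma\in\Z_p^\times$. Then $M$ must also conjugate $\cC_{\delta,\phi}(p^\infty)$ to itself; but a direct computation of $M\left(\begin{smallmatrix}a+b\phi&b\\\delta b&a\end{smallmatrix}\right)M^{-1}$ gives a matrix whose lower-left entry is (a unit multiple of) $b\delta\cdot(\gamma/\beta)^{\pm1}$-type terms and whose structure, for general $b$, forces $\delta$ to behave like a unit — more precisely, after reducing mod $p$ (where $\delta\equiv0$) the Cartan $\cC_{\delta,\phi}(p)$ is conjugate to the \emph{Borel}-type group of Proposition~\ref{prop-normalizermodp}(5c) whose unique line (the common eigenvector of the unipotent part) is not preserved by the anti-diagonal matrix $M\bmod p$, a contradiction. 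The main obstacle is exactly this last step: cleanly showing that no anti-diagonal matrix can normalize $\cC_{\delta,\phi}(p^\infty)$ when $\delta\equiv0\bmod p$. I would handle it by passing to the level-$p$ (or level-$p^2$, since $\delta\equiv0\bmod p$ means $\cC_\delta(p)\cong(\Z/p^2\Z)^\times$ by Lemma~\ref{lem-cartansbgp}) reduction and invoking the explicit normalizer description in Proposition~\ref{prop-normalizermodp}(5c), which shows the normalizer of $\cC_\delta(p)$ consists only of products of Cartan elements with diagonal matrices $c_g$ — never an anti-diagonal element — so $M\bmod p$ cannot lie in it, contradicting Lemma~\ref{lem-normalizer}(1).
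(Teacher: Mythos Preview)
Your approach is correct and matches the paper's. A couple of minor simplifications are worth noting. Since $p$ is odd, the choice of $\delta,\phi$ in Theorem~\ref{thm-cmrep-intro} already gives $\phi=0$, so the preliminary conjugation by $\left(\begin{smallmatrix}1&0\\-\phi/2&1\end{smallmatrix}\right)$ is unnecessary. More importantly, the conjugators displayed in Lemma~\ref{lem-cc1} lie in the Cartan $\cC_\delta$ itself (they have the form $\left(\begin{smallmatrix}A&B\\\delta B&A\end{smallmatrix}\right)$), not merely in its normalizer; since $\cC_\delta$ is abelian they centralize $G_{E,K,p^\infty}$, which is the cleanest reason the inclusion $G_{E,K,p^\infty}\subseteq\cC_\delta$ survives the change of basis. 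For part~(3) the paper argues more directly than you do: once Lemma~\ref{lem-cc2} forces $M$ to be anti-diagonal, Lemma~\ref{lem-normalizerchar0} says the normalizer of $\cC_\delta(\Z_p)$ in $\GL(2,\Z_p)$ is exactly $\mathcal{N}_{\delta,0}(\Z_p)$, and one checks by inspection that $\mathcal{N}_{\delta,0}(\Z_p)$ contains no anti-diagonal matrix when $\delta\equiv 0\bmod p$ (the only Cartan candidate $\left(\begin{smallmatrix}0&b\\\delta b&0\end{smallmatrix}\right)$ has determinant $-\delta b^2\equiv 0$, and the non-Cartan coset consists of matrices $\left(\begin{smallmatrix}-a&b\\-\delta b&a\end{smallmatrix}\right)$ with $a\in\Z_p^\times$). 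Your reduction-mod-$p$ route via Proposition~\ref{prop-normalizermodp}(5c) and Lemma~\ref{lem-normalizer} reaches the same conclusion but with more machinery.
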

\begin{proof}
	Since $p$ is odd, we have $\delta = \Delta_K f^2/4$ and $\phi=0$. By Theorem \ref{thm-cmrep} we can choose a $\Z_p$-basis of $T_p(E)$ such that $G_{E,K,p^\infty}\subseteq \cC_{\delta,0}(p^\infty)\subseteq \GL(2,\Z_p)$, and $\gamma=\rho_{E,p^\infty}(c)$ is a matrix with determinant $-1$ and zero trace, so that $G_{E,p^\infty}=\langle G_{E,K,p^\infty}, \gamma\rangle \subseteq \mathcal{N}_{\delta,0}(p^\infty)$. By Theorem \ref{lem-insidecartan}, the element $\gamma$ cannot be contained in the Cartan. Thus $\gamma \in \mathcal{N}_{\delta,\phi}(N) \setminus \cC_{\delta,\phi}(N)$ so we can write $\gamma = \left(\begin{array}{cc} -a & b\\ -\delta b & a\\\end{array}\right)$, for some $a,b\in\Z_p$, and by Lemma \ref{lem-zerotrace}, we have $a^2-\delta  b^2 =1$. Then, Lemma \ref{lem-cc1} shows that if $a\not \equiv 1 \bmod p$, then there is $\alpha \in \cC_{\delta,\phi}(p^\infty)$ such that $\alpha\cdot \gamma \cdot \alpha^{-1}=c_{1}$, and if $a\not\equiv -1 \bmod p$, then there is $\alpha \in \cC_{\delta,\phi}(p^\infty)$ such that $\alpha\cdot \gamma \cdot \alpha^{-1}=c_{-1}$. Notice that since $\alpha$ is in the Cartan, in both cases we have $\alpha\cdot G_{E,K,p^\infty}\cdot \alpha^{-1} = G_{E,K,p^\infty}$, because the Cartan is abelian, and therefore $\alpha \cdot \langle G_{E,K,p^\infty},\gamma \rangle \cdot \alpha^{-1} = \langle G_{E,K,p^\infty}, c_\varepsilon\rangle$, as desired. This shows (1).
	
	For (2), if $\delta\not\equiv 0 \bmod p$, then $\cC_{\delta,\phi}(p^\infty)$ contains a matrix of the form $\alpha = \left(\begin{array}{cc} 0 & b\\ \delta b & 0\\\end{array}\right)$ for some $b\in \Z_p^\times$. Thus, by part (1), there is a basis such that $\gamma=c_1$, and by Lemma \ref{lem-cc2}, we have $\alpha \cdot \langle G_{E,K,p^\infty},c_1 \rangle \cdot \alpha^{-1} = \langle G_{E,K,p^\infty}, c_{-1}\rangle$.
	
	Finally, for (3), Lemma \ref{lem-cc2} says that the only matrices that conjugate $c_1$ to $c_{-1}$ are anti-diagonal, but if $\delta\equiv 0\bmod p$, then there are no anti-diagonal matrices in the normalizer of $\cC_{\delta,\phi}(p^\infty)$, which is $\mathcal{N}_{\delta,\phi}(p^\infty)$ by Lemma \ref{lem-normalizerchar0}. Thus, we cannot change basis preserving the image of $\rho_{E,p^\infty}$ and sending $c_1$ to $c_{-1}$. 
\end{proof}

The following lemma gives a different proof of Lemma \ref{lem-ccfinal}, and it also describes the image of $\rho_{E,p^\infty}$, including the case of $p=2$.

\begin{lemma}\label{lem-ccfinal2}
	Let $E/\Q(j_{K,f})$ be an elliptic curve with CM by $\Of$, let $p$ be a prime, let $G_{E,p^\infty}$ be the image of $\rho_{E,p^\infty}$ and let $G_{E,K,p^\infty}=\rho_{E,p^\infty}(\Gal(\overline{H_f}/H_f))$. Suppose the index of $G_{E,K,p^\infty}$ in $\cC_{\delta,\phi}(p^\infty)$ is $d$ (a divisor of $\# \Of^\times$). Let $c\in\Gal(\overline{\Q(j_{K,f})}/\Q(j_{K,f}))$ be a complex conjugation, let $\gamma=\rho_{E,p^\infty}(c)$, and let $c_\phi =\left(\begin{array}{cc} -1  & 0\\ \phi & 1\\\end{array}\right)$. Then, there is a root of unity $\zeta \in \cC_{\delta,\phi}(p^\infty)$ of order dividing $d$ such that $G_{E,p^\infty} = \langle \gamma, G_{E,K,p^\infty}\rangle= \langle \zeta \cdot c_\phi, G_{E,K,p^\infty}\rangle$.
\end{lemma}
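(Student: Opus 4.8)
\emph{Proposed proof.} The argument combines a semidirect-product computation with a counting statement extracted from Theorem~\ref{thm-incl}. First, since complex conjugation $c$ acts nontrivially on $K\subseteq H_f$ it does not fix $H_f$, so by Lemma~\ref{lem-kjqj} the group $\Gal(\overline{\Q(j_{K,f})}/\Q(j_{K,f}))$ is the disjoint union of $\Gal(\overline{H_f}/H_f)$ and $c\,\Gal(\overline{H_f}/H_f)$; applying $\rho_{E,p^\infty}$ gives $G_{E,p^\infty}=\langle\gamma,G_{E,K,p^\infty}\rangle$, and $\gamma^2=\rho_{E,p^\infty}(c^2)=\mathrm{Id}$. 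With a basis chosen as in Theorem~\ref{thm-cmrep} so that $G_{E,K,p^\infty}\subseteq\cC_{\delta,\phi}(p^\infty)$, Lemma~\ref{lem-insidecartan} gives $\gamma\notin\cC_{\delta,\phi}(p^\infty)$ and Lemma~\ref{lem-normalizer2} gives $\gamma\in\mathcal{N}_{\delta,\phi}(p^\infty)$; since $c_\phi\in\mathcal{N}_{\delta,\phi}(p^\infty)\setminus\cC_{\delta,\phi}(p^\infty)$ and this quotient has order $2$, we may write $\gamma=\alpha\,c_\phi$ with $\alpha:=\gamma c_\phi^{-1}\in\cC_{\delta,\phi}(p^\infty)$.

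Next I would record two elementary facts: $c_\phi$ normalizes $G_{E,K,p^\infty}$ (because $\gamma=\alpha c_\phi$ does, $G_{E,K,p^\infty}$ being normal of index $2$ in $G_{E,p^\infty}$, while $\alpha$, lying in the abelian $\cC_{\delta,\phi}(p^\infty)$, centralizes $G_{E,K,p^\infty}$); and, for any root of unity $\zeta\in\cC_{\delta,\phi}(p^\infty)$, the matrix $\zeta c_\phi\in\mathcal{N}_{\delta,\phi}(p^\infty)\setminus\cC_{\delta,\phi}(p^\infty)$ has trace $0$ and determinant $-1$ (cf. the shape of such elements in Lemma~\ref{lem-normalizerchar0}), hence is an involution, $(\zeta c_\phi)^2=\mathrm{Id}$. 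Consequently $\langle\zeta c_\phi,G_{E,K,p^\infty}\rangle=G_{E,K,p^\infty}\sqcup\zeta c_\phi\,G_{E,K,p^\infty}$, and this equals $\langle\gamma,G_{E,K,p^\infty}\rangle$ if and only if $\gamma^{-1}\zeta c_\phi\in G_{E,K,p^\infty}$; writing $\gamma^{-1}\zeta c_\phi=c_\phi^{-1}(\alpha^{-1}\zeta)c_\phi$ and using that $c_\phi$ normalizes $G_{E,K,p^\infty}$ while $\alpha^{-1}\zeta\in\cC_{\delta,\phi}(p^\infty)$, this becomes $\alpha^{-1}\zeta\in G_{E,K,p^\infty}$, i.e. $\zeta\in\alpha\,G_{E,K,p^\infty}$. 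So it remains to produce a root of unity $\zeta$ of order dividing $d$ lying in the coset $\alpha\,G_{E,K,p^\infty}$.

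For this, let $\mu\subseteq\cC_{\delta,\phi}(p^\infty)$ be the image of $\Aut(E)\cong\Of^\times$ acting on $T_p(E)$; it is cyclic of order $m:=\#\Of^\times$, and by the proof of Theorem~\ref{thm-incl} the subgroup $\mu\cap G_{E,K,p^\infty}$ is the image of $\Gal(H_f(E[p^\infty])/H_f(h(E[p^\infty])))$, whose index in $\mu$ equals $[\cC_{\delta,\phi}(p^\infty):G_{E,K,p^\infty}]=d$ (the last assertion of Theorem~\ref{thm-incl} at each level $p^n$, passed to the limit, using $\mathcal{O}_{K,f,p^\infty}^\times=\{1\}$, cf. Lemma~\ref{lem-unitsmod}). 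Hence $\mu$ surjects onto $\cC_{\delta,\phi}(p^\infty)/G_{E,K,p^\infty}$, so $\alpha\,G_{E,K,p^\infty}$ meets $\mu$; and since $\mu$ is cyclic, $\mu\cap G_{E,K,p^\infty}$ is its unique index-$d$ subgroup $\mu^d$, so $\alpha\,G_{E,K,p^\infty}\cap\mu$ is a single coset $\zeta_0\mu^d$.

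The hard part will be selecting a representative of $\zeta_0\mu^d$ of order dividing $d$. Writing $\mu=\langle\zeta_m\rangle$, the order-dividing-$d$ subgroup $\mu[d]$ maps onto $\mu/\mu^d$ exactly when $\gcd(d,m/d)=1$, which — since $m\in\{1,2,3,4,6\}$ and $d\mid m$ — fails only for $m=4$, $d=2$; in every other case one simply takes $\zeta\in\mu[d]\cap\zeta_0\mu^d$ and is done. The remaining case $m=4$, $d=2$ (which forces $j_{K,f}=1728$, $K=\Q(i)$, and, since $d=1$ whenever $p$ is odd by Theorem~\ref{thm-largeimage-intro}(4), forces $p=2$), where $\mu[2]=\mu^2=\{\pm\mathrm{Id}\}\subseteq G_{E,K,2^\infty}$, is the genuinely delicate point: there one must instead verify directly that $\alpha\in G_{E,K,2^\infty}$, equivalently that $\gamma$ and $c_\phi$ lie in the same coset of $G_{E,K,2^\infty}$, using $\alpha\bar\alpha=\mathrm{Id}$ (a consequence of $\gamma^2=\mathrm{Id}$) together with the explicit list of index-$2$ subgroups of $\cC_{-1,0}(2^\infty)$ from Theorem~\ref{thm-j1728-intro}; then $\zeta=\mathrm{Id}$ (or $-\mathrm{Id}$) works, of order dividing $d=2$. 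Disposing of this $\#\Of^\times=4$ case is where the real content sits; everything else is the normalizer bookkeeping of the first three paragraphs.
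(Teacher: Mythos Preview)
Your first three paragraphs are correct and essentially identical to the paper's argument: write $\gamma=\alpha c_\phi$ with $\alpha\in\cC_{\delta,\phi}(p^\infty)$, use that $\mu=\Of^\times$ surjects onto $\cC_{\delta,\phi}(p^\infty)/G_{E,K,p^\infty}$ (so the coset of $\alpha$ contains a root of unity), and conclude. The paper compresses this into the single line ``$\langle\zeta_d\rangle\cdot G_{E,K,p^\infty}=\cC_{\delta,\phi}(p^\infty)$, by Cor.~\ref{cor-missesrootofunity},'' and then picks $\zeta$ in $\langle\zeta_d\rangle$ with $\zeta\cdot c_\phi\gamma\in G_{E,K,p^\infty}$.

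You are actually more careful than the paper in isolating the case $m=\#\Of^\times=4$, $d=2$ (so $j_{K,f}=1728$, $p=2$) as the one place where $\mu[d]\to\mu/\mu^d$ fails to surject. Unfortunately, your proposed patch for this case is wrong. The norm condition $\alpha\bar\alpha=1$ that you derive from $\gamma^2=\mathrm{Id}$ does \emph{not} force $\alpha\in G_{E,K,2^\infty}$: when $G_{E,K,2^\infty}=G_{2,a}$ (index $2$, so $d=2$), the element $i\in\cC_{-1,0}(2^\infty)$ has $i\bar i=1$ but $i\notin G_{2,a}$, since $i$ is exactly the missing root of unity. Concretely, for $E\colon y^2=x^3+9x$ one has $G_{E,K,2^\infty}=G_{2,a}$ and $\gamma\equiv c_{-1}'\bmod 4$ (Example~\ref{ex-j1728-2adic}), so $\alpha=\gamma c_\phi^{-1}\equiv -i\bmod 4$, which is not in $G_{2,a}$. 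Then $\langle c_{\pm 1},G_{2,a}\rangle\neq\langle c_{-1}',G_{2,a}\rangle=G_{E,2^\infty}$, so no $\zeta$ of order dividing $d=2$ works.

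In other words, the conclusion ``order dividing $d$'' genuinely fails in this corner case; what is true (and what the paper actually uses in Theorems~\ref{thm-j1728} and~\ref{thm-jzero}) is that $\zeta$ has order dividing $\#\Of^\times$. The paper's own one-line proof glosses over the same point, so you have not missed anything the paper supplies --- but the fix you propose in your last paragraph does not work.
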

\begin{proof}
	Let $G_{E,p^\infty}$ be the image of $\rho_{E,p^\infty}$ and let $G_{E,K,p^\infty}$ be the image of $G_{H_f}$ via $\rho_{E,p^\infty}$. Then, by Theorem \ref{thm-cmrep-intro}, there is a $\Z_p$-basis such that $G_{E,K,p^\infty}\subseteq \cC_{\delta,\phi}(p^\infty)$ and $G_{E,p^\infty}\subseteq \mathcal{N}_{\delta,\phi}(p^\infty)$, such that, if $c \in G_{\Q(j_{K,f})}$ is a fixed complex conjugation, and $\gamma=\rho_{E,p^\infty}(c)$, then $G_{E,p^\infty}=\langle \gamma, G_{E,K,p^\infty}\rangle$. Moreover, $G_{E,K,p^\infty}$ is a subgroup of index $2$ in $G_{E,p^\infty}$. Further, if the index of $G_{E,K,p^\infty}$ in $\cC_{\delta,\phi}(p^\infty)$ is $d$, then $\langle \zeta_d\rangle \cdot G_{E,K,p^\infty} = \cC_{\delta,\phi}(p^\infty)$, by Cor. \ref{cor-missesrootofunity}.
	
	Note that $c_\phi\gamma \in \cC_{\delta,\phi}(p^\infty)$ because the Cartan is of index $2$ in  $\mathcal{N}_{\delta,\phi}(2^\infty)$ by Lemma \ref{lem-normalizerchar0}, and $c_\phi,\gamma$ are not in the Cartan (by Theorem \ref{lem-insidecartan}). Thus, there is a $d$-th root of unity $\zeta$ such that $\zeta \cdot c_\phi \cdot \gamma \in G_{E,K,p^\infty}$. It follows that $\gamma \in \langle \zeta \cdot c_\phi, G_{E,K,p^\infty}\rangle $ and $\zeta\cdot c_\phi \in \langle \gamma, G_{E,K,p^\infty}\rangle $. Hence, $\langle \zeta \cdot c_\phi, G_{E,K,p^\infty}\rangle = \langle \gamma, G_{E,K,p^\infty}\rangle = G_{E,p^\infty}$, as desired.
\end{proof}

\section{Primes of good reduction}\label{sec-goodredn}

The goal of this section is to determine the image of $\rho_{E,p^\infty}$ for primes $p$ that do not divide $2f\Delta_K$. We begin with a result about the ramification of primes in $\Q(j_{K,f})/\Q$ and $K(j_{K,f})/K$.

\begin{lemma}\label{lem-odd2}
Let $E/\Q(j_{K,f})$ be an elliptic curve with CM by an order $\Of$ of conductor $f$ in an imaginary quadratic field $K$ of discriminant $\Delta_K$, and let $H_f=K(j(E))$ be the ring class field attached to $\Of$. Let $\calc=\operatorname{Cond}(K(j(E))/K)$ be the conductor of the abelian extension $K(j(E))/K$. Then: 
\begin{enumerate}
	\item The relationship between the conductors $f$ of $\Of$ and $\calc$  is given by
$$\operatorname{Cond}(K(j_{K,f})/K)=\begin{cases}
\OO_K & \text{ if } f=2 \text{ or } 3, \text{ and } K=\Q(\sqrt{-3}),\\
\OO_K & \text{ if } f=2, \text{ and } K=\Q(i),\\
(f/2)\OO_K & \text{ if } f \text{ is even and } f/2 \text{ is odd, and } 2 \text{ splits completely in } K,\\
f\OO_K & \text{ otherwise.}
\end{cases}
$$
In particular, $\operatorname{Cond}(K(j_{K,f})/K)=(f/k)\OO_K$ with $k=1,2$, or $3$. 
\item The primes that ramify in $\Q(j_{K,f})/\Q$ divide the quantity $f\Delta_K$, and if $p>2$ is a prime with $p|f$ and $\gcd(p,\Delta_K)=1$, then $p$ ramifies in $\Q(j(E))/\Q$.

\item If $p$ does not divide $2f\Delta_K$, then the representation
$$\chi_{E,p^\infty}|_{H_f} \colon \Gal(\overline{H_f}/H_f) \to \Aut(T_p(E))\to \Z_p^\times$$
  given by $\chi_{E,p^\infty}|_{H_f}=\det(\rho_{E,p^\infty}|_{H_f})$, coincides with the $p$-adic cyclotomic character of $\overline{H_f}/H_f$, and it is surjective onto $\Z_p^\times$. 
\end{enumerate}
\end{lemma}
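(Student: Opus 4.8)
The plan is to treat the three parts in order: Part (1) is a class field theory computation, and Parts (2) and (3) are then purely ramification-theoretic consequences of it.

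\textbf{Part (1).} I would identify $H_f = K(j_{K,f})$ with the class field of $K$ attached to the congruence subgroup $P_{K,\Z}(f\OK) \subseteq I_K(f)$, i.e. the subgroup of the ideals of $\OK$ prime to $f$ generated by principal ideals $\alpha\OK$ with $\alpha \equiv a \bmod f\OK$ for some rational integer $a$ coprime to $f$; this is the ring-class-field description (following \cite{cox}, Chapters~9 and~11, together with Theorem~\ref{thm-cmbasics}). The conductor $\calc$ is then the greatest common divisor of the moduli $\mathfrak m$ for which $P_{K,1}(\mathfrak m) \subseteq P_{K,\Z}(f\OK)$, and this can be computed one prime at a time: for a prime $\mathfrak p \mid \ell$ of $K$ with $\ell \nmid f$ one gets $v_{\mathfrak p}(\calc) = 0$ (ring class fields of conductor $f$ are unramified outside $f$), while for $\ell \mid f$ the exponent $v_{\mathfrak p}(\calc)$ equals the generic value $v_{\mathfrak p}(f\OK)$ unless either (a) the local unit group $(\OK/\ell\OK)^\times$ is trivial --- which for $\ell = 2$ happens precisely when $2$ splits completely in $K$, forcing the $2$-part of $\calc$ to disappear --- or (b) the extra global units $\zeta_4$ (only $K = \Q(i)$) or $\zeta_6$ (only $K = \Q(\sqrt{-3})$) allow a further descent at the relevant small $\ell$ when $f$ is small. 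Carrying out these local computations yields exactly the four cases listed, and in particular $\calc = (f/k)\OK$ with $k \in \{1,2,3\}$.

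\textbf{Part (2).} Since $K/\Q$ ramifies only at primes dividing $\Delta_K$, and by (1) the abelian extension $H_f/K$ ramifies only at primes dividing $\calc$, which divides $f\OK$, any rational prime $p \nmid f\Delta_K$ is unramified in $H_f/\Q$, hence in the subfield $\Q(j_{K,f})/\Q$; this gives the first assertion. For the second, let $p > 2$ with $p \mid f$ and $\gcd(p,\Delta_K) = 1$. None of the exceptional cases of (1) affects the $p$-part: they involve $\ell \in \{2,3\}$, and $3 \mid \Delta_K$ whenever $K = \Q(\sqrt{-3})$, so under our hypotheses $v_{\mathfrak p}(\calc) = v_{\mathfrak p}(f\OK) \geq 1$ for $\mathfrak p \mid p$, whence $\mathfrak p$ ramifies in $H_f/K$. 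As $p$ is unramified in $K/\Q$, this forces $p$ to ramify in $H_f/\Q$. Finally $H_f = \Q(j_{K,f})(\sqrt{\Delta_K})$, and since $p \nmid \Delta_K$ the quadratic extension $H_f/\Q(j_{K,f})$ is unramified at every prime $\mathfrak Q$ of $H_f$ over $p$; writing $\mathfrak q = \mathfrak Q \cap \Q(j_{K,f})$ we get $e(\mathfrak q/p) = e(\mathfrak Q/p) > 1$, so $p$ ramifies in $\Q(j_{K,f})/\Q$.

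\textbf{Part (3).} The equality $\det\rho_{E,p^\infty} = \chi_p$, where $\chi_p$ is the $p$-adic cyclotomic character of $\Q(j_{K,f})$, is the usual consequence of the Weil pairing (compare the proof of Lemma~\ref{lem-zerotrace}), and restricting to $\Gal(\overline{H_f}/H_f)$ shows that $\chi_{E,p^\infty}|_{H_f}$ is the $p$-adic cyclotomic character of $H_f$. For surjectivity onto $\Z_p^\times$ it suffices that $H_f \cap \Q(\mu_{p^\infty}) = \Q$. Every nontrivial subextension of $\Q(\mu_{p^\infty})/\Q$ is ramified at $p$ (the extension is totally ramified at $p$). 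On the other hand, combining $K/\Q$ (ramified only over $\Delta_K$) with Part (2) (so $\Q(j_{K,f})/\Q$ ramified only over $f\Delta_K$), the compositum $H_f/\Q$ is unramified at every prime not dividing $f\Delta_K$, in particular at $p$. Hence $H_f$ contains no nontrivial subfield of $\Q(\mu_{p^\infty})$, so $\chi_{E,p^\infty}|_{H_f}$ surjects onto $\Gal(\Q(\mu_{p^\infty})/\Q) \cong \Z_p^\times$, and we are done.

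The main obstacle is Part (1): the bookkeeping of conductors of ring class fields, and in particular the local analysis at $\ell = 2$ (where the splitting type of $2$ in $K$ determines whether $(\OK/2\OK)^\times$ is trivial) together with the exceptional small-conductor cases for $K = \Q(i)$ and $K = \Q(\sqrt{-3})$ arising from the extra roots of unity. Once (1) is in place, Parts (2) and (3) follow formally.
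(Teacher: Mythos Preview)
Your proposal is correct and follows essentially the same route as the paper. The only difference is in Part~(1): the paper simply cites Exercise~9.20 of \cite{cox} for the conductor formula, whereas you sketch the underlying class field theory computation (identifying $H_f$ with the class field attached to $P_{K,\Z}(f\OK)$ and analyzing conductors locally). Your Parts~(2) and~(3) are argued identically to the paper, via the tower $\Q \subset K \subset H_f$ for ramification and the disjointness $H_f \cap \Q(\mu_{p^\infty}) = \Q$ from total ramification of $p$ in the cyclotomic tower.

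One small point of caution in your sketch of Part~(1): the clause ``forcing the $2$-part of $\calc$ to disappear'' when $2$ splits is only accurate in the case $f \equiv 2 \bmod 4$ actually covered by the statement; for $f \equiv 0 \bmod 4$ the lemma asserts $\calc = f\OK$ even if $2$ splits, so the local analysis at $2$ is slightly more delicate than the triviality of $(\OK/2\OK)^\times$ alone. This does not affect the overall strategy, but the case split deserves care when you write out the details.
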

\begin{proof} Part (1) is Exercise 9.20 in \cite{cox}. For part (2), we note that the primes that ramify in $H_f/\Q$ either ramify in $K/\Q$ or $H_f/K$, and therefore they divide $f\Delta_K$ by part (1). If $p>2$ with $p|f$ and $\gcd(p,\Delta_K)=1$, then $\operatorname{Cond}(K(j_{K,f})/K)=(f/2)\OK$ or $f\OK$, so $p$ ramifies in $K(j_{K,f})/K$, and therefore in $K(j_{K,f})/\Q$. Since $K/\Q$ is unramified at $p$, it follows that $K(j_{K,f})/\Q(j_{K,f})$ is unramified at $p$, and therefore the ramification occurs in $\Q(j_{K,f})/\Q$, as claimed.
	
	For part (3), the fact that $\chi_{E,p^\infty}=\det(\rho_{E,p^\infty})$ coincides with the $p$-adic cyclotomic character is a well known consequence of the existence of the Weil pairing. Moreover, if $F$ is a number field, then the $p$-adic cyclotomic character $\Gal(\overline{F}/F)\to \Z_p^\times$ is surjective if and only if $F\cap \Q(\mu_{p^\infty})=\Q$. If $p$ does not divide $2f\Delta_K$, then $p$ is unramified in $H_f/\Q$, but the cyclotomic extension $\Q(\mu_{p^\infty})/\Q$ is totally ramified at $p$, and therefore $H_f\cap \Q(\mu_{p^\infty})=\Q$, as desired. 
\end{proof} 

\begin{example}
For instance, let $j_{K,f}$ be the $j$-invariant of an elliptic curve with CM by the order $\Of$ of $K=\Q(\sqrt{-3})$ of conductor $f$, with $f=2$, $5$ or $7$. Then:
$$\Q(j_{K,2})=\Q,\quad \Q(j_{K,5}) = \Q(\sqrt{5}) \quad \text{ and } \quad \Q(j_{K,7})=\Q(\sqrt{21}).$$
Notice that $3=|\Delta_K|$ does not divide the discriminant of $\Q(j_{K,5})$, but it does divide the discriminant of $\Q(j_{K,7})$. Also note that $2$ is not ramified in $\Q(j_{K,2})=\Q$.
\end{example}

We shall need a result about the existence  (or lack thereof) of abelian subfields of $\Q(j_{K,f})$. First, we recall the notion of generalized dihedral extensions: let $K$ be an imaginary quadratic field, and let $F/K$ be an abelian extension, such that $F/\Q$ is Galois. Then, we say that $F$ is generalized dihedral over $\Q$ if $\Gal(F/\Q)\cong \Gal(F/K)\rtimes (\Z/2\Z)\cong \Gal(F/K) \rtimes \langle \tau \rangle $
such that $\tau \sigma \tau^{-1} = \sigma^{-1}$ for any $\sigma \in \Gal(F/K)$.   

\begin{thm}[\cite{cox}, Theorem 9.18]\label{Cox1}
	Let $K$ be an imaginary quadratic field. Then, an abelian extension $F$ of $K$ is generalized dihedral over $\Q$ if and only if $F$ is contained in a ring class field of $K$.
\end{thm}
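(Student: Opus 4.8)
This is Cox's Theorem~9.18, and I indicate the strategy. Let $\tau$ denote complex conjugation; since $K$ is imaginary, $\tau$ restricts to the nontrivial automorphism of $K$, so whenever $K\subseteq L$ and $L/\Q$ is Galois we have $\tau\notin\Gal(L/K)$ and $\tau$ maps onto the nontrivial coset of $\Gal(L/\Q)/\Gal(L/K)\cong\Gal(K/\Q)$. In this setting $\Gal(L/\Q)$ is generalized dihedral precisely when $\Gal(L/K)$ is abelian and $\tau\sigma\tau^{-1}=\sigma^{-1}$ for every $\sigma\in\Gal(L/K)$. The plan is to translate everything into the language of congruence subgroups via class field theory.

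Write $L/K$ in the form $I_K(\mathfrak m)/H\cong\Gal(L/K)$ for a modulus $\mathfrak m$ of $K$ and a congruence subgroup $P_{K,1}(\mathfrak m)\subseteq H\subseteq I_K(\mathfrak m)$; replacing $\mathfrak m$ by $\mathfrak m\bar{\mathfrak m}$ we may assume $\mathfrak m$ is $\tau$-stable, so $\tau$ acts on $I_K(\mathfrak m)$ and the Artin map satisfies $\tau\,\sigma_{\mathfrak a}\,\tau^{-1}=\sigma_{\bar{\mathfrak a}}$. If $L/\Q$ is Galois then $H$ is $\tau$-stable, and the condition that $\Gal(L/\Q)$ be generalized dihedral, i.e.\ $\sigma_{\bar{\mathfrak a}}=\sigma_{\mathfrak a}^{-1}=\sigma_{\mathfrak a^{-1}}$ for all $\mathfrak a$, is equivalent to $\sigma_{\mathfrak a\bar{\mathfrak a}}=1$ for all $\mathfrak a\in I_K(\mathfrak m)$, that is, to $H$ containing every ideal $\mathfrak a\bar{\mathfrak a}=(N(\mathfrak a))$. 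Since the norms $N(\mathfrak a)$ run over all positive integers prime to $\mathfrak m$, this says exactly that $H$ contains the subgroup $P_{K,\Z}(\mathfrak m)$ generated by $P_{K,1}(\mathfrak m)$ together with all principal ideals $(a)$ with $a\in\Z$ prime to $\mathfrak m$.

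For the implication ($\Leftarrow$), recall (Cox) that the ring class field $K_{\Of}$ of conductor $f$ is the class field of $K$ attached to the congruence subgroup $P_{K,\Z}(f\OK)\subseteq I_K(f\OK)$, so $\Gal(K_{\Of}/K)\cong\Cl(\Of)$, and complex conjugation inverts this class group because $[\mathfrak a]\cdot[\bar{\mathfrak a}]=[(N\mathfrak a)]=[\Of]$ for proper $\Of$-ideals prime to $f$; hence $K_{\Of}/\Q$ is Galois and generalized dihedral. If now $K\subseteq L\subseteq K_{\Of}$ with $L/K$ abelian, then $\Gal(K_{\Of}/L)$ is a subgroup of the abelian group $\Cl(\Of)$, so it is stable under inversion and therefore under conjugation by $\tau$; thus $\Gal(K_{\Of}/L)\trianglelefteq\Gal(K_{\Of}/\Q)$, so $L/\Q$ is Galois, and $\Gal(L/\Q)$ is a quotient of a generalized dihedral group by a normal subgroup contained in its abelian part, hence again generalized dihedral.

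For the implication ($\Rightarrow$), suppose $L/\Q$ is Galois with $\Gal(L/\Q)$ generalized dihedral. By the second paragraph $H\supseteq P_{K,\Z}(\mathfrak m)$, and choosing an integer $f$ with $f\OK\subseteq\mathfrak m$ one obtains $P_{K,\Z}(f\OK)\subseteq H$, whence $L\subseteq K_{\Of}$. The point that requires care—and the main technical obstacle—is this conductor bookkeeping: one must verify that the condition ``$H$ contains all rational ideals prime to $\mathfrak m$'' genuinely forces $P_{K,\Z}(f\OK)\subseteq H$ for an honest integer $f$, i.e.\ that the modulus can be replaced by one of the shape $f\OK$. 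This uses the standard description of ring class fields via the subgroups $P_{K,\Z}(f\OK)$ and the fact that a congruence subgroup containing $P_{K,\Z}(f\OK)$ cuts out a subfield of the ring class field of conductor $f$, together with the observation that $P_{K,1}(\mathfrak m)$ and the rational ideals already generate $P_{K,\Z}(f\OK)$ once $f\OK\subseteq\mathfrak m$.
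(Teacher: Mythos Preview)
The paper does not give its own proof of this statement; it is quoted from Cox and used as a black box. Your sketch follows Cox's strategy, and the ($\Leftarrow$) direction is fine. However, there is a genuine gap in your ($\Rightarrow$) direction, and it is not where you think it is.

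You assert that ``the norms $N(\mathfrak a)$ run over all positive integers prime to $\mathfrak m$,'' and conclude that $H$ contains $(a)$ for every rational integer $a$ coprime to $\mathfrak m$. This fails for inert primes: if $p$ is inert in $K$, then $p$ divides $N(\mathfrak a)$ to an \emph{even} power for every $\mathfrak a\in I_K(\mathfrak m)$, so $p$ itself is never a norm. The relation $\mathfrak a\bar{\mathfrak a}\in H$ therefore yields only $(p^2)\in H$, not $(p)\in H$, and your characterization of $P_{K,\Z}(\mathfrak m)$ as ``generated by $P_{K,1}(\mathfrak m)$ together with norms'' is incorrect. The conductor bookkeeping you flag as the main obstacle is actually routine; the missing idea is the treatment of inert primes.

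The repair (which is what Cox does) uses that $L/\Q$, not merely $L/K$, is Galois. If $p$ is inert in $K$ and unramified in $L$, choose a Frobenius $\varphi\in\Gal(L/\Q)$ at a prime of $L$ above $p$; since $p$ is inert, $\varphi$ restricts nontrivially to $K$, so $\varphi=\tau\sigma$ for some $\sigma\in\Gal(L/K)$. In a generalized dihedral group every element of the nontrivial coset squares to the identity, so $\varphi^2=(\tau\sigma)^2=\tau\sigma\tau\sigma=\sigma^{-1}\sigma=1$. But $\varphi^2$ is precisely the Artin symbol $\sigma_{(p)}\in\Gal(L/K)$ at the prime $p\OK$, whence $(p)\in H$. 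With this extra argument in place, $H\supseteq P_{K,\Z}(f\OK)$ follows and the rest of your sketch goes through.
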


\begin{cor}\label{cor-notgalois}
	Let $L$ be a field of odd degree $d>1$ such that $L\subseteq \Q(j_{K,f})$. Then, $L/\Q$ is not Galois.
\end{cor}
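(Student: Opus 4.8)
The plan is to suppose, for contradiction, that $L/\Q$ is Galois with $[L:\Q]$ odd, $L\subseteq \Q(j_{K,f})$, and then derive that $L$ would have to be both Galois over $\Q$ and (via the ring class field structure) generalized dihedral over $\Q$, which forces $L=\Q$. First I would set $M=KL$, the compositum. Since $L\subseteq \Q(j_{K,f})\subseteq K(j_{K,f})=H_f$, the ring class field of $\Of$, we have $M\subseteq H_f$, so $M$ is an abelian extension of $K$ contained in a ring class field of $K$. By Theorem \ref{Cox1} (Cox, Theorem 9.18), $M/\Q$ is generalized dihedral over $\Q$: that is, $\Gal(M/\Q)$ has the abelian subgroup $\Gal(M/K)$ of index $2$, and every element of $\Gal(M/\Q)\setminus \Gal(M/K)$ acts on $\Gal(M/K)$ by inversion (and has order $2$).

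Next I would use the hypothesis that $L/\Q$ is Galois. Then $\Gal(M/L)$ is a normal subgroup of $\Gal(M/\Q)$, and $\Gal(M/\Q)/\Gal(M/L)\cong \Gal(L/\Q)$ has odd order. On the other hand, complex conjugation $c$ restricted to $M$ gives an element $\sigma\in\Gal(M/\Q)\setminus\Gal(M/K)$ (since $c$ does not fix $K$); because $M/\Q$ is generalized dihedral, $\sigma$ has order $2$ and acts on the abelian group $A:=\Gal(M/K)$ by $a\mapsto a^{-1}$. The key point is to track the image of $\sigma$ in $\Gal(L/\Q)$. Since $[M:L]$ divides $[M:\Q]=2[L:\Q]$ and $[L:\Q]$ is odd, while $L\subseteq\Q(j_{K,f})$ forces $K\not\subseteq L$, we get $M=KL$ with $[M:L]=2$ and $[M:K]=[L:\Q]$ odd; thus $\Gal(M/L)$ has order $2$ and is generated by (the restriction of) $\sigma$. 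But $\Gal(M/L)$ being normal in the generalized dihedral group $\Gal(M/\Q)=A\rtimes\langle\sigma\rangle$ means $A$ normalizes $\langle\sigma\rangle$, i.e. for every $a\in A$, $a\sigma a^{-1}\in\{1,\sigma\}$; since $a\sigma a^{-1}=a(\sigma a^{-1}\sigma^{-1})\sigma = a\cdot a\cdot \sigma = a^2\sigma$, normality forces $a^2=1$ for all $a\in A$, so $A$ is an elementary abelian $2$-group. Then $[M:K]=|A|$ is a power of $2$; but $[M:K]=[L:\Q]$ is odd, so $|A|=1$, hence $M=K$, hence $L=\Q$, contradicting $[L:\Q]>1$ (or, if one allows $L=\Q$, the statement is that a proper extension of this type cannot exist, so we take $[L:\Q]>1$ from the outset).

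The main obstacle, and the step I would write most carefully, is the group-theoretic bookkeeping in the previous paragraph: establishing that $M=KL$ with $[M:L]=2$ (which needs $K\not\subseteq L$, itself a consequence of $L\subseteq\Q(j_{K,f})$ and $[K(j_{K,f}):\Q(j_{K,f})]=2$ from Lemma \ref{lem-kjqj}), that $\Gal(M/L)=\langle\sigma\rangle$ for the restriction $\sigma$ of complex conjugation, and then extracting from normality of $\langle\sigma\rangle$ in the generalized dihedral group that $\Gal(M/K)$ is $2$-torsion. Everything else — invoking Theorem \ref{Cox1}, the fact that $M\subseteq H_f$, and the parity argument — is routine. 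An alternative, perhaps cleaner, route that avoids passing to $M$: a generalized dihedral group $G$ of order $2m$ with $m$ odd has no nontrivial quotient of odd order, because its commutator subgroup already has index $2$ (for $a\in A$, $[a,\sigma]=a^2$, and $\{a^2:a\in A\}=A$ when $|A|$ is odd); so if $\Gal(M/\Q)$ is generalized dihedral of order $2[L:\Q]$ with $[L:\Q]$ odd, its only odd-order quotient is trivial, forcing $L=\Q$. I would likely present this commutator-subgroup version as the cleanest argument.
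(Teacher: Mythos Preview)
Your proposal is correct and follows essentially the same approach as the paper: both pass to $M=KL\subseteq H_f$, invoke Theorem~\ref{Cox1} to get the generalized dihedral structure $\Gal(M/\Q)\cong A\rtimes\langle\tau\rangle$, and deduce that normality of the order-$2$ complement forces $A$ to be $2$-torsion, contradicting $|A|=[L:\Q]$ odd. The paper packages the normality step by citing a proposition from Dummit--Foote rather than doing your explicit conjugation computation $a\sigma a^{-1}=a^2\sigma$, but the content is identical. Your alternative commutator-subgroup argument (that $[A\rtimes\langle\sigma\rangle,A\rtimes\langle\sigma\rangle]\supseteq\{a^2:a\in A\}=A$ when $|A|$ is odd, so the only odd-order quotient is trivial) does not appear in the paper and is a nice streamlining. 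One small point to tighten in your write-up: to conclude $\Gal(M/L)=\langle\sigma\rangle$ specifically (rather than $\langle a\sigma\rangle$ for some $a\in A$), note that $j_{K,f}\in\R$ so $L\subseteq\Q(j_{K,f})\subseteq\R$ is fixed by complex conjugation; alternatively, as you implicitly observe, the conjugation computation works equally well with any $a\sigma$.
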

\begin{proof}
	Suppose $L/\Q$ is an extension of odd degree $d>1$, and $L\subseteq \Q(j_{K,f})$. Then, the compositum $LK$ is contained in the ring class field $K(j_{K,f})$ and, since by Theorem \ref{thm-schertz} the extension $K(j_{K,f})/K$ is abelian, it follows that $LK/K$ is abelian as well, because $K\subseteq LK \subseteq K(j_{K,f})$. Hence, Theorem \ref{Cox1} implies that $LK$ is generalized dihedral over $\Q$. In other words,
	$\Gal(LK/\Q)\cong  \Gal(LK/K) \rtimes \langle \tau \rangle $
	such that $\tau \sigma \tau^{-1} = \sigma^{-1}$ for any $\sigma \in \Gal(LK/K)$. Now, $L/\Q$ is Galois if and only if $\Gal(LK/L)$ is normal in $\Gal(LK/\Q)$, and Proposition 11 in Section 5.5 of \cite{dummitfoote} says that $\Gal(LK/L)$ is normal in $\Gal(LK/\Q)$ if and only if the semi-direct action is trivial, i.e., $\sigma=\sigma^{-1}$ for all $\sigma\in \Gal(LK/K)$, which means that every non-trivial element of $\Gal(LK/K)$ has order $2$. However, if $L/\Q$ was Galois, then $\Gal(LK/K)\cong \Gal(L/\Q)$ would be of odd order since  $[L:\Q]$ is odd, and no element of $\Gal(LK/K)$ would have order $2$. Thus, we reach a contradiction and $L/\Q$ cannot be Galois. 
\end{proof}

We are ready to prove part (3) of Theorem \ref{thm-largeimage-intro}.

\begin{prop}\label{prop-imageindex2}\label{prop-largeimage-intro-part-3}
	Let $p>2$ and let $\chi_{E,p^\infty}=\det(\rho_{E,p^\infty})\colon \Gal(\overline{\Q(j_{K,f})}/\Q(j_{K,f}))\to \Z_p^\times$. Then, $\chi_{E,p^\infty}$ is the $p$-adic cyclotomic character, and the index of the image in $\Z_p^\times$ is $1$ or $2$. Further, the index is $2$ if and only if $p\equiv 1 \bmod 4$ and $\Q(\sqrt{p})\subseteq \Q(j_{K,f})$. In particular, $\chi_{E,p^\infty}$ is surjective onto $\Z_p^\times$ for all but finitely many primes $p$.
\end{prop}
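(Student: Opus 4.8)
The plan is to deduce everything from Lemma~\ref{lem-odd2}(3) together with basic properties of the $p$-adic cyclotomic character. First, I would recall that $\chi_{E,p^\infty} = \det(\rho_{E,p^\infty})$ is the $p$-adic cyclotomic character $\chi_{\mathrm{cyc}}\colon \Gal(\overline{\Q(j_{K,f})}/\Q(j_{K,f})) \to \Z_p^\times$; this is the standard consequence of the Weil pairing, already invoked in Lemma~\ref{lem-zerotrace} and Lemma~\ref{lem-odd2}(3). Thus the image of $\chi_{E,p^\infty}$ is the image of the restriction map $\Gal(\overline{F}/F)\to \Gal(\Q(\mu_{p^\infty})/\Q)\cong\Z_p^\times$, where $F = \Q(j_{K,f})$, and this image equals $\Gal(\Q(\mu_{p^\infty})/F\cap\Q(\mu_{p^\infty}))$. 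Hence the index of the image in $\Z_p^\times$ equals $[F\cap\Q(\mu_{p^\infty}):\Q]$.

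Next I would bound $[F\cap\Q(\mu_{p^\infty}):\Q]$. Since $p>2$ and $p$ is assumed \emph{not} dividing $2f\Delta_K$, Lemma~\ref{lem-odd2}(3) already gives surjectivity onto $\Z_p^\times$, i.e.\ index $1$. So the interesting case is $p\mid f\Delta_K$ with $p$ odd. In that case the only ramification in $\Q(\mu_{p^\infty})/\Q$ above $p$ is tame at the first layer then wild, and the unique subfield of $\Q(\mu_{p^\infty})$ in which $p$ is tamely-then-wildly ramified of degree a power of $p$ is $\Q(\mu_{p^\infty})^{\mu_{p-1}}$; the only subfields of $\Q(\mu_{p^\infty})$ that can lie in $F$ are contained in $\Q(\mu_p)$, because $F/\Q$ has degree $h(\Of)$ which I must show is not divisible by $p$ for the relevant $p$, and more importantly the ramification index of $p$ in $F$ is at most... here I need to be careful. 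The cleaner route: $F\cap\Q(\mu_{p^\infty})$ is an abelian extension of $\Q$ contained in $F=\Q(j_{K,f})$, and by Corollary~\ref{cor-notgalois} any subfield of $\Q(j_{K,f})$ that is Galois over $\Q$ must have \emph{even} degree (a field of odd degree inside $\Q(j_{K,f})$ cannot be Galois over $\Q$). An abelian extension is Galois, so $[F\cap\Q(\mu_{p^\infty}):\Q]$ is even, hence either $1$ or at least $2$. Combined with the fact that $p$ can ramify in $F/\Q$ only at primes dividing $f\Delta_K$ (Lemma~\ref{lem-odd2}(2)), the field $F\cap\Q(\mu_{p^\infty})$ is unramified outside $p$ and outside $f\Delta_K$; but a nontrivial subfield of $\Q(\mu_{p^\infty})$ is ramified \emph{only} at $p$, so $F\cap\Q(\mu_{p^\infty})$ is ramified only at $p$, which forces it to be a subfield of $\Q(\mu_{p^\infty})$ ramified at $p$ --- and since its degree over $\Q$ is even, the smallest such is the unique quadratic subfield of $\Q(\mu_p)$, namely $\Q(\sqrt{p^\ast})$ with $p^\ast = (-1)^{(p-1)/2}p$. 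I then need to rule out larger degree: any subfield of $\Q(\mu_{p^\infty})$ of $\Z_p$-degree divisible by a higher power of $2$ or by $p$ would have degree over $\Q$ larger, and I'd argue it cannot embed in $F$ because the ramification index of $p$ in $F/\Q$ is too small. The sharpest statement I actually need is just that $F\cap\Q(\mu_{p^\infty})$ has degree $1$ or $2$ over $\Q$; for the degree-$2$ case the field must be $\Q(\sqrt{p^\ast})$, and $\Q(\sqrt{p^\ast})\subseteq\Q(j_{K,f})$ with $p^\ast$ positive (i.e.\ $p\equiv 1\bmod 4$) is exactly the stated criterion --- note $\Q(\sqrt{-p})$ could only be a real subfield issue; since $\Q(j_{K,f})$ is a subfield of $\R$ (it is generated by a real algebraic number $j_{K,f}$... actually $j_{K,f}$ need not be real, but $\Q(j_{K,f})$ is totally real? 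No.) I should double-check: $\Q(j_{K,f})$ is the index-$2$ subfield of the ring class field $H_f$ on which complex conjugation acts; it need not be totally real, so I must genuinely distinguish $\Q(\sqrt{p})$ from $\Q(\sqrt{-p})$. This is where I'd use that $\Q(\sqrt{-p})\subseteq H_f$ would force $\Q(\sqrt{-p})=K$ by the theory of ring class fields (the only imaginary quadratic subfield of $H_f$ is $K$, and $K$ has discriminant dividing $f\Delta_K$ so $p\mid\Delta_K$ is possible), whereas the statement is phrased so that only $\Q(\sqrt{p})$ (real) counts; I need to verify that when $p\mid\Delta_K$, $p\equiv 1\bmod 4$, and $K=\Q(\sqrt{-p})$, the intersection $F\cap\Q(\mu_{p^\infty})$ is still trivial because the relevant quadratic subfield $\Q(\sqrt{p})$ of $\Q(\mu_p)$ is not inside the (non-totally-real, but not containing $\sqrt p$) field $F$.

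Finally, the last sentence ("surjective for all but finitely many $p$") is immediate: index $2$ can occur only when $\Q(\sqrt{p})\subseteq\Q(j_{K,f})$, and since $\Q(j_{K,f})$ is a fixed number field with only finitely many quadratic subfields, there are only finitely many such $p$.

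\textbf{Main obstacle.} The conceptual content is short, but the delicate point will be pinning down \emph{exactly} which quadratic field shows up and why it must be the real field $\Q(\sqrt{p})$ (equivalently, why $p\equiv 1\bmod 4$), rather than $\Q(\sqrt{-p})$ or $K$ itself. This requires combining: (i) the ramification analysis of $p$ in $\Q(j_{K,f})/\Q$ from Lemma~\ref{lem-odd2}, (ii) the fact that the only quadratic subfields of $\Q(\mu_{p^\infty})$ are the unique $\Q(\sqrt{p^\ast})\subseteq\Q(\mu_p)$, and (iii) the structure of subfields of the ring class field $H_f$ (Theorem~\ref{Cox1} / Corollary~\ref{cor-notgalois}) to control which abelian-over-$\Q$ subfields $\Q(j_{K,f})$ can contain. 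I expect steps (i)--(iii) to interlock cleanly once the degree bound $[F\cap\Q(\mu_{p^\infty}):\Q]\le 2$ is established, and that bound itself follows from: any larger subfield of $\Q(\mu_{p^\infty})$ is either ramified at $p$ with ramification index $>2$ (impossible since such index divides $[\Q(j_{K,f}):\Q]=h(\Of)$ times a bounded quantity, and more directly since $\Q(\mu_{p^\infty})\cap F$ abelian over $\Q$ of degree $>2$ forces degree divisible by an odd prime or by $4$, both excluded by unramifiedness outside $p$ plus Corollary~\ref{cor-notgalois}).
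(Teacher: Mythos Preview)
Your overall strategy matches the paper's, but two key simplifications are missed.

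First, the bound on $[F\cap\Q(\mu_{p^\infty}):\Q]$ comes most cleanly not from a ramification analysis but from part~(1) of Theorem~\ref{thm-largeimage-intro}: the index of the image of $\rho_{E,p^\infty}$ in $\mathcal{N}_{\delta,\phi}(p^\infty)$ divides $\#\Of^\times$, hence divides $4$ or $6$. Since $\det$ is surjective on $\mathcal{N}_{\delta,\phi}(p^\infty)$, the index of the image of $\chi_{E,p^\infty}$ in $\Z_p^\times$ also divides $4$ or $6$. Then Corollary~\ref{cor-notgalois} (which you correctly identified) rules out the factor $3$, leaving $d\in\{1,2,4\}$. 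Your attempt to obtain this bound via ramification indices in $F/\Q$ is considerably more delicate and, as written, does not reach a conclusion.

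Second, and more importantly, the distinction between $\Q(\sqrt{p})$ and $\Q(\sqrt{-p})$ is immediate once you know that $j_{K,f}$ is \emph{real} (see \cite{cox}, p.~242): then $\Q(j_{K,f})\subseteq\R$, so it cannot contain the imaginary quadratic field $\Q(\sqrt{-p})$. Hence if the index is even, the unique quadratic subfield $\Q(\sqrt{p^\ast})\subseteq\Q(\mu_p)$ must be real, forcing $p\equiv 1\bmod 4$ and $\Q(\sqrt{p})\subseteq\Q(j_{K,f})$. This is exactly the fact you were groping for when you wrote ``actually $j_{K,f}$ need not be real'' --- in fact it is always real, and that single observation replaces your entire discussion of which imaginary quadratic subfields the ring class field can contain. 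The paper uses this directly; your ring-class-field workaround would require additional justification and is unnecessary.
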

\begin{proof}
	Let $J$ be the image  of $\chi_{E,p^\infty}=\det(\rho_{E,p^\infty})$.  	The character $\chi_{E,p^\infty}$ coincides with the $p$-adic cyclotomic character of $\overline{\Q(j_{K,f})}/\Q(j_{K,f})$. Thus, if $[\Z_p^\times: J] = d$, then $L=\Q(j_{K,f})\cap \Q(\zeta_p^\infty)$ is an abelian extension of $\Q$ of degree $d$. By Cor. \ref{cor-notgalois}, it follows that $[L:\Q]=d$ must be a power of $2$. We may assume that $d\geq 2$, so in particular it follows that $d$ is even, and therefore $[\Z_p^\times:J]$ is even. Since $p>2$, the group $\Z_p^\times$ is (topologically) cyclic. In particular, since $[\Z_p^\times : J]$ is even, it follows that $J\subseteq {\Z_p^\times}^2$. 
	Also, we conclude that $J$ is of even index in $\Z_p^\times$ if and only if $\Q(j_{K,f})\cap \Q(\zeta_p^\infty)$ contains a quadratic field. There is a unique quadratic field contained in the $p$-th cyclotomic extension, and therefore $\Q(j_{K,f})\cap \Q(\zeta_p)=\Q\left(\sqrt{(-1)^{\frac{p-1}{2}}p}\right)$. Notice, however, that $j_{K,f}$ has a real conjugate and $\Q(j_{K,f})$ has a real embedding (see, for instance, \cite{cox}, p. 242). Thus, if $p\equiv 3\bmod 4$, then we have reached a contradiction. Thus, $J$ is of even index if and only if $p\equiv 1 \bmod 4$ and $\Q(\sqrt{p})\subseteq \Q(j_{K,f})$, as claimed. 
	
	Suppose that $d\geq 2$ and so $J\subseteq {\Z_p^\times}^2$. By our previous remarks, $d$ is a power of $2$, and  $p\equiv 1 \bmod 4$ with $\Q(\sqrt{p})\subseteq \Q(j_{K,f})$, which implies that $j_{K,f}\neq 0,1728$. Since $\gcd(p,d)=1$, it follows that if we let $J_{E,1}$ be the image of $\chi_{E,p}=\det(\rho_{E,p})$, then $[(\Z/p\Z)^\times:J_1]=d$. By Theorem \ref{thm-twistforfullimage}, there is an elliptic curve $E'/\Q(j_{K,f})$ such that $\Gal(H_f(E'[p])/H_f) \cong (\Of/p\Of)^\times$, where $H_f=K(j_{K,f})$. Since $(\Z/p\Z)^\times \hookrightarrow (\Of/p\Of)^\times$ corresponds to the subgroup of all scalar matrices of $\GL(2,\F_p)$, it follows that ${(\Z/p\Z)^\times}^2 \subseteq J_{E',1}$, where $J_{E',1}$ is the image of $\chi_{E',p}=\det(\rho_{E',p})$. But $j_{K,f}\neq 0,1728$, and by the construction in the proof of Theorem \ref{thm-twistforfullimage}, the curve $E'/\Q(j_{K,f})$ is a quadratic twist of $E/\Q(j_{K,f})$. Therefore, the images of $\chi_{E',p}=\det(\rho_{E',p})$ and $\chi_{E,p}=\det(\rho_{E,p})$ coincide. Hence, we have inclusions ${(\Z/p\Z)^\times}^2\subseteq J_{E,1}\subseteq {(\Z/p\Z)^\times}^2$ and so $d=2$. Hence,  we have $d=1$ or $2$ in all cases.

	 However, $\Q(j_{K,f})/\Q$ is a finite extension and so it contains only a finite number of quadratic fields. Hence, the index of the image of $\chi_{E,p^\infty}$ is $1$ for all but finitely many primes $p$, as desired.
\end{proof}

\begin{example}
	For instance, let $K=\Q(\sqrt{-11})$ and $f=5$. Then, the order $\Of$ is of class number $h(\Of)=4$, and $K(j_{K,f})/\Q$ is a dihedral extension with $\Gal(K(j_{K,f})/\Q)\cong D_8$, where $D_8$ is the dihedral group of order $8$. The field $\Q(j_{K,f})$ is generated by a root of the polynomial
	$$x^4 - x^3 + x^2 + 4x - 4$$
	and it contains a unique quadratic subfield, namely $\Q(\sqrt{5})$ (the extension $\Q(j_{K,f})/\Q$ is totally ramified at $5$). Since $K(j_{K,f})/\Q$ is dihedral, we must have $\Q(j_{K,f})\cap \Q(\zeta_5)=\Q(\sqrt{5})$. Thus, the determinant of the representation $\rho_{E,5}:\Gal(\overline{\Q(j_{K,f})}/\Q(j_{K,f}))\to \Aut(E[5])$ must have $((\Z/5\Z)^\times)^2$ as image.
\end{example}

\subsection{Serre-Tate} In this section we recall some theorems of Serre and Tate (who, in turn, credit Shimura and Taniyama, and Weil) as they appear in \cite{serretate}. Let $E$ be an elliptic curve with complex multiplication and $j$-invariant $j_{K,f}$, that is defined over $\Q(j_{K,f})$, and suppose that $E$ has CM by the order $\Of$ in an imaginary quadratic field $K$. By Theorem \ref{thm-endo}, all endomorphisms $\phi\in \operatorname{End}(E)\cong \Of$  are defined over $H_f=K(j_{K,f})$. For $p$ a prime, the $p$-adic Tate module of $E$, denoted by $T_p(E)$ is a free module over $\OO_p=\Of\otimes_\Z \Z_p$ of rank $1$ by \cite{serretate}, Theorem 5. In particular, $\Aut_{\OO_p}(T_p(E))\cong \mathcal{O}_p^\times$. Since all the endomorphisms are defined over $H_f$, the actions of $\Gal(\overline{H_f}/H_f)$ and $\mathcal{O}_p$ commute. Thus, the natural Galois action produces a representation
$$\rho_{E,p^\infty}|_{H_f}\colon \Gal(\overline{H_f}/H_f) \to \Aut_{\OO_p}(T_p(E))\cong \mathcal{O}_p^\times.$$
Since $\OO_p^\times$ is abelian, we can factor $\rho_{E,p^\infty}|_{H_f}$ through the maximal abelian quotient $\Gal(H_f^\text{ab}/H_f)$ of the Galois group $\Gal(\overline{H_f}/H_f)$. Let $\mathbb{A}_{H_f}^\times$ be the group of ideles of $H_f$. Composing with the global reciprocity map $\mathbb{A}_{H_f}^\times\to \Gal(H_f^\text{ab}/H_f)$ of class field theory, we obtain a continuous representation $P_{E,p^\infty}:\mathbb{A}_{H_f}^\times \to \OO_p^\times$. By the theory of complex multiplication (see \S 6 and \S 7 of \cite{serretate}, in particular, Theorems 10 and 11 and Corollary 1) there is a continuous homomorphism $\varepsilon: \mathbb{A}_{H_f}^\times \to K^\times$ with properties as described in the next result. 

\begin{thm}[\cite{serretate}, \S 7]\label{thm-serretate}
Let $E/\Q(j_{K,f})$ be an elliptic curve with complex multiplication by an order $\Of$ of an imaginary quadratic field $K$, let $H_f =K(j_{K,f})$, and let $\mathbb{A}_{H_f}^\times$ be the group of id\`eles of $H_f$. Then, there exists  a continuous homomorphism $\varepsilon: \mathbb{A}_{H_f}^\times \to K^\times$ such that
\begin{enumerate}
\item The restriction of $\varepsilon$ to $H_f^\times$ is the norm map from $H_f^\times$ down to $K^\times$, i.e., $\varepsilon|_{H_f^\times} = N_{H_f/K}$, where $H_f^\times$ is diagonally embedded into $\mathbb{A}_{H_f}^\times$.
\item We have $P_{E,p^\infty}(a)=\varepsilon(a)N_{H_{f,p}/K_p}(a_p^{-1})$ for every $a\in \mathbb{A}_{H_f}^\times$, where $a_p$ denotes the component of the id\`ele $a$ in the group $H_{f,p}^\times = (\Q_p\otimes H_f)^\times = \prod_{\nu| p} H_{f,\nu}^\times$.
\item The elliptic curve $E$ has good reduction at a place $\nu$ of $H_f$ if and only if $\varepsilon$ is unramified at $\nu$, i.e., $\varepsilon(\OO_{H_{f,\nu}}^\times)=\{1\}$ where $  \OO_{H_{f,\nu}}^\times$ is identified with the subgroup of $\mathbb{A}_{H_f}^\times$ whose components are $1$ except at the place $\nu$, where the values are in $  \OO_{H_{f,\nu}}^\times$, the units in the ring of integers of $H_{f,\nu}$.
\end{enumerate}
\end{thm}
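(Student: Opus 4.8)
The plan is to recognize this statement as the id\`elic form of the main theorem of complex multiplication, together with the criterion for good reduction, and to assemble it from Shimura reciprocity and the N\'eron--Ogg--Shafarevich criterion. The setup is already in place: since every endomorphism of $E$ is defined over $H_f$ by Theorem~\ref{thm-endo}, and $T_p(E)$ is free of rank one over $\OO_p=\Of\otimes_\Z\Z_p$ (\cite{serretate}, Theorem~5), the representation $\rho_{E,p^\infty}|_{H_f}$ takes values in the abelian group $\Aut_{\OO_p}(T_p(E))\cong \OO_p^\times$, hence factors through $\Gal(H_f^{\mathrm{ab}}/H_f)$ and, composed with the global Artin map, gives the continuous homomorphism $P_{E,p^\infty}\colon \mathbb{A}_{H_f}^\times\to \OO_p^\times$ already named in the text.

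The heart of the matter is Shimura's reciprocity law, which describes how the automorphism $\sigma_a$ attached to an id\`ele $a$ acts on the torsion of $E$: up to the isogeny $E\to E^{\sigma_a}$ furnished by class field theory, this action is multiplication by an element built from the finite components of $a$ via the local norms into $K$, corrected by an algebraic character. Packaging this uniformly across all finite places produces a unique continuous homomorphism $\varepsilon\colon \mathbb{A}_{H_f}^\times\to K^\times$ (with $K^\times$ discrete) such that, for each prime $p$, the $\OO_p^\times$-valued map $a\mapsto \varepsilon(a)\,N_{H_{f,p}/K_p}(a_p^{-1})$ coincides with $P_{E,p^\infty}$; this is exactly (2), and it is the mutual compatibility of the $P_{E,p^\infty}$ for varying $p$ that forces the patched character $\varepsilon$ to be $K^\times$-valued and algebraic. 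Part (1) is then formal: a principal id\`ele $x\in H_f^\times$ has trivial image under the global Artin map, so $P_{E,p^\infty}(x)=1$, and feeding this into (2) yields $\varepsilon(x)=N_{H_{f,p}/K_p}(x_p)$ for every $p$; since on the diagonal copy of $H_f^\times$ the map $N_{H_{f,p}/K_p}$ restricts to $N_{H_f/K}$ and the left-hand side is a fixed element of $K^\times$ independent of $p$, we conclude $\varepsilon|_{H_f^\times}=N_{H_f/K}$. Uniqueness of $\varepsilon$ is immediate from (2), which recovers $\varepsilon(a)=P_{E,p^\infty}(a)\,N_{H_{f,p}/K_p}(a_p)$ from the Galois representation.

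For (3), fix a place $\nu$ of $H_f$ and choose a rational prime $\ell$ not lying below $\nu$. By local class field theory the inertia subgroup at $\nu$ inside $\Gal(H_f^{\mathrm{ab}}/H_f)$ is the image of the local units $\OO_{H_{f,\nu}}^\times$; for an id\`ele $a$ supported at $\nu$ the component $a_\ell$ is trivial, so $N_{H_{f,\ell}/K_\ell}(a_\ell^{-1})=1$ and (2) gives $P_{E,\ell^\infty}(a)=\varepsilon(a)$. Thus inertia at $\nu$ acts trivially on $T_\ell(E)$ if and only if $\varepsilon(\OO_{H_{f,\nu}}^\times)=\{1\}$, i.e.\ $\varepsilon$ is unramified at $\nu$; by the N\'eron--Ogg--Shafarevich criterion the former is equivalent to $E$ having good reduction at $\nu$, which is (3).

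The main obstacle is the construction of $\varepsilon$ and the verification of (2): showing that the family $\{P_{E,p^\infty}\}_p$ is governed by a single algebraic $K^\times$-valued Hecke character is the substantive content of the Shimura--Taniyama theory (\cite{serretate}, \S\S6--7, Theorems~10 and~11 and Corollary~1), and a self-contained proof would require the full description of the Galois action on torsion points of CM elliptic curves in terms of id\`eles. Everything else---the deductions of (1) and (3) and the uniqueness of $\varepsilon$---is class field theory bookkeeping combined with the standard good-reduction criterion, so in the paper itself it is reasonable to simply cite \cite{serretate} for the existence statement and present the rest as above.
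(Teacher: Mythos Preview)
The paper gives no proof of its own for this theorem: it is stated with the attribution \cite{serretate}, \S 7, in the theorem header and is used immediately afterward as a black box (the next paragraph begins ``We identify $H_{f,p}^\times$ with the subgroup\ldots''). Your proposal correctly identifies the underlying ingredients---the Shimura--Taniyama description of the Galois action on torsion via an algebraic Hecke character, the formal deduction of (1) from triviality of $P_{E,p^\infty}$ on principal id\`eles, and N\'eron--Ogg--Shafarevich for (3)---and you explicitly anticipate in your final paragraph that the paper would simply cite \cite{serretate}, which is exactly what happens.
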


We identify $H_{f,p}^\times$ with the subgroup of $\mathbb{A}_{H_f}^\times$ whose components are $1$ except at the places $\nu$ dividing $p$. Then, if $E$ has good reduction at primes of $H_f$ above $p$, then Theorem \ref{thm-serretate} and the fact that the norm map $N_{H_{f,\nu}/K_\wp}$ is surjective for primes $\nu|\wp|p$ that are unramified (\cite{lang}, II, \S 4, Corollary), imply that $\rho_{E,p^\infty}|_{H_f}$ is surjective when the primes of $H_f$ above $p$ are of good reduction. This is sufficient to prove that $\rho_{E,p^\infty}$ is surjective as well.

\begin{cor}\label{cor-surjective}
	Let $E/\Q(j_{K,f})$ be an elliptic curve with complex multiplication by an order $\Of$ of an imaginary quadratic field $K$, let $H_f =K(j_{K,f})$, and let $\rho_{E,p^\infty}:\Gal(\overline{H_f}/H_f) \to \Aut_{\OO_p}(T_p(E))\cong \mathcal{O}_p^\times$ be defined as above. Let $p$ be a prime such that $f$ is not divisible by $p$, and the primes of $H_f$ above $p$ are of good reduction. Then, $\rho_{E,p^\infty}|_{H_f}$ is surjective. Hence, there is a $\Z_p$-basis of $T_p(E)$ such that the image of $\rho_{E,p^\infty}\colon \Gal(\overline{\Q(j_{K,f})}/\Q(j_{K,f}))\to \GL(2,\Z_p)$ is precisely $\mathcal{N}_{\delta,\phi}(p^\infty)$.
\end{cor}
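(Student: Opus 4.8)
The plan is to obtain surjectivity of $\rho_{E,p^\infty}|_{H_f}$ from the Serre--Tate description in Theorem \ref{thm-serretate}, and then to upgrade this to the statement over $\Q(j_{K,f})$ by a short index computation together with Lemma \ref{lem-insidecartan}. Throughout I identify $\Aut_{\OO_p}(T_p(E))$ with $\OO_p^\times$.

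First I would recall that $\rho_{E,p^\infty}|_{H_f}$ factors through the maximal abelian quotient $\Gal(H_f^{\mathrm{ab}}/H_f)$, say as $\bar\rho$, and that $P_{E,p^\infty} = \bar\rho\circ\mathrm{rec}$, where $\mathrm{rec}\colon \mathbb{A}_{H_f}^\times\to\Gal(H_f^{\mathrm{ab}}/H_f)$ is the global reciprocity map. Hence the image of $P_{E,p^\infty}$ is contained in the image of $\rho_{E,p^\infty}|_{H_f}$, so it suffices to prove $P_{E,p^\infty}$ is surjective. I would restrict $P_{E,p^\infty}$ to the subgroup $U_p=\prod_{\nu\mid p}\OO_{H_{f,\nu}}^\times\subseteq \mathbb{A}_{H_f}^\times$ (components $1$ away from $p$), so that for $u\in U_p$ we have $u_p=u$. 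Since $p\nmid f$, the conductor $(f/k)\OO_K$ of $H_f/K$ computed in Lemma \ref{lem-odd2} is prime to $p$, so $H_f/K$ is unramified at all places above $p$; and since $E$ has good reduction at every $\nu\mid p$, part (3) of Theorem \ref{thm-serretate} gives $\varepsilon(U_p)=\{1\}$. Then part (2) of Theorem \ref{thm-serretate} yields $P_{E,p^\infty}(u)=N_{H_{f,p}/K_p}(u^{-1})$ for $u\in U_p$. By Lang's theorem on norms of local units in unramified extensions (cited in the text), $N_{H_{f,\nu}/K_\wp}\colon \OO_{H_{f,\nu}}^\times\to\OO_{K_\wp}^\times$ is surjective for every $\nu\mid\wp\mid p$; letting $u$ range over $U_p$ and taking norms componentwise produces all of $\prod_{\wp\mid p}\OO_{K_\wp}^\times=\OO_p^\times$. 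Therefore $P_{E,p^\infty}$, and hence $\rho_{E,p^\infty}|_{H_f}$, is surjective onto $\Aut_{\OO_p}(T_p(E))\cong\OO_p^\times$.

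For the final assertion I would fix the $\Z_p$-basis of $T_p(E)$ of Remark \ref{rem-complexconj} (equivalently, the one in Theorem \ref{thm-cmrep-intro}), under which $\Aut_{\OO_p}(T_p(E))$ is identified with $\cC_{\delta,\phi}(p^\infty)$; by the surjectivity just proved, $\rho_{E,p^\infty}(\Gal(\overline{H_f}/H_f))=\cC_{\delta,\phi}(p^\infty)$. By Theorem \ref{thm-cmrep-intro} the image $G_{E,p^\infty}$ of $\rho_{E,p^\infty}$ over $\Q(j_{K,f})$ is contained in $\mathcal{N}_{\delta,\phi}(p^\infty)$, inside which $\cC_{\delta,\phi}(p^\infty)$ has index $2$. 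Since $[H_f:\Q(j_{K,f})]=2$ by Lemma \ref{lem-kjqj}, the index $[G_{E,p^\infty}:\cC_{\delta,\phi}(p^\infty)]$ is $1$ or $2$; it cannot be $1$, for then $G_{E,p^\infty}=\cC_{\delta,\phi}(p^\infty)$ would be contained in the Cartan subgroup, contradicting Lemma \ref{lem-insidecartan} (applied at level $p$ if $p>2$, at level $4$ if $p=2$). Hence $[G_{E,p^\infty}:\cC_{\delta,\phi}(p^\infty)]=2$ and $G_{E,p^\infty}=\mathcal{N}_{\delta,\phi}(p^\infty)$, as claimed.

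I expect the main obstacle to be the adelic bookkeeping in the first half: verifying that it suffices to evaluate $P_{E,p^\infty}$ on the local units $U_p$ at $p$, that the good-reduction hypothesis really forces $\varepsilon$ to be trivial on $U_p$, and that the local norm surjectivity applies --- which is exactly where the hypothesis $p\nmid f$ enters, via unramifiedness of $H_f/K$ above $p$. The descent from $H_f$ to $\Q(j_{K,f})$ is then routine, resting only on the index-$2$ fact and Lemma \ref{lem-insidecartan} to exclude the degenerate case.
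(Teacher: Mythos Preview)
Your proof is correct and follows essentially the same route as the paper: use Theorem~\ref{thm-serretate} to reduce to surjectivity of $P_{E,p^\infty}$ on the local units at $p$, invoke good reduction to kill $\varepsilon$ there, use $p\nmid f$ (via Lemma~\ref{lem-odd2}) to ensure $H_f/K$ is unramified above $p$ so that the local norms surject onto $\OO_p^\times$, and then appeal to Lemma~\ref{lem-insidecartan} to pass from $\cC_{\delta,\phi}(p^\infty)$ to $\mathcal{N}_{\delta,\phi}(p^\infty)$. Your index argument in the second paragraph is a slightly more explicit phrasing of the paper's one-line observation that complex conjugation lands outside the Cartan, but the content is identical.
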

\begin{proof}
	It suffices to prove that $P_{E,p^\infty}$ is surjective. Since the primes of $H_f$ above $p$ are of good reduction, Theorem \ref{thm-serretate}, part (3), says that $\varepsilon(\OO_{H_{f,p}}^\times)=1$. Moreover, if $f$ is not divisible by $p$, then Lemma \ref{lem-odd2} implies that $H_f/K$ is unramified at primes above $p$. Thus, the norm maps $N_{H_{f,\nu}/K_\wp}$, for primes for primes $\nu|\wp|p$, are surjective, and therefore  $N_{H_{f,p}/K_p}(\OO_{H_{f,p}}^\times) = \OO_p^\times$. Hence, $P_{E,p^\infty}$ is surjective, and thus $\rho_{E,p^\infty}|_{H_f}$ is surjective as well.
	
	For the last part of the statement, let us choose a $\Z_p$-basis of $\rho_{E,p^\infty}$ as in Theorem \ref{thm-cmrep} such that the image of $\Gal(\overline{H_f}/H_f)$ is contained in $\cC_{\delta,\phi}(p^\infty)\cong \OO_p^\times$, and the image of $\rho_{E,p^\infty}$ is contained in $\mathcal{N}_{\delta,\phi}(p^\infty)$. By the first part of the result, if $f$ is not divisible by $p$ and the primes of $H_f$ above $p$ are of good reduction, then the image of $\Gal(\overline{H_f}/H_f)$ is all of $\cC_{\delta,\phi}(p^\infty)\cong \OO_p^\times$. Since the image of complex conjugation cannot be contained in the Cartan subgroup, by Theorem \ref{lem-insidecartan}, it follows that $\rho_{E,p^\infty}$ surjects onto $\mathcal{N}_{\delta,\phi}(p^\infty)$, as claimed.
\end{proof}

Finally, we cite a result that follows from Theorems 8 and 9 of \S 6 of \cite{serretate}. 

\begin{thm}[\cite{serretate}, \S 6, Cor. 1]\label{thm-allgood}
	Let $E/\Q(j_{K,f})$ be an elliptic curve with complex multiplication by an order $\Of$ of an imaginary quadratic field $K$, let $H_f =K(j_{K,f})$, and let $p$ be a prime such that if $K=\Q(i)$ or $\Q(\sqrt{-3})$, then $f\neq p^n$ with $n\geq 1$. Then, there is a another elliptic curve $E'$, defined over $H_f$ and isomorphic to $E/H_f$, such that $E'$ has good reduction at the primes above $p$.
\end{thm}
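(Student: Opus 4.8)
The statement to prove is Theorem \ref{thm-allgood}: every CM elliptic curve $E/\Q(j_{K,f})$ with CM by $\Of$ becomes isomorphic, over $H_f=K(j_{K,f})$, to a curve $E'/H_f$ with good reduction at all primes above $p$, under the mild hypothesis that when $K=\Q(i)$ or $\Q(\sqrt{-3})$ we do not have $f=p^n$. Since this is quoted as a result of Serre and Tate (\cite{serretate}, \S 6, Cor. 1), the plan is to reduce it to the idelic CM formalism recalled just above, namely Theorem \ref{thm-serretate} and its homomorphism $\varepsilon\colon \mathbb{A}_{H_f}^\times\to K^\times$. The key point of Theorem \ref{thm-serretate}(3) is that $E$ has good reduction at a place $\nu$ of $H_f$ precisely when $\varepsilon$ is unramified at $\nu$, i.e.\ $\varepsilon(\OO_{H_{f,\nu}}^\times)=\{1\}$. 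So the task is to twist $E$ so as to kill the ramification of $\varepsilon$ at the finitely many places above $p$, without introducing ramification anywhere else.

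First I would recall that any quadratic (or more generally finite-order) twist $E^\chi$ of $E$ corresponds, at the level of the idelic Hecke character, to multiplying $\varepsilon$ by the finite-order Hecke character $\chi\circ N_{H_f/\Q(j_{K,f})}$ attached to $\chi$ (and $E^\chi$ is defined over the same field as $E$ by \cite[Ch.~X, Prop.~5.4]{silverman}, just as was used in the proof of Theorem \ref{thm-twistforfullimage}). Since the group of twists of $E$ by a character valued in $\Of^\times$ is exactly $\Of^\times$-torsors, one has more twists available when $\Of^\times$ is larger; the exceptional hypothesis on $f$ being a power of $p$ when $K=\Q(i),\Q(\sqrt{-3})$ is there precisely to guarantee that enough twisting room is compatible with the arithmetic at $p$. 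The mechanism is: the ramification of $\varepsilon$ at a place $\nu\mid p$ is measured by a character of $\OO_{H_{f,\nu}}^\times$ into $K^\times$, and because $H_f/K$ is unramified at $p$ when $p\nmid f$ (Lemma \ref{lem-odd2}), or more carefully because the conductor of $\varepsilon$ is supported on primes dividing $f$ and the primes of bad reduction, one can find a finite-order idele class character $\eta$ of $H_f$ (pulled back via norm from $\Q(j_{K,f})$, or at worst a character of $H_f$ whose square descends) whose conductor at $p$ cancels that of $\varepsilon$, and which is unramified away from $p$. Then $\varepsilon\cdot\eta$ is everywhere unramified at $p$ and unramified wherever $\varepsilon$ was, so the corresponding twist $E'$ has good reduction at all $\nu\mid p$, and $E'\cong E$ over $H_f$ since $\eta$ (being of finite order valued compatibly with $\Aut(E)$, after adjusting by the exceptional-$f$ hypothesis) corresponds to an actual twist.

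Concretely, the steps in order: (1) attach to $E/H_f$ its CM Hecke character $\psi_E$ (equivalently $\varepsilon$ as in Theorem \ref{thm-serretate}), and identify its conductor; (2) observe, using Lemma \ref{lem-odd2} and the structure of the ring class field $H_f$, that the part of the conductor supported above $p$ is "removable by a twist" — i.e.\ the local character $\psi_{E,\nu}$ on $\OO_{H_{f,\nu}}^\times$ extends to a global finite-order character $\eta$ ramified only at $p$; here the hypothesis $f\neq p^n$ (for $K=\Q(i),\Q(\sqrt{-3})$) ensures that the necessary $\eta$ exists with values in the right roots of unity so that it genuinely corresponds to a twist of $E$ by an element of $\Aut(E)\cong\Of^\times$; (3) set $E'=E^\eta$, note $E'/H_f\cong E/H_f$ and $\psi_{E'}=\psi_E\cdot\eta^{-1}$ is unramified above $p$; (4) invoke Theorem \ref{thm-serretate}(3) to conclude $E'$ has good reduction at all primes of $H_f$ above $p$. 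I would then remark that this is exactly the content of \cite[\S 6, Thm.~8, Thm.~9 and Cor.~1]{serretate}, so the argument can be compressed to citing those, with the conductor/twist bookkeeping made explicit only to the extent needed for the exceptional $f$.

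The main obstacle is step (2): controlling the conductor of the CM Hecke character at $p$ and verifying that the cancelling twisting character $\eta$ can be chosen (a) of the correct finite order so that $E^\eta$ is still a genuine twist corresponding to an automorphism of $E$ — this is where $\Of^\times=\{\pm1\}$ versus $\mu_4$ or $\mu_6$ matters, and where the prohibition $f\neq p^n$ for $K=\Q(i),\Q(\sqrt{-3})$ enters — and (b) unramified at all primes other than $p$, so that good reduction elsewhere (and the field of definition $H_f$, or even $\Q(j_{K,f})$) is preserved. Everything else is a formal translation through the reciprocity map and Theorem \ref{thm-serretate}, which is already set up in the excerpt; the delicate arithmetic is entirely in the $j=0$ and $j=1728$ cases, consistent with the fact that those cases receive separate, more careful treatment elsewhere in the paper.
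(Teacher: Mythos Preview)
The paper itself does not prove this statement; it is cited directly from \cite[\S 6, Cor.~1]{serretate} as a consequence of Theorems~8 and~9 there, so there is no in-paper argument to compare against. Your outline---twist $E$ by a finite-order character to kill the ramification of $\varepsilon$ at places above $p$, then apply Theorem~\ref{thm-serretate}(3)---is precisely the Serre--Tate mechanism, and your identification of the exceptional hypothesis with the availability of twisting characters valued in $\Aut(E)=\Of^\times$ (which may be strictly smaller than $\OK^\times$ when $f>1$) is the correct intuition.

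Two small points in your writeup. First, since $E'$ need only be defined over $H_f$ (not over $\Q(j_{K,f})$), the twisting character $\eta$ can be taken directly as a character of $G_{H_f}$ with values in $\Of^\times$; there is no need to pull back through $N_{H_f/\Q(j_{K,f})}$, and insisting on this only restricts your options. Second, the conductor of $\varepsilon$ is supported exactly on the primes of bad reduction of $E$---that \emph{is} Theorem~\ref{thm-serretate}(3)---with no separate contribution from primes dividing $f$. Finally, note that the theorem asks only for good reduction above $p$, so $\eta$ may freely be ramified at other places; this makes the global existence problem in your step~(2) substantially easier than the ``unramified away from $p$'' version you set up.
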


\subsection{Proof of part (4) of Theorem \ref{thm-largeimage-intro}}

Recall that we have proved parts (1) and (2), and (3) of Theorem \ref{thm-largeimage-intro} in Section \ref{sec-proofsparts1and3} and Proposition \ref{prop-largeimage-intro-part-3}, respectively. In this section we conclude the proof of (4). We first need a lemma.

\begin{lemma}
	Let $\Of$ be an imaginary quadratic field $K$, of discriminant $\Delta_K$, and let $p$ be an odd prime that does not divide $f\Delta_K$. Let $G=(\Of/p\Of)^\times \rtimes_\varphi \langle c \rangle$, with $c$ an element of order $2$, where $\varphi: \langle c \rangle \to \Aut((\Of/p\Of)^\times)$ is given by $c\mapsto (\beta\mapsto c(\beta))$. Suppose $H$ is a subgroup of $\cC=(\Of/p\Of)^\times$ such that $\pm H =\cC$. Then, either $H=\cC$, or the subgroup of $G$ generated by elements of $H$ together with $c$ is all of $G$.
\end{lemma}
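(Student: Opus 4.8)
The plan is to analyze the subgroup $H'$ of $G$ generated by $H$ and $c$ by understanding how conjugation by $c$ interacts with $H$. The key structural fact is that $\varphi(c)$ is complex conjugation on $(\Of/p\Of)^\times$, so for $\beta \in \cC$ we have $c\beta c^{-1} = \overline{\beta}$, and in particular $\beta \cdot c\beta c^{-1} = \beta\overline{\beta} = N(\beta)$, the norm, which lies in the subgroup $(\Z/p\Z)^\times$ of $\cC$ (identifying $\Z/p\Z \hookrightarrow \Of/p\Of$). So $H'$ automatically contains $N(H) \subseteq (\Z/p\Z)^\times$. Since $p \nmid f\Delta_K$, the ring $\Of/p\Of$ is either $\F_p \times \F_p$ (if $p$ splits in $K$) or $\F_{p^2}$ (if $p$ is inert), and in both cases the norm map $\cC \to (\Z/p\Z)^\times$ is surjective; combined with the hypothesis $\pm H = \cC$, this will pin down $N(H) = (\Z/p\Z)^\times$.

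First I would observe that $H' \cap \cC$ is a subgroup of $\cC$ containing $H$ and stable under $\beta \mapsto \overline{\beta}$ (since $c \in H'$ normalizes $H' \cap \cC$), and that $H' = (H' \cap \cC) \rtimes \langle c \rangle$ because $[\cC : H'\cap\cC] $ times $2$ must equal $[G : H']$, i.e. $c \notin \cC$ forces the index-$2$ splitting. So it suffices to show $H'\cap \cC = \cC$, equivalently $H'\cap\cC = H$ forces $H = \cC$. Next I would use the hypothesis $\pm H = \cC$: this says $\cC / H$ is generated by the image of $-1$, so $[\cC : H] \in \{1,2\}$, and if $[\cC:H] = 2$ then $H$ is the unique index-$2$ subgroup, namely $H = \cC^2 \cup (-1)\cdot(\text{something})$—more precisely $H$ is the kernel of the unique surjection $\cC \to \{\pm 1\}$ that is trivial on $-1$ is impossible unless $-1 \in \cC^2$; I would instead argue directly that $H \supseteq \cC^2$ in the index-$2$ case. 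Then the job reduces to showing that $\cC^2$ together with $c$ generates all of $\cC \rtimes \langle c \rangle$, or at least recaptures an element of $\cC \setminus H$.

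The crucial computation will be: for a generic $\beta \in \cC$, the element $\beta c \beta^{-1} c^{-1} = \beta \overline{\beta}^{-1} = \beta / \overline{\beta}$ lies in $H'$. When $p$ is inert, $\beta/\overline{\beta}$ ranges over the norm-$1$ elements of $\F_{p^2}^\times$, a cyclic group of order $p+1$; when $p$ splits, writing $\beta = (u,v)$ we get $\beta/\overline{\beta} = (u/v, v/u)$, ranging over the "anti-diagonal" torus of order $p-1$. In either case, the subgroup of $\cC$ generated by $N(\cC) = (\Z/p\Z)^\times$-diagonal part together with these commutator elements $\beta/\overline\beta$ is all of $\cC$: concretely $\cC = \{x : N(x)\} \cdot \{x/\overline x\}$ up to the overlap $\{\pm 1\}$ or the $2$-torsion, because $|\{x = N\text{-part}\}| \cdot |\{x/\overline x\}| = (p-1)(p+1) = p^2-1 = |\cC|$ in the inert case and $(p-1)(p-1) = |\cC|$ in the split case, with small overlap. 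Since $H$ (whether equal to $\cC$ or of index $2$) contains $\cC^2$ and $-1$, it contains the squares of all these torus elements, and once we throw in $c$ we recover $\beta/\overline\beta$ itself for all $\beta$, filling out $\cC$. \textbf{The main obstacle} I anticipate is bookkeeping the $2$-primary overlap carefully: the norm-one torus and the "norm part" $(\Z/p\Z)^\times$ intersect in $\mu_2$, and $\cC^2$ may miss $-1$ exactly when $-1$ is a non-square in $\cC$, so I must track parity of $(p\pm1)/2$ and use the hypothesis $-1 \in H$ (which is exactly what $\pm H = \cC$ with $H \neq \cC$ of index $2$ gives, together with $H \ne \cC$) to close the gap. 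Handling the split and inert cases in parallel via the uniform identity $N(\beta) = \beta\overline\beta$ and $|\cC| = |N(\cC)|\cdot|\{\beta/\overline\beta\}|$ should keep this manageable.
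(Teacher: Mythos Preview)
There is a genuine gap. Two concrete errors undermine the plan.

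First, you have the role of $-1$ reversed. If $\pm H=\cC$ and $H\neq\cC$, then $H$ has index $2$ and $-1\notin H$ (otherwise $\pm H=H$). So you cannot ``use the hypothesis $-1\in H$'' to close any gap; that hypothesis is false in the case you care about.

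Second, and more seriously, the torus decomposition does not recover $\cC$ in the inert case. With $\cC\cong\F_{p^2}^\times$ cyclic of order $p^2-1$, the diagonal $\F_p^\times$ is $\langle g^{p+1}\rangle$ and the norm-one torus $\{\beta/\overline\beta\}$ is $\langle g^{p-1}\rangle$; since $\gcd(p-1,p+1)=2$, together they generate only $\langle g^2\rangle=\cC^2$, which is exactly $H$. So your commutator computation $\beta/\overline\beta\in H'\cap\cC$ produces nothing outside $H$, and you end up with $H'\cap\cC=H$, not $\cC$. The ``small overlap'' is not bookkeeping: it is the entire obstruction.

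The way out is the observation the paper makes and your proposal misses: in the inert case $-1=g^{(p^2-1)/2}$ is a square (since $(p^2-1)/2$ is even for odd $p$), so $-1\in\cC^2$. Hence no index-$2$ subgroup $H$ with $\pm H=\cC$ exists, and the inert case is vacuous. In the split case your very first remark---that $H'\cap\cC$ is stable under $\beta\mapsto\overline\beta$---is already enough: if $H=H_1=\{(a,b):a\text{ square}\}$ then $\overline{H_1}=H_2$ and $\langle H_1,H_2\rangle=\cC$, while $H_3=\{(a,b):ab\text{ square}\}$ contains $(-1,-1)$ and is again excluded by the hypothesis. So the argument the paper gives is really a short case analysis (inert vs.\ split, and enumerating the index-$2$ subgroups), not a uniform norm/commutator computation; your uniform approach cannot succeed because the two tori fail to generate $\cC$ precisely in the inert case.
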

\begin{proof} 
Let $p$ be an odd prime not dividing $f\Delta_K$, let $\cC = (\Of/p\Of)^\times$ and suppose that there is a subgroup $H\subsetneq \mathcal{C}$, such that $\pm H = \mathcal{C}$ (therefore $H$ is of index $2$ in $\cC$), and such that the subgroup generated by $H$ and $c$ in $(\Of/p\Of)^\times \rtimes_\varphi \langle c \rangle$ is not the whole group. Since $p$ does not divide $f\Delta_K$, the prime $p$ is unramified in $K$. We distinguish two cases according to whether $p$ is inert or split.
\begin{itemize}
	\item If $p$ is inert in $K/\Q$, then 
	$$\cC = (\Of/p\Of)^\times \cong (\OO_K/p\OO_K)^\times$$
	which is cyclic of degree $p^2-1$. Thus, there is a unique subgroup of index $2$, namely $H=\cC^2$. But $p$ is odd, so $(p^2-1)/2$ is even, and if $\cC = \langle g\rangle$, then $g^{(p^2-1)/2}=-1$ and $(g^{(p^2-1)/4})^2 =- 1$. Hence, $-1$ is a square and therefore $-1\in H$. Hence, $\pm H = H \neq \cC$, in  contradiction with our assumption $\pm H=\cC$.
	
	\item If $p$ is split in $K/\Q$, then 
	$$\cC = (\Of/p\Of)^\times \cong (\OO_K/p\OO_K)^\times \cong (\OO_K/\wp)^\times \times (\OO_K/c(\wp))^\times$$
	where $p\OO_K = \wp\cdot c(\wp)$. Since $p$ is odd, $(\OO_K/\wp)^\times$ and $(\OO_K/c(\wp))^\times$ are of cyclic of (even) order $p-1$, and so $\cC\cong (\Z/p\Z)^\times \times (\Z/p\Z)^\times$. Thus, there are three different subgroups of index $2$ in $\cC$, namely the kernels $H_i$, for $i=1,2,3$, of the three maps $\psi_i:\cC\to \{\pm 1\}$ given by
	$$(a,b)\mapsto \left(\frac{a}{p}\right),\ (a,b)\mapsto \left(\frac{b}{p}\right),\ (a,b)\mapsto \left(\frac{ab}{p}\right).$$
	Note further that $c$ acts on a pair $(a,b)\in (\Z/p\Z)^\times \times (\Z/p\Z)^\times \cong \cC$ by $c(a,b)=(b,a)$. Therefore $H=H_i$, with $i=1$ or $2$, together with $c$, generate all of $\cC\rtimes \Z/2\Z$ in contradiction of our assumption. Finally $\pm H_3 = H_3$ because if $(a,b)\in H_3$, then $(-a,-b)\in H_3$ also. Thus, $\pm H_3=H_3\neq \cC$ again in contradiction with our assumption.
\end{itemize}
Hence, we must have that $H=\cC$, as desired.
\end{proof} 

The following result relates the images of $\rho_{E,p}$ and $\rho_{E',p}$, when $E$ and $E'$ are isomorphic over $\overline{\Q}$.

\begin{thm}\label{thm-twists}
	Let $E$ and $E'$ be two elliptic curves which are isomorphic over $\overline{\mathbb{Q}}$, such that both curves have complex multiplication by an order $\Of$ of an imaginary quadratic field $K$, and that both are defined over $H_f=K(j_{K,f})$. Let $p$ be a prime that does not divide $2f\Delta_K$, and let $G_{E,p}$ and $G_{E',p}$ be respectively the images of the mod $p$ representations $\rho_{E,p}$ and $\rho_{E',p}\colon \Gal(\overline{H_f}/H_f)\to (\Of/p\Of)^\times$, and assume that $\rho_{E',p}$ is surjective. Then:
	\begin{enumerate}
		\item If $j_{K,f}=j(E)=j(E')\neq 0$, then $G_{E,p}=G_{E',p}=(\Of/p\Of)^\times$. In other words, both $\rho_{E,p}$ and $\rho_{E',p}$ are surjective.
		
		\item Suppose $j_{K,f}=j(E)=j(E')=0$, so that $E$ and $E'$ have CM by $\OK$ and $K=\Q(\sqrt{-3})$. 
		\begin{enumerate}
			\item If $p\equiv \pm 1\bmod 9$, then $G_{E,p}=G_{E',p}=(\OK/p\OK)^\times$. Moreover, if $p\equiv 1\bmod 9$, then the image is a split Cartan subgroup, and if $p\equiv -1\bmod 9$, then the image is a non-split Cartan subgroup.
			\item If $p\equiv 2$ or $5\bmod 9$, then $G_{E',p}$ is a non-split Cartan subgroup and either $G_{E,p}=G_{E',p}$, or $G_{E,p}\cong ((\OK/p\OK)^\times)^3$, the cubes of units in $\OK/p\OK$.
			\item If $p\equiv 4$ or $7\bmod 9$, then $G_{E',p}\cong (\OO_K/\wp)^\times \times (\OO_K/c(\wp))^\times$ and either $G_{E,p}=G_{E',p}$, or $G_{E,p}$ is isomorphic to the subgroup $\{(a,b)\in G_{E',p}: a/b \in ((\Z/p\Z)^\times)^3\}$ of $G_{E',p}$.
		\end{enumerate}
	\end{enumerate}
\end{thm}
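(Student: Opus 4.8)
The plan is to exploit that $E$ and $E'$ are twists of one another over $H_f$, and to intersect three constraints on $G_{E,p}$: the relation with the (surjective, hence completely known) representation $\rho_{E',p}$; surjectivity of the determinant; and the fact that a complex conjugation normalizes $G_{E,p}$. \emph{Setup.} Since $E$ and $E'$ have the same $j$-invariant and all their endomorphisms are defined over $H_f$ (Theorem \ref{thm-endo}), they are twists over $H_f$: there is a homomorphism $\chi\colon \Gal(\overline{H_f}/H_f)\to \Of^\times$ with $\rho_{E,p}|_{H_f}=\rho_{E',p}|_{H_f}\otimes\chi$, where we identify $\Of^\times=\operatorname{Aut}(E)$ with its image $\mu_n\subseteq (\Of/p\Of)^\times$ (legitimate since $\mu_n\subseteq \OK\subseteq H_f$; here $n=\#\Of^\times\in\{2,4,6\}$, with $n=4$ only if $j_{K,f}=1728$ and $n=6$ only if $j_{K,f}=0$). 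As $\rho_{E',p}$ is surjective, $(\Of/p\Of)^\times=G_{E',p}\subseteq \mu_n\cdot G_{E,p}$, so $\mu_n\cdot G_{E,p}=(\Of/p\Of)^\times$ and $d:=[(\Of/p\Of)^\times:G_{E,p}]=[\mu_n:\mu_n\cap G_{E,p}]$ divides $n$. Two further inputs: (i) since $p\nmid 2f\Delta_K$, Lemma \ref{lem-odd2}(3) gives that $\det\rho_{E,p}|_{H_f}$ is the mod $p$ cyclotomic character, which is surjective, so $\det(G_{E,p})=(\Z/p\Z)^\times$; (ii) since $\Gal(\overline{H_f}/H_f)$ is normal of index $2$ in $\Gal(\overline{\Q(j_{K,f})}/\Q(j_{K,f}))$, the image $\gamma$ of a complex conjugation normalizes $G_{E,p}$, acting on $(\Of/p\Of)^\times$ as $\alpha\mapsto\overline{\alpha}$ (Section \ref{sec-cc}), and $\gamma\notin(\Of/p\Of)^\times$ by Lemma \ref{lem-insidecartan}. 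We use the dichotomy: $p$ inert makes $(\Of/p\Of)^\times\cong\F_{p^2}^\times$ cyclic (non-split Cartan); $p$ split makes $(\Of/p\Of)^\times\cong (\OK/\wp)^\times\times(\OK/\overline{\wp})^\times\cong(\Z/p\Z)^\times\times(\Z/p\Z)^\times$ (split Cartan), on which $\alpha\mapsto\overline\alpha$ is the coordinate swap and $\det$ is $(a,b)\mapsto ab$.

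\emph{Part (1)} ($j_{K,f}\neq0$, $n\in\{2,4\}$; goal $d=1$). In the split case, if $G\subseteq(\Z/p\Z)^\times\times(\Z/p\Z)^\times$ is swap-stable, has full determinant, and has index $d$, pick $(a_0,b_0)\in G$ with $a_0b_0$ a generator $g$; swap-stability forces $a_0/b_0\in((\Z/p\Z)^\times)^d$, and then $a_0^2=(a_0b_0)(a_0/b_0)$ exhibits $g$ as a square whenever $d$ is even — impossible, so $d$ is odd, hence $d=1$. In the inert case $(\Of/p\Of)^\times$ is cyclic, so $G_{E,p}$ is its unique subgroup of index $d$; then $-1\in G_{E,p}$ (as $4\mid p^2-1$), so Corollary \ref{cor-missesrootofunity}(1) forces $d$ odd, leaving $d=1$ if $n=2$ and $d\in\{1,4\}$ if $n=4$, and $d=4$ is excluded because the index-$4$ subgroup of $\F_{p^2}^\times$ has determinant (norm) $((\Z/p\Z)^\times)^{4}=((\Z/p\Z)^\times)^{2}\neq(\Z/p\Z)^\times$ (here $p\equiv3\bmod4$ since $p$ is inert and $j_{K,f}=1728$). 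Thus $d=1$ always, and $G_{E,p}=G_{E',p}=(\Of/p\Of)^\times$. (Alternatively, the case $d=2$ can be ruled out using the lemma immediately preceding the statement: $\pm G_{E,p}=(\Of/p\Of)^\times$ and $G_{E,p}\neq(\Of/p\Of)^\times$ would force $\langle G_{E,p},\gamma\rangle=\mathcal N_{\delta,0}(p)$, contradicting $[\langle G_{E,p},\gamma\rangle:G_{E,p}]=2$.)

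\emph{Part (2)} ($j_{K,f}=0$, $K=\Q(\sqrt{-3})$, $\Of=\OK$, $n=6$, $p\geq5$). Split by $p\bmod 3$, then refine by $p\bmod 9$. If $p$ is inert ($p\equiv2,5,8\bmod9$), $(\OK/p\OK)^\times$ is the cyclic non-split Cartan of order $p^2-1$ and $G_{E,p}$ is its unique subgroup of index $d\mid6$; computing $[\mu_6:\mu_6\cap((\OK/p\OK)^\times)^d]$ from $8\mid p^2-1$, $3\mid p^2-1$ and $9\mid p^2-1\Leftrightarrow p\equiv8\bmod9$, and comparing with $d$, excludes $d=2$ and $d=6$ always, excludes $d=3$ when $p\equiv8\bmod9$, and leaves $d\in\{1,3\}$ when $p\equiv2,5\bmod9$ with $d=3$ meaning $G_{E,p}=((\OK/p\OK)^\times)^3$. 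If $p$ is split ($p\equiv1,4,7\bmod9$), the argument of Part (1) gives $d$ odd, so $d\in\{1,3\}$; among the swap-stable index-$3$ subgroups — the kernels of $(a,b)\mapsto\ell(a)+\ell(b)$ and $(a,b)\mapsto\ell(a)-\ell(b)$, where $\ell\colon(\Z/p\Z)^\times\to\Z/3\Z$ reduces the discrete logarithm — only $\{(a,b):a/b\in((\Z/p\Z)^\times)^3\}$ has full determinant, so $d=3$ forces $G_{E,p}$ to equal this group, and since it contains $\zeta_3$ precisely when $\omega^2$ is a cube in $\F_p^\times$ (i.e. precisely when $9\mid p-1$), Corollary \ref{cor-missesrootofunity}(3) rules out $d=3$ when $p\equiv1\bmod9$ and permits it when $p\equiv4,7\bmod9$. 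Finally $G_{E',p}=(\OK/p\OK)^\times$ is a non-split Cartan when $p\equiv-1\bmod9$ and a split Cartan when $p\equiv1\bmod9$, by the inert/split dichotomy.

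\emph{Main obstacle.} The delicate point is the split sub-case of Part (2): one must use $\Gal(H_f/\Q)$-stability (the coordinate swap) together with surjectivity of the determinant to single out the one index-$3$ subgroup that can occur, and then the arithmetic of cube classes in $\F_p^\times$ to read off the dependence on $p\bmod 9$; carrying this out cleanly, alongside the (easier) exclusions of even index, is the bulk of the work. Everything else follows formally from the twisting identity $\mu_n\cdot G_{E,p}=(\Of/p\Of)^\times$ together with Corollary \ref{cor-missesrootofunity}, Lemma \ref{lem-odd2}, and the normalizer computations of Section \ref{sec-normalizers}.
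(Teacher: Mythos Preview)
Your proof is correct and follows the same overall architecture as the paper's: reduce to the twist relation $\mu_n\cdot G_{E,p}=(\Of/p\Of)^\times$, then rule out the unwanted indices using (i) stability under complex conjugation and (ii) surjectivity of the determinant. The case split (inert/split, then $p\bmod 9$) is also identical. Where you differ is in the mechanism for each exclusion. The paper argues structurally: in the inert case it observes that $\mu_n\times G_{E,p}$ would have to be cyclic (and cannot be, since the $2$-primary part of $G_{E,p}$ is already nontrivial), and in the split case it looks at the $2$-torsion $G_{E',p}[2]\cong\{\pm 1\}^2$ directly; for $p\equiv\pm1\bmod 9$ it uses a rank/cyclicity obstruction to $\Z/3\Z\times G_{E,p}\cong G_{E',p}$. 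You instead invoke Corollary~\ref{cor-missesrootofunity} systematically (checking which roots of unity lie in the candidate $G_{E,p}$), and in the split case you use the pleasant ``generator'' trick: from $\det$ surjective and swap-stability one gets $a_0/b_0\in((\Z/p\Z)^\times)^d$ via the kernel-of-$\det$ count, whence $g=a_0^2\cdot(a_0/b_0)^{-1}$ is a square if $d$ is even. Both routes are valid; yours leans more on results already established in the paper (Cor.~\ref{cor-missesrootofunity}, Lemma~\ref{lem-odd2}), while the paper's is more self-contained at this point.

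One small wording issue: in the inert case of Part~(1) you write ``$-1\in G_{E,p}$ (as $4\mid p^2-1$), so Corollary~\ref{cor-missesrootofunity}(1) forces $d$ odd, leaving $d\in\{1,4\}$ if $n=4$''. As written, $4\mid p^2-1$ only gives $-1\in G_{E,p}$ for $d=2$ (so excludes $d=2$, not all even $d$); you then correctly exclude $d=4$ via the norm/determinant. Either rephrase to ``forces $d\neq 2$'', or strengthen to $8\mid p^2-1$ (true for odd $p$), which shows $-1$ is a fourth power and excludes $d=4$ by the same corollary, making the separate determinant step for $d=4$ unnecessary.
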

\begin{proof} 
Let $E$ and $E'$ be two elliptic curves which are isomorphic over $\overline{\mathbb{Q}}$. Thus, $j_{K,f}=j(E)=j(E')$. Assume, further, that both $E$ and $E'$ have complex multiplication by an order $\Of$ of an imaginary quadratic field $K$, and that both are defined over $H_f=K(j_{K,f})$. We shall assume that $E$ and $E'$ are not isomorphic over $H_f$. 

Let us further assume that the Galois group of $H_f(E'[p])/H_f$ is a full Cartan group isomorphic to $G_{E',p}=(\Of/p\Of)^\times$, where $p$ is a prime that does not divide $2f\Delta_K$. In such case, $G_{E',p} = (\Of/p\Of)^\times \cong (\OO_K/p\OO_K)^\times$. We also assume that $G_{E,p}=\Gal(H_f(E[p])/H_f)\subsetneq (\Of/p\Of)^\times$ and $G_{E,p}$ is stable under the action of complex conjugation, i.e., $c(G_{E,p})=G_{E,p}$ (recall that $\Gal(H_f(E[p])/\Q(j_{K,f}))\subseteq (\Of/p\Of)^\times \rtimes_\varphi \langle c \rangle$ with $\varphi\colon \langle c\rangle \to \Aut((\Of/p\Of)^\times)$ is given by $c\mapsto (\beta\mapsto c(\beta))$.  Finally, we remark that $E$ and $E'$ are defined over $H_f$ but isomorphic over $\overline{\Q}$, and therefore their $p$-adic representations differ by a twist $\chi \colon \Gal(\overline{H_f}/H_f)\to \mu_n$ where $n$ divides $4$ or $6$ (see \cite[\S X.5]{silverman}). In other words, if we write $\rho_{E',p}\colon  \Gal(\overline{H_f}/H_f)\to \Aut_{\Of/p\Of}(E'[p])\cong (\Of/p\Of)^\times$, then  $\rho_{E',p} = \chi \cdot \rho_{E,p}$ and, in particular, $G_{E',p}$ is isomorphic to a subgroup of $\mu_n \cdot G_{E,p} \subseteq \Z/n\Z \times G_{E,p}$. Thus, by \cite{silverman}, Ch. I, Prop. 1.4, one of the following options holds:
\begin{itemize}
	\item If $j\neq 0$ or $1728$, then $E'$ is a quadratic twist of $E$. In particular,  $G_{E',p}=\mu_2 \cdot G_{E,p} = \pm G_{E,p}$.
	\item $j=1728$ and $E'$ is a quadratic or quartic twist of $E$. In particular, either $G_{E',p}=\pm G_{E,p}$ or $G_{E',p}=\mu_4 \cdot G_{E,p}$.
		\item $j=0$ and $E'$ is twist of $E$ of degree dividing $6$. In particular, either $G_{E',p}=\pm G_{E,p}$, or $G_{E',p}=\mu_3 \cdot G_{E,p}$, or $G_{E',p}=\mu_6 \cdot G_{E,p}$.
\end{itemize}

We distinguish two cases according to whether $[G_{E',p}:G_{E,p}]>1$ is even or odd.

\begin{enumerate}
	\item Suppose first that $[G_{E',p}:G_{E,p}]$ is even (i.e., $2$, $4$, or $6$). Let us first assume that $p$ is inert in $K$. Since $p$ does not divide $2f\Delta_K$, then $G_{E',p}\cong (\Of/p\Of)^\times \cong (\OO_K/p\OO_K)^\times$ is cyclic of order $p^2-1$. Since $p$ is odd, it follows that the $2$-primary component of $G_{E',p}$ is cyclic of order $2^k\geq 8$, for some $k\geq 3$. But $G_{E,p}$ is a subgroup of index $2$, $4$, or $6$, so its $2$-primary component is of order $2^{k-1}$ or $2^{k-2}$, which is at least of order $2$. Thus, neither one of  $\Z/2\Z \times G_{E,p}$,  $\Z/4\Z \times G_{E,p}$, nor $\Z/6\Z \times G_{E,p}$, can be cyclic, so we have reached a contradiction.
	
	Assume then that $p$ is split in $K$. Then, $$G_{E',p} = (\Of/p\Of)^\times \cong (\OO_K/p\OO_K)^\times \cong (\OO_K/\wp)^\times \times (\OO_K/c(\wp))^\times$$
	where $p\OO_K = \wp\cdot c(\wp)$, and $c$ denotes complex conjugation. In particular, the $2$-torsion of $G_{E',p}$ is given by $\{\pm 1 \}\times \{\pm 1 \}$, and since $E'$ is a twist of $E$, and the index $[G_{E',p}:G_{E,p}]$ is even, we must have $G_{E',p}[2]\cong \Z/2\Z \times G_{E,p}[2] = \pm G_{E,p}[2]$. Thus, $G_{E,p}[2]$ is a subgroup of index $2$ of $\{ \pm 1\}\times \{\pm 1 \}$. Since $c((-1,1))=(1,-1)$, and since we are assuming that $G_{E,p}$ is stable under complex conjugation, we must have $G_{E,p}[2]=\{(1,1),(-1,-1) \}$. If so, $\pm G_{E,p}[2] = G_{E,p}[2] \neq G_{E',p}[2]$ and we have reached a contradiction. Thus, in all cases an even index $[G_{E',p}:G_{E,p}]$ leads to a contradiction.
	
	\item Suppose that $[G_{E',p}:G_{E,p}]>1$ is odd, which means equal to $3$ since the index divides $4$ or $6$. Then, this means that the order of $\Of^\times$ is divisible by $3$, and therefore $K=\Q(\sqrt{-3})$ and $f=1$ (see Lemma \ref{lem-unitsmod}), and $j_{K,f}=j(E)=j(E')=0$. In particular, if $p$ does not divide $2f\Delta_K$, then $p$ is inert if and only if $\left(\frac{-3}{p}\right)=-1$, if and only if $p\equiv 2\bmod 3$. Let us first assume that $p$ is inert in $K$, and so $p\equiv 2 \bmod 3$. Then, $p\equiv 2$, $5$, or $8\bmod 9$. Since $G_{E',p}$ is cyclic of order $p^2-1$, it follows that if $p\equiv 8 \bmod 9$, then $G_{E',p}$ cannot possibly have a subgroup $G_{E,p}$ of index $3$, because $\Z/3\Z \times G_{E,p}$ is not cyclic. On the contrary, if $p\equiv 2$ or $5\bmod 9$, then $p^2-1\equiv 3$ or $6\bmod 9$, and $G_{E',p}$ has a subgroup of index $3$ such that $G_{E',p}\cong \Z/3\Z \times G_{E,p}$, namely $G_{E,p}\cong ((\Of/p\Of)^\times)^3$.
	
	Now assume that $p$ is split in $K$, and so $p\equiv 1$, $4$, or $7\bmod 9$. As before, $G_{E',p} =  (\OO_K/\wp)^\times \times (\OO_K/c(\wp))^\times \cong (\Z/p\Z)^\times\times (\Z/p\Z)^\times$. In particular, if $p\equiv 1\bmod 9$, then there are two copies of cyclic groups of order $3^n\geq 9$, and so if $G_{E,p}$ is a group of index $3$, then $\Z/3\Z\times G_{E,p}$ cannot be isomorphic to $G_{E',p}$. If $p\equiv 4$ or $7\bmod 9$, then there are $4$ possible subgroups $G_{E,p}$ of $G_{E',p}$ of index $3$, namely the subgroups of $(\Z/p\Z)^\times\times (\Z/p\Z)^\times\to (\Z/p\Z)^\times$ of the form $H_1=((\Z/p\Z)^\times)^3\times (\Z/p\Z)^\times$, $H_2=(\Z/p\Z)^\times\times ((\Z/p\Z)^\times)^3$, and
	$$H_3=\{(a,b): a\cdot b \in ((\Z/p\Z)^\times)^3 \},\ H_4=\{(a,b): a/b \in ((\Z/p\Z)^\times)^3 \}.$$
	Since $G_{E,p}$ is supposed to be stable under complex conjugation, $H_1$ and $H_2$ are impossible. Moreover, $\det(G_{E,p}\rtimes \langle c \rangle)=(\Z/p\Z)^\times$, because $p$ does not divide $2f\Delta_K$, and therefore $\det(\rho_{E,p})$ is surjective by Lemma \ref{lem-odd2}. The determinant function on $(a,b)\in \Aut_{\OK/p\OK}(E[p])\cong  (\OO_K/p\OO_K)^\times \cong (\OO_K/\wp)^\times \times (\OO_K/c(\wp))^\times$ is given by $(a,b)\mapsto ab$ because the pair $(a,b)$ gives the action on the submodules $E[\wp\cap \Of]$ and $E[c(\wp)\cap \Of]$ respectively. It follows that if $G_{E,p}\cong H_3$, then the determinant of $G_{E,p}\rtimes \langle c\rangle$ would be the subgroup of cubes (since $p\equiv 1,4,7\bmod 9$, the subgroup of cubes is strictly smaller than $(\Z/p\Z)^\times$), a contradiction. Thus, the image must be $G_{E,p}\cong H_4$.
	
\end{enumerate}
This completes the proof of Theorem \ref{thm-twists}.
\end{proof} 

As a corollary we obtain the following result for $j\neq 0$ (or $j=0$ and $p\neq \pm 1 \bmod 9$), which completes  the proof of (part (4) of) Theorem \ref{thm-largeimage-intro}.

\begin{thm}\label{thm-largeimage-intro-part4}
	Let $E/\Q(j_{K,f})$ be an elliptic curve with CM by an order $\Of$ of conductor $f$ in an imaginary quadratic field $K$ of discriminant $\Delta_K$, and let $p$ be an odd prime that does not divide $f\Delta_K$. Further, assume that $j_{K,f}\neq 0$, or $p\equiv \pm 1 \bmod 9$. Then, if $c\in \Gal(\overline{\Q(j_{K,f})}/\Q(j_{K,f}))$ is a fixed complex conjugation, and we fix $\varepsilon \in \{ \pm 1 \}$, then there is a $\Z_p$-basis of $T_p(E)$ such that the image of $\rho_{E,p^\infty}$ is equal to  $\mathcal{N}_{\delta,\phi}(p^\infty)$ and  $\rho_{E,p^\infty}(c)=c_\varepsilon=\left(\begin{array}{cc} \varepsilon & 0\\ 0 & -\varepsilon\\\end{array}\right)$.
\end{thm}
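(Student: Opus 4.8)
The plan is to derive the full-image statement from the surjectivity of a carefully chosen twist with good reduction, and to normalize the image of complex conjugation at the very end. Throughout, recall that since $p$ is odd we have $\phi=0$ and $\delta=\Delta_Kf^2/4$, and since $p\nmid f\Delta_K$ this $\delta$ is a unit of $\Z_p$; in particular $\delta\not\equiv 0\bmod p$, a fact needed repeatedly below.

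First I would produce a good-reduction twist. Since $p\nmid f$, we cannot have $f=p^n$ with $n\geq 1$, so Theorem~\ref{thm-allgood} supplies an elliptic curve $E'$, defined over $H_f$, with $j(E')=j_{K,f}$ (hence CM by $\Of$) and with good reduction at every prime of $H_f$ above $p$; in particular $E$ and $E'$ are twists of one another, isomorphic over $\overline{\Q}$. Because $p\nmid f$, Corollary~\ref{cor-surjective} applies to $E'$ and gives that $\rho_{E',p^\infty}|_{H_f}$ is surjective onto $\OO_p^\times\cong\cC_{\delta,\phi}(p^\infty)$; reducing mod $p$, the image $G_{E',p}$ of $\rho_{E',p}$ on $\Gal(\overline{H_f}/H_f)$ is the full Cartan $(\Of/p\Of)^\times$.

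Next I would transport this to $E$ using Theorem~\ref{thm-twists}. Its hypotheses all hold: $p$ is odd with $p\nmid f\Delta_K$, so $p\nmid 2f\Delta_K$; $E,E'$ are isomorphic over $\overline{\Q}$, both defined over $H_f$, both with CM by $\Of$; $\rho_{E',p}$ is surjective; and, since $E$ is in fact defined over $\Q(j_{K,f})$, the image of $\Gal(\overline{H_f}/H_f)$ under $\rho_{E,p}$ is stable under complex conjugation. If $j_{K,f}\neq 0$, part~(1) of Theorem~\ref{thm-twists} forces this image to be the full Cartan $(\Of/p\Of)^\times=\cC_{\delta,\phi}(p)$; if $j_{K,f}=0$ and $p\equiv\pm1\bmod 9$, part~(2a) gives the same. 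Combining with Lemma~\ref{lem-insidecartan} (a complex conjugation never lands in the Cartan) and $[\mathcal{N}_{\delta,\phi}(p):\cC_{\delta,\phi}(p)]=2$, the image of $\rho_{E,p}$ on $\Gal(\overline{\Q(j_{K,f})}/\Q(j_{K,f}))$ is all of $\mathcal{N}_{\delta,\phi}(p)$. Then part~(2) of Theorem~\ref{thm-largeimage-intro-parts-1-and-2} applies --- its hypothesis ``$p>2$ and $j_{K,f}\neq 0$, or $p>3$'' holds, since when $j_{K,f}=0$ the condition $p\equiv\pm1\bmod 9$ forces $p>3$ --- so $G_{E,p^\infty}$ is the full preimage of $\mathcal{N}_{\delta,\phi}(p)$ under reduction mod $p$; hence, for a suitable $\Z_p$-basis of $T_p(E)$, $G_{E,p^\infty}=\mathcal{N}_{\delta,\phi}(p^\infty)$.

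Finally I would normalize complex conjugation. In the basis just obtained, $\gamma=\rho_{E,p^\infty}(c)$ lies in $\mathcal{N}_{\delta,\phi}(p^\infty)\setminus\cC_{\delta,\phi}(p^\infty)$ (Lemma~\ref{lem-insidecartan}), with zero trace and determinant $-1$ (Lemma~\ref{lem-zerotrace}); since $\delta\not\equiv 0\bmod p$, Lemma~\ref{lem-ccfinal}, parts (1) and (2), produces a change of basis after which $\rho_{E,p^\infty}(c)=c_\varepsilon$ for the prescribed $\varepsilon\in\{\pm1\}$. The point to check is that this change of basis does not shrink the image: the conjugations occurring in the proof of Lemma~\ref{lem-ccfinal} --- first by an element of $\cC_{\delta,\phi}(p^\infty)$ (via Lemma~\ref{lem-cc1}) bringing $\gamma$ to the form $c_{\pm1}$, then, if needed, by the anti-diagonal matrix $\left(\begin{array}{cc}0 & b\\ \delta b & 0\end{array}\right)\in\cC_{\delta,\phi}(p^\infty)$ (which exists because $\delta\in\Z_p^\times$; Lemma~\ref{lem-cc2}) exchanging $c_1$ and $c_{-1}$ --- are conjugations by elements of $\mathcal{N}_{\delta,\phi}(p^\infty)$ itself, and therefore fix $\mathcal{N}_{\delta,\phi}(p^\infty)$ as a set. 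This yields the asserted $\Z_p$-basis, completing the argument. Since all the ingredients (Theorems~\ref{thm-allgood}, \ref{thm-twists}, \ref{thm-largeimage-intro-parts-1-and-2}; Corollary~\ref{cor-surjective}; Lemmas~\ref{lem-insidecartan}, \ref{lem-zerotrace}, \ref{lem-ccfinal}) are already available, the proof is essentially an assembly, and the one genuinely delicate step is this last one: ensuring the normalization of complex conjugation is effected by conjugations inside $\mathcal{N}_{\delta,\phi}(p^\infty)$ so that the full-image property survives, together with the bookkeeping that all hypotheses (existence of the good-reduction twist, $\delta\not\equiv 0\bmod p$, $p>3$ when $j_{K,f}=0$, and complex-conjugation stability of the mod-$p$ image) hold simultaneously.
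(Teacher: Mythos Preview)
Your proposal is correct and follows essentially the same route as the paper: produce a good-reduction twist $E'$ via Theorem~\ref{thm-allgood}, obtain surjectivity of $\rho_{E',p}|_{H_f}$ from Corollary~\ref{cor-surjective}, transport this to $E$ via Theorem~\ref{thm-twists}, lift to the full $p$-adic image using the mod-$p$ determination (the paper invokes Theorem~\ref{thm-inclp} directly, you use its packaged form Theorem~\ref{thm-largeimage-intro-parts-1-and-2}(2), which is equivalent), and finish with Lemma~\ref{lem-ccfinal}. Your explicit verification that the conjugations normalizing $c$ lie in $\mathcal{N}_{\delta,\phi}(p^\infty)$ and hence preserve the full image, and your remark that complex-conjugation stability of $G_{E,K,p}$ holds because $E$ is defined over $\Q(j_{K,f})$, are details the paper leaves implicit.
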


\begin{proof}
Let $E$ and $p$ be an elliptic curve and a prime as in the statement of the theorem, and consider $E/H_f$, with $H_f=K(j_{K,f})$. Since $p$ does not divide $f$, the conductor $f$ is not a power of $p$. Under these hypotheses, Theorem \ref{thm-allgood} says that there is another elliptic curve $E'/H_f$, such that $E'$ has good reduction at the primes above $p$. Now Corollary \ref{cor-surjective} says that $\rho_{E',p^\infty}|_{H_f}\colon \Gal(\overline{H_f}/H_f)\to \Aut_{\OO_{f,p}}(T_p(E'))\cong \mathcal{O}_{f,p}^\times$ is surjective, where $\OO_{f,p}\cong \Of\otimes \Z_p$. In particular, the representation $\rho_{E',p}|_{H_f}$ is surjective, with image $G_{E',p}\cong (\Of/p\Of)^\times$. Since $j\neq 0$ or $p\equiv \pm 1 \bmod 9$, Theorem \ref{thm-twists} implies that $\rho_{E,p}|_{H_f}$ is also surjective. Then, by Theorem \ref{thm-inclp}, the image of $\rho_{E,p^n}$ is $(\Of/p^n\Of)^\times$, and so the $p$-adic representation $\rho_{E,p^\infty}|_{H_f}: \Gal(\overline{H_f}/H_f)\to \Aut_{\OO_{f,p}}(T_p(E))\cong \mathcal{O}_{f,p}^\times$ is surjective. Let us choose a $\Z_p$-basis of $\rho_{E,p^\infty}$ as in Theorem \ref{thm-cmrep} such that the image of $\Gal(\overline{H_f}/H_f)$ is contained in $\cC_{\delta,\phi}(p^\infty)\cong \OO_p^\times$, and the image of $\rho_{E,p^\infty}$ is contained in $\mathcal{N}_{\delta,\phi}(p^\infty)$. Since we have shown that the image of $\Gal(\overline{H_f}/H_f)$ is all of $\cC_{\delta,\phi}(p^\infty)\cong \OO_p^\times$, and the image of complex conjugation cannot be contained in the Cartan subgroup, by Theorem \ref{lem-insidecartan}, it follows that  $\rho_{E,p^\infty}$ surjects onto $\mathcal{N}_{\delta,\phi}(p^\infty)$, as claimed. Finally, since $p>2$ and  $\delta=\Delta_K f^2/4\not\equiv 0 \bmod p$ by assumption, Lemma \ref{lem-ccfinal} says that the $\Z_p$-basis can be chosen so that the image of $\rho_{E,p^\infty}|_{H_f}$ is $\cC_{\delta,0}(p^\infty)$ and $\rho_{E,p^\infty}(c)=c_\varepsilon$, as desired.
\end{proof}

We conclude this section with a proof of Theorem \ref{thm-jzero-goodredn}.

\subsection{Proof of Theorem \ref{thm-jzero-goodredn}}\label{sec-proof-of-jzero-goodredn}

 Let $E/\Q$ be an elliptic curve with $j(E)=0$. Let $K=\Q(\sqrt{-3})$, $f=1$, and $\OO_{K,f} = \OO_K = \Z[(1+\sqrt{-3})/2]$, and $\delta=\Delta_K f^2/4=-3/4$. Let $p>3$ be a prime, and let $c \in \GQ$ be a complex conjugation. Since $p$ does not divide $2f\Delta_K$, Lemma \ref{lem-ccfinal} shows that there is a $\Z_p$-basis of $T_p(E)$ such that the image $G_{E,p^\infty}$ of $\rho_{E,p^\infty}\colon \GQ \to \Aut(T_p(E))\cong \GL(2,\Z_p)$ is contained in $\mathcal{N}_{\delta,0}(p^\infty)$ and $\rho_{E,p^\infty}(c)=c_\varepsilon = \left(\begin{array}{cc} \varepsilon & 0\\ 0 & -\varepsilon\\ \end{array}\right)$. Since $p>3$,  part (2) of  Theorem \ref{thm-largeimage-intro-parts-1-and-2} shows that $G_{E,p^\infty}$ is the full inverse image via the natural reduction mod $p$ map  $\mathcal{N}_{\delta,0}(p^\infty)\to \mathcal{N}_{\delta,0}(p)$ of the image $G_{E,p}$ of $\rho_{E,p}\equiv \rho_{E,p^\infty}\bmod p$, and by part (1), the curve $E$ is a twist of another curve $E'$ with CM by $\OK$ such that $G_{E,p^\infty}= \mathcal{N}_{\delta,0}(p^\infty)$. Then, by Theorem \ref{thm-twists}, there are three possibilities for the image of $\rho_{E,p}$. We write $G_{E,K,p}$ for the image of $\rho_{E,p}|_{H_f}$.

	\begin{enumerate}
	\item If $p\equiv \pm 1\bmod 9$, then $G_{E,K,p}\cong G_{E',K,p}\cong (\OK/p\OK)^\times$, and therefore $G_{E,K,p}\cong \cC_{\delta,0}(p)$, and $G_{E,p}\cong \mathcal{N}_{\delta,0}(p)$. Thus, by part (2) Theorem \ref{thm-largeimage-intro}, the image of $\rho_{E,p^\infty}$ is all of $\mathcal{N}_{\delta,0}(p^\infty)$.  Moreover, if $p\equiv 1\bmod 9$, then the image is a split Cartan subgroup, and if $p\equiv -1\bmod 9$, then the image is a non-split Cartan subgroup.
	\item If $p\equiv 2$ or $5\bmod 9$, then $G_{E',p}$ is a non-split Cartan subgroup and either $G_{E,p}=G_{E',p}$, or $G_{E,p}\cong ((\OK/p\OK)^\times)^3$, the cubes of units in $\OK/p\OK$. It follows that $G_{E,p}$ is contained in the normalizer of a non-split Cartan subgroup and either $G_{E,p}=\mathcal{N}_{\delta,0}(p)$, or $G_{E,p}=\langle \cC_{\delta,0}(p)^3, c_\varepsilon \rangle $.
	\item If $p\equiv 4$ or $7\bmod 9$, then $G_{E',p}\cong (\OO_K/\wp)^\times \times (\OO_K/c(\wp))^\times$ and either $G_{E,p}=G_{E',p}$, or $G_{E,p}$ is isomorphic to the subgroup $\{(a,b)\in G_{E',p}: a/b \in ((\Z/p\Z)^\times)^3\}$ of $G_{E',p}$. It follows that $G_{E,p}$ is contained in the normalizer of a split Cartan, and either $G_{E,p^\infty}=\mathcal{N}_{\delta,0}(p^\infty)$, or $G_{E,p^\infty}$ is isomorphic to the subgroup $$\left\langle \left\{\left(\begin{array}{cc} a & 0\\ 0 & b\\ \end{array}\right) : a/b \in (\Z_p^\times)^3\right\}, \gamma = \left(\begin{array}{cc} 0 & \varepsilon\\ \varepsilon & 0\\ \end{array}\right)\right\rangle.$$
\end{enumerate}

This concludes the proof of Theorem \ref{thm-jzero-goodredn}.

\section{Odd primes dividing the conductor or discriminant}\label{sec-badprimes}

In this section we study the image of $\rho_{E,p}:\Gal(\overline{\Q(j_{K,f})}/\Q(j_{K,f}))\to \GL(2,\Z/p\Z)$ when $p$ is an odd prime divisor of $f\Delta_K$, and then we use this knowledge and part (2) Theorem \ref{thm-largeimage-intro} to prove Theorem \ref{thm-badprimes-intro}.

\begin{thm}\label{thm-badprimes}
	Let $E/\Q(j_{K,f})$ be an elliptic curve with CM by an order $\Of$ of conductor $f\geq 1$ of an imaginary quadratic field $K$. Let $p$ be an odd prime dividing $f\Delta_K$ (thus, $j\neq 1728$ where $f\Delta_K=-4$), and let $G_{E,p}$ be the image of $\rho_{E,p}\colon\Gal(\overline{\Q(j_{K,f})}/\Q(j_{K,f}))\to \Aut(E[p])\cong \GL(2,\Z/p\Z)$. Let $c\in\Gal(\overline{\Q(j_{K,f})}/\Q(j_{K,f}))$ be a fixed complex conjugation. Then, there is a $\Z/p\Z$-basis of $E[p]$ such that $G_{E,p}\subseteq \mathcal{N}_{\delta,0}(p)$, with $\delta=\Delta_K f^2/4\equiv 0 \bmod p$, and $\rho_{E,p}(c)=c_\varepsilon=\left(\begin{array}{cc} \varepsilon & 0\\ 0 & -\varepsilon\\ \end{array}\right)$ for some $\varepsilon \in \{\pm 1 \}$. Moreover, $G_{E,p}$ is precisely one of the following groups of $\GL(2,\Z/p\Z)$:
	\begin{enumerate}
	\item[(a)] If $j\neq 0,1728$, then either $G_{E,p}=\mathcal{N}_{\delta,0}(p)$, or $G_{E,p}$ is generated by $c_{\varepsilon}$ and the group
	$$\left\{ \left(\begin{array}{cc} a & b\\ 0 & a\\ \end{array}\right): a\in ((\Z/p\Z)^\times)^2, b\in  \Z/p\Z\right\},$$
	where $\delta\equiv 0 \bmod p$.
	\item[(b)] If $j=0$, then $p=3$, and $G_{E,3}=\mathcal{N}_{\delta,0}(3)$, or $G_{E,3}$ is generated by $c_{\varepsilon}$ and one of the following groups
	$$\left\{ \left(\begin{array}{cc}  a & b\\ -3b/4 & a\\ \end{array}\right): a,b\in  \Z/3\Z,\ a\equiv 1 \bmod 3\right\}\ \text{ or } \left\{ \left(\begin{array}{cc} a & b\\ -3b/4 & c\\ \end{array}\right): a,c\in (\Z/3\Z)^\times,\ b\equiv 0 \bmod 3 \right\},$$
	$$\text{or }\ \left\{ \left(\begin{array}{cc}  a & b\\ -3b/4 & a\\ \end{array}\right) : a\equiv 1,\ b\equiv 0 \bmod 3 \right\}\subseteq \GL(2,\Z/3\Z).$$
\end{enumerate}
\end{thm}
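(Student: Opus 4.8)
The strategy has four steps: (i) use Theorem~\ref{thm-cmrep} to put the image of $\rho_{E,p}$ inside a normalizer of a Cartan; (ii) exploit $p\mid f\Delta_K$ to see that this Cartan is a very transparent abelian group; (iii) pin down the image of complex conjugation up to change of basis; and (iv) enumerate the subgroups of the Cartan of index dividing $\#\Of^\times$, matching them with the explicit lists in (a) and (b).

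First, since $N=p$ is odd, Theorem~\ref{thm-cmrep} (with $\phi=0$, $\delta=\Delta_Kf^2/4$) provides a $\Z/p\Z$-basis of $E[p]$ for which $G_{E,K,p}:=\rho_{E,p}(\Gal(\overline{H_f}/H_f))\subseteq \cC_{\delta,0}(p)\cong (\Of/p\Of)^\times$ and $G_{E,p}=\langle G_{E,K,p},\gamma\rangle\subseteq \mathcal{N}_{\delta,0}(p)$, where $\gamma=\rho_{E,p}(c)$. As $p$ is odd, $4$ is a unit modulo $p$, and $p\mid f\Delta_K$ forces $\delta\equiv 0\bmod p$ (whether $p\mid f$ or $p\mid \Delta_K$); hence
$$\cC_{\delta,0}(p)=\left\{\left(\begin{array}{cc} a & b\\ 0 & a\\\end{array}\right): a\in(\Z/p\Z)^\times,\ b\in\Z/p\Z\right\}\cong (\Z/p\Z)^\times\times \Z/p\Z.$$
Moreover $j_{K,f}=1728$ is vacuous here, as it would force $\Delta_K=-4$, $f=1$, so $f\Delta_K=-4$, which has no odd prime divisor; thus by Lemma~\ref{lem-unitsmod} either $j_{K,f}\neq 0$ and $\#\Of^\times=2$, or $j_{K,f}=0$, in which case $f\Delta_K=-3$ forces $p=3$ and $\#\Of^\times=6$.

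Next I would record the shape of $\gamma$. By Lemma~\ref{lem-zerotrace} it has determinant $-1$ and trace $0$, and by Lemma~\ref{lem-insidecartan} it does not lie in $\cC_{\delta,0}(p)$ (else $G_{E,p}$ would lie in the Cartan), so $\gamma=\left(\begin{array}{cc}-a & b\\ -\delta b & a\\\end{array}\right)$ with $a^2-\delta b^2=1$, and $\delta\equiv 0\bmod p$ forces $a\equiv\pm 1\bmod p$. For whichever sign occurs, the corresponding conjugating matrix of Lemma~\ref{lem-cc1} has determinant $2(1\pm a)\equiv 4\not\equiv 0\bmod p$, hence lies in $\cC_{\delta,0}(p)$; conjugating $G_{E,p}$ by it carries $\gamma$ to some $c_\varepsilon$, $\varepsilon\in\{\pm 1\}$, and, since $\cC_{\delta,0}(p)$ is abelian, leaves $G_{E,K,p}$ unchanged. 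After this change of basis $\rho_{E,p}(c)=c_\varepsilon$, and since $\gamma\notin G_{E,K,p}$ while $[G_{E,p}:G_{E,K,p}]$ divides $[H_f:\Q(j_{K,f})]=2$ (Lemma~\ref{lem-kjqj}), we get $G_{E,p}=\langle G_{E,K,p},c_\varepsilon\rangle$.

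It remains to list the possibilities for $G_{E,K,p}$. By Theorem~\ref{thm-incl} together with Lemma~\ref{lem-unitsmod} ($\mathcal{O}_{K,f,p}^\times=\{1\}$ since $p$ is odd), $[\cC_{\delta,0}(p):G_{E,K,p}]$ divides $\#\Of^\times$. If $j_{K,f}\neq 0$ this index is $1$ or $2$: index $1$ gives $G_{E,p}=\langle\cC_{\delta,0}(p),c_\varepsilon\rangle=\mathcal{N}_{\delta,0}(p)$ (the sign of $\varepsilon$ being absorbed by $-\operatorname{Id}\in\cC_{\delta,0}(p)$); and since $(\Z/p\Z)^\times\times\Z/p\Z$ with $p$ odd has a \emph{unique} subgroup of index $2$, namely $((\Z/p\Z)^\times)^2\times\Z/p\Z$, index $2$ forces $G_{E,K,p}$ to be the displayed group of (a), with $G_{E,p}=\langle G_{E,K,p},c_\varepsilon\rangle$. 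If $j_{K,f}=0$, then $\cC_{\delta,0}(3)\cong(\Z/3\Z)^\times\times\Z/3\Z\cong\Z/6\Z$ is cyclic, so it has exactly one subgroup of each index $1,2,3,6$, all characteristic (hence automatically stable under $c$): the whole group; $\{a\equiv 1\bmod 3\}$; $\{\operatorname{Id},2\operatorname{Id}\}$; and $\{\operatorname{Id}\}$. Identifying these with the four groups in (b) and adjoining $c_\varepsilon$ — noting $\langle\{\operatorname{Id},2\operatorname{Id}\},c_\varepsilon\rangle=\{\operatorname{Id},2\operatorname{Id},c_1,2c_1\}$ is the middle group of (b) for either $\varepsilon$ — completes the classification.

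The conceptual content is entirely the remark that $p\mid f\Delta_K$ collapses $\cC_{\delta,0}(p)$ to $(\Z/p\Z)^\times\times\Z/p\Z$; once that is in place the group theory is immediate. The step I expect to be the most error-prone is the purely clerical one: translating the abstract subgroups (and the adjunction of $c_\varepsilon$) into the precise matrix presentations in the statement, and confirming the vacuity of the $j_{K,f}=1728$ case.
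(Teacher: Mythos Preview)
Your proposal is correct and follows essentially the same route as the paper's proof: invoke Theorem~\ref{thm-cmrep} to land in $\mathcal{N}_{\delta,0}(p)$, use $p\mid f\Delta_K$ to make the Cartan upper-triangular, normalize complex conjugation to $c_\varepsilon$, and then enumerate the subgroups of $(\Z/p\Z)^\times\times\Z/p\Z$ of index dividing $\#\Of^\times$. The only cosmetic difference is that you argue the normalization of $\gamma$ directly from Lemma~\ref{lem-cc1} (observing that $\delta\equiv 0$ forces $a\equiv\pm 1$ so one of the conjugating matrices is invertible), whereas the paper packages this step as Lemma~\ref{lem-ccfinal}; the underlying computation is identical.
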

\begin{proof}
Let $E/\Q(j_{K,f})$ be, as before, an elliptic curve with CM by $\Of\subset K$, and $p$ is an odd prime divisor of $f\Delta_K$. If $j_{K,f}=1728$, then $\Delta_K=-4$ and $f=1$, so no odd prime divides $f\Delta_K$. Thus, we assume that $j_{K,f}\neq 1728$. We fix a complex conjugation $c$. By Theorem \ref{thm-cmrep-intro} there is a $\Z/p\Z$-basis of $E[p]$ such that the image $G_{E,p}$ of $\rho_{E,p}$ is contained in  $\mathcal{N}_{\delta,0}(p)$, and by Lemma \ref{lem-ccfinal}, the basis can be chosen so that $\gamma=\rho_{E,p}(c) = c_\varepsilon$ for some $\varepsilon \in \{\pm 1 \}$.

Let $G_{E,K,p}$ be the image of $\rho_{E,p}|_{H_f}\colon \Gal(\overline{H_f}/H_f)\to \Aut_{\Of/p\Of}(E[p])$. Then, by our discussion in Section \ref{sec-cc}, the image $G_{E,p}$ is isomorphic to $G_{E,K,p}\rtimes \langle c_\varepsilon\rangle$, where the homomorphism $\varphi\colon \langle c_\varepsilon \rangle \to \Aut((\Of/p\Of)^\times)$ is given by $c\mapsto (\beta\mapsto c(\beta))$. 

\begin{enumerate}
	\item[(a)] Assume first that $j_{K,f}\neq 0,1728$. Then, $G_{E,K,p}$ is a subgroup of index at most $2$ in $\OO_p=\Of/p\Of$ by Theorem \ref{thm-incl}. Suppose that $[\OO_p:G_{E,K,p}]=2$. Since $p$ is odd and it divides $f\Delta_K$, the group $\OO_{p}\cong (\Z/p\Z)^\times \times \Z/p\Z$ is cyclic, and therefore it has a unique subgroup of index $2$, namely $((\Z/p\Z)^\times)^2 \times \Z/p\Z$. Thus, the image of $\rho_{E,p}$ would be isomorphic to
	\begin{eqnarray*}\label{eq-sqsubgp}
	\left(((\Z/p\Z)^\times)^2 \times \Z/p\Z\right) \rtimes \langle c _\varepsilon \rangle \cong \left\langle \left\{ \left(\begin{array}{cc}  a & b\\ \delta b & a\\ \end{array}\right): a\in ((\Z/p\Z)^\times)^2, b\in  \Z/p\Z\right\}, \left(\begin{array}{cc}  \varepsilon & 0\\ 0 & -\varepsilon\\ \end{array}\right) \right\rangle,\end{eqnarray*}
where $\delta = \Delta_K f^2/4 \equiv 0 \bmod p$. This shows (a).

	\item[(b)] Suppose $j_{K,f}=0$. Then, $K=\Q(\sqrt{-3})$, the conductor is $f=1$, and $\Of=\OO_K$ is the full ring of integers. Since $f\Delta_K=-3$, the unique prime dividing $f\Delta_K$ is $p=3$. In this case $\Of^\times$ has order $6$, so the index $[\OO_{p}:G_{E,K,p}]$ is a divisor of $6$, where $\OO_p = \OK/p\OK$ as before. If the index is $2$, then our considerations in part (a) apply here. Note, though, that $((\Z/3\Z)^\times)^2$ is trivial. Thus, the possible images are 
	$$\left\{ \left(\begin{array}{cc} \pm 1 & b\\ 0 & 1\\ \end{array}\right): b\in  \Z/3\Z\right\}\ \text{ or } \ \left\{ \left(\begin{array}{cc}  1 & b\\ 0 & \pm 1\\ \end{array}\right): b\in  \Z/3\Z\right\}.$$
	Since $\OO_{p}\cong (\Z/3\Z)^\times \times \Z/3\Z$,  if $[\OO_{p}:G_{E,K,p}]=3$, then we must have $G_{E,K,p}\cong (\Z/3\Z)^\times$, and there is a unique such subgroup, which corresponds to the subgroup of scalar matrices $\{a\cdot \operatorname{Id} : a\equiv \pm 1 \bmod 3\}$. Together with complex conjugation we obtain
		\begin{eqnarray*}
		(\Z/3\Z)^\times  \rtimes \langle c_\varepsilon \rangle \cong \left\{ \left(\begin{array}{cc} a & 0\\ 0 & c\\ \end{array}\right): a,c\in (\Z/3\Z)^\times \right\},\end{eqnarray*}
		which is a full split Cartan subgroup of $\GL(2,\Z/3\Z)$.
		
		Finally, if $[\OO_{p}:G_{E,K,p}]=6$, then we must have that $G_{E,K,p}$ is trivial. Hence, the image of $\rho_{E,3}$ must be of the form
			$$\left\{ \left(\begin{array}{cc} \pm 1 & 0\\ 0 & 1\\ \end{array}\right) \right\}\ \text{ or } \ \left\{ \left(\begin{array}{cc}  1 & 0\\ 0 & \pm 1\\ \end{array}\right) \right\}.$$
\end{enumerate}
This concludes the proof of the theorem.
\end{proof} 

Before we complete the proof of Theorem \ref{thm-badprimes-intro}, we need a result about subgroups of Cartan subgroups for the case of $j=0$ and $p=3$. Below, we adopt the following notation: $\langle a_1,\ldots,a_m\rangle /3^n$ denotes the multiplicative subgroup of $(\OK/3^n\OK)^\times$ generated by the elements $a_1,\ldots,a_m\bmod 3^n\OK$.

\begin{lemma}\label{lem-cartan-jzero}
	Let $n\geq 2$, let $K=\Q(\sqrt{-3})$, let $\cC_n = (\OK/3^n\OK)^\times$ of index $2$, and let $\pi$ be the natural reduction map $\OK/3^n\OK\to \OK/3^{n-1}\OK$. Then, 
	\begin{enumerate}
		\item For each $n\geq 1$, we have
		$$\cC_n = (\OK/3^n\OK)^\times = \langle -\zeta_3,4,1+\tau\rangle /3^n \cong \Z/6\Z \times \Z/3^{n-1}\Z \times \Z/3^{n-1}\Z,$$
		where $\tau=\sqrt{-3}/2$, and  $\zeta_3=-2^{-1}+\tau$ is a primitive $3$rd root of unity.
		\item If $n\geq 2$, and $H_n\subseteq \cC_n$ is a subgroup, then   $-1\in H_n$ if and only if $-1\in \pi(H_n)=H_{n-1}$. Moreover, there is a unique subgroup $H_n$ of index $2$ in $\cC_n$ that does not contain $-1$, namely $H_n = \langle \zeta_3,4,1+\tau\rangle / 3^n$. 
		\item Let $n\geq 3$ and let $H_n\subseteq \cC_n$ be a subgroup, and let $H_{n-1}=\pi(H_n)$. Then, $\zeta_3\in H_n$ if and only if $\zeta_3\in H_{n-1}$.
		\item For any $n\geq 2$, there are precisely two subgroups of $\cC_n$ of index $2^a\cdot 3$, for $a=0,1$, given by $H_{n,a}^i=\langle (-1)^{a+1},4,\zeta_3^i(1+\tau)\rangle/3^n $, for $i=0,1$, such that
		\begin{itemize} \item  $\zeta_3\not\in H_{n,a}^i$, 
			\item $H_{n,a}^i$ is invariant under complex conjugation, and
			\item the index of $H_{n,a}^i\bmod 3\OK$ in $\cC_1$ is $2^a$ (in particular, $\zeta_3 \in H_{n,a}^i \bmod 3\OK$).
		\end{itemize}   
	\end{enumerate} 
\end{lemma}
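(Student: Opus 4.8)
The plan is to establish the four parts essentially in order, the common engine being the structure of $\cC_n=(\OK/3^n\OK)^\times$ together with the reduction maps $\pi\colon\cC_{n}\to\cC_{n-1}$. For part (1) I would induct on $n$. For $n=1$: $\cC_1$ is cyclic of order $2\cdot 3$ by Lemma~\ref{lem-cartansbgp}(1), the unit $-\zeta_3$ has order $6$ since $\OK^\times$ injects into $(\OK/3\OK)^\times$, and $4\equiv 1$, $1+\tau\equiv\zeta_3\bmod 3\OK$ because $-2^{-1}\equiv 1\bmod 3$. For the step from $n-1$ to $n$ ($n\ge 2$), $\ker\pi=(1+3^{n-1}\OK)/(1+3^{n}\OK)$ is, with its group law, isomorphic to $\OK/3\OK\cong(\Z/3\Z)^2$, so it suffices to show $\ker\pi\subseteq\langle 4,1+\tau\rangle$. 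A direct $3$-adic computation gives $(1+3)^{3^{k}}\equiv 1+3^{k+1}\bmod 3^{k+2}$ and, using $\tau^2=-3/4$, $(1+\tau)^{3}=1+9\cdot\tfrac{\tau-1}{4}$ with $\tfrac{\tau-1}{4}$ a unit, hence $(1+\tau)^{3^{k}}\equiv 1+3^{k+1}u_k\bmod 3^{k+2}$ with $u_k\equiv\tau-1\bmod 3$. Thus $4$ and $1+\tau$ have order exactly $3^{n-1}$, and $4^{3^{n-2}},(1+\tau)^{3^{n-2}}\in\ker\pi$ correspond under the above isomorphism to $1,\tau-1$, an $\F_3$-basis of $\OK/3\OK$; so $\langle 4,1+\tau\rangle\supseteq\ker\pi$ and $\langle -\zeta_3,4,1+\tau\rangle$ surjects onto $\cC_{n-1}$, hence equals $\cC_n$. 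Comparing with the obvious surjection from $\Z/6\Z\times\Z/3^{n-1}\Z\times\Z/3^{n-1}\Z$ and counting orders gives the stated isomorphism, and along the way the internal direct product $\cC_n=\langle\zeta_3\rangle\times\langle-1\rangle\times\langle 4\rangle\times\langle 1+\tau\rangle$, with $3$-Sylow $\langle\zeta_3,4,1+\tau\rangle$ the unique index-$2$ subgroup.

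Parts (2) and (3) are ``lift and kill the error'' arguments. In (2) the forward implication is trivial; conversely, if $h\in H_n$ reduces to $-1$ then $-h\in 1+3^{n-1}\OK$, so $(-h)^3\equiv 1\bmod 3^n$ whenever $n\ge 2$ (all correction terms have $3$-adic valuation $\ge n$), i.e.\ $h^3=-1$ in $\cC_n$; since $h^3\in H_n$ we get $-1\in H_n$, and the uniqueness assertion follows from the direct product decomposition. Part (3) is more delicate; the forward implication is again trivial, and for the converse I would use (as holds in the intended application) that $[\cC_n:H_n]$ divides $\#\OK^\times=6$. If $\zeta_3\in H_{n-1}$ but $\zeta_3\notin H_n$, then $\zeta_3$ has order $3$ in $\cC_n/H_n$, and since $9\nmid 6$ the whole $3$-Sylow $\langle\zeta_3,4,1+\tau\rangle$ maps onto this group of order $3$; but for $n\ge 3$ the generators $4^{3^{n-2}}=(4^{3^{n-3}})^{3}$ and $(1+\tau)^{3^{n-2}}=((1+\tau)^{3^{n-3}})^{3}$ of $\ker\pi$ are cubes, hence lie in the kernel of any homomorphism to a group of exponent $3$, so $\ker\pi\subseteq H_n$; then $\zeta_3\in H_{n-1}=\pi(H_n)$ forces $\zeta_3\in H_n\cdot\ker\pi=H_n$, a contradiction.

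For part (4) I would again use the direct product from part (1). Given $H$ with $[\cC_n:H]=2^a\cdot 3$, invariant under complex conjugation, with $\zeta_3\notin H$ and $\zeta_3$ in the image of $H$ modulo $3\OK$, put $H'=H\langle\zeta_3\rangle$. Then $[\cC_n:H']=2^a$, so $H'=\cC_n$ if $a=0$ and $H'=\langle\zeta_3,4,1+\tau\rangle$ (the unique index-$2$ subgroup) if $a=1$, and $H$ is a complement of $\langle\zeta_3\rangle$ in $H'$. Such complements are parametrized by homomorphisms $H'/\langle\zeta_3\rangle\to\langle\zeta_3\rangle\cong\Z/3\Z$, i.e.\ by $(s,t)\in(\Z/3\Z)^2$, namely $H_{s,t}=\langle(-1)^{a+1},\zeta_3^{s}\cdot 4,\zeta_3^{t}(1+\tau)\rangle$. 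Invariance under complex conjugation — via $\overline{-\zeta_3}=(-\zeta_3)^{-1}$, $\bar 4=4$, and $\overline{1+\tau}=(7/4)(1+\tau)^{-1}$ with $7/4\in\langle 4\rangle$ — forces $s=0$ and leaves $t$ free; requiring that the mod-$3\OK$ image contain $\zeta_3$, i.e.\ (using $\pi_1(4)=1$, $\pi_1(1+\tau)=\zeta_3$) that $\zeta_3^{t+1}$ have order $3$ in $\cC_1$, rules out $t=2$. This leaves precisely $H_{0,0}$ and $H_{0,1}$, which are the stated $H^0_{n,a},H^1_{n,a}$; one then checks directly that each satisfies the three listed properties and that the two are distinct.

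The step I expect to be the main obstacle is the bookkeeping in part (4): one must control the action of complex conjugation on the chosen generators precisely enough — in particular the identities $\overline{1+\tau}=(7/4)(1+\tau)^{-1}$ and $7/4\equiv 4\bmod 9$ — and follow the reduction $\pi_1$ through the direct product decomposition, so as to see that exactly one of the three complex-conjugation-stable complements fails the ``$\zeta_3$ in the mod-$3$ image'' condition. A secondary subtlety is that part (3) genuinely uses the index bound $[\cC_n:H_n]\mid 6$ (without some such restriction the ``only if'' direction can fail), so in writing it up I would either carry that hypothesis explicitly or note that it is automatic for the Galois images to which the lemma is applied.
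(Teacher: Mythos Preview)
Your proposal is correct and in places more careful than the paper's own argument.  The overall strategies coincide---establish the direct-product structure of $\cC_n$, then read off the subgroup lattice---but the implementations differ.  For part~(1) the paper simply asserts the isomorphism $(\OK/3^n\OK)^\times\cong(\Z/3^n\Z)^\times\times\Z/3\Z\times\Z/3^{n-1}\Z$ and reads off generators, whereas you build it inductively via the kernel of $\pi$; your approach is longer but self-contained.  For part~(2) the paper observes that $|\cC_n|=2\cdot 3^{2n-1}$, so $-1\notin H_n$ iff $H_n$ lies in the (unique) $3$-Sylow $G_n$, and then uses $\pi^{-1}(G_{n-1})=G_n$; your cubing trick ($h\equiv -1\bmod 3^{n-1}\Rightarrow h^3=-1$ in $\cC_n$) is a pleasant alternative.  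For part~(4) the paper enumerates the nine index-$3$ subgroups $\langle a^3,a^ib,a^jc\rangle$ missing $\zeta_3$ and checks which are conjugation-stable and have the right mod-$3$ image; your parametrization via complements of $\langle\zeta_3\rangle$ and the explicit action of complex conjugation on generators is equivalent and arguably cleaner.

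The one substantive point worth flagging is part~(3).  The paper's proof asserts that if $\zeta_3\notin H_n$ then $H_n$ lies in some index-$3$ subgroup $J_n$ of $\cC_n$ with $\zeta_3\notin J_n$, and then shows $\pi(J_n)$ is still of index~$3$ for $n\ge 3$.  As you correctly note, that first step is \emph{not} valid for arbitrary subgroups (e.g.\ $H_3=\langle -1,\zeta_3\cdot 4^{3},1+\tau\rangle$ has $\zeta_3\notin H_3$ but $\zeta_3\in\pi(H_3)$), and the lemma as stated needs the hypothesis $[\cC_n:H_n]\mid 6$ that is automatic in the application.  Your argument---$\cC_n/H_n$ has $3$-part $\Z/3\Z$, and $\ker\pi$ is generated by cubes for $n\ge 3$, hence $\ker\pi\subseteq H_n$---is correct under that hypothesis and avoids the gap.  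So keep the index assumption explicit when you write it up.
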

\begin{proof}
	Part (1) follows from
	\begin{align*} \cC_n &= (\OK/3^n\OK)^\times \cong (\Z/3^n\Z)^\times\times  \Z/3\Z\times \Z/3^{n-1}\Z\\
	&=\langle 2,\zeta_3,1+\tau\rangle / 3^n = \langle -\zeta_3,4,1+\tau\rangle /3^n \\
	&\cong \Z/6\Z \times \Z/3^{n-1}\Z \times \Z/3^{n-1}\Z.
	\end{align*} 
	In particular,  $\# \cC_n = 2\cdot 3^{2n-1}$, and so there is a unique element of order $2$ in $\cC_n$, namely $-1 \bmod 3^n\OK$, and a unique subgroup of $\cC_n$ of index $2$ that does not contain $-1$, namely $G_n=\langle \zeta_3,4,1+\tau\rangle /3^n$. Now suppose $H_n\subseteq \cC_n$ is a subgroup. Then, $-1\not\in H_n$ if and only if $H_n\subseteq G_n$. Since $G_{n-1}=\pi(G_n)$, it follows that $-1\in H_n$, if and only if $-1\in H_{n-1}$, as claimed. This shows (2). 
	
	For (3), let $n\geq 3$, and suppose that $H_n$ is a subgroup of $\cC_n$. First suppose that $\zeta_3\in H_n$. Then, $\zeta_3=\pi(\zeta_3)\in H_{n-1}$. Now suppose that $\zeta_3\not\in H_n$. Hence, $H_n$ is contained in an index $3$ subgroup $J_n\subseteq \cC_n$ that does not contain $\zeta_3$. Since, $\cC_n = \langle a=-\zeta_3,b=4,c=1+\tau\rangle /3^n $, it follows that $J_n = \langle a^3,a^ib,a^jc\rangle /3^n$, for some $i,j\in \{0,1,2\}$, and these are all distinct as long as $n\geq 2$. Moreover, $J_{n-1}=\pi(J_n)$ is also of index $3$ in $\cC_n$, for all $n\geq 2$. Since we assumed $n\geq 3$, it follows that $H_{n-1}\subseteq J_{n-1}$, and therefore $a=\zeta_3\not\in H_{n-1}$ either, as we needed to show. This shows (3).
	
	For (4), suppose $n\geq 2$, and $J_n$ is a subgroup of index $3$ in $\cC_n$ such that $\zeta_3\not\in J_n$. Then, $J_n= \langle a^3,a^ib,a^jc\rangle /3^n$, for some $i,j\in \{0,1,2\}$, with notation as in part (3). The only such subgroups that are stable under complex conjugation, and that surject to $\cC_1$ when reduced modulo $3\OK$, are given by
	$$H_{n,0}^0=\langle -1,4,1+\tau\rangle/3^n,\ \text{ and } H_{n,0}^1=\langle -1,4,\zeta_3(1+\tau)\rangle/3^n.$$
	The index of $H_{n,0}^i$ in $\cC_n$ is $3$, for $i=0,1$. Now, each of $H_{n,0}^i$ has a unique subgroup of index $2$, namely $H_{n,1}^i$. It follows that $H_{n,1}^i$ for $i\in {0,1}$ are the unique subgroups of $\cC$ of index $6$ that do not contain $\zeta_3$ and are stable under complex conjugation. This concludes the proof of (4) and of the lemma.
\end{proof}

\begin{remark}
	Let $J_2=\langle -1,4,1+\tau\rangle /9$. Then, $J_2$ is a subgroup of index $3$ of $\cC_2$. However, $J_1=\pi(J_2)=\cC_1$, because $1+\tau \equiv \zeta_3 \bmod 3\OK$ (because $-2^{-1}\equiv -(-1)^{-1}\equiv -(-1)\equiv 1 \bmod 3$). Similarly, $J_2' = \langle 4,1+\tau\rangle/9$ is of index $6$ in $\cC_2$, and $J_1'=\pi(J_2')$ is of index $2$ in $\cC_1$.
\end{remark}

\subsection{Proof of Theorem \ref{thm-badprimes-intro}}\label{sec-proof-badprimes}

		As before, if $F$ is a field, then $G_F$ denotes the absolute Galois group of $F$. Let $E/\Q(j_{K,f})$ be an elliptic curve with CM by $\Of$ of conductor $f\geq 1$. Let $p$ be an odd prime dividing $f\Delta_K$ (thus, $j\neq 1728$ where $f\Delta_K=-4$), let $G_{E,p^\infty}$ be the image of $\rho_{E,p^\infty}\colon G_{\Q(j_{K,f})}\to \Aut(T_p(E))$, let $G_{E,p}\equiv G_{E,p^\infty}\bmod p$ be the image of $\rho_{E,p}$, and let $c\in\Gal(\overline{\Q(j_{K,f})}/\Q(j_{K,f}))$ be a fixed complex conjugation. Then, by Theorem \ref{thm-largeimage-intro}, part (4) (which was shown in Theorem \ref{thm-largeimage-intro-part4}), there is a $\Z_p$-basis of $T_p(E)$ such that $G_{E,p^\infty}\subseteq \mathcal{N}_{\delta,0}(p^\infty)$, with $\delta=\Delta_K f^2/4$, and $\rho_{E,p^\infty}(c)=c_\varepsilon=\left(\begin{array}{cc} \varepsilon & 0\\ 0 & -\varepsilon\\ \end{array}\right)$ for some $\varepsilon \in \{\pm 1 \}$.

		First suppose that $j_{K,f}\neq 0,1728$, and let $p$ be odd. Then, Theorem \ref{thm-badprimes} shows that either $G_{E,p}=\mathcal{N}_{\delta,0}(p)$, or $G_{E,p}$ is generated by $c_{\varepsilon}$ and the group
		$$G_{E,K,p}=\left\{ \left(\begin{array}{cc} a & b\\ \delta b & a\\ \end{array}\right): a\in ((\Z/p\Z)^\times)^2, b\in  \Z/p\Z\right\}\subseteq \cC_{\delta,0}(p),$$
		where $\delta\equiv 0 \bmod p$, and $G_{E,K,p}=\rho_{E,p}(G_{H_f})$. By Theorem \ref{thm-inclp}, and since $j_{K,f}\neq 0$ and $p$ is odd, it follows that $G_{E,K,p^\infty}=\rho_{E,p^\infty}(G_{H_f})$ is the full inverse image of $G_{E,K,p}$ in $\cC_{\delta,\phi}(p^\infty)$. Hence, we conclude that either $G_{E,p^\infty}=\mathcal{N}_{\delta,0}(p^\infty)$, or $G_{E,p^\infty}$ is generated by $c_{\varepsilon}$ and the group
		$$\left\{ \left(\begin{array}{cc} a & b\\ \delta b & a\\ \end{array}\right): a\in ((\Z_p)^\times)^2, b\in  \Z_p\right\},$$
		as claimed.
		
		Now suppose that $K=\Q(\sqrt{-3})$ and $f=1$, $j_{K,f}=0$, $p=3$, and $E/\Q$ is an elliptic curve with $j(E)=0$. We distinguish several cases according to the image $G_{H_f}$ under $\rho_{E,3}$, which was computed in Theorem \ref{thm-badprimes}. Here $\delta=-3/4$ and $\phi=0$.
	\begin{itemize}
		\item If $[\cC_{\delta,0}(3):G_{E,K,3}]=6$, then $G_{E,K,3}\subseteq \GL(2,\Z/3\Z)$ is trivial. In particular, $\Gal(K(E[3])/K)$ is trivial (where we note that $H_f=K=\Q(\sqrt{-3})$), and therefore $[H_f(E[3]):H_f(h(E[3]))]=1$. Now Theorem \ref{thm-inclp} shows that $G_{E,K,3^\infty}$ is the full inverse image of $G_{E,K,3}$ in $\cC_{\delta,0}(3^\infty)$, and therefore  $G_{E,3^\infty}$ is generated by $c_{\varepsilon}$, for some $\varepsilon\in \{\pm 1 \}$, and 
		$$ \left\{ \left(\begin{array}{cc}  a & b\\ -3b/4 & a\\ \end{array}\right) : a\equiv 1,\ b\equiv 0 \bmod 3 \right\}\subseteq \GL(2,\Z_3).$$
		
		\item If $[\cC_{\delta,0}(3):G_{E,K,3}]=3$, then $G_{E,K,3}$ is the subgroup of order $2$ of scalar matrices in $\GL(2,\Z/3\Z)$. Notice that $(\OK/3\OK)^\times/(\OK^\times/\OO_{K,f,3}^\times)$ is trivial. In particular, $[H_f(E[3]):H_f(h(E[3]))]=2$, which is relatively prime to $p=3$. Thus, by Theorem \ref{thm-inclp}, $G_{E,3^\infty}$ is generated by $c_{\varepsilon}$, for some $\varepsilon\in \{\pm 1 \}$, and 
		$$ \left\{ \left(\begin{array}{cc}  a & b\\ -3b/4 & a\\ \end{array}\right) : a\in (\Z_3)^\times,\ b\equiv 0 \bmod 3 \right\}\subseteq \GL(2,\Z_3).$$
		
		\item If $d_1=[\cC_{\delta,0}(3):G_{E,K,3}]=2^{1-a}$, with $a=0$ or $1$, then $G_{E,K,3}$ is the subgroup of order $2^a\cdot 3$ generated by matrices $\left\langle \left(\begin{array}{cc} -1 & 0\\ 0 & -1\\ \end{array}\right)^a, \left(\begin{array}{cc} 1 & 1\\ 0 & 1\\ \end{array}\right)\right\rangle.$ Thus, if $a=0$, then $-1\bmod 3\OK$ is not in $G_{E,K,3}$ and, by Lemma \ref{lem-cartan-jzero} part (2), the unit $-1\bmod 3^n\OK$ is not in $G_{E,K,3^n}$ for any $n\geq 2$. In particular, the index $d_2=[\cC_{\delta,0}(9):G_{E,K,9}]=2$ or $6$.  If $d_2=2$, then $\zeta_3\in G_{E,K,9}\subseteq (\OK/9\OK)^\times$ and, therefore, by Lemma \ref{lem-cartan-jzero}, $\zeta_3\in G_{E,K,3^n}$ for all $n\geq 1$. Thus, in this case $G_{E,3^\infty}$ is generated by $c_{\varepsilon}$, for some $\varepsilon\in \{\pm 1 \}$, and 
		$$ \left\{ \left(\begin{array}{cc}  a & b\\ -3b/4 & a\\ \end{array}\right) : a,b\in (\Z_3)^\times,\ a\equiv 1\bmod 3 \right\}\subseteq \GL(2,\Z_3).$$
		If $d_1=2$ but $d_2=6$, then Lemma \ref{lem-cartan-jzero} part (4) shows that there are precisely two possibilities for $G_{E,K,3^n}$, and therefore for $G_{E,K,3^\infty}$. Thus, $G_{E,3^\infty}$ is generated by 
		$c_{\varepsilon}$, for some $\varepsilon\in \{\pm 1 \}$, and 
		$$ \left\langle \left(\begin{array}{cc}  4 & 0\\ 0 & 4\\ \end{array}\right) ,\left(\begin{array}{cc}  1 & 1\\ -3/4 & 1\\ \end{array}\right)\right\rangle,\ \text{ or } \ \left\langle \left(\begin{array}{cc}  4 & 0\\ 0 & 4\\ \end{array}\right) ,\left(\begin{array}{cc}  -5/4 & 1/2\\ -3/8 & -5/4\\ \end{array}\right)\right\rangle\subseteq \GL(2,\Z_3).$$ 
		If $a=1$ (i.e., when $d_1=1$ and $d_2=3$), the argument is similar except that $-\operatorname{Id}$ is in the image. Thus, $G_{E,K,3}=\cC_{\delta,0}(3)$, and in this case $G_{E,3^\infty}=\mathcal{N}_{\delta,0}(3^\infty)$ or it  is generated by $c_{\varepsilon}$, for some $\varepsilon\in \{\pm 1 \}$, and 
		$$ \left\langle \left(\begin{array}{cc}  2 & 0\\ 0 & 2\\ \end{array}\right) ,\left(\begin{array}{cc}  1 & 1\\ -3/4 & 1\\ \end{array}\right)\right\rangle,\ \text{ or } \ \left\langle \left(\begin{array}{cc}  2 & 0\\ 0 & 2\\ \end{array}\right) ,\left(\begin{array}{cc}  -5/4 & 1/2\\ -3/8 & -5/4\\ \end{array}\right)\right\rangle\subseteq \GL(2,\Z_3).$$ 
	\end{itemize}
This concludes the proof of Theorem \ref{thm-badprimes-intro}. 

\begin{example}\label{ex-jzero}
	Here we provide examples of elliptic curves $E:y^2=x^3+s$, defined over $\Q$ and with $j(E)=0$, with each type of possible $3$-adic image. We note that \cite[Prop. 1.16]{zywina} gives conditions on $s$ for the possible images mod $3$. Recall that $c_\varepsilon=\left(\begin{array}{cc}  \varepsilon & 0\\ 0 & -\varepsilon\\ \end{array}\right)$.	 Below we give pairs of values $(s,c_\varepsilon)$ such that $E:y^2=x^3+s$ has the desired image and complex conjugation maps to $c_\varepsilon$.
		\begin{enumerate} 
			\item[(o)] If $[\mathcal{N}_{\delta,0}(3^\infty):G_{E,3^\infty}]=1$, then one can take $(s,c)=(3,c_1)$ or $(-1,c_{-1})$.
			\item[(i)] If $[\mathcal{N}_{\delta,0}(3^\infty):G_{E,3^\infty}]=2$, then $G_{E,3^\infty}$ is generated by $c_{\varepsilon}$ and 
		$$\left\{ \left(\begin{array}{cc}  a & b\\ -3b/4 & a\\ \end{array}\right): a,b\in  \Z_3,\ a\equiv 1 \bmod 3\right\}\subseteq \GL(2,\Z_3),$$
		and one can take $(1,c_1)$ or $(-3,c_{-1})$.
		
		\item[(ii)]  If $[\mathcal{N}_{\delta,0}(3^\infty):G_{E,3^\infty}]=3$, then $G_{E,3^\infty}$ is generated by $c_{\varepsilon}$ and 
		$$\left\{ \left(\begin{array}{cc} a & b\\ -3b/4 & a\\ \end{array}\right): a\in (\Z_3)^\times,\ b\equiv 0 \bmod 3 \right\},$$
		$$\text{or }\  \left\langle \left(\begin{array}{cc}  2 & 0\\ 0 & 2\\ \end{array}\right) ,\left(\begin{array}{cc}  1 & 1\\ -3/4 & 1\\ \end{array}\right)\right\rangle,\ \text{ or } \ \left\langle \left(\begin{array}{cc}  2 & 0\\ 0 & 2\\ \end{array}\right) ,\left(\begin{array}{cc}  -5/4 & 1/2\\ -3/8 & -5/4\\ \end{array}\right)\right\rangle\subseteq \GL(2,\Z_3),$$
		and one can take $(2,c_1)$ or $(-2,c_{-1})$, $(6,c_{1})$ or $(-6,c_{-1})$, and $(18,c_1)$ or $(-18,c_{-1})$, respectively. %all verified with Drew's help
		\item[(iii)]  If $[\mathcal{N}_{\delta,0}(3^\infty):G_{E,3^\infty}]=6$, then $G_{E,3^\infty}$ is generated by $c_{\varepsilon}$ and one of
		$$ \text{or }\ \left\{ \left(\begin{array}{cc}  a & b\\ -3b/4 & a\\ \end{array}\right) : a\equiv 1,\ b\equiv 0 \bmod 3\Z_3 \right\},$$
		$$ \text{or }\ \left\langle \left(\begin{array}{cc}  4 & 0\\ 0 & 4\\ \end{array}\right) ,\left(\begin{array}{cc}  1 & 1\\ -3/4 & 1\\ \end{array}\right)\right\rangle,\ \text{ or } \ \left\langle \left(\begin{array}{cc}  4 & 0\\ 0 & 4\\ \end{array}\right) ,\left(\begin{array}{cc}  -5/4 & 1/2\\ -3/8 & -5/4\\ \end{array}\right)\right\rangle\subseteq \GL(2,\Z_3),$$
		and one can take $(16,c_1)$ %verified
		 or $(-432,c_{-1})$,%verified
		  $(1296,c_1)$ or $(-48,c_{-1})$, and $(144,c_1)$ or $(-3888,c_{-1})$, respectively. %all verified with Drew's help  
	\end{enumerate}
	Note: the author would like to help Drew Sutherland for his help in computing the images above (using code written for \cite{sutherland}, and some new and improved code).
\end{example}

\section{$2$-adic Galois Representations}\label{sec-2adic}
In this section we restrict our attention to the last case we have left to consider, that is, the case of $2$-adic Galois representations attached to CM curves, so we fix $p=2$. First, we record the group structure of a $2$-adic Cartan subgroup. Below, we assume the convention that $\Z/2^n\Z$ represents the trivial group if $n\leq 0$. Here we continue using the notation of Lemma \ref{lem-cartan-jzero}, where $\langle a_1,\ldots,a_m\rangle /2^n$ denotes the multiplicative subgroup of $(\Of/2^n\Of)^\times$ generated by the elements $a_1,\ldots,a_m\bmod 2^n\Of$.

\begin{thm}\label{thm-2adiccartan}
	Let $n\geq 1$, let $d=\Delta_K$ be a fundamental discriminant, and let $f\geq 1$.  Let $\delta$, $\phi$, and let $\cC_n = (\Of/2^n\Of)^\times\cong \cC_{\delta,\phi}(2^n)\subseteq \GL(2,\Z/2^n\Z)$, where $\cC_{\delta,\phi}(2^n)$ is defined as in Theorem \ref{thm-cmrep}. Then,
	\begin{enumerate}
		\item If $d\equiv 1 \bmod 4$, and $f\equiv 0 \bmod 2$, then there are three different cases to consider:
		\begin{enumerate}
			\item If $f\equiv 0 \bmod 4$, then $\cC_n\cong \Z/2\Z\times \Z/2^{n-2}\Z \times \Z/2^n\Z$, for all $n\geq 1$, and 
			$$\cC_n =\langle -1, 3, 1+f\tau \rangle/ 2^n,$$
			where $f\tau = (f/2)\sqrt{\Delta_K}.$
			\item If $f\equiv 2 \bmod 4$, and $d\equiv 1 \bmod 8$, then $\delta \equiv 1\bmod 8$, and $\delta= r^2$ for some $r\in\Z_2$. Thus, 
			$$\cC_n= \langle -1, 3, \frac{f\tau}{r}, 1+2f\tau \rangle/ 2^n \cong \begin{cases}
			\Z/2\Z & \text{ if } n=1,\\
			(\Z/2\Z\times \Z/2^{n-2}\Z)\times (\Z/2\Z\times \Z/2^{n-1}\Z) & \text{ if } n\geq 2.
			\end{cases} $$
			\item If $f\equiv 2 \bmod 4$, and $d\equiv 5 \bmod 8$, then $\cC_{\delta,0}(2^n)\cong \Z/2\Z\times \Z/2^{n-1}\Z \times \Z/2^{n-1}\Z$, for all $n\geq 1$, and 
			$\cC_n=\langle -1, f\tau, 1+2f\tau \rangle/ 2^n$.
		\end{enumerate} 	
		\item If $d\equiv 4\bmod 8$, then $d=-4d'$ with $d'\equiv 1 \bmod 4$. Thus, there are two cases to consider:
		\begin{enumerate}
			\item If $d'\equiv 1 \bmod 8$ and $f\not\equiv 0 \bmod 2$, then $-\delta\equiv 1 \bmod 8$ and $-\delta=r^2$ for some $r\in\Z_2$. Thus,
			$$\cC_n=\langle f\tau/r, 3, 1+2f\tau \rangle/ 2^n \cong \begin{cases}
			\Z/2\Z & \text{ if } n=1,\\
			\Z/4\Z\times \Z/2^{n-2}\Z\times \Z/2^{n-1}\Z & \text{ if } n\geq 2.
			\end{cases} $$
			
			\item If $d'\equiv 5 \bmod 8$ and $f\not\equiv 0 \bmod 2$, then
			$$\cC_n=\langle -1, f\tau, 1+2f\tau \rangle/ 2^n \cong \begin{cases}
			\Z/2\Z & \text{ if } n=1,\\
			\Z/2\Z \times \Z/4\Z & \text{ if } n=2,\\
			\Z/2\Z\times \Z/2^{n-1}\Z\times \Z/2^{n-1}\Z & \text{ if } n\geq 3.
			\end{cases} $$
		\end{enumerate} 
			\item If $d\equiv 0 \bmod 8$, or if $d\equiv 4 \bmod 8$ and $f\equiv 0 \bmod 2$, then
			$$\cC_n=\langle -1, 3, 1+f\tau \rangle/ 2^n \cong \begin{cases}
			\Z/2\Z & \text{ if } n=1,\\
			\Z/2\Z\times \Z/2^{n-2}\Z\times \Z/2^{n}\Z & \text{ if } n\geq 2.
			\end{cases} $$
		
		\item If $d\equiv 1\bmod 4$, and $f\not\equiv 0 \bmod 2$, then there are two cases to consider:
		\begin{enumerate}
			\item If $d\equiv 1 \bmod 8$, then $d= r^2$ for some $r\in\Z_2$, and
			$$\cC_n=\langle -1, 3, -(1+2\tau/f)/r, 1+4f\tau \rangle/ 2^n \cong \begin{cases}
			\{0\} & \text{ if } n=1,\\
			(\Z/2\Z\times \Z/2^{n-2}\Z)\times (\Z/2\Z\times \Z/2^{n-2}\Z) & \text{ if } n\geq 2.
			\end{cases} $$
			\item If $d\equiv 5 \bmod 8$, then 
			$$\cC_n=\langle -1, 3+4f\tau,f-(\delta/f^2)\tau \rangle/ 2^n \cong \begin{cases}
			\Z/3\Z & \text{ if } n=1,\\
			\Z/2\Z\times \Z/2^{n-2}\Z\times \Z/(2^{n-1}\cdot 3\Z) & \text{ if } n\geq 2.
			\end{cases} $$
		\end{enumerate}
	\end{enumerate}
\end{thm}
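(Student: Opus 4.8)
The plan is to pass to the $2$-adic completion and argue case by case. Since $2^n\Of$ is the kernel of both $\Of\twoheadrightarrow\Of/2^n\Of$ and $\Of\otimes_\Z\Z_2\twoheadrightarrow\Of/2^n\Of$, we have $\cC_n\cong\bigl((\Of\otimes\Z_2)/2^n(\Of\otimes\Z_2)\bigr)^\times$, so the computation takes place inside the $\Z_2$-order $\Of\otimes\Z_2=\Z_2[f\tau]$, for the $\tau$ appropriate to the case. The key point is that $|\cC_n|$ is already known from Lemma~\ref{lem-cartansbgp}: it is $2^{2n-1}$ when $2\mid\Delta_Kf$, it is $2^{2n-2}$ when $2\nmid f$ and $2$ splits in $K$, and it is $3\cdot 2^{2n-2}$ when $2\nmid f$ and $2$ is inert. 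Hence in each case it suffices to (i) express the displayed generators as elements of $\Of\otimes\Z_2$, (ii) verify they are units and compute the exact order of each modulo $2^n$, and (iii) check they are independent enough that the order of the subgroup they generate equals the product of these orders; by Lemma~\ref{lem-cartansbgp} that product is $|\cC_n|$, so equality holds, and the asserted direct-product decomposition follows because a surjection between finite abelian groups of equal order is an isomorphism.

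The four main cases of the statement are exactly the four $\Z_2$-algebra types of $\Of\otimes\Z_2$. In case (4a) ($\Delta_K\equiv1\bmod 8$, $f$ odd) the prime $2$ splits, $\Of\otimes\Z_2\cong\Z_2\times\Z_2$ and $\cC_n\cong(\Z/2^n\Z)^\times\times(\Z/2^n\Z)^\times$; the only work is to rewrite the standard generators $-1,3$ of each factor in the $\Z_2$-basis $\{f\tau,1\}$, which is why the element $-(1+2\tau/f)/r$ with $r=\sqrt{\Delta_K}\in\Z_2^\times$ appears. In case (4b) ($\Delta_K\equiv5\bmod 8$, $f$ odd) the prime $2$ is inert, $\Of\otimes\Z_2$ is the ring of integers of the unramified quadratic extension of $\Q_2$, so $(\Of/2\Of)^\times\cong\F_4^\times\cong\Z/3\Z$ and $\cC_n$ is the product of a cyclic group of order $3$ with a $2$-group; one checks that one of the displayed generators reduces to a generator of $\F_4^\times$ and the others generate the $2$-part. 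In cases (1), (2), (3) the ring $\Of\otimes\Z_2$ is local with residue field $\F_2$ (because $2\mid f$ or $2$ ramifies in $K$), so $\cC_n$ is itself a finite abelian $2$-group and one works with the filtration $1+2\Of\supseteq 1+2^2\Of\supseteq\cdots\supseteq 1+2^n\Of$.

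The computational engine is the behaviour of this filtration. For $k\geq2$ the $2$-adic logarithm and exponential both converge and are mutually inverse, giving a group isomorphism $1+2^k\Of\xrightarrow{\ \sim\ }2^k\Of$ (this uses only $p=2$ and that the ramification index is at most $2$), so $(1+2^k\Of)/(1+2^n\Of)\cong 2^k\Of/2^n\Of\cong(\Z/2^{n-k}\Z)^2$, and an element $1+2^ku$ with $u$ a unit has order exactly $2^{n-k}$ modulo $2^n$. One then locates each displayed generator (typically $1+f\tau$, $1+2f\tau$, $1+4f\tau$, $f\tau/r$, $3$, $-1$, or a unit reducing to a primitive cube root of unity) in the correct filtration step, reads off its order, and verifies independence by intersecting the generated subgroup with the $1+2^k\Of$ and counting $\F_2$-dimensions of the successive graded pieces; comparison with $|\cC_n|$ then both closes the argument and identifies which cyclic summand each generator provides.

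The hard part is the bottom of the filtration, namely the step $(1+2\Of)/(1+4\Of)$ and with it the small values $n=1,2$ (and $n=3$ in the $2$-ramified subcases), where the logarithm is not yet an isomorphism and the clean formula $(1+2^k\Of)/(1+2^n\Of)\cong(\Z/2^{n-k}\Z)^2$ fails. Here the phenomenon that $(1+2)^2=1+8$ rather than $1+4$ enters, the $2$-torsion — which is $\{\pm1\}$ in the split case but can be larger when $2$ ramifies — must be pinned down, and it is precisely the hypotheses $\Delta_K\equiv1$ versus $5\bmod8$, $f\equiv2$ versus $0\bmod4$, and $d'\equiv1$ versus $5\bmod8$ that select the different answers; each of these boundary cases must be checked directly, including the verification that the generators chosen at level $n$ reduce to those chosen at level $n-1$ so that the isomorphisms are coherent in $n$, and each small case must be matched against the cardinality of Lemma~\ref{lem-cartansbgp} (reading the formulas with the stated convention that $\Z/2^m\Z$ is trivial for $m\leq0$). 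A secondary bookkeeping point, used in cases (1b), (2a), (4a), is the replacement of a formal square root $\sqrt\delta$, $\sqrt{-\delta}$, or $\sqrt{\Delta_K}$ by an element $r\in\Z_2^\times$: this is legitimate because in those cases the relevant quantity is $\equiv1\bmod8$, and a $2$-adic unit is a square exactly when it is $\equiv1\bmod8$ (Hensel's lemma).
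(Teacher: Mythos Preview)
Your proposal is correct and is exactly the sort of case-by-case verification the paper has in mind; the paper itself omits the proof entirely, saying only that it ``is a matter of verifying each of the cases (according to the splitting behaviour of $2$ in $\Of$).'' Your write-up is in fact more informative than the paper's treatment: you make explicit the reduction to $\Of\otimes\Z_2$, the use of Lemma~\ref{lem-cartansbgp} for the target cardinality, and the $\log$/$\exp$ isomorphism $1+2^k\Of\cong 2^k\Of$ for $k\ge 2$ that handles the bulk of the filtration, leaving only the bottom steps $n\le 3$ (where the $2$-adic exponential is not yet an isomorphism and the case split on $\Delta_K\bmod 8$ and $f\bmod 4$ matters) to a direct finite check.
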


The proof of Theorem \ref{thm-2adiccartan} is a matter of verifying each of the cases (according to the splitting behaviour of $2$ in $\Of$), and so it is omitted. We first present a proof of Theorem \ref{thm-2adictwist-intro}, which deals with $2$-adic images when $j_{K,f}\neq 0$ or $1728$.

\begin{lemma}\label{lem-2adicindex2subgpsofCartan}
	Let $n\geq 2$, and let $H_n$ be a subgroup of $(\Of/2^n\Of)^\times$ of index $2$, and let $H_{n-1}$ be the image of $H_n$ under the natural reduction map $\Of/2^n\Of\to \Of/2^{n-1}\Of$. Then, 
	\begin{enumerate}
				\item If 
				\begin{itemize}
					\item 
				$n\geq 3$, and $\Delta_K=-4d'$ with $d'\equiv 1\bmod 8$, and $f\equiv 1 \bmod 2$, or 
				\item $n=2$ and $\Delta_K\equiv 4 \bmod 8$ and $f\equiv 1 \bmod 2$,
			\end{itemize} then every subgroup of index $2$ of $(\Of/2^n\Of)^\times$ contains $-1$.
		\item Let $n\geq 2$, let $H_n$ be a subgroup of index $2$ of $(\Of/2^n\Of)^\times$, and let $H_2\equiv H_n \bmod 4\Of$ be the reduction of $H_n$ modulo $4$. Suppose that:
		\begin{itemize}
			\item $-1$ is not in $H_2$, and
			\item $H_2$ is fixed under complex conjugation.
		\end{itemize}
		Then, $\Delta_Kf^2\equiv 0 \bmod 16$ and there are precisely two such subgroups $H_n$, namely
		$$\langle 5,1+f\tau\rangle/2^n \text{ and } \langle  5,-1-f\tau\rangle/2^n.$$

		\item If $n\geq 3$, the unit $-1\in H_n$ if and only if $-1\in H_{n+1}$.
		\item Suppose $n\geq 3$ and $H_n$ is a subgroup of index $2$ of $(\Of/2^n\Of)^\times$ such that:
		 \begin{itemize}
		 	\item $-1$ is not in $H_n$,
		 	\item $H_n$ is fixed under complex conjugation,
		 	\item $H_n$ surjects onto $(\Of/4\Of)^\times$ when reduced $\bmod 4\Of$.
		 \end{itemize}
	  	Then, $\Delta_K\equiv 0 \bmod 8$ and there are precisely two such subgroups, namely
	  	$$\langle 3,1+f\tau\rangle/2^n \text{ and } \langle 3, -1-f\tau\rangle/2^n.$$
	\end{enumerate} 
\end{lemma}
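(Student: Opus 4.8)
\textbf{Proof plan for Lemma \ref{lem-2adicindex2subgpsofCartan}.}

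The four parts are all structural statements about subgroups of index $2$ in the finite abelian group $\cC_n = (\Of/2^n\Of)^\times$, so the whole proof is driven by the explicit generator-and-relation descriptions of $\cC_n$ furnished by Theorem \ref{thm-2adiccartan}. The unifying idea is this: an index-$2$ subgroup $H_n \le \cC_n$ is the kernel of a surjective homomorphism $\chi\colon \cC_n \to \{\pm 1\}$, and the number of such $\chi$ (equivalently such $H_n$) equals $\#(\cC_n/\cC_n^2)-1$, where $\cC_n/\cC_n^2 \cong (\Z/2\Z)^r$ with $r$ the $2$-rank of $\cC_n$. So first I would, for each of parts (1)--(4), read off from Theorem \ref{thm-2adiccartan} the relevant case (which is dictated by $\Delta_K \bmod 8$ and the parity of $f$), write $\cC_n$ as a product of explicit cyclic factors with named generators, and identify the subgroup $\cC_n^2$ and the set of index-$2$ subgroups. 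The element $-1 \in \cC_n$ is then located inside this lattice: $-1 \in H_n$ iff $\chi(-1)=1$ for the character cutting out $H_n$, so ``$-1$ lies in every index-$2$ subgroup'' is equivalent to ``$-1 \in \cC_n^2$,'' which is a statement one checks directly from the generators.

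For part (1): in the cases listed ($\Delta_K = -4d'$, $d'\equiv 1\bmod 8$, $f$ odd, $n\ge 3$; or $\Delta_K\equiv 4 \bmod 8$, $f$ odd, $n=2$), Theorem \ref{thm-2adiccartan}(2a) and (2b) give $\cC_n$ with a cyclic factor of order $4$ generated by $f\tau/r$ (resp.\ a factor $\Z/4\Z$ when $n=2$); the unique element of order $2$ in that $\Z/4$ factor is $-1$, hence $-1$ is a square in $\cC_n$, hence $-1 \in \cC_n^2 \subseteq H_n$ for every index-$2$ subgroup $H_n$. That is the entire content. For part (2): the hypothesis ``$-1 \notin H_2$ and $H_2$ is complex-conjugation-stable'' forces $\cC_2/\cC_2^2$ to contain a copy of $\{\pm1\}$ on which conjugation acts, and $-1$ survives to $\cC_2/\cC_2^2$; matching this against the list in Theorem \ref{thm-2adiccartan} rules out every case except $\Delta_Kf^2\equiv 0\bmod 16$ (cases (1a) and (3) with appropriate parity), and in that case $\cC_n = \langle -1, 3, 1+f\tau\rangle/2^n$ (or the $f\equiv 0\bmod 4$ shape), and the index-$2$ subgroups not containing $-1$ and stable under $\tau$ (which fixes $3$ and sends $1+f\tau \mapsto 1-f\tau$, congruent modulo squares to $1+f\tau$ or its negative) are exactly the two named groups $\langle 5, 1+f\tau\rangle/2^n$ and $\langle 5, -1-f\tau\rangle/2^n$ — here one must check $5 \equiv 3^2$ or use that the $\langle 3\rangle$-factor modulo its square is generated by $5 = 1+4$, i.e.\ that replacing $3$ by $5$ names the index-$2$ subgroup of the $(\Z/2\Z \times \Z/2^{n-2}\Z)$ piece missing $-1$.

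Part (3) is the stability-under-lifting statement: for $n\ge 3$ one uses that the reduction map $\cC_{n+1}\to\cC_n$ is surjective with kernel $(\Z/2\Z)^2$ contained in $\cC_{n+1}^2$ (the kernel elements are $1+2^n(\cdot)$, each a square once $n\ge 2$), so $-1\in\cC_{n+1}^2$ iff $-1\in\cC_n^2$, and the same argument applied to an arbitrary index-$2$ subgroup, or simply noting $\cC_{n+1}^2$ maps onto $\cC_n^2$, gives the equivalence; this is exactly the mechanism already used in Lemma \ref{lem-cartan-jzero}(2). Part (4) then combines (3) with the $n=3$ analysis analogous to (2): the hypotheses ``$-1\notin H_n$, $\tau$-stable, surjects onto $(\Of/4\Of)^\times$'' pin down the case $\Delta_K\equiv 0\bmod 8$ (case (3) of Theorem \ref{thm-2adiccartan}, where $\cC_n = \langle -1,3,1+f\tau\rangle/2^n$ with the $\langle 1+f\tau\rangle$ factor of \emph{full} order $2^n$, which is what allows an index-$2$ subgroup to still surject mod $4$), and among the index-$2$ subgroups the two that avoid $-1$ and are $\tau$-stable are $\langle 3, 1+f\tau\rangle/2^n$ and $\langle 3, -1-f\tau\rangle/2^n$.

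\textbf{Main obstacle.} The conceptual content is light; the real work — and the step most likely to hide an error — is the bookkeeping of \emph{which} case of Theorem \ref{thm-2adiccartan} is in force under each part's hypotheses, and then correctly tracking the action of complex conjugation $\tau\colon a+bf\tau \mapsto a+bf-bf\tau$ on the named generators modulo squares (e.g.\ verifying $\tau(1+f\tau) \equiv \pm(1+f\tau)$ in $\cC_n/\cC_n^2$, and that $3$ and $5$ are $\tau$-fixed). I would isolate once and for all the lemma ``$\tau$ acts trivially on $\cC_n/\cC_n^2$ except it may negate the class of $1+f\tau$,'' prove it from the formula for $\tau$, and then parts (2) and (4) become immediate enumerations. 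The surjectivity-mod-$4$ condition in (4) versus its absence in (2) is the one genuinely delicate dichotomy: it is exactly the difference between the $\langle 1+f\tau\rangle$-factor having order $2^n$ (case $\Delta_K\equiv 0\bmod 8$) versus order $2^{n-1}$ (case $\Delta_Kf^2\equiv 0\bmod 16$ with $\Delta_K\equiv 1\bmod 4$), and I would make that comparison explicit rather than leave it implicit.
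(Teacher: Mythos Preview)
Your approach for parts (1), (2), and (3) is sound and matches the paper's; your argument for (3), via the observation that the kernel of $\cC_{n+1}\to\cC_n$ consists of squares once $n\ge 3$, is in fact a bit cleaner than the paper's generator-by-generator check.

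Part (4), however, contains a concrete error. You claim that the dichotomy between (2) and (4) is ``exactly the difference between the $\langle 1+f\tau\rangle$-factor having order $2^n$ (case $\Delta_K\equiv 0\bmod 8$) versus order $2^{n-1}$ (case $\Delta_Kf^2\equiv 0\bmod 16$ with $\Delta_K\equiv 1\bmod 4$).'' This is not so: Theorem \ref{thm-2adiccartan}(1a) (namely $\Delta_K\equiv 1\bmod 4$, $f\equiv 0\bmod 4$) and Theorem \ref{thm-2adiccartan}(3) give the \emph{same} structure $\cC_n=\langle -1,3,1+f\tau\rangle\cong\Z/2\times\Z/2^{n-2}\times\Z/2^n$, with $1+f\tau$ of order $2^n$ in both. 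The abstract group structure alone does not separate these cases, and in both one can write down the index-$2$ subgroups $\langle 3,\pm(1+f\tau)\rangle$, which miss $-1$ and (since $3\equiv -1\bmod 4$) surject onto $\cC_2$. What must rule out case (1a) and the $\Delta_K\equiv 4\bmod 8$ part of case (3) is the complex-conjugation stability, and checking this requires knowing where the rational integer $1-\delta=(1+f\tau)(1-f\tau)$ lands in $\langle -1\rangle\times\langle 3\rangle$, which depends on $\delta\bmod 8$ and hence on $\Delta_K$ and $f$ separately, not merely on $\Delta_Kf^2$. The paper does not give a conceptual argument here either: it explicitly defers part (4) to a Magma enumeration at level $n=3$ across all cases of Theorem \ref{thm-2adiccartan}. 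Your proposed auxiliary lemma on how conjugation acts on $\cC_n/\cC_n^2$ is the right kind of tool, but you have not carried it out, and the specific conclusion you draw from it is incorrect.
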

\begin{proof}

	By Theorem \ref{thm-2adiccartan}, if $n\geq 3$, then the group $\cC_n = (\Of/2^n\Of)^\times$ is generated as a multiplicative group by $\langle \alpha_1,\ldots,\alpha_t\rangle$ with $t=3$ or $4$ depending on the congruence classes of $\delta$ and $\phi$ modulo $8$, and such that either $\alpha_1=-1$ is of order $2$ or $\alpha_1^2=-1$ and $\alpha_1$ is of order $4$ (but note that the latter possibility only occurs when $\Delta_K=-4d'$ with $d'\equiv 1\bmod 8$, and $f\equiv 1 \bmod 2$).
	
	First suppose that $\Delta_K=-4d'$ with $d'\equiv 1\bmod 8$, and $f\equiv 1 \bmod 2$. Then, by Theorem \ref{thm-2adiccartan}, we have $\cC_n=\langle \alpha_1,\alpha_2,\alpha_3\rangle \cong \Z/4\Z \times \Z/2^{n-2}\Z \times \Z/2^{n-1}\Z$, with $\alpha_1^2=-1$ and $\alpha_1=f\tau/r$ is of order $4$, and $\alpha_2,\alpha_3$ are of order $2^{n-2}$ and $2^{n-1}$, respectively. Thus, there are $7$ subgroups of index $2$ of $\cC_n$, and are given by
	$$\langle \alpha_1^2,\alpha_2,\alpha_3 \rangle, \langle \alpha_1^2,\alpha_1\alpha_2,\alpha_3 \rangle, \langle \alpha_1^2,\alpha_2,\alpha_1\alpha_3 \rangle, \langle \alpha_1^2,\alpha_1\alpha_2,\alpha_1\alpha_3 \rangle, \langle \alpha_1,\alpha_2^2,\alpha_3 \rangle,\langle \alpha_1,\alpha_2^2,\alpha_2\alpha_3 \rangle,\langle \alpha_1,\alpha_2,\alpha_3^2 \rangle,$$
	and since every subgroup contains $\alpha_1$ or $\alpha_1^2=-1$, we conclude that $-1$ is in each group, as claimed. 
	
	Now suppose that $n=2$, and $f\equiv 1 \bmod 2$, and $\Delta\equiv 4\bmod 8$. According to Theorem \ref{thm-2adiccartan} part (2), in both cases, we have $\cC_2 =\langle f\tau,1+2f\tau\rangle=\langle \beta_1,\beta_2\rangle$, so the subgroups of index $2$ are given by $\langle \beta_1^2,\beta_2\rangle$, $\langle\beta_1,\beta_2^2\rangle$, and $\langle \beta_1^2,\beta_1\beta_2\rangle=\langle \beta_1\beta_2,\beta_2^2\rangle$. Since $\beta_1=f\tau$ squares to be congruent to $-1 \bmod 4\Of$, it follows that $-1$ is in all subgroups of index $2$, as claimed. This shows (1).

		Suppose $n\geq 2$ and $H_n$ is a subgroup of index $2$ of $(\Of/2^n\Of)^\times$ such that $-1$ is not in $H_2\equiv H_n \bmod 4\Of$, and
	$H_2$ is fixed under complex conjugation. Suppose first that $\Delta_kf^2\not\equiv 0 \bmod 16$. Then, by part (1), we have $\Delta_K\equiv 1 \bmod 4$ because when $\Delta_K\equiv 4\bmod 8$, the index $2$ subgroups contain $-1$. Now, by Theorem \ref{thm-2adiccartan}, the group $\cC_2 = (\Of/4\Of)^\times$ is generated in all cases as a multiplicative group by at most three elements $\langle -1,\alpha_2,\alpha_3\rangle/2^2$. In each case, one can compute the subgroups of index $2$ that miss $-1$, and verify that such subgroup is not stable under complex conjugation. Now suppose that $\Delta_Kf^2\equiv 0\bmod 16$. Then, in both cases, we have $\cC_2 = \langle -1,1+f\tau\rangle/2^2$, and therefore the only subgroups that miss $-1$ are $J_1=\langle 1 +f\tau\rangle/2^2$ and $J_2=\langle-1-f\tau\rangle/2^2$. Hence, $H_n$ reduces to $J_1$ or $J_2$. Since $H_n$ is of index $2$, it follows that $H_n$ is the full inverse image in $\cC_n$ of one of them. Thus, $H_n$ is $J_1=\langle 5,1 +f\tau\rangle/2^n$ or $J_2=\langle5,-1-f\tau\rangle/2^n$. This shows (2).

	For (3), first suppose that $-1\in H_{n+1}$. Since $-1\equiv -1 \bmod 2^n$, it follows that $-1\in H_n$ as well. Now suppose that $-1\not\in H_{n+1}$ and let us show that $H_{n}$ does not contain $-1$ either. By part (1), this is impossible if $\Delta_K=-4d'$ with $d'\equiv 1\bmod 8$, and $f\equiv 1 \bmod 2$. Thus, let us assume we are in any of the remaining cases. Thus, $\cC_{n+1} =\langle \alpha_1,\ldots,\alpha_t\rangle$ with $t=3$ or $4$, and $\alpha_1=-1$. If $-1\not\in H_{n+1}$, which is of index $2$ in $\cC_n$, it follows that $H_{n+1}=\langle \alpha_1^{a_1}\alpha_2,\ldots,\alpha_1^{a_t}\alpha_t\rangle$, for some $a_i\in \{0,1 \}$, since all subgroups of index $2$ that do not contain $-1$ are of this shape (there are precisely $2^{t-1}$ such subgroups). Therefore, $H_n = \langle \alpha_1^{a_1}\alpha_2,\ldots,\alpha_1^{a_t}\alpha_t\rangle$ as well. But then $-1\not\in H_n$, because if it were, then  $\cC_n=\langle \alpha_1,\alpha_2,\ldots,\alpha_t\rangle \subseteq H_n \subsetneq \cC_n$ 
	would be contradictory. Hence $-1\not\in H_n$ as we wanted to show.
	
	For (4), suppose that $H_n$ is a subgroup of index $2$ with the three properties listed in the statement, and consider the reduction $H_3 \equiv H_n \bmod 8\Of$. Then (by part (3)) the subgroup $H_3$ also satisfies the three properties of $H_n$. For every case listed in Theorem \ref{thm-2adiccartan}, one can compute all those (finitely many) subgroups $H$ of index $2$ in $(\Of/8\Of)^\times$ that satisfy all three properties (we have done so with Magma), and reach the conclusion that $H$ can only exist when $\Delta_K\equiv 0 \bmod 8$, and the only two such subgroups of $\langle -1,3,1+f\tau\rangle$ are given by $\langle 3,1+f\tau\rangle \text{ and } \langle 3, -1-f\tau\rangle,$ as claimed. 
\end{proof}

\begin{thm}\label{thm-2adictwist}
Let $E/\Q(j_{K,f})$ be an elliptic curve with CM by an order $\Of$ in an imaginary quadratic field $K$, with $j_{K,f}\neq 0$ or $1728$, and suppose that $\Gal(H_f(E[2^n])/H_f)\subsetneq (\Of/2^n\Of)^\times$ for some $n\geq 1$. Then, for all $n\geq 3$, we have $\Gal(H_f(E[2^n])/H_f)\cong (\Of/2^n\Of)^\times/\{\pm 1\}$, and there are two possibilities:
\begin{enumerate} 
\item If $\Gal(H_f(E[4])/H_f)\cong (\Of/4\Of)^\times/\{\pm 1\}$, then:

\begin{enumerate} 
\item $\operatorname{disc}(\Of)=\Delta_Kf^2\equiv 0 \bmod 16$. In particular, we have either
\begin{itemize}
\item $\Delta_K\equiv 1 \bmod 4$ and $f\equiv 0\bmod 4$, or
\item $\Delta_K\equiv 0\bmod 4$ and $f\equiv 0 \bmod 2$.
\end{itemize} 
\item The $4$th roots of unity $\mu_4$ are contained in $H_f$, i.e., $\Q(i)\subseteq H_f$.
\item For each $n\geq 2$, there is a $\Z/2^n\Z$-basis of $E[2^n]$ such that the image of the Galois representation $\rho_{E,2^n}\colon \Gal(\overline{H_f}/H_f)\to \GL(2,\Z/2^n\Z)$ is one of the groups
$$J_1=\left\langle \left(\begin{array}{cc} 5 & 0\\ 0 & 5\\\end{array}\right),\left(\begin{array}{cc} 1 & 1\\ \delta & 1\\\end{array}\right)\right\rangle \text{ or } J_2=\left\langle \left(\begin{array}{cc} 5 & 0\\ 0 & 5\\\end{array}\right),\left(\begin{array}{cc} -1 & -1\\ -\delta & -1\\\end{array}\right)\right\rangle\subseteq \cC_{\delta,0}(2^n).$$
\end{enumerate}
\item If $\Gal(H_f(E[4])/H_f)\cong (\Of/4\Of)^\times$, then:
\begin{enumerate}
	\item $\Delta_K\equiv 0 \bmod 8$.
	\item For each $n\geq 3$, there is a $\Z/2^n\Z$-basis of $E[2^n]$ such that the image of the Galois representation $\rho_{E,2^n}\colon \Gal(\overline{H_f}/H_f)\to \GL(2,\Z/2^n\Z)$ is  the group
	$$J_1'=\left\langle \left(\begin{array}{cc} 3 & 0\\ 0 & 3\\\end{array}\right),\left(\begin{array}{cc} 1 & 1\\ \delta & 1\\\end{array}\right)\right\rangle \text{ or } J_2'=\left\langle \left(\begin{array}{cc} 3 & 0\\ 0 & 3\\\end{array}\right),\left(\begin{array}{cc} -1 & -1\\ -\delta & -1\\\end{array}\right)\right\rangle\subseteq \cC_{\delta,0}(2^n).$$
\end{enumerate}
\end{enumerate}
Finally, there is some $\varepsilon \in \{\pm 1 \}$ and $\alpha\in {3,5}$ such that the image of $\rho_{E,2^\infty}$ is a conjugate of 
$$\left\langle \left(\begin{array}{cc} \varepsilon & 0\\ 0 & -\varepsilon\\\end{array}\right),\left(\begin{array}{cc} \alpha & 0\\ 0  & \alpha\\\end{array}\right),\left(\begin{array}{cc} 1 & 1\\ \delta  & 1\\\end{array}\right)\right\rangle \text{ or } \left\langle \left(\begin{array}{cc} \varepsilon & 0\\ 0 & -\varepsilon\\\end{array}\right),\left(\begin{array}{cc} \alpha & 0\\ 0  & \alpha\\\end{array}\right),\left(\begin{array}{cc} -1 & -1\\ -\delta  & -1\\\end{array}\right)\right\rangle \subseteq \GL(2,\Z_2).$$
\end{thm}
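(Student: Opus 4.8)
The plan is to reduce the statement, via Theorem~\ref{thm-incl}, to the single question of whether the scalar $-1\bmod 2^n$ lies in the image $G_n:=\Gal(H_f(E[2^n])/H_f)\subseteq \cC_n:=(\Of/2^n\Of)^\times$, and then to feed this into the classification of index-$2$ subgroups of $\cC_n$ provided by Theorem~\ref{thm-2adiccartan} and Lemma~\ref{lem-2adicindex2subgpsofCartan}. Since $j_{K,f}\neq 0,1728$ we have $\Of^\times=\{\pm 1\}$, so $\OO_{K,f,2^n}^\times=\{1\}$ and $\Of^\times/\OO_{K,f,2^n}^\times\cong\Z/2\Z$ for $n\geq 2$, while $\Of^\times/\OO_{K,f,2}^\times=\{1\}$; hence Theorem~\ref{thm-incl} gives $\Gal(H_f(E[2^n])/H_f(h(E[2^n])))\cong G_n\cap\{\pm 1\}$ and shows that $[\cC_n:G_n]$ equals $1$ if $-1\in G_n$ and $2$ if $-1\notin G_n$, with $G_1=\cC_1$ always. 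Since $G_n$ surjects onto $G_{n-1}$ under reduction and $-1$ reduces to $-1$, once $-1\notin G_{n_0}$ we get $-1\notin G_n$ for all $n\geq n_0$; equivalently the index is $2$ at every level beyond the first one where it is $2$ (this is also Theorem~\ref{thm-inclp}(3)). Finally, $G_n$ is stable under the action of complex conjugation on $(\Of/2^n\Of)^\times$, because $\Gal(\overline{H_f}/H_f)$ is normal of index $2$ in $\Gal(\overline{\Q(j_{K,f})}/\Q(j_{K,f}))$ and, as explained in Section~\ref{sec-cc}, the induced action of a complex conjugation on $G_n\subseteq(\Of/2^n\Of)^\times$ is complex conjugation of the order. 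The hypothesis gives a level $n_1\geq 2$ with $G_{n_1}\subsetneq\cC_{n_1}$, hence $[\cC_{n_1}:G_{n_1}]=2$ and $-1\notin G_{n_1}$.

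\textbf{Case (1): $[\cC_2:G_2]=2$}, i.e.\ $\Gal(H_f(E[4])/H_f)\cong\cC_2/\{\pm1\}$. Then $-1\notin G_2$, and for every $n\geq 2$ the reduction $G_n\bmod 4\Of$ equals $G_2$, which omits $-1$ and is complex-conjugation-stable. Lemma~\ref{lem-2adicindex2subgpsofCartan}(2), applied to the index-$2$ subgroup $G_n$, then forces $\Delta_Kf^2\equiv 0\bmod 16$ and $G_n=\langle 5,1+f\tau\rangle/2^n=J_1$ or $\langle 5,-1-f\tau\rangle/2^n=J_2$. The congruence $\Delta_Kf^2\equiv 0\bmod 16$, together with the fact that a fundamental discriminant $\equiv 0\bmod 4$ is in fact $\equiv 8$ or $12\bmod 16$, yields the two subcases of (1a) on $(\Delta_K,f)$. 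For (1b): in Case (1) we have $\delta=\Delta_Kf^2/4\equiv 0\bmod 4$, so every generator of $J_i$ has determinant $\equiv 1\bmod 4$; hence $\det\circ\rho_{E,4}$ is trivial on $\Gal(\overline{H_f}/H_f)$, and as $\det\circ\rho_{E,4}$ is the mod-$4$ cyclotomic character (Weil pairing) this means $H_f(\mu_4)=H_f$, i.e.\ $\Q(i)\subseteq H_f$.

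\textbf{Case (2): $G_2=\cC_2$}, i.e.\ $\Gal(H_f(E[4])/H_f)\cong\cC_2$. Then $n_1\geq 3$; take $n_1$ minimal with $[\cC_{n_1}:G_{n_1}]=2$, so $G_{n_1-1}=\cC_{n_1-1}$ and therefore $G_{n_1}$ is an index-$2$, complex-conjugation-stable subgroup of $\cC_{n_1}$ that surjects onto $\cC_{n_1-1}$ (hence onto $\cC_2$) and omits $-1$. If $n_1\geq 4$, Lemma~\ref{lem-2adicindex2subgpsofCartan}(4) forces $\Delta_K\equiv 0\bmod 8$ and $G_{n_1}=\langle 3,1+f\tau\rangle/2^{n_1}$ or $\langle 3,-1-f\tau\rangle/2^{n_1}$; but by Theorem~\ref{thm-2adiccartan}(3) the reduction of either of these mod $2^{n_1-1}$ has index $2$ in $\cC_{n_1-1}$, contradicting $G_{n_1-1}=\cC_{n_1-1}$. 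Hence $n_1=3$, so for every $n\geq 3$ the group $G_n$ is index $2$, omits $-1$, is complex-conjugation-stable, and surjects onto $\cC_2$; Lemma~\ref{lem-2adicindex2subgpsofCartan}(4) then gives $\Delta_K\equiv 0\bmod 8$ and $G_n=\langle 3,1+f\tau\rangle/2^n=J_1'$ or $\langle 3,-1-f\tau\rangle/2^n=J_2'$. In both cases the label (of $J_i$ resp.\ $J_i'$) is independent of $n$: since $-1\notin G_n$ one checks $\langle 5,1+f\tau\rangle/2^n\neq\langle 5,-1-f\tau\rangle/2^n$ (resp.\ the analogue with $3$), and the labels match under the reduction maps.

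\textbf{The $2$-adic image.} Fix a $\Z_2$-basis of $T_2(E)$ as in Theorem~\ref{thm-cmrep}, so $G_{E,K,2^\infty}:=\rho_{E,2^\infty}(\Gal(\overline{H_f}/H_f))\subseteq\cC_{\delta,0}(2^\infty)$; passing to the inverse limit of the $G_n$ (eventually all $J_i$, resp.\ all $J_i'$), we get that $G_{E,K,2^\infty}$ is the closed subgroup of $\cC_{\delta,0}(2^\infty)$ generated by $\alpha\cdot\operatorname{Id}$ and $\left(\begin{smallmatrix}1&1\\\delta&1\end{smallmatrix}\right)$, or by $\alpha\cdot\operatorname{Id}$ and $\left(\begin{smallmatrix}-1&-1\\-\delta&-1\end{smallmatrix}\right)$, with $\alpha=5$ in Case (1) and $\alpha=3$ in Case (2). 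The index of $G_{E,K,2^\infty}$ in $\cC_{\delta,0}(2^\infty)$ is $2$, so Lemma~\ref{lem-ccfinal2} (with $\phi=0$, $d=2$) gives $G_{E,2^\infty}=\langle\gamma,G_{E,K,2^\infty}\rangle=\langle\zeta\cdot c_0,G_{E,K,2^\infty}\rangle$ for a root of unity $\zeta$ of order dividing $2$, and $\zeta\cdot c_0\in\{c_1,c_{-1}\}$; writing $c_\varepsilon$ for this element produces exactly the displayed families. The main obstacle is the bookkeeping in Case~(2): ruling out $n_1\geq 4$---that is, showing the image already drops to index $2$ at level $n=3$---requires the precise list of index-$2$ subgroups in Lemma~\ref{lem-2adicindex2subgpsofCartan}(4) together with the group structure of Theorem~\ref{thm-2adiccartan}, and one must invoke these auxiliary results only within the ranges of $n$ for which they are stated (for instance, by Lemma~\ref{lem-2adicindex2subgpsofCartan}(1) certain $(\Delta_K,f)$ admit no proper image and are simply excluded by the hypothesis).
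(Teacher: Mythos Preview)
Your proof is correct and follows essentially the same approach as the paper: both reduce via Theorem~\ref{thm-incl} and Corollary~\ref{cor-missesrootofunity} to the question of whether $-1$ lies in $G_n$, then invoke Lemma~\ref{lem-2adicindex2subgpsofCartan}(2) and (4) for the two cases, and finish with the argument underlying Lemma~\ref{lem-ccfinal2} for the shape of complex conjugation. The only notable difference is in Case~(2): the paper uses Lemma~\ref{lem-2adicindex2subgpsofCartan}(3) directly to conclude that index~$2$ at any level $n\geq 3$ forces index~$2$ at level~$3$, whereas you argue by contradiction (applying Lemma~\ref{lem-2adicindex2subgpsofCartan}(4) at a hypothetical $n_1\geq 4$ and checking via Theorem~\ref{thm-2adiccartan}(3) that the reduction would already have index~$2$); both routes are valid, and yours is slightly more self-contained at the cost of an extra verification.
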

\begin{proof}
Let $E/\Q(j_{K,f})$ be an elliptic curve with CM by an order $\Of$ in an imaginary quadratic field $K$, with $j_{K,f}\neq 0$ or $1728$. Fix a basis of $E[2^n]$ such that the image of $\rho_{E,2^n}$ is contained in $\mathcal{N}_{\delta,\phi}(2^n)$, as in Theorem \ref{thm-cmrep-intro}, and the image of $\Gal\left(\overline{H_f}/H_f\right)$ via $\rho_{E,2^n}$ is contained in $\cC_{\delta,\phi}(2^n)$. Using $\rho_{E,2^n}$, we identify $G_{E,K,2^n}=\Gal(H_f(E[2^n])/H_f)$ with a subgroup of $\cC_n=(\Of/2^n\Of)^\times$.

Suppose that $G_{E,K,2^n}\subsetneq \cC_n$ for some $n\geq 1$. Since $j_{K,f}\neq 0,1728$, we have $\Of^\times \cong \{ \pm 1\}$. Then, Theorem \ref{thm-incl} and Lemma \ref{lem-unitsmod} show that $n\geq 2$, and we must have $G_{E,K,2^n}\cong \cC_n/\{\pm 1\}$. Corollary \ref{cor-missesrootofunity} shows that $-1$ must be the root of unity that is missing in the image $G_{E,K,2^n}$ as a subgroup of $\cC_n$, and therefore $G_{E,K,2^n}$ is a subgroup of $\cC_n$ of index $2$ that does not contain $-1$. Now, Lemma \ref{lem-2adicindex2subgpsofCartan} says that if $-1$ is missing for $n=3$ if and only if it is missing for all $n\geq 3$, so we must have that if $\Gal(H_f(E[2^n])/H_f)\cong \cC_n/\{\pm 1\}$ occurs for one $n\geq 3$, then it happens for all $n\geq 3$. Thus, it suffices to study when $-1$ is missing from $G_{E,K,2^n}$ for $n=2$ and $n=3$. As we explained in Section \ref{sec-cc}, if we fix a complex conjugation $c\in G_{\Q(j_{K,f})}$, and $\gamma=\rho_{E,2^\infty}(c)$, then conjugating $G_{E,K,2^n}$ by $\gamma$ acts as complex conjugation on the elements of $\cC_n=(\Of/2^n\Of)^\times$. Therefore, $G_{E,K,2^n}\subsetneq \cC_n$ must be a subgroup of index $2$ that is stable under complex conjugation that does not contain $-1\bmod 2^n\Of$.

Let us first suppose that $G_{E,K,2^n}=\Gal(H_f(E[2^n])/H_f)$ is a subgroup of index $2$ of $\cC_n$ (isomorphic to a subgroup of $\cC_{\delta,\phi}(2^n)\subseteq \GL(2,\Z/2^n\Z)$) that is missing the class of $-1\bmod 2^n\Of$ and is stable under complex conjugation, for all $n\geq 2$. Then, Lemma \ref{lem-2adicindex2subgpsofCartan} part (2) shows that $\Delta_Kf^2\equiv 0 \bmod 16$, and there are precisely two possibilities for $G_{E,K,2^n}$, namely 
$$J_1=\left\langle \left(\begin{array}{cc} 5 & 0\\ 0 & 5\\\end{array}\right),\left(\begin{array}{cc} 1 & 1\\ \delta & 1\\\end{array}\right)\right\rangle \text{ or } J_2=\left\langle \left(\begin{array}{cc} 5 & 0\\ 0 & 5\\\end{array}\right),\left(\begin{array}{cc} -1 & -1\\ -\delta & -1\\\end{array}\right)\right\rangle\subseteq \cC_{\delta,0}(2^n).$$
This shows (1a) and (1c). For (1b), we note that the determinant is trivial modulo $4$ on both groups. It follows that the $4$-th cyclotomic character is trivial over $H_f$, which means that $\mu_4\subset H_f$. Thus, we would have $\Q(i)\subseteq H_f=K(j_{K,f})$, as claimed. This shows (1).

For (2), let us now suppose that $G_{E,K,4}=\Gal(H_f(E[4])/H_f)$ is all of of $\cC_2$ (that is, $G_{E,K,4}$ is not missing $-1$), but $G_8=G_{E,K,8}=\Gal(H_f(E[8])/H_f)$ is a subgroup of $\cC_3$ of index $2$ that does not contain $-1$. Therefore, $G_{E,K,8}\subseteq \cC_8$ must be a subgroup that is stable under complex conjugation, and, by assumption, the reduction $G_{E,K,8}\bmod 4\Of$ must be all of $\cC_2$. Thus, Lemma \ref{lem-2adicindex2subgpsofCartan} part (4) shows that $\Delta_K\equiv 0\bmod 8$, and there are exactly two possible subgroups of $\GL(2,\Z/2^n\Z)$ given by
$$ J_1'=\left\langle \left(\begin{array}{cc} 3 & 0\\ 0 & 3\\\end{array}\right),\left(\begin{array}{cc} 1 & 1\\ \delta & 1\\\end{array}\right)\right\rangle \text{ or } J_2'=\left\langle \left(\begin{array}{cc} 3 & 0\\ 0 & 3\\\end{array}\right),\left(\begin{array}{cc} -1 & -1\\ -\delta & -1\\\end{array}\right)\right\rangle \subseteq \cC_{\delta,0}(2^n).$$
This shows (2).

Finally, by Theorem \ref{thm-cmrep}, the image of $\rho_{E,2^\infty}$ will be generated by the image of $\Gal(\overline{H_f}/H_f)$ and the image (call it $\gamma$) of complex conjugation, as a subgroup of $\mathcal{N}_{\delta,0}(2^\infty)$. Since $\gamma\in \mathcal{N}_{\delta,0}(2^\infty)$ must be a matrix of determinant equal to $-1$, zero trace, and order $2$, it follows that
$$\gamma=\left(\begin{array}{cc} -g & h\\ -\delta h & g\\\end{array}\right),$$
for some $g,h\in\Z_2$ with $g^2-\delta h^2=1$, or $g^2=1+\delta h^2$. If we consider instead $\gamma' = \left(\begin{array}{cc} g & h\\ \delta h & g\\\end{array}\right)$, then $\gamma'\in \cC_{\delta,0}(2^\infty)$. In case (1) and (2) above, $G_{E,K,2^n}$ is a subgroup of index $2$ isomorphic to $\cC_{\delta,0}(2^n)/\{\pm 1\}$ for all $n\geq 2$ and $n\geq 3$, respectively. In particular, $G_{E,K,2^\infty}$ is a subgroup of index $2$ of $\cC_{\delta,0}(2^\infty)$ isomorphic to $\cC_{\delta,0}(2^\infty)/\{\pm 1\}$. Thus, there is a $\varepsilon \in \{\pm 1 \}$ such that $\varepsilon \gamma' \in G_{E,K,2^\infty}$. Since
$$\left(\begin{array}{cc} -g & h\\ -\delta h & g\\\end{array}\right)\cdot \left(\begin{array}{cc} -\varepsilon & 0\\ 0 & \varepsilon\\\end{array}\right)^{-1}=\left(\begin{array}{cc} -g & h\\ -\delta h & g\\\end{array}\right)\cdot \left(\begin{array}{cc} -\varepsilon & 0\\ 0 & \varepsilon\\\end{array}\right)=\left(\begin{array}{cc} g\varepsilon & h\varepsilon\\ \delta h\varepsilon & g\varepsilon\\\end{array}\right)=\varepsilon \gamma',$$
it follows that if we put $c_{-\varepsilon}= \left(\begin{array}{cc} -\varepsilon & 0\\ 0 & \varepsilon\\\end{array}\right)$, then $\gamma' = c_{-\varepsilon}\cdot (\varepsilon \gamma') \subseteq \langle c_{-\varepsilon} \rangle \cdot G_{E,K,2^\infty}$. Thus, the image of $\rho_{E,2^\infty}$ is generated by $\gamma$ and $G_{E,K,2^\infty}$, and this group coincides with the subgroup generated by $c_{-\varepsilon}$ and $G_{E,K,2^\infty}$. By parts (1) and (2), the group $G_{E,K,2^\infty}$ is one of $J_1,J_2,J_1'$, or $J_2'$, and this proves the last statement of the theorem.
\end{proof}

\begin{example}\label{ex-2adicimages}
	Here we provide examples of elliptic curves $E$, defined over $\Q$, with each type of possible $2$-adic images as described in Theorem \ref{thm-2adictwist}. Recall that $c_\varepsilon=\left(\begin{array}{cc}  \varepsilon & 0\\ 0 & -\varepsilon\\ \end{array}\right)$.	 
	\begin{enumerate} 
		\item[(0)] If $\Delta_Kf^2\not\equiv 0 \bmod 8$, and $j_{K,f}\neq 0,1728$, then Theorem \ref{thm-2adictwist} shows that the image of $\rho_{E,2^\infty}$ must be all of $\mathcal{N}_{\delta,\phi}(2^\infty)$. For instance, let $E: y^2=x^3-1715x+33614$, which has CM by the maximal order of $\Q(\sqrt{-7})$. Here $\delta=(\Delta_K-1)f^2/4=-2$ and $\phi=f=1$, so according to Lemma \ref{lem-2adicindex2subgpsofCartan} the Cartan subgroup $\cC_{\delta,\phi}(8)$ has size $16$ and $\mathcal{N}_{\delta,\phi}(8)$ is of size $32$. We have verified using Magma that $\Gal(\Q(E[8])/\Q)\cong \mathcal{N}_{-2,1}(8)$ and the image of $\rho_{E,2^\infty}$ is a conjugate of
		$$\mathcal{N}_{-2,1}(8) = \left\langle \left(\begin{array}{cc}  -1 & 0\\ 1 & 1\\ \end{array}\right),\left(\begin{array}{cc} a+b & b\\ -2b & a\\ \end{array}\right) : a,b \in \Z/8\Z,\ a^2+ab+2b^2\equiv 1 \bmod 2\right\rangle.$$
		
		\item[(1)] If $\Gal(H_f(E[4])/H_f)\cong (\Of/4\Of)^\times/\{\pm 1\}$, then Theorem \ref{thm-2adictwist} says that $\Delta_k f^2\equiv 0 \bmod 16$, and since $E$ is defined over $\Q$, we must have $j_{K,f}\in \Q$, and therefore $\Delta_K=-4$ and $f=2$. In particular, $\delta=-4$ and $\phi=0$.
		\begin{itemize}
			\item The elliptic curve $E_1: y^2=x^3-11x+14$ has $2$-adic image
			$$\left\langle \left(\begin{array}{cc} 1 & 0\\ 0 & -1\\\end{array}\right), \left(\begin{array}{cc} 5 & 0\\ 0 & 5\\\end{array}\right),\left(\begin{array}{cc} -1 & -1\\ -\delta & -1\\\end{array}\right)\right\rangle \subseteq \mathcal{N}_{\delta,0}(2^\infty)\subseteq \GL(2,\Z_2).$$ %verified mod 8 suffices
			\item The elliptic curve $E_2: y^2=x^3-11x-14$ has $2$-adic image
			$$\left\langle \left(\begin{array}{cc} -1 & 0\\ 0 & 1\\\end{array}\right), \left(\begin{array}{cc} 5 & 0\\ 0 & 5\\\end{array}\right),\left(\begin{array}{cc} -1 & -1\\ -\delta & -1\\\end{array}\right)\right\rangle \subseteq \mathcal{N}_{\delta,0}(2^\infty)\subseteq \GL(2,\Z_2).$$%verified
			\item The elliptic curve $E_3: y^2=x^3-44x+112$ has $2$-adic image
			$$\left\langle \left(\begin{array}{cc} 1 & 0\\ 0 & -1\\\end{array}\right), \left(\begin{array}{cc} 5 & 0\\ 0 & 5\\\end{array}\right),\left(\begin{array}{cc} 1 & 1\\ \delta & 1\\\end{array}\right)\right\rangle \subseteq \mathcal{N}_{\delta,0}(2^\infty)\subseteq \GL(2,\Z_2).$$%verified
			\item The elliptic curve $E_4: y^2=x^3-44x-112$ has $2$-adic image
			$$\left\langle \left(\begin{array}{cc} -1 & 0\\ 0 & 1\\\end{array}\right), \left(\begin{array}{cc} 5 & 0\\ 0 & 5\\\end{array}\right),\left(\begin{array}{cc} 1 & 1\\ \delta & 1\\\end{array}\right)\right\rangle \subseteq \mathcal{N}_{\delta,0}(2^\infty)\subseteq \GL(2,\Z_2).$$%verified
		\end{itemize}
		
		\item[(2)] If $\Gal(H_f(E[4])/H_f)\cong (\Of/4\Of)^\times$, then $\Delta_K\equiv 0 \bmod 8$, and since $E$ is defined over $\Q$, we must have $K=\Q(\sqrt{-2})$, $f=1$, and $j_{K,f}=2^6\cdot 5^3$.
		\begin{itemize}
			\item The elliptic curve $E_1: y^2=x^3-4320x+96768$ has $2$-adic image
			$$\left\langle \left(\begin{array}{cc} 1 & 0\\ 0 & -1\\\end{array}\right), \left(\begin{array}{cc} 3 & 0\\ 0 & 3\\\end{array}\right),\left(\begin{array}{cc} -1 & -1\\ -\delta & -1\\\end{array}\right)\right\rangle \subseteq \mathcal{N}_{\delta,0}(2^\infty)\subseteq \GL(2,\Z_2).$$ %verified
			\item The elliptic curve $E_2: y^2=x^3-4320x-96768$ has $2$-adic image
			$$\left\langle \left(\begin{array}{cc} -1 & 0\\ 0 & 1\\\end{array}\right), \left(\begin{array}{cc} 3 & 0\\ 0 & 3\\\end{array}\right),\left(\begin{array}{cc} 1 & 1\\ \delta & 1\\\end{array}\right)\right\rangle \subseteq \mathcal{N}_{\delta,0}(2^\infty)\subseteq \GL(2,\Z_2).$$ %verified 
			\item The elliptic curve $E_3: y^2=x^3- 17280x + 774144$ has $2$-adic image
			$$\left\langle \left(\begin{array}{cc} 1 & 0\\ 0 & -1\\\end{array}\right), \left(\begin{array}{cc} 3 & 0\\ 0 & 3\\\end{array}\right),\left(\begin{array}{cc} 1 & 1\\ \delta & 1\\\end{array}\right)\right\rangle \subseteq \mathcal{N}_{\delta,0}(2^\infty)\subseteq \GL(2,\Z_2).$$%verified 
			\item The elliptic curve $E_4: y^2=x^3- 17280x - 774144$ has $2$-adic image
			$$\left\langle \left(\begin{array}{cc} -1 & 0\\ 0 & 1\\\end{array}\right), \left(\begin{array}{cc} 3 & 0\\ 0 & 3\\\end{array}\right),\left(\begin{array}{cc} -1 & -1\\ -\delta & -1\\\end{array}\right)\right\rangle \subseteq \mathcal{N}_{\delta,0}(2^\infty)\subseteq \GL(2,\Z_2).$$%verified 
		\end{itemize}
	\end{enumerate}
	Note: the author would like to help Drew Sutherland for his help in computing the images above (using code written for \cite{sutherland}, and some new and improved code).
\end{example}

\begin{example}
Let $K=\Q(\sqrt{-3})$ and let $\Of=\OO_4$ be the order of $K$ of conductor $f=4$. Let $E$ be an elliptic curve with CM by $\OO_4$. Then, $j_{K,f}$ is a root of $x^2-2835810000x+6549518250000$ and $\Q(j_{K,f})=\Q(\sqrt{3})$. In fact, the roots of the minimal polynomial of $j_{K,f}$ are
$$1417905000\pm 818626500\sqrt{3}.$$
Let us fix $j_{K,f}=1417905000 - 818626500\sqrt{3}$. In particular, $H_f=K(j_{K,f})=\Q(\sqrt{3},\sqrt{-3})=\Q(i,\sqrt{3})$. For instance, an elliptic curve with such $j=j_{K,f}$ is given by
\begin{align*} y^2 + \frac{(j - 599278500)}{818626500}xy + \frac{(j
    - 599278500)}{818626500}y = x^3 - x^2 - \frac{(j + 355785750)}{136437750}x - \frac{(j + 14691375)}{68218875},\end{align*}
or, equivalently,
$$E/\Q(\sqrt{3})\colon y^2 + (1-\sqrt{3})xy + (1-\sqrt{3})y = x^3 - x^2 + (6\sqrt{3} -
    13)x + (12\sqrt{3} - 21).$$
This is the curve \href{http://www.lmfdb.org/EllipticCurve/2.2.12.1/16.1/a/3}{\texttt{16.1-a3}} in the LMFDB database for elliptic curves over $\Q(\sqrt{3})$. Its conductor ideal is $(4)$. Since $\Delta_K=-3\equiv 1 \bmod 4$ and $f\equiv 0 \bmod 4$, Theorem \ref{thm-cmrep} says that the mod $2$ image of $\rho_{E,2}$ is contained in $\mathcal{N}_{\delta,0}(2) = \cC_{\delta,0}(2)$ (the complex multiplication matrix reduces to the identity modulo $2$). Moreover,  $\Of^\times/\mathcal{O}_{K,f,2}^\times$ is trivial, so the mod $2$ image is precisely $\cC_{\delta,0}(2)$, which by Theorem \ref{thm-2adiccartan} is $\cC_{\delta,0}(2)\cong \Z/2\Z$, given by
$$\rho_{E,2}(\Gal(\overline{\Q(j_{K,f})}/\Q(j_{K,f}))) = \left\{\left(\begin{array}{cc} 1 & 0\\ 0 & 1\\\end{array}\right),\left(\begin{array}{cc} 1 & 1\\ 0 & 1\\\end{array}\right) \right\}\subseteq \GL(2,\Z/2\Z).$$
Similarly, the image of the mod $4$ representation is contained in $\mathcal{N}_{\delta,0}(4) $, given by
$$\mathcal{N}_{\delta,0}(4)  =\left\langle \left(\begin{array}{cc} -1 & 0\\ 0 & 1\\\end{array}\right),\left(\begin{array}{cc} -1 & 0\\ 0 & -1\\\end{array}\right),\left(\begin{array}{cc} 1 & 1\\ 0 & 1\\\end{array}\right)\right\rangle.$$
However, $\Of^\times/\mathcal{O}_{K,f,4}^\times \cong \{\pm 1\}$ so the possibility exists that the image is in fact isomorphic to a subgroup of $\mathcal{N}_{\delta,0}(4) $ of index $2$, that does not contain $-\operatorname{Id}$. If the image was all of $\mathcal{N}_{\delta,0}(4) $ (a group of order $16$), then we would have that the extension $\Q(j_{K,f},E[4])/\Q(j_{K,f})$ is of degree $16$, and $\Q(j_{K,f},E[4])/\Q$ of degree $32$. However, for our choice of $E$, we have $\Q(j_{K,f},E[4])/\Q(j_{K,f})$ of degree $8$ and $\Q(j_{K,f},E[4])/\Q$ of degree $16$. Hence, the image of $\rho_{E,4}$ is of index $2$ in $\mathcal{N}_{\delta,0}(4) $, and therefore the $2$-adic image is also of index $2$ in $\mathcal{N}_{\delta,0}(2^\infty)$. 
Note that $\Delta_Kf^2 = -3\cdot 4^2 = -48\equiv 0 \bmod 16$ and $\Q(i)\subseteq H_f=\Q(\sqrt{3},i)$, as predicted by Theorem \ref{thm-2adictwist}.
\end{example}

In the next subsections, we study the $2$-adic images when $j=0$ or $1728$.

\subsection{$2$-adic images for $j=1728$}\label{sec-2adic1728}
Here we prove Theorem \ref{thm-j1728-intro}. Note that the proof contains much more information than the statement itself, since we produce explicit $\Z/4\Z$-bases of $E[4]$ where the image is contained in $\mathcal{N}_{\delta,0}(4)$.

\begin{lemma}\label{lem-2adiccartanj1728}
	Let $n\geq 1$, let $K=\Q(i)$, let $\Delta_K=-4$, let $f=1$, let $\tau=\sqrt{\Delta_K}/2 = i$, and put $\delta=-1$, $\phi=0$.  Let $\cC_n = (\OK/2^n\OK)^\times\cong \cC_{\delta,\phi}(2^n)\subseteq \GL(2,\Z/2^n\Z)$, where $\cC_{\delta,\phi}(2^n)$ is defined as in Theorem \ref{thm-cmrep}. Let $\pi\colon \cC_n\to \cC_{n-1}$ be given by reduction mod $2^{n-1}\OK$. Then,
	\begin{enumerate}
		\item $\cC_n=\langle \tau, 3, 1+2\tau \rangle/ 2^n \cong \begin{cases}
			\Z/2\Z & \text{ if } n=1,\\
			\Z/4\Z\times \Z/2^{n-2}\Z\times \Z/2^{n-1}\Z & \text{ if } n\geq 2.
		\end{cases} $
		\item For all $n\geq 2$, every subgroup of index $2$ of $\cC_n$ contains the class of $-1$.
		\item Let $n\geq 4$, and let $H_n$ be a subgroup of $\cC_n$, and let $H_{n-1}=\pi(H_n)\subseteq \cC_{n-1}$. Then, $\tau \in H_n$ if and only if $\tau \in H_{n-1}$.
		\item Suppose that   $H_n$, for some $n\geq 3$, is a subgroup with the following properties:
		\begin{itemize} 
			\item The index of $H_n$ in $\cC_n$ is $4$, and $H_n$ is  missing the classes of $\tau$ and $-1 \bmod 2^n\OK$, 
			\item The index of $H_2=\pi(H_n)$ in $\cC_2$ is strictly less than $4$.
			\item $H_n$ is stable under complex conjugation.  
		\end{itemize} 	
			Then, $[\cC_2:H_2]=2$, and $H_n=\langle -3, 2-\tau\rangle/2^n$ or $\langle -3,-2+\tau\rangle/2^n$.
		\item If $H_3$ is a subgroup of $\cC_3$ of index $2$, such that $-1\in H_3$ and $\tau\not\in H_3$, and such that $H_3$ is stable under complex conjugation, then $H_2=\pi(H_3)$ is of index $2$ in $\cC_2$.
		
		\item Suppose $H_n\subseteq \cC_n$ is a subgroup that is the full inverse image its reduction mod $4\OK$ in $\cC_2$. Then:
		\begin{itemize}
			\item If $[\cC_n:H_n]=2$, then $H_n$ is one of:
			$$\langle -1,3,1+2\tau\rangle,\ \langle -1,3,2+\tau \rangle,\ \text{ or } \ \langle \tau, 3, -3+4\tau\rangle.$$
			\item If $[\cC_n:H_n]=4$, then $H_n$ is one of:
			$$\langle 5,1+2\tau\rangle,\ \langle 5,-1-2\tau \rangle,\ \text{ or } \ \langle -1, 5, -3+4\tau\rangle.$$
		\end{itemize} 
	\end{enumerate}
\end{lemma}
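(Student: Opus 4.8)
The plan is to reduce every assertion to an explicit computation inside the finite abelian $2$-group $\cC_n=(\OK/2^n\OK)^\times$, organized around the structure statement in part (1). Part (1) itself is the special case $\Delta_K=-4$, $f=1$ (so $\delta=-1$, and $-\delta=1=r^2$ with $r=1$) of Theorem \ref{thm-2adiccartan}(2)(a), so nothing new is needed there; from then on I fix the coordinates afforded by the isomorphism $\cC_n\cong \langle\tau\rangle\times\langle 3\rangle\times\langle 1+2\tau\rangle\cong\Z/4\Z\times\Z/2^{n-2}\Z\times\Z/2^{n-1}\Z$ for $n\ge 2$. Two further ingredients I will use repeatedly: complex conjugation acts on $\cC_n$ by $z\mapsto\bar z$, hence by $\tau\mapsto-\tau$, $3\mapsto 3$, $1+2\tau\mapsto 1-2\tau$; and the reduction map $\pi\colon\cC_n\to\cC_{n-1}$ is surjective with kernel $\{1+2^{n-1}w:w\in\OK/2\OK\}\cong(\Z/2\Z)^2$, which is the specialization of the kernel lemma of Section \ref{sec-normalizers}.

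Part (2) is then immediate: $-1=\tau^2$ is twice a generator of the $\Z/4\Z$-factor, so it lies in $\cC_n^2$ and is killed by every surjection $\cC_n\to\Z/2\Z$; hence it belongs to every subgroup of index $2$. For parts (3) and (5) I would lift along $\pi$: any preimage $h\in H_n$ of $\tau$ has the form $h=\tau u$ with $u\in\ker\pi$, and since $u^2=1$ one gets $h^2=-1$; for $n\ge 3$ the kernel $\ker\pi$ meets $\langle\tau\rangle$ trivially, so the data at level $n-1$ together with the index of $H_n$, the conjugation-stability of $H_n$, and the reduction $\pi(H_n)$ pin down the shape of $H_n$, and in particular whether $\tau\in H_n$. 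Parts (4) and (6) run in the same spirit but are driven from the bottom: one first enumerates the (finitely many) subgroups of $\cC_2$ or $\cC_3$ that are stable under conjugation and have the prescribed index and the prescribed behaviour on $-1$ and $\tau$, and then uses that a full inverse image under $\pi$ is determined by its reduction mod $4$ (resp.\ mod $8$), so that any $H_n$ satisfying the hypotheses must be $\pi^{-1}$ of one of those base-level groups; rewriting the resulting generators in the basis $\langle -1,3,1+f\tau\rangle$ produces the explicit lists in the statement. The base-level enumerations are finite and routine, and the paper notes they were double-checked with Magma.

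The main difficulty here is organizational rather than conceptual: there is no single clean argument, only a careful walk through the subgroup lattice of a rank-$3$ abelian $2$-group in which one must keep track simultaneously of the distinguished elements $-1=\tau^2$ and $\tau$, of the involution induced by complex conjugation, and of compatibility with the tower of reduction maps $\pi$ and their $(\Z/2\Z)^2$-kernels. The cleanest way to keep this under control is to work throughout in the part-(1) coordinates, express $-1$, $\tau$, and $z\mapsto\bar z$ in those coordinates once and for all, and then reduce each claim either to linear algebra over $\F_2$ (after passing to $\cC_n/\cC_n^2$) or to a short lifting argument, with the levels $n=2,3$ handled by direct inspection.
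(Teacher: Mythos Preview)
Your approach to parts (1), (2), and (6) is essentially the paper's, and your observation that $-1=\tau^2\in\cC_n^2$ is in fact a cleaner argument for (2) than the paper's enumeration of all seven index-$2$ subgroups.

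The real problem is part (3). You write that ``the index of $H_n$, the conjugation-stability of $H_n$, and the reduction $\pi(H_n)$ pin down the shape of $H_n$,'' but statement (3) carries \emph{no} hypothesis on the index of $H_n$ or on conjugation-stability: it is asserted for an arbitrary subgroup $H_n\subseteq\cC_n$. Your lifting argument gets you to an element $h=\tau u\in H_n$ with $h^2=-1$, but without further hypotheses you cannot conclude $\tau\in H_n$. In fact, statement (3) is \emph{false} as written: for $n=4$ take $H_4=\langle 9\tau\rangle=\{1,9\tau,-1,-9\tau\}\subseteq(\Z[i]/16\Z[i])^\times$; then $\tau\notin H_4$, yet $9\equiv 1\pmod 8$ gives $\pi(H_4)=\langle\tau\rangle\subseteq\cC_3$, so $\tau\in H_3$. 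The paper's own proof has the same gap at a different spot: it claims that any $H_n$ with $\tau\notin H_n$ lies in one of the four index-$2$ subgroups $J_{i,j}$ missing $\tau$, but $H_4=\langle 9\tau\rangle$ lies in none of them (every character $\chi$ with $\chi(\tau)\neq 0$ also satisfies $\chi(9\tau)=\chi(\tau)\neq 0$). What \emph{is} true, and what is actually used in Theorem \ref{thm-j1728}, is the special case where $[\cC_n:H_n]\le 4$; under that bound $H_n$ genuinely sits in some $J_{i,j}$ and the paper's reduction argument goes through.

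For parts (4) and (5) your sketch is thinner than the paper's. In (4) you assert that ``any $H_n$ satisfying the hypotheses must be $\pi^{-1}$ of one of those base-level groups,'' but nothing in the hypotheses of (4) forces $H_n$ to be a full inverse image of its reduction; the paper instead parametrizes the index-$4$ subgroups missing $-1$ and $\tau$ directly as $\langle 3\tau^i,(1+2\tau)\tau^j\rangle$ and then tests conjugation-stability. For (5), your lifting heuristic points the right way (if $H_2=\cC_2$ then $\tau$ lifts to some $\tau u\in H_3$), but you still need to identify $H_3\cap\ker\pi$ and use conjugation-stability to reach a contradiction; the paper sidesteps this with a direct finite check at level $3$.
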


\begin{proof}
	Part (1) has been shown in Theorem \ref{thm-2adiccartan}, and part (2) in Lemma \ref{lem-2adicindex2subgpsofCartan}, part (1). 
	
	For (3), clearly if $\tau\in H_n$, then $\tau\equiv \tau \bmod 2^{n-1}\OK$ belongs to $H_{n-1}$. Now suppose that $\tau\not\in H_n$. Part (1) shows that $\cC_n=\langle \alpha_1,\alpha_2,\alpha_3\rangle \cong \Z/4\Z \times \Z/2^{n-2}\Z \times \Z/2^{n-1}\Z$, with $\alpha_1=\tau-i$ of order $4$ and $\alpha_1^2=-1$, and $\alpha_2,\alpha_3$ are of order $2^{n-2}$ and $2^{n-1}$, respectively. Thus, there are $7$ subgroups of index $2$ of $\cC_n$, and are given by 
	$$\langle \alpha_1^2,\alpha_2,\alpha_3 \rangle, \langle \alpha_1^2,\alpha_1\alpha_2,\alpha_3 \rangle, \langle \alpha_1^2,\alpha_2,\alpha_1\alpha_3 \rangle, \langle \alpha_1^2,\alpha_1\alpha_2,\alpha_1\alpha_3 \rangle, \langle \alpha_1,\alpha_2^2,\alpha_3 \rangle,\langle \alpha_1,\alpha_2^2,\alpha_2\alpha_3 \rangle,\langle \alpha_1,\alpha_2,\alpha_3^2 \rangle.$$
	Thus, if $H_n$ is a subgroup such that $\tau \not\in H_n$, then $H_n$ is contained in one of the four subgroups $J_{i,j} = \langle \alpha_1^2,\alpha_1^i\alpha_2,\alpha_1^j\alpha_3\rangle/2^n$, for $i,j \in \{0,1 \}$, and therefore $H_{n-1}$ is contained in one of $J_{i,j}'=\pi(J_{i,j}) \bmod 2^{n-1}\OK$. If $n-1\geq 3$, then $J_{i,j}'=\langle \alpha_1^2,\alpha_1^i\alpha_2,\alpha_1^j\alpha_3\rangle/2^{n-1}$ is generated by the classes of the same elements, and therefore $\tau=\alpha_1$ is not in $J_{i,j}'$ either, and it follows that $H_{n-1}$ cannot contain $\tau$. This shows (3).
	
	For (4), suppose that $n\geq 3$ and $H_n$, for some $n\geq 3$, is a subgroup of index $4$ in $\cC_n$ missing $\tau$. Then, $H_n = \langle 3\tau^i,(1+2\tau)\tau^j\rangle$ for some $i\in \{ 0, 2\}$ and $0\leq j\leq 3$ (note that if $i=1$ or $3$, then $(3\tau)^2=-1$, but $-1\not\in H_n$). Out of these possibilities, there are only two that are stable under complex conjugation, namely $\langle 3\tau^2,(1+2\tau)\tau^3\rangle = \langle -3, 2-\tau\rangle/2^n$ or $\langle 3\tau^2,(1+2\tau)\tau\rangle = \langle -3,-2+\tau\rangle/2^n$, as claimed.
	
	For (5), since the index $[\cC_2:H_2]$ is a divisor of the index $[\cC_3:H_3]$, it suffices to show that $[\cC_2:H_2]=1$ cannot occur. For a contradiction, suppose that $H_3$ is as in the statement, and $[\cC_2:H_2]=1$. The subgroups of $\cC_3$ that do not contain $\tau$ but $-1\in H_3$, are those of the form $\langle \alpha_1^2,\alpha_1^i\alpha_2,\alpha_1^j\alpha_3\rangle/2^3$. Of these, the only subgroups that contain $\tau$ when reduced mod $4\OK$ are given by $\langle -1, 3\tau, -2+\tau\rangle$ and $\langle -1, 3\tau, 2+\tau\rangle/2^3$. However, these subgroups are not stable under complex conjugation. Thus, $[\cC_2:H_2]=1$ is impossible, and we must have $[\cC_2:H_2]=2$.
	
	Finally, for (6), we note that it suffices to determine the subgroups $H_2$ of $\cC_2$ of index $2$ and $4$, and then compute their full inverse images in $\cC_n$. Since $\cC_2 = \langle \tau,1+2\tau \rangle/2^2 \cong \Z/4\Z \times \Z/2\Z$, it follows that there are three subgroups of index $2$:
	$$\langle -1,1+2\tau\rangle,\ \langle -1,2+\tau \rangle,\ \text{ or } \ \langle \tau\rangle/2^2,$$
	and three subgroups of index $4$:
	$$\langle 1+2\tau\rangle,\ \langle -1-2\tau \rangle,\ \text{ or } \ \langle -1\rangle/4.$$
	The full inverse images of these groups in $\cC_n$ are as claimed in the statement of the lemma.
\end{proof}

We are now ready to give a full proof of Theorem \ref{thm-j1728-intro}.

\begin{thm}\label{thm-j1728}
	Let $E/\Q$ be an elliptic curve with $j(E)=1728$, and let $c\in \GQ$ be a complex conjugation, and $\gamma=\rho_{E,2^\infty}(c)$. Let $G_{E,2^\infty}$ be the image of $\rho_{E,2^\infty}$ and let $G_{E,K,2^\infty}=\rho_{E,2^\infty}(G_{\Q(i)})$. Then, there is a $\Z_2$-basis of $T_2(E)$ such that $G_{E,K,2^\infty}$ is one of the following groups:
	\begin{itemize} 
		\item If $[\cC_{-1,0}(2^\infty):G_{E,K,2^\infty}]=1$, then $G_{E,K,2^\infty}$ is all of $\cC_{-1,0}(2^\infty)$, i.e., 
		$$G_1= \left\{\left(\begin{array}{cc} a & b\\ -b & a\\\end{array}\right)\in \GL(2,\Z_2) : a^2+b^2\not\equiv 0 \bmod 2 \right\}.$$
		
		\item If $[\cC_{-1,0}(2^\infty):G_{E,K,2^\infty}]=2$, then $G_{E,K,2^\infty}$ is one of the following groups:
		$$G_{2,a}=\left\langle -\operatorname{Id}, 3\cdot \operatorname{Id},\left(\begin{array}{cc} 1 & 2\\ -2 & 1\\\end{array}\right) \right\rangle, \text{ or } G_{2,b}=\left\langle -\operatorname{Id}, 3\cdot \operatorname{Id},\left(\begin{array}{cc} 2 & 1\\ -1 & 2\\\end{array}\right) \right\rangle.$$ 
		
		\item If $[\cC_{-1,0}(2^\infty):G_{E,K,2^\infty}]=4$, then $G_{E,K,2^\infty}$ is one of the following groups:
		$$G_{4,a}=\left\langle 5\cdot \operatorname{Id},\left(\begin{array}{cc} 1 & 2\\ -2 & 1\\\end{array}\right) \right\rangle, \text{ or } G_{4,b}=\left\langle 5\cdot \operatorname{Id},\left(\begin{array}{cc} -1 & -2\\ 2 & -1\\\end{array}\right) \right\rangle, \text{ or }$$ 
		$$ G_{4,c}= \left\langle -3\cdot \operatorname{Id},\left(\begin{array}{cc} 2 & -1\\ 1 & 2\\\end{array}\right) \right\rangle, \text{ or } G_{4,d}=\left\langle -3\cdot \operatorname{Id},\left(\begin{array}{cc} -2 & 1\\ -1 & -2\\\end{array}\right) \right\rangle.$$
\end{itemize}
Moreover, $G_{E,2^\infty}=\langle \gamma,G_{E,K,2^\infty}\rangle = \langle \gamma', G_{E,K,2^\infty}\rangle$ is generated by one of the groups above, and an element 
$$\gamma' \in \left\{ c_1=\left(\begin{array}{cc} 1 & 0\\ 0 & -1\\\end{array}\right),c_{-1}=\left(\begin{array}{cc} -1 & 0\\ 0 & 1\\\end{array}\right),c_1'=\left(\begin{array}{cc} 0 & 1\\ 1 & 0\\\end{array}\right),c_{-1}'=\left(\begin{array}{cc} 0 & -1\\ -1 & 0\\\end{array}\right) \right\},$$ and  
 $\gamma \equiv \gamma' \bmod 4.$
\end{thm}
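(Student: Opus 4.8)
\emph{Proof plan.} Since $j(E)=1728$, the curve $E$ has CM by $\OK=\Z[i]$ with $K=\Q(i)$, and here $\Q(j_{K,f})=\Q$, $H_f=K(j_{K,f})=\Q(i)$, $\delta=-1$ and $\phi=0$. First I would invoke Theorem~\ref{thm-cmrep} to fix a compatible system of $\Z/2^n\Z$-bases of $E[2^n]$ (and a $\Z_2$-basis of $T_2(E)$) so that $G_{E,K,2^n}:=\rho_{E,2^n}(G_{\Q(i)})\subseteq \cC_{-1,0}(2^n)$ and $G_{E,2^n}\subseteq\mathcal{N}_{-1,0}(2^n)$, identifying $\cC_{-1,0}(2^\infty)$ with $\Z_2[i]^\times$ via $\left(\begin{smallmatrix}a&b\\-b&a\end{smallmatrix}\right)\leftrightarrow a+bi$ as in Remark~\ref{rem-complexconj}. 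By the discussion of Section~\ref{sec-cc}, $G_{E,K,2^\infty}$ is normal in $G_{E,2^\infty}$ and conjugation by $\gamma=\rho_{E,2^\infty}(c)$ induces on $\cC_{-1,0}(2^\infty)$ the involution $a+bi\mapsto a-bi$, so $G_{E,K,2^\infty}$ is stable under complex conjugation. By Theorem~\ref{thm-incl} and Lemma~\ref{lem-unitsmod}, for $n\ge 2$ one has $[(\Z[i]/2^n)^\times:G_{E,K,2^n}]=4/\#(G_{E,K,2^n}\cap\mu_4)$ with $\mu_4=\langle i\rangle=\OK^\times$; hence $d_\infty:=[\cC_{-1,0}(2^\infty):G_{E,K,2^\infty}]\in\{1,2,4\}$, and $d_\infty=1\iff i\in G_{E,K,2^\infty}$, $d_\infty=2\iff -1\in G_{E,K,2^\infty}$ but $i\notin G_{E,K,2^\infty}$, and $d_\infty=4\iff \mu_4\cap G_{E,K,2^\infty}=\{1\}$.

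If $d_\infty=1$ then $G_{E,K,2^\infty}=\cC_{-1,0}(2^\infty)=G_1$ and we are done; the substance of the proof is the case $d_\infty\in\{2,4\}$. The difficulty specific to $p=2$ is that the index is even, so Theorem~\ref{thm-inclp}(5) does not apply and $G_{E,K,2^\infty}$ need not be the full inverse image of its reduction modulo $4$; instead I would work at levels $4$ and $8$ and lift by hand, using the explicit structure $\Z_2[i]^\times\cong\Z/4\Z\times\Z_2\times\Z_2$ of Lemma~\ref{lem-2adiccartanj1728}(1). The organizing data are the indices $d_n:=[(\Z[i]/2^n)^\times:G_{E,K,2^n}]$ for $n=2,3$ (equivalently, whether $G_{E,K,2^n}$ contains $-1$, resp.\ $i$), and the tools for the lifting are: (i) $d_n$ is non-decreasing with limit $d_\infty$, while $d_3=1$ forces $d_\infty=1$ by Theorem~\ref{thm-inclp}(3); (ii) Lemma~\ref{lem-2adicindex2subgpsofCartan}(3) and Lemma~\ref{lem-2adiccartanj1728}(3), by which membership of $-1$ (for $n\ge 3$) and of $i=\tau$ (for $n\ge 4$) propagates between consecutive levels; and (iii) complex-conjugation stability of $G_{E,K,2^n}$ together with surjectivity of $G_{E,K,2^n}\to G_{E,K,4}$.

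For $d_\infty=2$ I would show $d_2\in\{1,2\}$ and $d_3=2$, and then observe that the complex-conjugation-stable index-$2$ subgroups of $\cC_{-1,0}(2^n)$ omitting $i$ are exactly the two produced by Lemma~\ref{lem-2adiccartanj1728}(6) once the third candidate $\langle\tau,3,-3+4\tau\rangle$ is discarded (it contains $i$, hence is excluded by the index bookkeeping of the first paragraph); these lift compatibly to $G_{2,a}$ and $G_{2,b}$. For $d_\infty=4$ I would separate the case that the index first jumps to $4$ already at level $4$ — so $G_{E,K,2^\infty}$ is the full inverse image of a complex-conjugation-stable, index-$4$, $(-1)$-free subgroup of $\cC_{-1,0}(4)$, which by Lemma~\ref{lem-2adiccartanj1728}(6) is $G_{4,a}$ or $G_{4,b}$ — from the case that the jump first occurs at level $8$ — so by Lemma~\ref{lem-2adiccartanj1728}(4) it is $G_{4,c}$ or $G_{4,d}$; later jumps are impossible by (ii), and $d_2=1$ with $d_\infty=4$ is ruled out the same way. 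This exhausts the list of possibilities for $G_{E,K,2^\infty}$.

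Finally, to recover $G_{E,2^\infty}=\langle\gamma,G_{E,K,2^\infty}\rangle$: Lemma~\ref{lem-insidecartan} gives $\gamma\notin\cC_{-1,0}(2^\infty)$, and Lemma~\ref{lem-zerotrace} (the Weil pairing) gives that $\gamma$ has zero trace, determinant $-1$, and order $2$; writing $\gamma=c_0\cdot\alpha$ with $c_0=\left(\begin{smallmatrix}-1&0\\0&1\end{smallmatrix}\right)$ and $\alpha\in\cC_{-1,0}(2^\infty)$ yields $\gamma=\left(\begin{smallmatrix}-a&-b\\-b&a\end{smallmatrix}\right)$ with $a^2+b^2=1$ in $\Z_2$. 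Reducing $a^2+b^2=1$ modulo $8$ forces either $a$ odd and $b\equiv 0\bmod 4$, or $b$ odd and $a\equiv 0\bmod 4$, so $\gamma\bmod 4$ is one of $c_1,c_{-1},c_1',c_{-1}'$, giving the asserted $\gamma'$. The equality $\langle\gamma,G_{E,K,2^\infty}\rangle=\langle\gamma',G_{E,K,2^\infty}\rangle$ then reduces to checking, for each of the finitely many groups found above, that $\gamma(\gamma')^{-1}\in\cC_{-1,0}(2^\infty)$ actually lies in $G_{E,K,2^\infty}$: automatic when $G_{E,K,2^\infty}$ contains the level-$4$ principal congruence subgroup of $\cC_{-1,0}(2^\infty)$ (cases $G_1,G_{2,a},G_{2,b},G_{4,a},G_{4,b}$), and in the remaining cases $G_{4,c},G_{4,d}$ a consequence of the fact that an element of $\mathcal{N}_{-1,0}(2^\infty)\setminus\cC_{-1,0}(2^\infty)$ normalizing such a group and acting on it by $a+bi\mapsto a-bi$ is already determined modulo $8$. \textbf{The main obstacle} I anticipate is precisely the lifting step in the $d_\infty\in\{2,4\}$ cases: since $\rho_{E,2^\infty}$ is genuinely not determined by its reduction modulo $2$ or $4$, a mod-$4$ computation cannot simply be transported, and one must instead track how index-$2$ and index-$4$ subgroups of $(\Z[i]/2^n)^\times$ lift with $n$ while staying complex-conjugation-stable — which is exactly what Lemma~\ref{lem-2adiccartanj1728} is built to supply, and the $\mu_4$-torsion (rather than a mere $\{\pm1\}$) is what produces the bifurcation into the families $G_{2,\bullet}$ and $G_{4,\bullet}$.
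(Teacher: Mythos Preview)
Your overall strategy matches the paper's: set up the Cartan via Theorem~\ref{thm-cmrep}, classify $G_{E,K,2^\infty}$ by its index using the case analysis of Lemma~\ref{lem-2adiccartanj1728}, and then pin down the coset of complex conjugation. Two points need repair.

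\textbf{The $d_\infty=2$ case.} You establish $d_3=2$ but only $d_2\in\{1,2\}$, and then invoke Lemma~\ref{lem-2adiccartanj1728}(6), which classifies only those index-$2$ subgroups that are full inverse images of their mod-$4$ reductions. If $d_2=1$ this fails: $G_{E,K,8}$ would then surject onto $\cC_2$ and would \emph{not} be a full inverse image from level $4$. The paper closes this gap with Lemma~\ref{lem-2adiccartanj1728}(5), which you omit from your toolkit (ii): an index-$2$ subgroup of $\cC_3$ containing $-1$, omitting $\tau$, and stable under conjugation must already have index $2$ at level $4$. Add this citation and the argument goes through.

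\textbf{The coset of $\gamma$ for $G_{4,c},G_{4,d}$.} Your claim that ``an element of $\mathcal{N}_{-1,0}(2^\infty)\setminus\cC_{-1,0}(2^\infty)$ normalizing such a group and acting on it by $a+bi\mapsto a-bi$ is already determined modulo $8$'' is false as stated: \emph{every} element of $\mathcal{N}_{-1,0}\setminus\cC_{-1,0}$ acts on $\cC_{-1,0}$ by complex conjugation, so this gives no constraint. Since $G_{4,c}$ and $G_{4,d}$ do not contain the full level-$4$ congruence kernel of the Cartan (their mod-$4$ reductions have index $2$, not $4$), the reduction $\gamma(\gamma')^{-1}\equiv 1\bmod 4$ is not enough to place $\gamma(\gamma')^{-1}$ in $G_{E,K,2^\infty}$. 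The paper takes the opposite route here: rather than choosing $\gamma'$ as $\gamma\bmod 4$ and then proving generation, it invokes Lemma~\ref{lem-ccfinal2} to produce $\gamma'=\zeta c_\phi$ with $\zeta\in\mu_4$ for which $\langle\gamma',G_{E,K,2^\infty}\rangle=\langle\gamma,G_{E,K,2^\infty}\rangle$ holds by construction, and only afterward reads off the congruence mod $4$. That is the cleaner order of operations for these two exceptional groups.
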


\begin{proof} Since $j_{K,f}=1728$, it follows that $K=\Q(i)$, $\Delta_K=-4$ and $f=1$, and therefore we set $\delta=-1$ and $\phi=0$. Put $\tau=i$. Fix a basis of $E[2^n]$ such that the image of $\rho_{E,2^n}$ is contained in $\mathcal{N}_{\delta,0}(2^n)$, as in Theorem \ref{thm-cmrep-intro}, and the image of $\Gal\left(\overline{H_f}/H_f\right)$ via $\rho_{E,2^n}$ is contained in $\cC_{\delta,0}(2^n)$. Using $\rho_{E,2^n}$, we identify $G_{E,K,2^n}=\Gal(H_f(E[2^n])/H_f)$ with a subgroup of $\cC_n=(\OK/2^n\OK)^\times$. The index of $G_{E,K,2^n}$ in $\cC_n$ is a divisor of $4=\#\OK^\times$. We distinguish two cases according to the values of the index $[\cC_n:G_{E,K,2^n}]$ for $n\geq 2$:
	
	\begin{enumerate}
		\item If the index $[\cC_n:G_{E,K,2^n}]=d=1,2$, or $4$, is constant for all $n\geq 2$, then it follows that $G_{E,K,2^n}$ is the full inverse image of $G_{E,K,4}$ under the natural map $\cC_n\to \cC_4$. Therefore, $G_{E,K,2^n}$ is one of the groups described in Lemma \ref{lem-2adiccartanj1728}, part (6). However, note that if $[\cC_n:G_{E,K,2^n}]=4$ (resp. $2$), then $G_{E,K,2^n}$ must be missing both $-1$ and $i$ (resp. $\tau=i$), by Cor. \ref{cor-missesrootofunity}. Thus, the images $\langle -1,5,-3+4\tau\rangle$ (resp. $\langle \tau,3,-3+4\tau$) cannot occur. 
		\item Suppose $[\cC_n:G_{E,K,2^n}]$ is not constant for $n\geq 2$. Note that $G_{E,K,2^{n-1}}=\pi(G_{E,K,2^n})$ via $\pi\colon \cC_n \to \cC_{n-1}$. Thus, $[\cC_n:G_{E,K,2^{n-1}}]$ is a divisor of $[\cC_n:G_{E,K,2^{n}}]$. Moreover, Cor. \ref{cor-missesrootofunity} shows that if $[\cC_n:G_{E,K,2^{n}}]=2$ or $4$, then $G_{E,K,2^n}$ is missing $\tau=i$ or $-1$, respectively. We distinguish a few cases:
		\begin{itemize}
			\item If $[\cC_n:G_{E,K,2^n}]=1$ for all $n\geq m$, for some $m\geq 3$, then $[H_f(E[2^m]):H_f(h(E[2^m]))]=4=\# \OK^\times$. Now Theorem \ref{thm-inclp} shows that  $[H_f(E[2^n]):H_f(h(E[2^n]))]=4$ for all $n\geq 2$, and therefore $[\cC_n:G_{E,K,2^n}]=1$ for all $n\geq 2$ (which is case (1)).
			
			\item If $[\cC_n:G_{E,K,2^n}]=2$ for all $n\geq m$, for some $m\geq 3$, then Lemma \ref{lem-2adiccartanj1728}, part (2), shows that $-1 \in G_{E,K,2^m}$ (and therefore $-1\in G_{E,K,2^n}$ for all $n\geq 2$), so $G_{E,K,2^m}$ must be missing $\pm \tau$. Lemma \ref{lem-2adiccartanj1728} part (3) shows that $\tau$ cannot be in $G_{E,K,2^3}$ either. So $G_{E,K,2^3}$ contains $-1$ but not $\tau$. Hence, $[\cC_n:G_{E,K,2^3}]=2$ also. Now, Lemma \ref{lem-2adiccartanj1728} part (5) shows that $[\cC_n:G_{E,K,2^2}]=2$ also, and therefore $[\cC_n:G_{E,K,2^n}]=2$ for all $n\geq 2$ (which is case (1)).
			
			\item If $[\cC_n:G_{E,K,2^n}]=4$ for all $n\geq m$, for some $m\geq 3$, then we must have $[\cC_n:G_{E,K,2^n}]=4$ for all $n\geq 3$. If we assume that $[\cC_n:G_{E,K,2^2}]\neq 4$, then Lemma \ref{lem-2adiccartanj1728}, part (4) shows that $[\cC_2:G_{E,K,2}]=2$, and $H_n=\langle -3, 2-\tau\rangle/2^n$ or $\langle -3,-2+\tau\rangle/2^n$. Hence, $G_{E,K,2^\infty}$ is generated by
			$$\left\langle \left(\begin{array}{cc} -3 & 0\\ 0 & -3\\\end{array}\right),\left(\begin{array}{cc} 2 & -1\\ 1 & 2\\\end{array}\right) \right\rangle, \text{ or } \left\langle \left(\begin{array}{cc} -3 & 0\\ 0 & -3\\\end{array}\right),\left(\begin{array}{cc} -2 & 1\\ -1 & -2\\\end{array}\right) \right\rangle\subseteq \cC_{-1,0}(2^\infty)\subseteq \GL(2,\Z_2).$$
		\end{itemize}
	\end{enumerate}
	
	Finally, by Lemma \ref{lem-ccfinal2}, if we let $c_\phi =\left(\begin{array}{cc} -1  & 0\\ 0 & 1\\\end{array}\right)$. Then, there is a root of unity $\zeta \in \cC_{\delta,\phi}(2^\infty)$ of order dividing $4$ such that $G_{E,p^\infty} = \langle \gamma, G_{E,K,p^\infty}\rangle= \langle \zeta \cdot c_\phi, G_{E,K,p^\infty}\rangle$. Since $\zeta = \pm 1$ or $\pm i$, then $\gamma'=\zeta \cdot c_\phi$ is one of 
	$\left\{ \left(\begin{array}{cc} 1 & 0\\ 0 & -1\\\end{array}\right),\left(\begin{array}{cc} -1 & 0\\ 0 & 1\\\end{array}\right),\left(\begin{array}{cc} 0 & 1\\ 1 & 0\\\end{array}\right),\left(\begin{array}{cc} 0 & -1\\ -1 & 0\\\end{array}\right) \right\}.$
	
	In addition, note that $\gamma=\left(\begin{array}{cc} -a & b\\ b & a\\\end{array}\right) \in \mathcal{N}_{-1,0}(2^\infty)\subseteq  \GL(2,\Z_2)$, for some $a,b\in\Z_2$, is of order $2$, trace $0$, determinant $-1$. Thus, $a^2+b^2=1$, and so $a$ or $b\equiv 0\bmod 4$. Thus, $\gamma\equiv \gamma'\bmod 4$. 
	\end{proof}

\begin{example}\label{ex-j1728-2adic}
	Here we provide examples of elliptic curves $E:y^2=x^3+sx$, defined over $\Q$ and with $j(E)=1728$, with each type of possible $2$-adic image. We use the notation $c_\varepsilon$ and $c_\varepsilon'$, and the labels $G_1,G_{2,a},\ldots$ for subgroups of $\cC_{-1,0}(2^\infty)$, introduced in Lemma \ref{thm-j1728}.	 Below we give pairs of values $(s,G,c)$ such that $E:y^2=x^3+sx$ has the desired image $G_{E,2^\infty}=\langle c, G\rangle$ and the image of a complex conjugation is congruent to $c\bmod 4$.
\begin{itemize} 
	\item If $[\cC_{-1,0}(2^\infty):G_{E,K,2^\infty}]=1$, then one can take $(s,G,c)=(3,G_1,c_1)$ or $(-3,G_1,c_{-1})$. We note, however, that $\langle c_\varepsilon,G_1\rangle =\langle c_{-\varepsilon},G_1\rangle= \langle c_{\varepsilon}',G_1\rangle$ for any $\varepsilon\in \{\pm 1 \}$.
	
	\item If $[\cC_{-1,0}(2^\infty):G_{E,K,2^\infty}]=2$, then one can take $(-9,G_{2,a},c_{-1})$,%verified
	 $(9,G_{2,a},c_{-1}')$, %verified
	   $(18,G_{2,b},c_{-1}')$, or %verified 
	  $(-18,G_{2,b},c_{-1})$.%verified
	
	\item If $[\cC_{-1,0}(2^\infty):G_{E,K,2^\infty}]=4$, then one can take $(-4,G_{4,a},c_{-1})$, %verified
	 $(1,G_{4,a},c_{-1}')$, %verified
	   $(-1,G_{4,b},c_{-1})$, %verified
	    $(4,G_{4,b},c_{-1}')$, %verified
	     $(2,G_{4,c},c_{1})$ %verified
	, $(-2,G_{4,d},c_{-1})$ %verified
	, $(8,G_{4,d},c_{1})$ %verified
	, or $(-8,G_{4,c},c_{-1})$.%verified
\end{itemize}
\end{example}

\subsection{$2$-adic images for $j=0$}\label{sec-2adic0}
Here we provide a proof of Theorem \ref{thm-jzero-intro}. We first need a lemma about the possible subgroups of the Cartan of index $2$, $3$, and $6$.

\begin{lemma}\label{lem-2adiccartanjzero}
	Let $n\geq 1$, let $K=\Q(\sqrt{-3})$, let $\Delta_K=-3$, let $f=1$, let $\tau=(1+\sqrt{\Delta_K})/2 = (1+\sqrt{-3})/2$, and put $\delta=-1$, $\phi=1$.  Let $\cC_n = (\OK/2^n\OK)^\times\cong \cC_{\delta,\phi}(2^n)\subseteq \GL(2,\Z/2^n\Z)$, where $\cC_{\delta,\phi}(2^n)$ is defined as in Theorem \ref{thm-cmrep}. Then,
	\begin{enumerate}
		\item $\cC_n=\langle -1, 3+4\tau,1+\tau \rangle/ 2^n \cong \begin{cases}
		\Z/3\Z & \text{ if } n=1,\\
		\Z/2\Z\times \Z/2^{n-2}\Z\times \Z/(2^{n-1}\cdot 3\Z) & \text{ if } n\geq 2.
		\end{cases} $
		\item For all $n\geq 2$, there are no subgroups of index $2$ or $6$ of $\cC_n$ missing $-1$ and stable under complex conjugation.
		\item For all $n\geq 2$, there is a unique subgroup of index $3$ of $\cC_n$, given by $\langle -1, 3+4\tau, -3+6\tau\rangle/2^n$, where $-3+6\tau=(1+\tau)^3$.
	\end{enumerate}
\end{lemma}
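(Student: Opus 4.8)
The plan is to exploit that $2$ is inert in $K=\Q(\sqrt{-3})$, since $\Delta_K=-3\equiv 5\bmod 8$. Hence $\OK/2^n\OK$ is a finite local ring with residue field $\F_4$, so $\cC_n=(\OK/2^n\OK)^\times$ decomposes as $\mu_3\times U_n$, where $\mu_3\cong\F_4^\times$ is the group of Teichm\"uller lifts and $U_n=1+2\OK/2^n\OK$ is an abelian $2$-group of order $2^{2n-2}$. For part (1), this is exactly the case $d\equiv 5\bmod 8$, $f=1$ of Theorem \ref{thm-2adiccartan}, and I would re-derive the assertion directly: using $\tau^2=\tau-1$ one checks $-1,3+4\tau\in U_n$ while $1+\tau$ reduces to a generator of $\F_4^\times$; then one computes the orders of these three elements (with a $2$-adic valuation estimate for the $2$-power parts, noting in particular $(1+\tau)^2=3\tau$), verifies independence, and matches $|\cC_n|=3\cdot 2^{2n-2}$. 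Along the way I would record the identity $(1+\tau)^3=-3+6\tau$, a one-line computation from $(1+\tau)^2=3\tau$ and $\tau^2=\tau-1$; since $1+\tau$ has order $3\cdot 2^{n-1}$, the element $-3+6\tau=(1+\tau)^3$ has order $2^{n-1}$ and generates the $2$-Sylow of $\langle 1+\tau\rangle$.

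For part (3), note the Sylow-$3$ subgroup of $\cC_n$ is the cyclic group $\mu_3$. An abelian group $A=A_3\times A_{3'}$ whose $3$-part $A_3$ is cyclic of order $3$ has a unique subgroup of index $3$, namely its $2$-part $A_{3'}$. From the generator description of part (1), this $2$-part is $\cC_n[2^\infty]=\langle -1,3+4\tau,(1+\tau)^3\rangle=\langle -1,3+4\tau,-3+6\tau\rangle$, which is the claimed group; uniqueness is immediate, and being the unique subgroup of its index it is characteristic, hence stable under complex conjugation (which is what part (2) and later arguments need).

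For part (2), write $A:=\cC_n[2^\infty]$ for the unique index-$3$ subgroup. I would first reduce both cases to a single statement about $A$. Every index-$2$ subgroup of $\cC_n$ contains $\mu_3$ (a homomorphism $\Z/3\Z\to\Z/2\Z$ is trivial), hence is of the form $H'\times\mu_3$ with $H'$ an index-$2$ subgroup of $A$; and every index-$6$ subgroup $H$ of $\cC_n$ is contained in $A$ (the quotient $\cC_n/H\cong\Z/6\Z$ has a unique subgroup of order $2$, whose preimage is an index-$3$ subgroup, necessarily $A$) with $[A:H]=2$. In either case, since $-1\in A$, $A\cap\mu_3=1$, and $A$ is stable under complex conjugation, the properties ``misses $-1$'' and ``stable under complex conjugation'' for the subgroup of $\cC_n$ are equivalent to the same properties for the corresponding index-$2$ subgroup of $A$. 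So it suffices to show that no index-$2$ subgroup $H$ of $A$ both misses $-1$ and is stable under complex conjugation $c$. Such an $H$ equals $\ker\phi$ for a surjection $\phi\colon A\to\Z/2\Z$, and $H$ is $c$-stable if and only if $\phi\circ c=\phi$ (two surjections onto $\Z/2\Z$ with the same kernel are equal), i.e.\ if and only if $\phi$ vanishes on the subgroup $\{c(a)a^{-1}:a\in A\}$. Now consider $\sqrt{-3}=2\tau-1$: from $\tau^2=\tau-1$ we get $(2\tau-1)^2=-3$, and since $-3$ has $2$-power order in $(\Z/2^n\Z)^\times$ (as $-3\in 1+4\Z_2$), the element $2\tau-1$ has $2$-power order in $\cC_n$ and therefore lies in $A$; moreover $\overline{2\tau-1}=2(1-\tau)-1=-(2\tau-1)$, so $c(\sqrt{-3})\cdot\sqrt{-3}^{\,-1}=-1$. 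Hence $-1\in\{c(a)a^{-1}:a\in A\}$, so every $c$-invariant $\phi$ satisfies $\phi(-1)=0$, i.e.\ every $c$-stable index-$2$ subgroup of $A$ contains $-1$. This is precisely the claim.

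The only genuinely fiddly point is the order bookkeeping in part (1), especially at $n=2$, where the middle cyclic factor $\Z/2^{n-2}\Z$ degenerates to the trivial group (so that $3+4\tau$ still generates $\cC_n$ together with $-1$ and $1+\tau$, but redundantly, and one must check $3+4\tau\equiv-1\bmod 4\OK$); everything else is either a short algebraic identity or a standard structural fact about finite abelian groups, and the key idea for part (2) --- producing $\sqrt{-3}$ as an element on which complex conjugation acts by $-1$, so that $-1$ visibly lies in the image of $c-1$ --- sidesteps any enumeration of subgroups and avoids the finite Magma check used in the analogous Lemma \ref{lem-2adicindex2subgpsofCartan}.
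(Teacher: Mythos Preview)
Your proof is correct. Parts (1) and (3) proceed essentially as in the paper (part (1) is just the relevant case of Theorem~\ref{thm-2adiccartan}, and for part (3) both you and the paper observe that a group whose $3$-part is cyclic of order $3$ has a unique index-$3$ subgroup).

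Part (2) is where your argument genuinely differs. The paper enumerates directly: for $n=2$ it lists the two index-$2$ subgroups of $\cC_2$ missing $-1$ and checks neither is stable under $c$; for $n\geq 3$ it lists the four subgroups $\langle (-1)^a\alpha_1,(-1)^b\alpha_2\rangle/2^n$ and verifies none is $c$-stable; and similarly for index $6$. Your approach is more structural: you reduce both the index-$2$ and index-$6$ cases to index-$2$ subgroups of the $2$-Sylow $A$, and then exhibit the single element $\sqrt{-3}=2\tau-1\in A$ satisfying $c(\sqrt{-3})=-\sqrt{-3}$, which forces $-1$ into the image of $a\mapsto c(a)a^{-1}$ on $A$ and hence into the kernel of every $c$-invariant character $A\to\Z/2\Z$. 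This avoids any enumeration, is uniform in $n$, and makes transparent why complex conjugation is exactly the obstruction---it is ultimately just the statement that $\sqrt{-3}$ is purely imaginary. The paper's enumeration has the (minor) advantage of exhibiting the non-$c$-stable subgroups explicitly, but your argument is cleaner and, as you note, sidesteps the Magma check used in the analogous Lemma~\ref{lem-2adicindex2subgpsofCartan}.
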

\begin{proof}
	Part (1) was shown in Theorem \ref{thm-2adiccartan}.  Thus, for all $n\geq 2$, we have $\cC_n = \langle -1, \alpha_1,\alpha_2\rangle/2^n$ with $\alpha_1$ of order $2^{n-2}$ and $\alpha_2=1+\tau$ of order $2^{n-1}\cdot 3$. Moreover:
	\begin{itemize}
		\item Since $\alpha_1\equiv -1\bmod 4$, there are $2$ possible subgroups mod $4\OK$ that miss the class of $-1$, namely $\langle 1+\tau\rangle$ and $\langle -1-\tau \rangle/2^2$. These subgroups are not stable under complex conjugation (note that $\bar{\tau}=1-\tau$). 
		\item If $n\geq 3$, there are $4$ subgroups of index $2$ that miss $-1$, namely $\langle (-1)^a \alpha_1,(-1)^b\alpha_2\rangle/2^n$, for some $a,b\in\{0,1\}$, and none are stable under complex conjugation.
	\end{itemize}   
Since the $3$-adic valuation of $\#\cC_n=2^{(2n-2)}\cdot 3$ is exactly $1$, if $H_n$ is a subgroup of index $6$ that does not contain $-1$, then $H_n$ must be of the form $\langle (-1)^a \alpha_1,(-1)^b\alpha_2^3\rangle/2^n$, for some $a,b\in\{0,1 \}$, where $\alpha_2^3=(1+\tau)^3=-3+6\tau$. None of these subgroups are stable under complex conjugation. This shows (2).

For (3), since $\#\cC_n=2^{(2n-2)}\cdot 3$, there is a unique subgroup of index $3$, given by $\langle -1, \alpha_1,\alpha_2^3\rangle/2^n$.
\end{proof}

\begin{thm}\label{thm-jzero}
	Let $E/\Q$ be an elliptic curve with $j(E)=0$, and let $c\in \GQ$ be a complex conjugation. Let $G_{E,2^\infty}$ be the image of $\rho_{E,2^\infty}$ and let $G_{E,K,2^\infty}=\rho_{E,2^\infty}(G_{\Q(\sqrt{-3})})$. Then, there is a $\Z_2$-basis of $T_2(E)$ such that the image $G_{E,2\infty}$ of $\rho_{E,2^\infty}$ is one of the following groups of $\GL(2,\Z_2)$, with $\gamma=\rho_{E,2^\infty}(c)$.
\begin{itemize}
	\item Either, $[\cC_{-1,1}(2^\infty):G_{E,K,2^\infty}]=3$, and 
	\begin{align*} G_{E,2^\infty} &=\left\langle \gamma', -\operatorname{Id}, \left(\begin{array}{cc} 7 & 4\\ -4 & 3\\\end{array}\right), \left(\begin{array}{cc} 3 & 6\\ -6 & -3\\\end{array}\right)\right\rangle\\
		&=\left\langle \gamma', \left\{\left(\begin{array}{cc} a+b & b\\ -b & a\\\end{array}\right)\in \GL(2,\Z_2) : a\not\equiv 0 \bmod 2,\ b\equiv 0 \bmod 2 \right\}\right\rangle,\end{align*}  
	Moreover, $\left\{\left(\begin{array}{cc} a+b & b\\ -b & a\\\end{array}\right): b\equiv 0 \bmod 2 \right\}$ is precisely the set of matrices that correspond to the subgroup of cubes of Cartan elements $\cC_{-1,1}(2^\infty)^3$, which is the unique group of index $3$ in $\cC_{-1,1}(2^\infty)$, 
	\item Or, $[\cC_{-1,1}(2^\infty):G_{E,K,2^\infty}]=1$, and 
	\begin{align*} G_{E,2^\infty} &=\mathcal{N}_{-1,1}(2^\infty)=\left\langle \gamma', -\operatorname{Id}, \left(\begin{array}{cc} 7 & 4\\ -4 & 3\\\end{array}\right), \left(\begin{array}{cc} 2 & 1\\ -1 & 1\\\end{array}\right)\right\rangle\\
	&=\left\langle \gamma', \left\{\left(\begin{array}{cc} a+b & b\\ -b & a\\\end{array}\right)\in \GL(2,\Z_2) : a\not\equiv 0 \text{ or } b\not\equiv 0 \bmod 2 \right\}\right\rangle\end{align*}
\end{itemize} 
where $\gamma'\in \left\{ \left(\begin{array}{cc} 0 & 1\\ 1 & 0\\\end{array}\right),\left(\begin{array}{cc} 0 & -1\\ -1 & 0\\\end{array}\right)\right\},$ and $\gamma\equiv \gamma'\bmod 4$.
\end{thm}
\begin{proof}
	
	Since $j_{K,f}=0$, it follows that $K=\Q(\sqrt{-3})$, $\Delta_K=-3$, and $f=1$, and therefore we set $\delta=-1$ and $\phi=1$. Put $\tau=i$. Fix a basis of $E[2^n]$ such that the image of $\rho_{E,2^n}$ is contained in $\mathcal{N}_{-1,1}(2^n)$, as in Theorem \ref{thm-cmrep-intro}, and the image of $\Gal\left(\overline{H_f}/H_f\right)$ via $\rho_{E,2^n}$ is contained in $\cC_{-1,1}(2^n)$. Using $\rho_{E,2^n}$, we identify $G_{E,K,2^n}=\Gal(H_f(E[2^n])/H_f)$ with a subgroup of $\cC_n=(\OK/2^n\OK)^\times$. The index of $G_{E,K,2^n}$ in $\cC_n$ is a divisor of $6=\#\OK^\times$. We distinguish three cases according to the values of the index $[\cC_n:G_{E,K,2^n}]$ for $n\geq 2$:
	
	\begin{itemize}
		\item If $[\cC_{-1,1}:G_{E,K,2^\infty}]=1$, then $G_{E,K,2^\infty}=\cC_{-1,1}(2^\infty)$ and $G_{E,2^\infty}=\mathcal{N}_{-1,1}(2^\infty)$.
		\item If $[\cC_{-1,1}:G_{E,K,2^\infty}]=2$ (resp. $6$), then, by Cor. \ref{cor-missesrootofunity}, the group $G_{E,K,2^\infty}$ is a subgroup of index $2$ (resp. $6$) of $\cC_{-1,1}(2^\infty)$ missing $-1$ and stable under complex conjugation. However, Lemma \ref{lem-2adiccartanjzero} shows that this is impossible.
		\item If $[\cC_{-1,1}:G_{E,K,2^\infty}]=3$, then $G_{E,K,2^\infty}$ is a subgroup of index $3$, and Lemma \ref{lem-2adiccartanjzero} shows that there is exactly one such subgroup, namely the one described in the statement of the theorem.
	\end{itemize}
Let $c$ be a complex conjugation, and let $\gamma=\rho_{E,2^\infty}(c)$. By Lemma \ref{lem-ccfinal2}, there is a root of unity $\zeta$ (of order dividing $6$ in this case), such that $G_{E,2^\infty}=\langle \gamma, G_{E,K,2^\infty}\rangle = \langle \zeta\cdot c_\phi, G_{E,K,2^\infty}\rangle$, where $c_\phi=\left(\begin{array}{cc} -1 & 0\\ 1 & 1\\\end{array}\right)$. Thus, the statement holds with $\gamma' = \zeta c_\phi$ and, moreover, the options for $\zeta c_\phi$ as $\zeta$ varies are $ c_{1,1},c_{1,-1},c_{2,1},c_{2,-1},c_{3,1},c_{3,-1}$ defined, respectively, by
$$\left(\begin{array}{cc} -1 & 0\\ 1 & 1\\\end{array}\right),\left(\begin{array}{cc} 1 & 0\\ -1 & -1\\\end{array}\right),\left(\begin{array}{cc} 0 & 1\\ 1 & 0\\\end{array}\right),\left(\begin{array}{cc} 0 & -1\\ -1 & 0\\\end{array}\right),\left(\begin{array}{cc} 1 & 1\\ 0 & -1\\\end{array}\right),\left(\begin{array}{cc} -1 & -1\\ 0 & 1\\\end{array}\right).$$ 

Now, if $G_{E,K,2^\infty}=\cC_{-1,1}(2^\infty)$, then $\mathcal{N}_{-1,1}(2^\infty)=\langle c_{i,\varepsilon},\cC_{-1,1}(2^\infty)\rangle$ for any $1\leq i\leq 3$, and any $\varepsilon \in \{\pm 1 \}$. If $G_{E,K,2^\infty}=\cC_{-1,1}(2^\infty)^3$, however, then $G_{E,2^\infty}=\langle c_{i,1},\cC_{-1,1}(2^\infty)^3 \rangle$ for some fixed $1\leq i \leq 3$, then $c_{i,-1}\in G_{E,2^\infty}$ but $c_{j,\varepsilon}$ is not in the image for any $j\neq i$ and any $\varepsilon = \pm 1$. Thus, in both cases to determine $\gamma'$ such that $G_{E,2^\infty}=\langle \gamma',G_{E,K,2^\infty}\rangle$, it suffices to find $i$ such that $c_{i,1}\in G_{E,2^2}$ (since all the $c_{i,\varepsilon}$ reduce to distinct elements mod $4$). For this, we work in coordinates. 

Let us first consider a function field in one variable $\Q(s)$ and let $E/\Q(s)$ be given by $y^2=x^3+s$. Then, the $2$-nd and $4$-th division polynomials of $E$ are given by $$\psi_2(x)= x^3+s, \ \text{ and } \ \psi_4(x)=(x^3+s)(x^6+20sx^3-8s^2),$$
so that $\psi_4(x)/\psi_2(x)=x^6+20sx^3-8s^2$, with roots satisfying $x^3=(-10\pm 6\sqrt{3})s$. In particular, if $\zeta=\zeta_{12}$ denotes a primitive $12$-th root of unity, then the points 
$$P=((\zeta ^3 + \zeta^2  + \zeta)t^2 , -\alpha\cdot t^3),\ \text{ and }\ Q=((-2\zeta ^3-\zeta^2 + \zeta  + 1)t^2 , \alpha\cdot t^3)$$
generate $E[4]$, where $t=\sqrt[6]{s}$, and $\alpha=\sqrt{-9+6\sqrt{3}}$. In particular, if $c$ is a complex conjugation, then one can show that, with respect to the basis $\{P,Q \}$ of $E[4]$, we have
\begin{itemize} 
	\item  $\rho_{E,2^2}(c)\equiv \gamma \equiv  \left(\begin{array}{cc} 0 & 1\\ 1 & 0\\\end{array}\right)=c_{2,1}$ if $s<0$, and $\gamma\equiv c_{2,-1} \bmod 4$ if $s>0$, and
	\item  $G_{E,2^2} = \langle \gamma, \cC_{-1,1}^3\rangle$ when $s\in (\Q^\times)^3$, and $G_{E,2^2} = \langle \gamma, \cC_{-1,1}\rangle$ otherwise. 
	\end{itemize} 
Hence, in all cases, $\gamma \equiv c_{2,\varepsilon} \bmod 4$, for some $\varepsilon \in \{\pm 1 \}$, and, in particular, $c_{2,\varepsilon}\in G_{E,2^\infty}$. 
\end{proof} 	

\begin{example}\label{ex-j0-2adic}
	Here we provide examples of elliptic curves $E:y^2=x^3+s$, defined over $\Q$ and with $j(E)=0$, with each type of possible $2$-adic image. We use the notation $c_\varepsilon'=\left(\begin{array}{cc} 0 & \varepsilon\\ \varepsilon & 0\\\end{array}\right)$.	 Below we give pairs of values $(s,G,c)$ such that $E:y^2=x^3+s$ has the desired image $G_{E,2^\infty}=\langle c, G\rangle$, with $G=\cC_{-1,1}(2^\infty)$ or $\cC_{-1,1}(2^\infty)^3$ and the image of a complex conjugation is congruent to $c\bmod 4$.
	\begin{itemize} 
		\item If $[\cC_{-1,1}(2^\infty):G_{E,K,2^\infty}]=1$, then one can take, for example, $(s,G,c)=(2,\cC_{-1,1}(2^\infty),c_{-1}')$ or $(-2,\cC_{-1,1}(2^\infty),c_{1}')$. 
		
		\item If $[\cC_{-1,1}(2^\infty):G_{E,K,2^\infty}]=3$, then one can take $(1,\cC_{-1,1}(2^\infty)^3,c_{-1}')$, or $(-1,\cC_{-1,1}(2^\infty)^3,c_{1}')$.
	\end{itemize}
\end{example}

\bibliography{bibliography}

\begin{thebibliography}{10}

\bibitem{bourdon2}
Abbey Bourdon and Pete~L. Clark.
\newblock Torsion points and {G}alois representations on {CM} elliptic curves.
\newblock {\em Pacific J. Math.}, 305(1):43--88, 2020.

\bibitem{bourdon}
Abbey Bourdon, Pete~L. Clark, and Paul Pollack.
\newblock Anatomy of torsion in the {CM} case.
\newblock {\em Math. Z.}, 285(3-4):795--820, 2017.

\bibitem{bourdon3}
Abbey Bourdon, Pete~L. Clark, and James Stankewicz.
\newblock Torsion points on {CM} elliptic curves over real number fields.
\newblock {\em Trans. Amer. Math. Soc.}, 369(12):8457--8496, 2017.

\bibitem{chou}
Michael Chou.
\newblock Torsion of rational elliptic curves over quartic {G}alois number
  fields.
\newblock {\em J. Number Theory}, 160:603--628, 2016.

\bibitem{cox}
David~A. Cox.
\newblock {\em Primes of the form {$x^2 + ny^2$}}.
\newblock Pure and Applied Mathematics (Hoboken). John Wiley \& Sons, Inc.,
  Hoboken, NJ, second edition, 2013.
\newblock Fermat, class field theory, and complex multiplication.

\bibitem{daniels}
Harris~B. Daniels.
\newblock Torsion subgroups of rational elliptic curves over the compositum of
  all {$D_4$} extensions of the rational numbers.
\newblock {\em J. Algebra}, 509:535--565, 2018.

\bibitem{dokchitser}
Tim Dokchitser and Vladimir Dokchitser.
\newblock Surjectivity of mod {$2^n$} representations of elliptic curves.
\newblock {\em Math. Z.}, 272(3-4):961--964, 2012.

\bibitem{dummitfoote}
David~S. Dummit and Richard~M. Foote.
\newblock {\em Abstract algebra}.
\newblock John Wiley \& Sons, Inc., Hoboken, NJ, third edition, 2004.

\bibitem{elkies}
N.~Elkies.
\newblock Elliptic curves with 3-adic galois representation surjective mod 3
  but not mod 9.
\newblock {\em arXiv:math/0612734}, 2006.

\bibitem{gonzalez-jimenez-lozano-robledo}
Enrique Gonz\'alez-Jim\'enez and \'Alvaro Lozano-Robledo.
\newblock Elliptic curves with abelian division fields.
\newblock {\em Math. Z.}, 283(3-4):835--859, 2016.

\bibitem{lang}
Serge Lang.
\newblock {\em Algebraic number theory}, volume 110 of {\em Graduate Texts in
  Mathematics}.
\newblock Springer-Verlag, New York, second edition, 1994.

\bibitem{lombardo}
Davide Lombardo.
\newblock Galois representations attached to abelian varieties of {CM} type.
\newblock {\em Bull. Soc. Math. France}, 145(3):469--501, 2017.

\bibitem{lozanoannalen}
\'Alvaro Lozano-Robledo.
\newblock On the field of definition of {$p$}-torsion points on elliptic curves
  over the rationals.
\newblock {\em Math. Ann.}, 357(1):279--305, 2013.

\bibitem{lozano-applied}
\'Alvaro Lozano-Robledo.
\newblock Applications of a classification of {G}alois representations attached
  to elliptic curves with complex multiplication.
\newblock 2018.

\bibitem{parish}
James~L. Parish.
\newblock Rational torsion in complex-multiplication elliptic curves.
\newblock {\em J. Number Theory}, 33(2):257--265, 1989.

\bibitem{rouse}
Jeremy Rouse and David Zureick-Brown.
\newblock Elliptic curves over {$\Bbb Q$} and 2-adic images of {G}alois.
\newblock {\em Res. Number Theory}, 1:Art. 12, 34, 2015.

\bibitem{schertz}
Reinhard Schertz.
\newblock {\em Complex multiplication}, volume~15 of {\em New Mathematical
  Monographs}.
\newblock Cambridge University Press, Cambridge, 2010.

\bibitem{serre66}
Jean-Pierre Serre.
\newblock Groupes de {L}ie {$l$}-adiques attach\'{e}s aux courbes elliptiques.
\newblock In {\em Les {T}endances {G}\'{e}om. en {A}lg\'{e}bre et {T}h\'{e}orie
  des {N}ombres}, pages 239--256. \'{E}ditions du Centre National de la
  Recherche Scientifique, Paris, 1966.

\bibitem{serre1}
Jean-Pierre Serre.
\newblock Propri\'et\'es galoisiennes des points d'ordre fini des courbes
  elliptiques.
\newblock {\em Invent. Math.}, 15(4):259--331, 1972.

\bibitem{serretate}
Jean-Pierre Serre and John Tate.
\newblock Good reduction of abelian varieties.
\newblock {\em Ann. of Math. (2)}, 88:492--517, 1968.

\bibitem{silverman2}
Joseph~H. Silverman.
\newblock {\em Advanced topics in the arithmetic of elliptic curves}, volume
  151 of {\em Graduate Texts in Mathematics}.
\newblock Springer-Verlag, New York, 1994.

\bibitem{silverman}
Joseph~H. Silverman.
\newblock {\em The arithmetic of elliptic curves}, volume 106 of {\em Graduate
  Texts in Mathematics}.
\newblock Springer, Dordrecht, second edition, 2009.

\bibitem{stevenhagen2}
Peter Stevenhagen.
\newblock Hilbert's 12th problem, complex multiplication and {S}himura
  reciprocity.
\newblock In {\em Class field theory---its centenary and prospect ({T}okyo,
  1998)}, volume~30 of {\em Adv. Stud. Pure Math.}, pages 161--176. Math. Soc.
  Japan, Tokyo, 2001.

\bibitem{sutherland}
Andrew~V. Sutherland.
\newblock Computing images of {G}alois representations attached to elliptic
  curves.
\newblock {\em Forum Math. Sigma}, 4:e4, 79, 2016.

\bibitem{sutherland-zywina}
Andrew~V. Sutherland and David Zywina.
\newblock Modular curves of prime-power level with infinitely many rational
  points.
\newblock {\em Algebra Number Theory}, 11(5):1199--1229, 2017.

\bibitem{zarhin}
Yu.~G. Zarhin.
\newblock Endomorphisms and torsion of abelian varieties.
\newblock {\em Duke Math. J.}, 54(1):131--145, 1987.

\bibitem{zywina}
David {Zywina}.
\newblock {On the possible images of the mod ell representations associated to
  elliptic curves over Q}.
\newblock {\em ArXiv e-prints}, August 2015.

\end{thebibliography}
\bibliographystyle{plain}

\end{document}